\renewcommand{\Re}{\operatorname{Re}\,}
\newcommand\const{\operatorname{const.}}
\renewcommand{\Im}{\operatorname{Im}\,}
\newcommand{\supp}{\operatorname{supp}}
\renewcommand\epsilon{\varepsilon}
\def\C{{\mathbb C}}
\def\H{\mathbb H}
\def\R{{\mathbb R}}
\def\Z{{\mathbb Z}}
\newtheorem{theorem}{Theorem}
\newtheorem{lemma}[theorem]{Lemma}
\newtheorem{corollary}[theorem]{Corollary}
\newtheorem{proposition}[theorem]{Proposition}
\theoremstyle{definition}
\theoremstyle{remark}
\newtheorem{remark}[theorem]{Remark}
\numberwithin{equation}{section}
\numberwithin{theorem}{section}
\numberwithin{problem}{section}
\title[Spectral projectors on hyperbolic surfaces]
{Boundedness of spectral projectors\\
on hyperbolic surfaces}
\author[J.-P. Anker, P. Germain and T. L\'eger]{Jean-Philippe Anker, Pierre Germain and Tristan L\'eger}
\address{Institut Denis Poisson, Universit\'{e} d'Orl\'{e}ans, Universit\'{e} de Tours \& CNRS, B\^{a}timent de Math\'{e}matiques, BP 6759, 45067, Orl\'{e}ans cedex 02, France
}
\email{anker@univ-orleans.fr}%
\address{Department of Mathematics, Huxley building, South Kensington campus, Imperial College London,London SW7 2AZ, United Kingdom}
\email{pgermain@ic.ac.uk}%
\address{Princeton University,  Mathematics  Department,  Fine Hall,Washington Road,  Princeton,  NJ 08544-1000,  USA}
\email{tleger@princeton.edu}%
\begin{document}
	
\begin{abstract} 
In this paper, we prove $L^2 \to L^p$ estimates, where $p>2$, for spectral projectors on a wide class of hyperbolic surfaces. More precisely, we consider projections in small spectral windows $[\lambda-\eta,\lambda+\eta]$ on geometrically finite hyperbolic surfaces of infinite volume. In the convex cocompact case, we obtain optimal bounds with respect to $\lambda$ and $\eta$, up to subpolynomial losses. The proof combines the resolvent bound of Bourgain-Dyatlov and improved estimates for the Schr\"odinger group (Strichartz and smoothing estimates) on hyperbolic surfaces.
\end{abstract}	

\maketitle 
	
\tableofcontents


\section{Introduction} Consider a (smooth, complete) Riemannian manifold $X$, with \textit{positive} Laplace-Beltrami operator $\Delta$. Spectral projectors on thin shells are defined through functional calculus by
\begin{equation}
\label{defP}
P^{\hspace{1pt}\prime}_{\lambda,\eta}=\mathbf{1}_{[\lambda-\eta,\lambda+\eta]}(\sqrt{\Delta})\,.
\end{equation}
We are interested in the following question:
\textbf{which bounds does the operator norm $\|P^{\hspace{1pt}\prime}_{\lambda,\eta}\|_{L^2 \to L^p}$
enjoy, for $p>2$, $\lambda$ large and $\eta$ small} ?

In this work, we are concerned with hyperbolic surfaces $X$, in other words smooth surfaces endowed with a complete Riemannian metric of constant Gaussian curvature $-1$.  A classical result of Hopf (see for instance \cite[Theorem 2.8]{Borthwick2016}) ensures that these surfaces are quotients of the hyperbolic plane $\mathbb{H}$ by a Fuchsian group $\Gamma$ (i.e. a discrete subgroup of $\text{PSL}(2,\R)$) with no elliptic element:
$$
X=\Gamma\backslash\H\,.
$$
We will start by providing some background to this problem, paying special attention to the relation between the bounds of the projectors and the underlying geometry of the manifold.

The notation $f\lesssim g$ between two nonnegative expressions
means that there exists a constant $C>0$ such that $f\le C g$\hspace{1pt};
and $f\sim g$ means that $f\lesssim g$ and $f\gtrsim g$.
We write $f\lesssim_{\hspace{1pt}a}g$ to specify that the constant $C$ depends on a parameter $a$.

\subsection{General manifolds} The fundamental theorem due to Sogge~\cite{Sogge2017} asserts that there exists $\eta_0>0$ such that, for any $\lambda>1$,
\begin{equation}
\label{soggestatement}
\|P^{\hspace{1pt}\prime}_{\lambda,\eta_0}\|_{L^2 \to L^p}
\sim\lambda^{\frac12-\frac2p}+\lambda^{\frac12\left(\frac12-\frac1p\right)}
\sim\begin{cases}
\lambda^{\frac12-\frac2p}
&\text{if }p\ge6\\
\lambda^{\frac12\left(\frac12-\frac1p\right)}
&\text{if }2<p\le6,\\
\end{cases}
\end{equation}
which will be abbreviated as
$$
\|P^{\hspace{1pt}\prime}_{\lambda,\eta_0}\|_{L^2\to L^p}\sim\lambda^{\gamma(p)}
\quad\mbox{with}\quad
\gamma(p)=\max\bigl\{\tfrac12\hspace{-1pt}-\hspace{-1pt}\tfrac2p\hspace{1pt},
\tfrac12\bigl(\tfrac12\hspace{-1pt}-\hspace{-1pt}\tfrac1p\bigr)\bigr\}.
$$
This statement is classical on compact Riemannian manifolds;
we show in Appendix \ref{AppendixSogge} how it can be extended
to complete Riemannian manifolds with bounded geometry.

For the round sphere, $P^{\hspace{1pt}\prime}_{\lambda,\eta}=P^{\hspace{1pt}\prime}_{\lambda,1}$ if $\lambda$ is an eigenvalue of $\sqrt{\Delta}$ and $\eta<1$, so that no improvement of the above bounds can be expected as $\eta$ decreases. However, for manifolds of nonpositive curvature, decay for the operator norm $\|P^{\hspace{1pt}\prime}_{\lambda,\eta}\|_{L^2 \to L^p}$ was established as $\eta$ decreases \cite{BlairHuangSogge2022, BlairSogge2019}, and even stronger estimates are expected.

\subsection{The Euclidean case} 
\label{euclideancase}
The Euclidean plane and its quotients share many features with the hyperbolic plane and its quotients; for this reason, it is helpful to review here what is known and expected in this case.

\subsubsection{The Euclidean plane}
For the Euclidean plane $\mathbb{R}^2$, optimal bounds are given by
$$
\|P^{\hspace{1pt}\prime}_{\lambda,\eta}\|_{L^2\to L^p}
\lesssim\lambda^{\frac12-\frac2p}\hspace{1pt}\eta^{\frac12}
+\lambda^{\frac12\left(\frac12-\frac1p\right)}\hspace{1pt}\eta^{\frac32\left(\frac12-\frac1p\right)}
\sim\begin{cases}
\,\lambda^{\frac12-\frac2p}\hspace{1pt}\eta^{\frac12}
&\text{if }p\ge6\\
\,\lambda^{\frac12\left(\frac12-\frac1p\right)}\hspace{1pt}\eta^{\frac32\left(\frac12-\frac1p\right)}
&\text{if }2<p\le6.\\
\end{cases}$$
This is equivalent to the Stein-Tomas theorem~\cite{Stein1993, Tomas1975}. That it is optimal can be seen from two examples: the term $\lambda^{\frac12-\frac2p}\hspace{1pt}\eta^{\frac12}$ comes from the ``radial example", where concentration occurs at a point (in physical space), while the term $\lambda^{\frac12 \left(\frac12-\frac1p\right)}\hspace{1pt}\eta^{\frac32\left(\frac12-\frac1p\right)}$ stems from the Knapp example, which is concentrated along a geodesic (segment).

\subsubsection{The Euclidean torus}

Considering a general torus $\mathbb{T}^2 = \mathbb{R}^2 / (a \Z + b \Z)$, 
where $a,b \in \R^2$ are linearly independent, the expectation is that optimal bounds are provided by 
\begin{equation}
\label{torus}
\|P^{\hspace{1pt}\prime}_{\lambda,\eta}\|_{L^2\to L^p}
\lesssim\lambda^{\frac12-\frac2p}\hspace{1pt}\eta^{\frac12}
+(\lambda\hspace{1pt}\eta)^{\frac12\left(\frac12-\frac1p\right)}
\end{equation}
up to subpolynomial factors. The first term corresponds to the "radial example" while the second is a variant of the Knapp example, where concentration occurs along a closed geodesic.  See~\cite{Bourgain1993, BourgainDemeter2013, BourgainDemeter2015, GermainMyerson2021, GermainMyerson2022b} for progress on this conjecture.

\subsubsection{The Euclidean cylinder}

For the Euclidean cylinder $\mathbb{R}^2 / (a \Z)$ or equivalently for the Cartesian product $\mathbb{T} \times \mathbb{R}$, the estimate~\eqref{torus} can be proved to hold, and it is furthermore optimal. See~\cite{GermainMyerson2022a}, where a concise proof relying on the $\ell^2$ decoupling theorem of Bourgain-Demeter~\cite{BourgainDemeter2015} is provided.

\subsection{Harmonic analysis on infinite area hyperbolic surfaces}

From this point on, we will only consider hyperbolic surfaces of infinite area, instead of general manifolds of dimension 2. As is customary in this context, we let
$$
D=\sqrt{\Delta\hspace{-1pt}-\hspace{-1pt}\tfrac14\,};
$$
the spectral projector $P^{\hspace{1pt}\prime}_{\lambda,{\eta}}$ is then slightly modified to become
$$
P_{\lambda,{\eta}}=\mathbf{1}_{[\lambda-\eta,\lambda+\eta]}(D)P_e,
$$
where $P_e$ projects on the essential spectrum, on which $\Delta\hspace{-1pt}-\hspace{-1pt}\frac14\hspace{-1pt}\ge\hspace{-1pt}0$\hspace{1pt}. This definition will simplify notations throughout the text, and, since we are interested in the range where $\lambda$ is large and $\eta$ small, bounds for $P_{\lambda,\eta}$ and $P^{\hspace{1pt}\prime}_{\lambda,\eta}$ are equivalent.
Here and in the sequel, we use the following comparison,
which follows from the $TT^*$ argument (\footnotemark).


\begin{lemma}\label{comparison}
Let \,$m_1,m_2\hspace{-1pt}:\hspace{-1pt}[\hspace{.5pt}0,+\infty)\hspace{-1pt}\to\hspace{-1pt}\C$
be two bounded measurable functions.
If \,$|m_1|\hspace{-1pt}\le\hspace{-1pt}|m_2|$\hspace{.5pt},
then \,$\|m_1(D)\|_{L^2\to L^p}\!\le\|m_2(D)\|_{L^2\to L^p}$
for every \,$p\hspace{-1pt}>\!1$\hspace{.5pt}.
\end{lemma}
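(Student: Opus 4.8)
The plan is to prove Lemma \ref{comparison} via the standard $TT^*$ argument, exploiting the fact that $m(D)$ is self-adjoint on $L^2$ and that $L^p$ and $L^{p'}$ are dual. First I would observe that, by functional calculus, for any bounded measurable $m$ the operator $m(D)$ satisfies $m(D)^* = \overline{m}(D)$ on $L^2$, so $m(D)\,m(D)^* = |m|^2(D)$. Hence, writing $T_j = m_j(D)$ viewed as an operator from $L^2$ to $L^p$, the adjoint $T_j^*$ maps $L^{p'}$ to $L^2$, and $T_j T_j^* = |m_j|^2(D)$ is an operator from $L^{p'}$ to $L^p$. By duality,
\begin{equation*}
\|T_j\|_{L^2\to L^p}^2 = \|T_j^*\|_{L^{p'}\to L^2}^2 = \|T_j T_j^*\|_{L^{p'}\to L^p} = \bigl\| \,|m_j|^2(D)\,\bigr\|_{L^{p'}\to L^p}\,.
\end{equation*}

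Next I would reduce the claim to a monotonicity statement for operators of the form $\mu(D)$ with $\mu \geq 0$ bounded: it suffices to show that $0 \le \mu_1 \le \mu_2$ implies $\|\mu_1(D)\|_{L^{p'}\to L^p} \le \|\mu_2(D)\|_{L^{p'}\to L^p}$, since one then applies this with $\mu_j = |m_j|^2$ and takes square roots, using the display above. To prove this monotonicity, write $\mu_1 = \sqrt{\mu_1/\mu_2}\cdot \mu_2 \cdot \sqrt{\mu_1/\mu_2}$ (interpreting the ratio as $0$ where $\mu_2 = 0$, which is harmless since $\mu_1$ vanishes there too), so that $\mu_1(D) = S^* \,\mu_2(D)\, S$ where $S = \sqrt{\mu_1/\mu_2}(D)$ is a bounded self-adjoint operator on $L^2$ with $\|S\|_{L^2\to L^2} \le 1$. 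Here one uses that $\mu_2(D)$ and $S$ commute (both being functions of $D$) together with self-adjointness to split $S$ symmetrically. Then for $f \in L^{p'}$, $g \in L^p$,
\begin{equation*}
|\langle \mu_1(D) f, g\rangle| = |\langle \mu_2(D)\, Sf, \,Sg\rangle| \le \|\mu_2(D)\|_{L^{p'}\to L^p}\,\|Sf\|_{L^{p'}}\,\|Sg\|_{L^{p}}\,,
\end{equation*}
and the result follows provided $S$ is bounded on $L^{p'}$ and on $L^p$ with norm $\le 1$.

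The main obstacle is exactly this last point: $S = \sqrt{\mu_1/\mu_2}(D)$ is only obviously a contraction on $L^2$, not on $L^p$ for $p \neq 2$, so the symmetric-splitting argument above does not close directly. The clean way around it is to avoid $L^p$-boundedness of $S$ altogether and instead run the comparison at the level of $L^2$: set $S = \sqrt{\mu_1/\mu_2}(D)$ as before, note $T_1 T_1^* = |m_1|^2(D) = \sqrt{\mu_1/\mu_2}(D)\,|m_2|^2(D)\,\sqrt{\mu_1/\mu_2}(D) = (T_2 S^*)(T_2 S^*)^*$ with $S$ self-adjoint and $\|S\|_{L^2\to L^2}\le 1$; hence for $g \in L^{p'}$, $\|T_1^* g\|_{L^2} = \|S T_2^* g\|_{L^2} \le \|T_2^* g\|_{L^2}$, and taking the supremum over $\|g\|_{L^{p'}} \le 1$ gives $\|T_1^*\|_{L^{p'}\to L^2} \le \|T_2^*\|_{L^{p'}\to L^2}$, i.e. $\|T_1\|_{L^2\to L^p} \le \|T_2\|_{L^2\to L^p}$ by the duality identity above. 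This keeps every operator bound on a Hilbert space, which is where self-adjointness and spectral calculus behave well, and is presumably the form in which the footnoted $TT^*$ argument is meant. The condition $p > 1$ enters only to guarantee $p' < \infty$ so that $(L^{p'})^* = L^p$ and the duality identities are valid.
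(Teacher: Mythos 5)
Your final argument is correct and essentially the paper's: both reduce via $TT^*$ to comparing $\|T_j^*g\|_{L^2}^2 = \int_0^\infty |m_j(\mu)|^2\, d\langle\Pi(\mu)g, g\rangle$, where the positivity of the spectral measure $d\langle\Pi(\mu)g,g\rangle$ makes the monotonicity in $|m_j|$ immediate. Your packaging of this positivity via the $L^2$ contraction $S = \sqrt{\mu_1/\mu_2}(D)$ is a slightly more roundabout route to the same point, and you were right to discard the first attempt (which needs $S$ to be a contraction on $L^p$, not just $L^2$).
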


\footnotetext{\,More precisely,
\begin{align*}
\|m_j(D)\|_{L^2\to L^p}^2
&=\|m_j(D)m_j(D)^*\|_{L^{p^{\hspace{.5pt}\prime}}\!\to L^p}
=\!\sup_{\substack{f\in\mathcal{C}_c(X)\\\|f\|_{p'}\le1}}\int_X
[\hspace{1pt}m_j(D)m_j(D)^*\!f\hspace{1pt}](x)\,\overline{f(x)}\,dx\\
&=\!\sup_{\substack{\,f\in\mathcal{C}_c(X)\\\|f\|_{p'}\le1}}\int_0^\infty\!
|m_j(\mu)|^2\underbrace{\langle d\hspace{1pt}\Pi(\mu)f,f\rangle}_{\ge0}
\end{align*}
for $j=1,2$.}

\subsubsection{$L^p\to L^q$ bounds for Fourier multipliers} When is a Fourier multiplier $m(D)$ bounded from $L^p$ to $L^q$ ? Already in the Euclidean case, more regularity is needed from $m$ if $2 < p,q < \infty$ or $1 < p,q <2$ than if $p < 2 < q$, as is illustrated by the classical Bochner-Riesz conjecture or the Fefferman ball multiplier theorem.

This behavior becomes much more striking in the case of the hyperbolic space - and, presumably, also in the case of its quotients. More specifically, it has been shown by Clerc and Stein~\cite{ClercStein1974} that $L^p$ multipliers  ($p \neq 2$) on the hyperbolic space enjoy a bounded analytic continuation of their symbol on a strip containing the real axis. The argument, which naturally extends to $L^p \to L^q$ boundedness with $2 < p,q < \infty$ or $1 < p,q <2$, is recalled in Appendix \ref{HolomorphicMultipliers}. See~\cite{FotiadisMarias2010} for positive results on multipliers on locally symmetric spaces.

\subsubsection{$L^2 \to L^p$ bounds} 
Since the cases $p,q<2$ or $p,q>2$ leads to this requirement of analyticity for the symbol of a bounded multiplier, we shall focus in the present article, on the case $p < 2 < q$, where the phenomenology is very different. We actually further restrict the problem, and ask for optimal bounds for the operator $\| P_{\lambda,\eta} \|_{L^2 \to L^p}$, which has symbol $\mathbf{1}_{[\lambda-\eta,\lambda+\eta]}$. This choice is natural if one thinks of $P_{\lambda,\eta}$ as a spectral projector; furthermore, understanding its boundedness properties allows to treat general multipliers, see~\eqref{generalmultipliers}. Finally, since $P_{\lambda,\eta} = P_{\lambda,\eta}^{\hspace{1pt}2}$, this immediately implies $L^p\to L^q$ bounds as well, with $p<2<q$, though they might not be optimal.

In the case of the hyperbolic plane, optimal $L^2 \to L^p$ projector bounds were obtained by Chen-Hassell~\cite{ChenHassell2018}, see also~\cite{GermainLeger2023} for an alternative proof and the dependence of the constant in $p$.

\subsubsection{Dispersive and Strichartz estimates} Strichartz estimates for the Schr\"odinger equation are important in their own right, and for applications to Partial Differential Equations. Furthermore, they will play an important role in some of our arguments in the present paper.

For the hyperbolic plane, dispersive and Strichartz estimates were first obtained by Banica~\cite{Banica2007}, Anker-Pierfelice~\cite{AnkerPierfelice2009} and Ionescu-Staffilani~\cite{IonescuStaffilani2009}. For convex cocompact quotients, the case of the exponent of convergence $\delta<\frac12$ was treated by Burq-Guillarmou-Hassel~\cite{BurqGuillarmouHassell2010}, see also~\cite{FotiadisMandouvalosMarias2018}, while the case $\delta\ge\frac12$ is due to Wang~\cite{Wang2019}. Finally, we mention~\cite{AMPVZ} for symmetric spaces of higher rank.

\subsection{Main results in this article}

\begin{theorem} \label{mainthm}
Consider a geometrically finite hyperbolic surface \,$X$ and assume that
\,$0\hspace{-1pt}<\hspace{-1pt}\eta\hspace{-1pt}<\!1\!<\hspace{-1pt}\lambda$\hspace{1pt},
$p\hspace{-1pt}>\hspace{-1pt}2$\hspace{1pt}.
\begin{itemize}
\item
If $X$ has infinite area and no cusps,
then for any \,$\epsilon\hspace{-1pt}>0\hspace{-1pt}$ and \hspace{1pt}$M\!>\hspace{-1pt}0$\hspace{.5pt},
there holds
$$
\|P_{\lambda,\eta}\|_{L^2\to L^p}
\lesssim_{\hspace{1pt}p,\epsilon,M}\lambda^{\gamma(p)+\hspace{.5pt}\epsilon}\,\eta^{\frac12}
\qquad\text{when \,$\lambda^{-M}\hspace{-2.5pt}<\hspace{-1pt}\eta\hspace{-1pt}<\!1$\hspace{.5pt}.}
$$
\item
If the exponent of convergence $\delta$ of $X$ is $<\frac12$
$(\footnote{\,Notice that the assumption $\delta\hspace{-1pt}<\hspace{-1pt}\frac12$ excludes cusps.})$,
then
$$
\|P_{\lambda,\eta}\|_{L^2\to L^p}
\lesssim\bigl(1\!+\hspace{-1.5pt}\tfrac1{\sqrt{p-2}}\bigr)\,
\lambda^{\gamma(p)}\,\eta^{\frac12}\hspace{1pt}.
$$
This bound is furthermore optimal for the hyperbolic plane.
\end{itemize}
\end{theorem}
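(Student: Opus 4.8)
The plan is to run, on the quotient $X$, the Schr\"odinger--theoretic argument that yields the sharp estimate on $\H$, after first transplanting to $X$ the two dispersive ingredients it consumes --- Strichartz and local smoothing estimates for the Schr\"odinger group --- by means of the resolvent bound of Bourgain--Dyatlov; the quality of those ingredients is exactly what separates the two bullets. Concretely, by the $TT^{*}$ identity $\|P_{\lambda,\eta}\|_{L^{2}\to L^{p}}^{2}=\|P_{\lambda,\eta}\|_{L^{p'}\to L^{p}}$ and Lemma~\ref{comparison}, it suffices to bound $\bigl\|\chi\bigl((D-\lambda)/\eta\bigr)P_{e}\bigr\|_{L^{p'}\to L^{p}}$ for a fixed smooth even bump $\chi\ge\mathbf{1}_{[-1,1]}$. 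Using $\sqrt{u}-\lambda=\tfrac{u-\lambda^{2}}{2\lambda}\,\psi(\sqrt u/\lambda)$ with $\psi$ smooth near $1$ and expanding $\chi$, functional calculus rewrites this operator --- up to errors rapidly smoothing at frequency $\sim\lambda$ --- as an average of the Schr\"odinger propagator,
$$
\chi\bigl((D-\lambda)/\eta\bigr)P_{e}\;\approx\;\lambda\eta\int_{\R}\widehat{\chi}\hspace{1pt}(\lambda\eta\hspace{.5pt}t)\,e^{-it(\lambda^{2}+\frac14)}\,e^{\hspace{1pt}it\Delta}\,dt,
$$
with Schwartz tails outside the time window $|t|\lesssim(\lambda\eta)^{-1}$. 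The subinterval $|t|\lesssim\lambda^{-1}$ is \emph{local}: there, acting on data localized at frequency $\lambda$, $e^{it\Delta}$ propagates distance $\lesssim 1$ up to rapidly decaying tails (finite propagation speed for the Klein--Gordon operator $\partial_{t}^{2}+\Delta-\tfrac14$, the zeroth order term being harmless), so its kernel at $(x,y)$ with $d(x,y)\lesssim 1$ is, lifted to $\H$, a sum of boundedly many (depending on $X$) translates of the $\H$-kernel, hence dominated by its $\H$ counterpart, which is controlled by \cite{ChenHassell2018, GermainLeger2023}. Thus everything reduces to the long-range regime $\lambda^{-1}\lesssim|t|\lesssim(\lambda\eta)^{-1}$.

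\textbf{The model estimate on $\H$.} On $\H$ the long-range regime is handled by a $TT^{*}$ argument fed by the global-in-time Strichartz inequalities for Schr\"odinger (Anker--Pierfelice \cite{AnkerPierfelice2009}, Ionescu--Staffilani \cite{IonescuStaffilani2009}), which on $\H^{2}$ --- unlike $\R^{2}$ --- are available on a genuine two-parameter family of admissible exponents (this is the improvement responsible for the gain in $\eta$), together with the local smoothing estimate (global, with the mild weight permitted by the exponential volume growth). Balancing the near-diagonal contribution, which reproduces the ``radial'' exponent $\tfrac12-\tfrac2p$, against the contribution localized near a geodesic segment of length $\sim\eta^{-1}$, which reproduces the ``Knapp'' exponent $\tfrac12\bigl(\tfrac12-\tfrac1p\bigr)$, yields $\bigl\|P_{\lambda,\eta}\bigr\|_{L^{2}\to L^{p}}\lesssim\bigl(1+\tfrac1{\sqrt{p-2}}\bigr)\lambda^{\gamma(p)}\eta^{1/2}$ on $\H$; the blow-up $(p-2)^{-1/2}$ is forced, since the right-hand side tends to $0$ as $p\to 2^{+}$ while $\|P_{\lambda,\eta}\|_{L^{2}\to L^{2}}\sim 1$, and $(p-2)^{-1/2}$ is the exact rate read off from the Knapp computation as in \cite{GermainLeger2023}. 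Optimality on $\H$ is then checked on two examples: a unit bump of width $\lambda^{-1}$, evaluated via $\|P_{\lambda,\eta}\|_{L^{2}\to L^{p}}=\|P_{\lambda,\eta}\|_{L^{p'}\to L^{2}}$, gives $\gtrsim\lambda^{\frac12-\frac2p}\eta^{1/2}$, and a Knapp-type function concentrated near a geodesic segment of length $\sim\eta^{-1}$ gives $\gtrsim\lambda^{\frac12(\frac12-\frac1p)}\eta^{1/2}$ with the correct rate as $p\to 2^{+}$.

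\textbf{Transfer to $X$.} When $\delta<\tfrac12$ (convex cocompact with sufficiently small limit set), the Schr\"odinger group on $X$ obeys the \emph{same} global Strichartz and local smoothing estimates as on $\H$, with no loss --- this is where \cite{BurqGuillarmouHassell2010} enters, the hypothesis $\delta<\tfrac12$ ruling out any trapping obstruction --- so the model argument applies verbatim and gives the second bullet, optimality being inherited from $\H$. For a general geometrically finite $X$ of infinite area with no cusps (equivalently, convex cocompact), trapping is present, but the Bourgain--Dyatlov resolvent bound --- the cut-off resolvent near the real axis satisfies $\|\chi R(\omega\pm i0)\chi\|_{L^{2}\to L^{2}}\lesssim_{\epsilon}|\omega|^{-1+\epsilon}$, only a subpolynomial loss over the non-trapping rate --- upgrades, through Kato-smoothing and Duhamel perturbation, to Strichartz and local smoothing estimates on $X$ coinciding with those on $\H$ up to an overall factor $\lambda^{\epsilon}$. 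Plugging these into the model argument gives $\|P_{\lambda,\eta}\|_{L^{2}\to L^{p}}\lesssim_{p,\epsilon,M}\lambda^{\gamma(p)+\epsilon}\eta^{1/2}$ for $\lambda^{-M}<\eta<1$, the lower threshold on $\eta$ being the range in which the $\lambda^{\epsilon}$-lossy control of the trapped flow --- and the control of the various error terms against the a priori bound of Sogge (Appendix~\ref{AppendixSogge}) --- can be propagated through the argument over the time window $(\lambda\eta)^{-1}$.

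\textbf{Main obstacle.} The difficulty lies not in the $\H$-computation, which is available in \cite{ChenHassell2018, GermainLeger2023}, but in the transfer: extracting from the Bourgain--Dyatlov resolvent estimate Strichartz and smoothing bounds on $X$ that are uniform enough to drive the $TT^{*}$ scheme, and organizing that scheme so that the unavoidable $\lambda^{\epsilon}$ loss remains a multiplicative factor and never contaminates the exponent $\gamma(p)$ itself. Both the $\lambda^{\epsilon}$ loss in the first bullet and the threshold $\eta>\lambda^{-M}$ reflect the same limitation: one controls the trapped geodesic flow only on time scales subpolynomial in $\lambda$, whereas the no-loss regime $\delta<\tfrac12$ is precisely the one in which this limitation disappears.
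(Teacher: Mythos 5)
Your plan matches the paper's strategy at the level of the abstract, but two steps would not go through as written. For the $\delta<\tfrac12$ bullet (and the underlying bound on $\H$), the paper does not derive the sharp estimate with the constant $1+\tfrac1{\sqrt{p-2}}$ and the uniform $\eta^{1/2}$ for all $p>2$ from a Schr\"odinger Strichartz $TT^*$ scheme at all: it runs a Stein--Tomas interpolation of pointwise kernel bounds, decomposing the Dirac mass $\delta_\lambda$ dyadically, estimating each piece $p_{\lambda,\eta}$, $q_{\lambda,\eta}$ with precise $r$-dependence, interpolating, and removing the $p=6$ logarithm via Stein's analytic interpolation; the passage to $X$ is then absolute convergence of the Poincar\'e series (the $e^{-r/2}$ decay of the kernels beats $e^{\delta r}$ when $\delta<\tfrac12$) together with the Kunze--Stein phenomenon on $X$, not Strichartz. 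Your ``balancing near-diagonal against Knapp'' heuristic does not obviously reproduce the exact $1/\sqrt{p-2}$ rate, and the small-/long-time splitting you invoke is not justified by ``finite propagation speed for Klein--Gordon'': $e^{it\Delta}$ has infinite propagation speed, and what actually makes the short-range part local in the paper is a factorization $\phi\bigl(\tfrac{D-\lambda}\eta\bigr)=\mathscr{P}_{\lambda,1}^{\hspace{1pt}3}\,\mathbb{P}_{\lambda,\eta}$ in which the width-$1$ factor $\mathscr{P}_{\lambda,1}$ is subordinated to the wave group $\cos(tD)$ (genuine finite speed), while only the thin factor $\mathbb{P}_{\lambda,\eta}$ is subordinated to Schr\"odinger.

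For the first bullet, the claim that Bourgain--Dyatlov ``upgrades, through Kato-smoothing and Duhamel perturbation, to Strichartz and local smoothing estimates on $X$'' is where the scheme fails to close: the resolvent bound has compactly supported cutoffs, so Kato's theorem yields local $L^2$ smoothing only on compact sets, and there is no route from a compactly cut-off resolvent estimate to a global-in-space Strichartz estimate on the noncompact surface. The device you are missing is the partition of unity $1=\zeta_0+\sum_j\zeta_j$ and the transfer of the funnel dynamics to the model hyperbolic cylinders $C_j$, which have $\delta=0$ and hence admit the lossless frequency-localized Strichartz and $L^p$-smoothing estimates of Section~\ref{SurfacesSmallDelta}; Duhamel's formula identifies $\zeta_j\,e^{it\Delta_X}f$ with a solution of Schr\"odinger on $C_j$ forced by the commutator $[\Delta_{C_j},\zeta_j]\,u$, supported in the compact region where the Bourgain--Dyatlov smoothing applies. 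The commutator lemma, the Christ--Kiselev step, and the $L^q\to L^p$ bound on $\mathscr{P}_{\lambda,1}$ are all needed to assemble the pieces, and none of these is visible in your outline.
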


The above statement implies boundedness results for general multipliers.
In the case $\delta\hspace{-1pt}<\hspace{-1pt}\frac12$, one obtains
\begin{equation}
\label{generalmultipliers}
\begin{split}
&\bigl\|\hspace{1pt}m(D)\hspace{.5pt}\bigr\|_{L^{p^{\hspace{.5pt}\prime}}\to L^p}
\lesssim\int_0^{\hspace{1pt}1}\!\lambda^2\hspace{1pt}|m(\lambda)|\,d\lambda
+\int_1^{\hspace{.5pt}\infty}\!\lambda^{2\gamma(p)}\hspace{1pt}|m(\lambda)|\,d\lambda\,,\\
&\bigl\|\hspace{1pt}m(D)\hspace{.5pt}\bigr\|_{L^2\to L^p}
\lesssim\Bigl(\int_0^{\hspace{1pt}1}\!\lambda^2\hspace{1pt}|m(\lambda)|^2\hspace{1pt}d\lambda
+\!\int_1^{\hspace{.5pt}\infty}\!\lambda^{2\gamma(p)}\hspace{1pt}|m(\lambda)|^2\hspace{1pt}d\lambda
\Bigr)^{\hspace{-1pt}\frac12}
\end{split}
\end{equation}
by using in addition the low frequency estimates
$$
\|P_{\lambda,\eta}\|_{L^{p^{\hspace{.5pt}\prime}}\to L^p}
\lesssim(\lambda^2\!+\hspace{-1pt}\eta^2)\,\eta
\quad\text{and}\quad
\|P_{\lambda,\eta}\|_{L^2\to L^p}
\lesssim(\lambda\hspace{-1pt}+\hspace{-1pt}\eta)\,\eta^{\frac12}.
$$
In the case $\delta\ge\frac12$, there is a similar statement for multipliers supported away from the origin,
which involves some loss in the power $\lambda$ and some regularity on $m$.

When the surface has cusps, we expect the spectral projectors to be unbounded. We prove this in the model case of the parabolic cylinder in Appendix \ref{unbounded-cusp}. The general case will be the object of a forthcoming paper.

\subsection{Finite area hyperbolic surfaces} Estimating the operator norm of $P_{\lambda,\eta}$ for finite area hyperbolic surfaces is a very hard problem. When a cusp is present, the obstruction pointed out in Appendix \ref{unbounded-cusp} is still present; it can be avoided, for instance by focusing on a compact subset of the surface.

Two lines of research have been developed to address the case of finite area hyperbolic surfaces.
The first possibility is to rely on semiclassical analysis and to leverage negative curvature,
see for instance~\cite{BlairHuangSogge2022, BlairSogge2017, BlairSogge2018, BlairSogge2019,
CanzaniGalkowski2020, CanzaniGalkowski2021, HassellTacy2015, HezariRiviere2016}.
These tools typically lead to improvements of a power of $\log\lambda$ over the universal theorem of Sogge.

The second possibility is to focus on arithmetic surfaces, for which sharper results can be obtained through the use of number theory, as was first observed by Iwaniec and Sarnak~\cite{IwaniecSarnak1995}. They conjectured, in the compact case, that an eigenfunction $\phi_\lambda$ associated with the eigenvalue $\lambda$ satisfies the bound
$$
\|\phi_\lambda\|_{L^\infty}\lesssim\lambda^\epsilon\,\|\phi_\lambda\|_{L^2},
$$
with a corresponding statement in the non-compact case. See also the review \cite{Sarnak2003} and further progress in this conjecture in~\cite{ButtcaneKhan2017,Humphries2018,HumphriesKhan2022}. 

In view of Theorem~\ref{mainthm}, it is tempting to ask whether, if $X$ is a (geometrically finite) finite area surface, there might hold
\begin{equation}
\label{compactcase}
\|P_{\lambda,\eta}\|_{L^2 \to L^p}\sim\lambda^{\gamma(p)}\eta^{\frac12}+1
\end{equation}
(up to subpolynomial factors).

The validity of~\eqref{compactcase} is supported by some observations. First, notice that the term $1$ on the right-hand side is necessary since the $L^p$ norm of a function on a finite area domain controls its $L^2$ norm, if $p \geq 2$. Furthermore, this bound matches the universal lower bound proved in Subsection~\ref{aglb} as well as the bound stated in Theorem~\ref{mainthm} in the case of infinite area surfaces. Finally, the typical spacing of eigenvalues is expected to be $\sim \lambda^{-1}$, so that the choice $\eta = \lambda^{-1}$ should corresponds to estimates on eigenfunctions. The bound~\eqref{compactcase} then gives $\|P_{\lambda,\lambda^{-1}}\|_{L^2 \to L^p}\sim 1$, which corresponds to the conjecture of Iwaniec-Sarnak~\cite{IwaniecSarnak1995} - once again, up to subpolynomial factors.

A further argument in favor of the bound~\eqref{compactcase} has to do with the role played by geodesics. As far as estimating  $\|P_{\lambda,\eta}\|_{L^2 \to L^p}$ goes, it seems that the most important geometric feature of the underlying Riemannian manifold is the presence of closed geodesics and their stability, more than the finiteness of the area. This is apparent in Euclidean geometry, as recalled in Subsection~\ref{euclideancase}\,: estimates for the torus or the cylinder differ from estimates from the whole space, and this can be related to the existence of closed geodesics in the former case, but not in the latter. Going back to hyperbolic geometry, we see that closed geodesics exist on quotients of the hyperbolic space, but that estimates are nevertheless the same for the hyperbolic space and its infinite area quotients. This can be explained by the instability of geodesics, and further supports the bound~\eqref{compactcase}, which does not seem to be altered by the presence of closed geodesics.


\subsection{Organization of the paper and general ideas of the proof} 
We begin by recalling known harmonic analysis results on the hyperbolic plane and its quotients in Section \ref{tools}.   

In the first part of the paper we consider the special case of the hyperbolic plane: we prove the desired upper bound on spectral projectors in Section \ref{upperbplane} by adapting the interpolation method of Stein-Tomas~\cite{Stein1993, Tomas1975}, and show that it is sharp in Section \ref{lowerb}. In Section \ref{DispersiveStrichartzH}, we also prove new improved dispersive, Strichartz, and $L^p$ smoothing estimates for functions with narrow support in frequency space. 

The corresponding results are deduced in the case of convex cocompact surfaces with exponent of convergence $\delta\in[\hspace{1pt}0,\frac12)$ in Section \ref{SurfacesSmallDelta}. In this case, the sum defining the periodized kernel of the various Fourier multipliers converges absolutely; this leads to $L^\infty$ bounds, from which one can argue as in the case of the hyperbolic plane.

Section  \ref{SurfacesLargeDelta} focuses on the case $\delta \in [\frac12,1)$. In this case, the key estimate is due to Bourgain-Dyatlov \cite{BourgainDyatlov2018}, which leads to a local smoothing estimate for the Schr\"odinger group. On the one hand, this smoothing estimate allows to bound the spectral projector when restricted to compact subsets of the surface. On the other hand, the improved Strichartz and $L^p$ smoothing estimates allow to deal with the hyperbolic ends. This approach is reminiscent of Staffilani-Tataru~\cite{StaffilaniTataru2002}, who were the first to leverage local smoothing to obtain Strichartz estimates.


Appendix~\ref{AppendixSogge} is dedicated to the extension of the Sogge theorem to the case of a complete manifold, Appendix \ref{unbounded-cusp} to the proof of unboundedness of projectors in the case of the parabolic cylinder, and Appendix~\ref{HolomorphicMultipliers} gives a version of the theorem of Clerc-Stein for $L^p\!\to\hspace{-1pt}L^q$ multipliers, $p\neq q$.

\subsection*{Acknowledgements} The authors are grateful to P. Sarnak and M. Zworski for many instructive conversations, and for pointing out very useful references.

During the preparation of this article, Pierre Germain was supported by the Simons Foundation Collaboration on Wave Turbulence, a start up grant from Imperial College, and a Wolfson fellowship. Tristan L\'{e}ger was supported by the Simons Foundation Collaboration on Wave Turbulence. 

\section{Harmonic analysis on the hyperbolic plane $\H$ and its quotients $\Gamma\backslash\H$}\label{tools}
The aim of this section is to recall some basic formulas and facts related to generalizations of the Fourier transform to hyperbolic surfaces. We use \cite{Helgason1984,Helgason2008,Koornwinder1984} as general references about analysis on symmetric spaces and \cite{Borthwick2016} for analysis on hyperbolic surfaces.

Before doing so, let us first fix the following normalization for the Fourier transform on the real line\,:
$$
\widehat{f}(\xi)=\tfrac1{\sqrt{2\pi}}\int_{-\infty}^{\hspace{.5pt}\infty}\!f(x)\,e^{-ix\xi}\,dx\hspace{1pt}.
$$
With this normalization, the Fourier transform is an isometry from $L^2(\mathbb{R})$ to itself.


\subsection{The spherical Fourier transform on the hyperbolic plane $\H$}
Consider a radial function $f$ only depending on the distance $r$ to the origin
(which is $i$ in the upper half-plane model).
The spherical Fourier transform and its inverse are given by
\begin{align*}
\widetilde{\mathcal{F}}f(\lambda)=\widetilde{f}(\lambda)=2\hspace{.5pt}\pi
\int_0^{\hspace{.5pt}\infty}\!f(r)\,\varphi_\lambda(r)\hspace{1pt}\sinh r\,dr\hspace{1pt},\quad
f(r)=\tfrac1{2\hspace{.5pt}\pi^2}\int_0^{\hspace{.5pt}\infty}\!\widetilde{f}(\lambda)\,
\varphi_\lambda(r)\,|\mathbf{c}(\lambda)|^{-2}\,d\lambda\hspace{1pt},
\end{align*}
where the spherical functions $\varphi_\lambda(r)$
and the Harish-Chandra function $\mathbf{c}(\lambda)$
are given by
\begin{align*}
\varphi_\lambda(r)
=\tfrac{\sqrt{2}}\pi\int_0^{\hspace{1pt}r}\!\cos(\lambda\hspace{.5pt}s)\,
(\cosh r\hspace{-1pt}-\hspace{-1pt}\cosh s)^{-\frac12}\,ds\hspace{1pt},\quad
\mathbf{c}(\lambda)
=\tfrac{\Gamma(i\lambda)}
{\sqrt{\pi}\,\Gamma(\frac12\hspace{-1pt}+\hspace{-1pt}i\lambda)}\,.
\end{align*}
Notice that
\begin{equation}\label{equivalents}\begin{gathered}
\varphi_\lambda(r)\sim\varphi_0(r)\sim(1\!+\hspace{-1pt}r)\,e^{-\frac r2}
\quad\text{when}\quad\lambda\hspace{.5pt}r\hspace{-1pt}<\!1\hspace{.5pt},\\
|\mathbf{c}(\lambda)|^{-2}\sim\,\begin{cases}
\,\lambda^2
&\text{for $\lambda$ small, say $0\hspace{-1pt}<\hspace{-1pt}\lambda\hspace{-1pt}<\!1$\hspace{.5pt},}\\
\,\lambda
&\text{for $\lambda$ large, say $\lambda\hspace{-1pt}\ge\!1$\hspace{.5pt}.}
\end{cases}
\end{gathered}\end{equation}
In this context, Plancherel's formula writes
\begin{equation*}\label{PlancherelSpherical}
\|f\|_{L^2(\mathbb{H})}^2=\tfrac1{2\pi^2}\int_0^{\hspace{.5pt}\infty}
|\widetilde{f}(\lambda)|^2\,|\mathbf{c}(\lambda)|^{-2}\,d\lambda\hspace{1pt}.
\end{equation*}

The spherical Fourier transform diagonalizes the Laplacian and allows to define its functional calculus. Setting
$$
D=\sqrt{\Delta-\tfrac14\,}, 
$$
and given an even function $m$, there holds
$$
m(D)f=\widetilde{\mathcal{F}}^{-1}\bigl(m\cdot\widetilde{\mathcal{F}}f\bigr).
$$
Finally, the operator $m(D)$ can be realized through its convolution kernel


$$
K_{\mathbb{H}}(r)=-\,\tfrac1{2\hspace{1pt}\pi^{\hspace{.5pt}3/2}}
\int_{\hspace{1pt}r}^{\hspace{.5pt}\infty}\!\tfrac\partial{\partial s}\hspace{1pt}\widehat{m}(s)\,
(\cosh s\hspace{-1pt}-\hspace{-1pt}\cosh r)^{-\frac12}\,ds\hspace{1pt},
$$
for which
$$
[\hspace{.5pt}m(D)f\hspace{.5pt}](x)
=[\hspace{.5pt}f\hspace{-1pt}*\hspace{-1pt}K_{\mathbb{H}}\hspace{.5pt}](x)
=\int_{\mathbb{H}}K_{\mathbb{H}}(\operatorname{dist}(x,y))\,f(x')\,dx'\hspace{1pt}.
$$
The formula above holds true for radial or non-radial functions $f$.

\subsection{The Fourier transform on the hyperbolic plane $\H$ in the upper-half plane model}
In this model, the hyperbolic plane is represented by the complex upper-half plane $\{z=x+iy\in\C\,|\,y>0\}$,
which  turns out to be more convenient for some computations. We follow here \cite[Ch. 4]{Borthwick2016}.
The generalized eigenfunctions are given by
$$
E(\lambda,\xi,z)=\mathbf{C}(\lambda)\,
\bigl(\tfrac y{(x\hspace{1pt}-\hspace{1pt}\xi)^2+\hspace{1pt}y^2}\bigr)^{\hspace{-1pt}\frac12+i\lambda}\,,
\quad\text{with}\quad
\mathbf{C}(\lambda)
=\tfrac1{2\pi i\hspace{1pt}\lambda\hspace{1pt}\mathbf{c}(\lambda)}
=\tfrac1{2\sqrt{\pi}}
\tfrac{\Gamma(\frac12+\hspace{1pt}i\lambda)}{\Gamma(1\hspace{.5pt}+\hspace{1pt}i\lambda)}\,.
$$
Notice that
$$
|\mathbf{C}(\lambda)|\sim\begin{cases}
\,1
&\text{for $\lambda$ small, say $0\hspace{-1pt}\le\hspace{-1pt}\lambda\hspace{-1pt}<\!1$\hspace{.5pt},}\\
\,\lambda^{-\frac12}
&\text{for $\lambda$ large, say $\lambda\hspace{-1pt}\ge\!1$\hspace{.5pt}.}
\end{cases}$$
The spectral projector $\mathbf{P}_{\hspace{-1pt}\lambda}=\delta_\lambda(D)$ has integral kernel
\begin{equation}\label{Kernel}
K_\lambda(z,z')=\tfrac2\pi\hspace{1pt}\lambda^2\hspace{1pt}
\Bigl(\int_{-\infty}^{\hspace{.5pt}\infty}E(\lambda,\xi,z)\,\overline{E(\lambda,\xi,z')}\,d\xi\Bigr)
=\tfrac1{2\pi^2}\hspace{1pt}|\mathbf{c}(\lambda)|^{-2}\,\varphi_\lambda(d(z,z')\hspace{-1pt})\hspace{1pt}.
\end{equation}
This leads to the following formulas for the Fourier transform, its inverse, and Plancherel's identity:
\begin{align*}
&\widetilde{f}(\lambda,\xi)
=\int_{\H}f(z')\,\overline{E(\lambda,\xi,z')}\,dz'\\
&f(z)=\tfrac2\pi\int_0^{\hspace{.5pt}\infty}\!\int_{-\infty}^{\hspace{.5pt}\infty}
\widetilde{f}(\lambda,\xi)\,E(\lambda,\xi,z)\,d\xi\,\lambda^2\hspace{1pt}d\lambda\\
&\bigl\|f\bigr\|_{L^2(\H)}^2=\tfrac2\pi\,
\bigl\|\hspace{1pt}\lambda\hspace{1pt}\widetilde{f}\,\bigr\|_{L^2_{\lambda,\xi}}^2.
\end{align*}


\subsection{The Fourier transform on hyperbolic surfaces
$X\hspace{-1pt}=\hspace{-1pt}\Gamma\hspace{.5pt}\backslash\hspace{.5pt}\H\hspace{1pt}$}\label{FTX}

Consider first the special case of the hyperbolic cylinder $C_{\ell}\hspace{-1pt}=\hspace{-.5pt}\Gamma_{\ell}\backslash\hspace{.5pt}\mathbb{H}$ with $\Gamma_{\ell}\hspace{-1pt}=\hspace{-1pt}\bigl\langle z\mapsto e^{\ell}z\bigr\rangle$. Let us express on $C_\ell$ the spectral measure in terms of the generalized eigenfunctions. The spectral measure for $D\hspace{-.5pt}=\hspace{-1pt}\sqrt{\Delta\hspace{-1pt}-\hspace{-1pt}\smash{\tfrac14}\vphantom{\big|}}$ is given by
\begin{align}\label{Stone}
d\hspace{1pt}\Pi_{\hspace{1pt}C_\ell}(\lambda)
=\tfrac{\lambda}{\pi i}\Bigl[R_{\hspace{1pt}C_\ell}(\tfrac12\hspace{-1pt}-\hspace{-1pt}i\lambda)-R_{\hspace{1pt}C_\ell}(\tfrac12\hspace{-1pt}+i\lambda)\Bigr]d\lambda\,,
\end{align}
according to Stone's formula, and the resolvent by the kernel
\begin{align*}
R_{\hspace{1pt}C_{\ell}}(\tfrac12\hspace{-1pt}\mp\hspace{-1pt}i\lambda;z,z')
=\sum_{k\in\Z}R_{\hspace{1pt}\H}(\tfrac12\hspace{-1pt}\mp\hspace{-1pt}i\lambda;z,e^{k\ell}z')\,.
\end{align*} 
Notice that the summand is $\text{O}(e^{-\const\ell|k|}),$ hence the sum converges.
Correspondingly, the generalized eigenfunctions are given by 
\begin{equation}\label{GeneralizedEigenfunctions}
E_{\hspace{1pt}C_{\ell}}(\lambda,\xi,z)=\sum_{k\in\Z}E\bigl(\lambda,\xi,e^{k\ell}z\bigr)\,.
\end{equation}
It follows from \eqref{Kernel} that the kernel of the spectral measure is given by
$$
d\hspace{1pt}\Pi_{\hspace{1pt}C_{\ell}}(\lambda;z,z')
=\frac1{2\pi^2}\sum_{k\in\mathbb{Z}}
\varphi_{\lambda}\bigl(d(z,e^{k \ell}z')\hspace{-1pt}\bigr)\,
|\mathbf{c}(\lambda)|^{-2} d\lambda\,.
$$
Let us express it in terms of the generalized eigenfunctions \eqref{GeneralizedEigenfunctions}.

\begin{lemma}\label{Stonecyl}
We have 
\begin{equation*}
d\hspace{1pt}\Pi_{\hspace{1pt}C_{\ell}}(\lambda;z,z')
=\tfrac2\pi\bigg[\int_{1<|\xi|<e^{\hspace{.5pt}\ell}}\!E_{\hspace{1pt}C_{\ell}}(\lambda,\xi,z)\,
\overline{E_{\hspace{1pt}C_{\ell}}(\lambda,\xi,z')}\,d\xi\bigg]\lambda^2\,d\lambda\,.
\end{equation*}
\end{lemma}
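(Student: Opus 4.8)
\emph{Proof plan.}
The plan is to reconcile the two expressions for $d\Pi_{C_\ell}$ already written down above — the one obtained by periodizing the kernel \eqref{Kernel} of the hyperbolic plane, and the one asserted in the lemma — by an unfolding argument on the boundary $\partial\H\cong\R\cup\{\infty\}$, along which $\Gamma_\ell$ acts by $\xi\mapsto e^\ell\xi$ with fundamental domain $\{1<|\xi|<e^\ell\}$. The first ingredient is the covariance of the generalized eigenfunctions under the generator $g_\ell:z\mapsto e^\ell z$ of $\Gamma_\ell$. Writing $E(\lambda,\xi,z)=\mathbf{C}(\lambda)\,\bigl(\tfrac{y}{(x-\xi)^2+y^2}\bigr)^{\frac12+i\lambda}$ with $z=x+iy$, the elementary identity $\tfrac{e^\ell y}{(e^\ell x-e^\ell\xi)^2+(e^\ell y)^2}=e^{-\ell}\,\tfrac{y}{(x-\xi)^2+y^2}$ gives, for every $k\in\Z$,
\begin{equation*}
E(\lambda,e^{k\ell}\xi,e^{k\ell}z)=e^{-k\ell(\frac12+i\lambda)}\,E(\lambda,\xi,z),
\end{equation*}
and hence the two one-sided forms $E(\lambda,\xi,e^{k\ell}z)=e^{-k\ell(\frac12+i\lambda)}E(\lambda,e^{-k\ell}\xi,z)$ and $E(\lambda,e^{k\ell}\xi,z)=e^{-k\ell(\frac12+i\lambda)}E(\lambda,\xi,e^{-k\ell}z)$.

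Next I would combine \eqref{Kernel} with the periodized spectral-measure kernel recorded above (that is, $K_\lambda(z,z')=\tfrac2\pi\lambda^2\int_\R E(\lambda,\xi,z)\overline{E(\lambda,\xi,z')}\,d\xi=\tfrac1{2\pi^2}|\mathbf{c}(\lambda)|^{-2}\varphi_\lambda(d(z,z'))$ applied to $z'\rightsquigarrow e^{k\ell}z'$) to obtain
\begin{equation*}
d\Pi_{C_\ell}(\lambda;z,z')=\tfrac2\pi\,\lambda^2\sum_{k\in\Z}\Bigl(\int_{\R}E(\lambda,\xi,z)\,\overline{E(\lambda,\xi,e^{k\ell}z')}\,d\xi\Bigr)d\lambda\,.
\end{equation*}
It therefore suffices to prove the boundary identity $\int_{1<|\xi|<e^\ell}E_{C_\ell}(\lambda,\xi,z)\,\overline{E_{C_\ell}(\lambda,\xi,z')}\,d\xi=\sum_{k\in\Z}\int_\R E(\lambda,\xi,z)\,\overline{E(\lambda,\xi,e^{k\ell}z')}\,d\xi$.

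For this I would expand the left-hand side, using \eqref{GeneralizedEigenfunctions}, as the double sum $\sum_{m,n\in\Z}\int_{1<|\xi|<e^\ell}E(\lambda,\xi,e^{m\ell}z)\,\overline{E(\lambda,\xi,e^{n\ell}z')}\,d\xi$, apply the one-sided covariance to both factors, and then substitute $\xi=e^{m\ell}\zeta$, which turns the fundamental domain $\{1<|\xi|<e^\ell\}$ into $\{e^{-m\ell}<|\zeta|<e^{(1-m)\ell}\}$ and brings the integrand to $E(\lambda,\zeta,z)\,\overline{E(\lambda,e^{(m-n)\ell}\zeta,z')}$ times a product of scalars $e^{\pm(\cdot)\ell(\frac12\pm i\lambda)}$ and the Jacobian $e^{m\ell}$. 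Using the one-sided covariance once more to rewrite $\overline{E(\lambda,e^{(m-n)\ell}\zeta,z')}$ as $e^{-(m-n)\ell(\frac12-i\lambda)}\overline{E(\lambda,\zeta,e^{(n-m)\ell}z')}$, a one-line bookkeeping check shows that the net power of $e^{\ell/2}$ and the net power of $e^{\pm i\lambda\ell}$ in the accumulated scalar factor both vanish, so it equals $1$. Reindexing by $k=n-m$ and summing over $m$, the translated fundamental domains $\{e^{-m\ell}<|\zeta|<e^{(1-m)\ell}\}$, $m\in\Z$, tile $\R$ up to a null set, so the $m$-sum reconstitutes $\int_\R$, leaving exactly $\sum_{k\in\Z}\int_\R E(\lambda,\zeta,z)\,\overline{E(\lambda,\zeta,e^{k\ell}z')}\,d\zeta$, which is the desired identity.

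All the interchanges of sums and integrals are legitimate by absolute convergence: for $\xi$ in the (closed) fundamental domain — hence bounded away from $0$ and $\infty$ — and $z$ fixed, the bound $|E(\lambda,\xi,e^{k\ell}z)|\lesssim_{\lambda,z} e^{-|k|\ell/2}$ holds uniformly in $\xi$, so the single sums converge and the double sum is dominated by $\bigl(\sum_{k}e^{-|k|\ell/2}\bigr)^2<\infty$, while $\int_\R|E(\lambda,\xi,z)|\,|E(\lambda,\xi,e^{k\ell}z')|\,d\xi<\infty$ since the integrand decays like $|\xi|^{-2}$ as $|\xi|\to\infty$ (consistent with the finiteness of \eqref{Kernel}). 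The only genuinely delicate point is the cancellation of the phase and weight factors after the change of variables; everything else is formal once the covariance relation is in hand, so I expect that computation to be the main — and rather mild — obstacle.
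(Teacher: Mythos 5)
Your proof is correct. It differs from the paper's in two cosmetic but non-trivial ways. The paper invokes Borthwick's Proposition 4.5 (the imaginary part of the free resolvent on $\H$ as a rank-one integral over $E(\lambda,\xi,\cdot)$) together with Stone's formula \eqref{Stone}, which yields the asymmetric intermediate expression \eqref{Expression}, $d\Pi_{C_\ell}(\lambda;z,z')=\tfrac2\pi\bigl[\int_\R E(\lambda,\xi,z)\,\overline{E_{C_\ell}(\lambda,\xi,z')}\,d\xi\bigr]\lambda^2\,d\lambda$, with one $\H$-eigenfunction and one already-periodized eigenfunction; a single fold of $\R$ onto the fundamental domain $\{1<|\xi|<e^\ell\}$ then converts the remaining $E$ into $E_{C_\ell}$, using the same covariance relation you record. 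You instead bypass the resolvent identity, reading off the symmetric double-$E$ form directly from \eqref{Kernel} applied to $z'\rightsquigarrow e^{k\ell}z'$ (which encodes the same content as Borthwick Prop.~4.5 plus Stone), and then close the gap with a double unfolding over the pair $(m,n)$ of periodization indices, using the covariance twice. Your bookkeeping check that the accumulated scalars $e^{-m\ell(\frac12+i\lambda)}\,e^{-m\ell(\frac12-i\lambda)}\,e^{m\ell}=1$ plays the same role as the paper's observation that the $E$-factor reassembles into $E_{C_\ell}$ after the fold. Both routes are valid and use the same three ingredients (periodized kernel, a Stone-type spectral identity on $\H$, and $\Gamma_\ell$-covariance of $E$); yours avoids the explicit resolvent formula at the cost of carrying one extra index and one extra application of the covariance, while the paper's single fold is marginally more economical.
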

\begin{proof}
According to \cite[Proposition~4.5]{Borthwick2016}, we have
\begin{equation*}
R_{\hspace{1pt}\H}\bigl(\tfrac12-i\lambda;z,e^{k \ell}z'\bigr)
-R_{\hspace{1pt}\H}\bigl(\tfrac12+i\lambda;z,e^{k\ell}z'\bigr)
=i2\lambda\int_{-\infty}^{\hspace{.5pt}\infty}\!E(\lambda,\xi,z)\,
\overline{E\bigl(\lambda,\xi,e^{k\ell}z'\bigr)}\,d\xi\,.
\end{equation*}
By summing up over $k\in\Z$ and by using Stone's formula \eqref{Stone}, we obtain
\begin{equation}\label{Expression}
d\hspace{1pt}\Pi_{\hspace{1pt}C_{\ell}}(\lambda;z,z')=\tfrac2\pi\bigg[
\int_{-\infty}^{\hspace{.5pt}\infty}\!E(\lambda,\xi,z)\,
\overline{E_{\hspace{1pt}C_\ell}(\lambda,\xi,z')}\,d\xi\bigg]\lambda^2\,d\lambda\,.
\end{equation}
We conclude by splitting up
$$
\int_{-\infty}^{\hspace{.5pt}\infty}\!d\xi=\sum_{k\in\Z}\int_{e^{k\ell}<|\xi|<e^{(k+1)\ell}}\!d\xi
$$
in \eqref{Expression} and by using
$$
E_{\hspace{1pt}C_\ell}\bigl(\lambda,e^{k\ell}\xi,z\bigr)
=e^{-\left(\frac12+\hspace{.5pt}i\lambda\right)k\ell}E_{\hspace{1pt}C_\ell}(\lambda,\xi,z)\,,
$$
which follows from
$$
E\bigl(\lambda,e^{k\ell}\xi,e^{k\ell}z\bigr)
=e^{-\left(\frac12+\hspace{.5pt}i\lambda\right)k\ell}E(\lambda,\xi,z)\,.
$$
\end{proof}

Next we treat the general case, which is covered in \cite[Chapter~7]{Borthwick2016}.
Consider a hyperbolic surface $X\!=\hspace{-1pt}\Gamma\backslash\mathbb{H}$ of infinite area with $m\hspace{-1pt}\ge\!1$ funnel ends, whose boundary geodesics have lengthes $\ell_1,\dots,\ell_m$, and with $n\hspace{-1pt}\ge\hspace{-1pt}0$ cusps. Then (see \cite[Chapter~7]{Borthwick2016}), there exist generalized $\Delta_X$--\hspace{1pt}eigenfunctions\footnote{\,Note that Borthwick \cite{Borthwick2016} uses rather the notation $E^{\hspace{.5pt}\textnormal{f}}_j(\frac12\hspace{-1pt}+\hspace{-1pt}i\lambda\hspace{1pt};z,\theta)$.} $E_j(\lambda,\theta,z)$ and finitely many $\Delta_X$--\hspace{1pt}eigenfunctions $F_k(z)$ with eigenvalues $\frac14\hspace{-1pt}-\hspace{-1pt}\lambda_k^2\!\in\hspace{-1pt}[\hspace{1pt}0,\frac14)$ such that the spectral measure of $D_X\hspace{-.5pt}=\hspace{-1pt}\sqrt{\hspace{-.5pt}\Delta_X\!-\hspace{-1pt}\smash{\tfrac14}\hspace{1pt}\vphantom{\big|}}$ has integral kernel
\begin{align*}
d\hspace{1pt}\Pi_X(\lambda;z,z')
&=\tfrac1{\pi^2}\sum_{j=1}^m\ell_j\hspace{1pt}\Bigl(\int_0^{\hspace{1pt}2\pi}\!E_j(\lambda,\theta,z)\,
\overline{E_j(\lambda,\theta,z')}\,d\theta\Bigr)\hspace{1pt}\lambda^2\hspace{1pt}d\lambda\\
&+\tfrac2\pi\sum_{k=1}^nF_k(z)\,\overline{F_k(z')}\,\lambda_k^2\,d\hspace{.5pt}
\delta_{i\lambda_k}(\lambda)\hspace{1pt}.
\end{align*}
This formula leads to the following expressions
for the Fourier transform, its inverse, and Plancherel's identity\,:
\begin{align}
\label{FourierX}&\widetilde{\mathcal{F}}_{\hspace{-.5pt}X}:\,\begin{cases}
\,\widetilde{f}_j(\lambda,\theta)=\int_Xf(z')\,\overline{E_j(\lambda,\theta,z')}\,dz'
&(1\!\le\hspace{-1pt}j\hspace{-1pt}\le\hspace{-1pt}m),\\
\,\widetilde{f}(i\lambda_k)=\int_Xf(z')\,\overline{F_k(z')}\,dz'
&(1\!\le\hspace{-1pt}k\hspace{-1pt}\le\hspace{-1pt}n),\end{cases}\\
\label{FourierInversionX}&f(z)
=\tfrac1{\pi^2}\sum_{j=1}^m\ell_j\int_0^{\hspace{.5pt}\infty}\!\int_0^{\hspace{.5pt}2\pi}\!
\widetilde{f}_j(\lambda,\theta)\,E_j(\lambda,\theta,z)\,d\theta\,\lambda^2d\lambda
+\tfrac2\pi\sum_{k=1}^n\widetilde{f}(i\lambda_k)\,F_k(z)\hspace{1pt},\\
\label{PlancherelX}&\|f\|^2_{L^2(X)}
=\tfrac1{\pi^2}\sum_{j=1}^m\ell_j\,
\bigl\|\hspace{1pt}\lambda\hspace{1pt}\widetilde{f}_j\hspace{.5pt}\bigr\|_{L^2_{\lambda,\theta}}^2\!
+\tfrac2\pi\sum_{k=1}^n|\widetilde{f}(i\lambda_k)|^2\hspace{1pt}.
\end{align}

As far as it is concerned, the functional calculus is still given by
$$
m(D_X)f\hspace{-.5pt}=\widetilde{\mathcal{F}}^{\hspace{1pt}-1}_{\hspace{-1pt}X}
\bigl(m\cdot\widetilde{\mathcal{F}}_{\hspace{-.5pt}X}f\hspace{1pt}\bigr)\hspace{1pt}.
$$
The kernel $K_X$ of $m(D_X)$ on $X$ satisfies by definition
$[\hspace{1pt}m(D_X)f\hspace{1pt}](z)\hspace{-1pt}=\hspace{-1pt}\int_XK_X(z,z')\,f(z')\,dz'$,
and is, at least formally given by the series
$$
K_X(\Gamma z,\Gamma z')
=\sum_{\gamma\in\Gamma}K_{\mathbb{H}}(d(z,\gamma\hspace{1pt}z')\hspace{-1pt})\hspace{1pt}.
$$

Let us rewrite the continuous part of
\eqref{FourierX}, \eqref{FourierInversionX}, \eqref{PlancherelX}
in terms of the spectral projectors
$\mathbf{P}^{\hspace{.5pt}X}_{\hspace{-1pt}\lambda}\hspace{-1pt}=\hspace{-.5pt}\delta_\lambda(D_X)$,
restriction operators $\mathbf{R}^{\hspace{.5pt}X}_{\hspace{.5pt}\lambda}$ and
extension\footnote{\,The extension operator is called \textit{Poisson operator\/} in~\cite{Borthwick2016}.}
operators $\mathbf{E}^{\hspace{.5pt}X}_{\hspace{.5pt}\lambda}$,
which are defined for $\lambda\hspace{-1pt}\ge\hspace{-1pt}0$ by
\begin{align*}
&\mathbf{P}^{\hspace{.5pt}X}_{\hspace{-1pt}\lambda}\!f(z)
=\tfrac{\lambda^2}{\pi^2}\sum_{j=1}^m\ell_j\int_X
\int_0^{2\pi}\!E_j(\lambda,\theta,z)\,\overline{E_j(\lambda,\theta,z')}\,f(z')\,d\theta\,dz'\\
&\mathbf{E}^{\hspace{.5pt}X}_{\hspace{.5pt}\lambda}\hspace{-1pt}F(z)
=\tfrac\lambda\pi\sum_{j=1}^m\sqrt{\ell_j}
\int_0^{2\pi}\!F_j(\theta)\,E_j(\lambda,\theta,z)\,d\theta,\\
&\mathbf{R}^{\hspace{.5pt}X}_{\hspace{.5pt}\lambda,j}f(\theta)
=\tfrac{\lambda}\pi\sqrt{\ell_j}\int_Xf(z')\,\overline{E_j(\lambda,\theta,z')}\,dz'.
\end{align*}
These operators enjoy
$$
d\hspace{1pt}\Pi_X(\lambda)=\mathbf{P}^{\hspace{.5pt}X}_{\hspace{-1pt}\lambda}d\lambda\,,
\quad\mathbf{P}^{\hspace{.5pt}X}_{\hspace{-1pt}\lambda}\!
=\mathbf{E}^{\hspace{.5pt}X}_{\hspace{.5pt}\lambda}
\mathbf{R}^{\hspace{.5pt}X}_{\hspace{.5pt}\lambda}
\quad\mbox{and}\quad
\mathbf{R}^{\hspace{.5pt}X}_{\hspace{.5pt}\lambda}\!
=\hspace{-1pt}\bigl(\mathbf{E}^{\hspace{.5pt}X}_{\hspace{.5pt}\lambda}\bigr)^*,
\quad\mathbf{E}^{\hspace{.5pt}X}_{\hspace{.5pt}\lambda}\!
=\hspace{-1pt}\bigl(\mathbf{R}^{\hspace{.5pt}X}_{\hspace{.5pt}\lambda}\bigr)^*.
$$

Let us finally point out some relations between operator norms of
\hspace{1pt}$\mathbf{P}^{\hspace{.5pt}X}_{\hspace{-1pt}\lambda}$\hspace{-1pt},
$\mathbf{E}^{\hspace{.5pt}X}_{\hspace{.5pt}\lambda}$\hspace{-1pt},
$\mathbf{R}^{\hspace{.5pt}X}_{\hspace{.5pt}\lambda}$
\hspace{-1pt}and $P_{\lambda,\eta}^X\hspace{1pt}.$
It follows from
$$
P^{\hspace{.5pt}X}_{\lambda,\eta}\hspace{1pt}=\hspace{-1pt}
\int_{\lambda-\eta}^{\lambda+\eta}\!\mathbf{P}^{\hspace{.5pt}X}_{\hspace{-1pt}\mu}\,d\mu
\quad\text{and}\quad
\mathbf{P}^{\hspace{.5pt}X}_{\hspace{-1pt}\lambda}=\lim_{\eta\hspace{1pt}\searrow\hspace{1pt}0}
\tfrac1{2\hspace{1pt}\eta}\hspace{1pt}P^{\hspace{.5pt}X}_{\lambda,\eta}
$$
that
\begin{equation*}
\|P^{\hspace{.5pt}X}_{\lambda,\eta}\|_{L^{p^{\hspace{.5pt}\prime}}\!\to L^p}
\le\int_{\lambda-\eta}^{\lambda+\eta}
\|\mathbf{P}^{\hspace{.5pt}X}_{\hspace{-1pt}\mu}\|_{L^{p^{\hspace{.5pt}\prime}}\to L^p}\,d\mu
\quad\text{and}\quad
\left\|\mathbf{P}^{\hspace{.5pt}X}_{\hspace{-1pt}\lambda}\right\|_{L^{p^{\hspace{.5pt}\prime}}\to L^p}\le\liminf_{\eta\hspace{1pt}\searrow\hspace{1pt}0}\tfrac1{2\hspace{1pt}\eta}\hspace{1pt}
\|P^{\hspace{.5pt}X}_{\lambda,\eta}\|_{L^{p^{\hspace{.5pt}\prime}}\!\to L^p}
\end{equation*}
for every $\lambda\hspace{-1pt}>\!1\!>\hspace{-1pt}\eta\hspace{-1pt}>\hspace{-1pt}0$
and $p\hspace{-1pt}>\hspace{-1pt}2$\hspace{1pt}.
Next result is deduced by combining these inequalities with the usual \hspace{1pt}$TT^*$ argument.

\begin{proposition}\label{EquivalentOperatorNormEstimates}
The following estimates are equivalent for
\,$\lambda\hspace{-1pt}>\!1\!>\hspace{-1pt}\eta\hspace{-1pt}>\hspace{-1pt}0$
$($with implicit constants only depending on $p\hspace{-1pt}>\hspace{-1pt}2\hspace{1pt}):$
\begin{itemize}
\item[\rm(i)]
$\|\hspace{.5pt}P^{\hspace{.5pt}X}_{\lambda,\eta}\hspace{.5pt}\|_{L^{p^{\hspace{.5pt}\prime}}\!\to L^p}
\lesssim\lambda^{2 \gamma(p)}\hspace{1pt}\eta$\hspace{1pt},
\item[\rm(ii)]
$\|\hspace{.5pt}P^{\hspace{.5pt}X}_{\lambda,\eta}\hspace{.5pt}\|_{L^2\to L^p}
\lesssim\lambda^{\gamma(p)}\hspace{1pt}\eta^{\frac12}$\hspace{1pt},
\item[\rm(iii)]
$\|\hspace{1pt}\mathbf{P}^{\hspace{.5pt}X}_{\hspace{-1pt}\lambda}\|_{L^{p^{\hspace{.5pt}\prime}}\to L^p}\lesssim\lambda^{2\gamma(p)}$,
\item[\rm(iv)]
$\|\hspace{1pt}\mathbf{E}^{\hspace{.5pt}X}_\lambda\|_{L^2\to L^p}
=\|\hspace{1pt}\mathbf{R}^{\hspace{.5pt}X}_\lambda\|_{L^{p^{\hspace{.5pt}\prime}}\!\to L^2}
\lesssim\lambda^{\gamma(p)}$\hspace{1pt}.
\end{itemize}
\end{proposition}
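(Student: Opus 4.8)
The plan is to run the standard $TT^{*}$ machinery, moving back and forth between the thin-shell projector and the spectral density by means of the two displayed identities recorded just before the statement, namely $P^{X}_{\lambda,\eta}=\int_{\lambda-\eta}^{\lambda+\eta}\mathbf{P}^{X}_{\mu}\,d\mu$ and $\mathbf{P}^{X}_{\lambda}=\lim_{\eta\searrow0}\frac1{2\eta}P^{X}_{\lambda,\eta}$. Concretely, I would prove the chain $\mathrm{(ii)}\Leftrightarrow\mathrm{(i)}\Rightarrow\mathrm{(iii)}\Leftrightarrow\mathrm{(iv)}\Rightarrow\mathrm{(i)}$. The two ``horizontal'' equivalences are pure $TT^{*}$: since $P^{X}_{\lambda,\eta}$ is an orthogonal projection on $L^{2}(X)$, hence self-adjoint with $(P^{X}_{\lambda,\eta})^{2}=P^{X}_{\lambda,\eta}$, we have
\[
\|P^{X}_{\lambda,\eta}\|_{L^{2}\to L^{p}}^{2}
=\bigl\|P^{X}_{\lambda,\eta}\,(P^{X}_{\lambda,\eta})^{*}\bigr\|_{L^{p'}\to L^{p}}
=\bigl\|(P^{X}_{\lambda,\eta})^{2}\bigr\|_{L^{p'}\to L^{p}}
=\|P^{X}_{\lambda,\eta}\|_{L^{p'}\to L^{p}},
\]
so that the bound $\lambda^{2\gamma(p)}\eta$ of $\mathrm{(i)}$ is precisely the square of the bound $\lambda^{\gamma(p)}\eta^{1/2}$ of $\mathrm{(ii)}$. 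In the same way, the factorization $\mathbf{P}^{X}_{\lambda}=\mathbf{E}^{X}_{\lambda}\mathbf{R}^{X}_{\lambda}$ with $\mathbf{R}^{X}_{\lambda}=(\mathbf{E}^{X}_{\lambda})^{*}$, recorded above, gives $\|\mathbf{E}^{X}_{\lambda}\|_{L^{2}\to L^{p}}^{2}=\|\mathbf{E}^{X}_{\lambda}(\mathbf{E}^{X}_{\lambda})^{*}\|_{L^{p'}\to L^{p}}=\|\mathbf{P}^{X}_{\lambda}\|_{L^{p'}\to L^{p}}$, while $\|\mathbf{R}^{X}_{\lambda}\|_{L^{p'}\to L^{2}}=\|\mathbf{E}^{X}_{\lambda}\|_{L^{2}\to L^{p}}$ by duality; hence $\mathrm{(iii)}$ and $\mathrm{(iv)}$ both reduce to the single estimate $\|\mathbf{P}^{X}_{\lambda}\|_{L^{p'}\to L^{p}}\lesssim\lambda^{2\gamma(p)}$.

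It then remains to connect the thin-shell estimate to the spectral-density estimate. For $\mathrm{(i)}\Rightarrow\mathrm{(iii)}$ I would use the second displayed inequality before the statement: for fixed $\lambda>1$,
\[
\|\mathbf{P}^{X}_{\lambda}\|_{L^{p'}\to L^{p}}
\le\liminf_{\eta\searrow0}\tfrac1{2\eta}\,\|P^{X}_{\lambda,\eta}\|_{L^{p'}\to L^{p}}
\le\liminf_{\eta\searrow0}\tfrac1{2\eta}\,\bigl(C\,\lambda^{2\gamma(p)}\eta\bigr)
=\tfrac{C}{2}\,\lambda^{2\gamma(p)}.
\]
For $\mathrm{(iii)}\Rightarrow\mathrm{(i)}$ I would integrate the density over the window, $\|P^{X}_{\lambda,\eta}\|_{L^{p'}\to L^{p}}\le\int_{\lambda-\eta}^{\lambda+\eta}\|\mathbf{P}^{X}_{\mu}\|_{L^{p'}\to L^{p}}\,d\mu$: on the interval of integration $\mu\le\lambda+1\le2\lambda$ (since $\eta<1<\lambda$) and $\gamma(p)>0$, so for $\mu\ge1$ estimate $\mathrm{(iii)}$ gives $\|\mathbf{P}^{X}_{\mu}\|_{L^{p'}\to L^{p}}\lesssim\mu^{2\gamma(p)}\lesssim\lambda^{2\gamma(p)}$, while the leftover range $\mu<1$ only occurs when $\lambda\in(1,2)$ --- where $\lambda^{2\gamma(p)}\sim1$ --- and is absorbed by the elementary low-frequency bound $\|\mathbf{P}^{X}_{\mu}\|_{L^{p'}\to L^{p}}\lesssim\mu^{2}\lesssim1$. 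Integrating over the window of length $2\eta$ then yields $\|P^{X}_{\lambda,\eta}\|_{L^{p'}\to L^{p}}\lesssim\lambda^{2\gamma(p)}\eta$, which is $\mathrm{(i)}$; at every step the implied constants depend only on $p$.

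I do not expect a genuine obstacle: the whole argument is soft, using only the $TT^{*}$ identity, the factorization of $\mathbf{P}^{X}_{\lambda}$ through the extension and restriction operators, and the fact that $P^{X}_{\lambda,\eta}$ is simultaneously the integral and --- after dividing by $2\eta$ --- the limit of $\mathbf{P}^{X}_{\lambda}$. The single point deserving a word of care is the lower semicontinuity of the operator norm invoked in $\mathrm{(i)}\Rightarrow\mathrm{(iii)}$, i.e.\ $\|\mathbf{P}^{X}_{\lambda}\|_{L^{p'}\to L^{p}}\le\liminf_{\eta\searrow0}\frac1{2\eta}\|P^{X}_{\lambda,\eta}\|_{L^{p'}\to L^{p}}$; this is exactly the inequality displayed immediately before the proposition, following from the identity $\mathbf{P}^{X}_{\lambda}=\lim_{\eta\searrow0}\frac1{2\eta}P^{X}_{\lambda,\eta}$, and so may be taken as given.
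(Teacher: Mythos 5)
Your proof is correct and is exactly the argument the paper intends: the $TT^*$ identity gives the horizontal equivalences $\mathrm{(i)}\Leftrightarrow\mathrm{(ii)}$ and $\mathrm{(iii)}\Leftrightarrow\mathrm{(iv)}$ (using that $P^X_{\lambda,\eta}$ is a self-adjoint idempotent and that $\mathbf{P}^X_\lambda=\mathbf{E}^X_\lambda(\mathbf{E}^X_\lambda)^*$), and the two displayed inequalities just before the statement close the loop between $\mathrm{(i)}$ and $\mathrm{(iii)}$. The only loose thread is your invocation of an "elementary low-frequency bound'' $\|\mathbf{P}^X_\mu\|_{L^{p'}\to L^p}\lesssim\mu^2$ for $\mu<1$, which is not established at this point in the paper; but that edge case only arises for $1<\lambda<2$ and can just as well be dispensed with by restricting the window or treating small $\lambda$ separately, so it does not constitute a genuine gap.
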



\section{Lower bounds on $\mathbb{H}$}\label{lowerb}

\subsection{A general lower bound}\label{aglb}

\begin{proposition}
For any complete Riemannian manifold with uniformly bounded geometry,
there exists ${\eta_0}>0$ such that, for any $\lambda>{1}$ and $\eta<1$,
$$
\sup_{\mu\in[\lambda-{\eta_0},\lambda+{\eta_0}]}\|P_{\mu,\eta}\|_{L^{2}\to L^p}
\gtrsim \lambda^{\gamma(p)} \eta^{1/2}.
$$
\end{proposition}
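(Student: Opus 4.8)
The plan is to reduce the statement, by a pigeonhole decomposition of the window $[\lambda-\eta_0,\lambda+\eta_0]$ into sub‑windows of size $\eta$, to the fixed‑window lower bound of Sogge~\cite{Sogge2017}, and then invoke the latter. In this scheme the factor $\eta^{1/2}$ is produced entirely by the sub‑window decomposition, which is elementary; the only substantial input is the estimate $\|P_{\lambda,\eta_0}\|_{L^2\to L^p}\gtrsim\lambda^{\gamma(p)}$ for a suitable fixed $\eta_0>0$, i.e.\ the lower‑bound half of Sogge's theorem, which holds on complete manifolds of bounded geometry (Appendix~\ref{AppendixSogge}). Note also that the $\sup$ over $\mu$ is indispensable: on a general manifold $\|P_{\mu,\eta}\|_{L^2\to L^p}$ may vanish for individual values of $\mu$.

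If $\eta\ge\eta_0$ the claim is immediate: $\mathbf 1_{[\lambda-\eta_0,\lambda+\eta_0]}\le\mathbf 1_{[\lambda-\eta,\lambda+\eta]}$, so the comparison Lemma~\ref{comparison} (whose $TT^*$ proof applies verbatim with $\sqrt\Delta$ in place of $D$) gives $\|P_{\lambda,\eta}\|_{L^2\to L^p}\ge\|P_{\lambda,\eta_0}\|_{L^2\to L^p}\gtrsim\lambda^{\gamma(p)}\ge\lambda^{\gamma(p)}\eta^{1/2}$ since $\eta<1$, with $\mu=\lambda$. Assume then $\eta<\eta_0$. Partition $[\lambda-\eta_0,\lambda+\eta_0]$ into $N\le 2\eta_0/\eta$ disjoint subintervals $I_1,\dots,I_N$, each contained in some $[\mu_j-\eta,\mu_j+\eta]$ with $\mu_j\in[\lambda-\eta_0,\lambda+\eta_0]$, and set $Q_j=\mathbf 1_{I_j}(\sqrt\Delta)$. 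The $Q_j$ are mutually orthogonal projections summing to $P_{\lambda,\eta_0}$, so for $\|f\|_{L^2}\le1$ the triangle inequality, $\|Q_jf\|_{L^p}\le\|Q_j\|_{L^2\to L^p}\|Q_jf\|_{L^2}$, Cauchy--Schwarz and $\sum_j\|Q_jf\|_{L^2}^2\le\|f\|_{L^2}^2$ yield
\[
\|P_{\lambda,\eta_0}\|_{L^2\to L^p}\le\Big(\sum_{j=1}^N\|Q_j\|_{L^2\to L^p}^2\Big)^{1/2}\le N^{1/2}\sup_{\mu\in[\lambda-\eta_0,\lambda+\eta_0]}\|P_{\mu,\eta}\|_{L^2\to L^p},
\]
the last step using $\mathbf 1_{I_j}\le\mathbf 1_{[\mu_j-\eta,\mu_j+\eta]}$ and Lemma~\ref{comparison} again. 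Since $N\lesssim_{\eta_0}\eta^{-1}$, rearranging gives $\sup_\mu\|P_{\mu,\eta}\|_{L^2\to L^p}\gtrsim\eta^{1/2}\,\|P_{\lambda,\eta_0}\|_{L^2\to L^p}$.

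It remains to recall, for a suitable fixed $\eta_0$, why $\|P_{\lambda,\eta_0}\|_{L^2\to L^p}\gtrsim\lambda^{\gamma(p)}$; this rests on two quasimodes, whose construction is available on any manifold of bounded geometry through a short‑time parametrix for the wave group. For $p\ge6$ one uses the \emph{point} example: the local Weyl law gives $K(x,x)\gtrsim\lambda$ for the kernel $K$ of $P_{\lambda,\eta_0}$ (once $\eta_0$ dominates the Weyl remainder constant) and $|K(x,y)|\gtrsim\lambda$ for $d(x,y)\lesssim\lambda^{-1}$, so testing against $f=K(x,\cdot)$, for which $\|f\|_{L^2}^2=K(x,x)$ and $\|f\|_{L^p}\gtrsim\lambda\cdot\lambda^{-2/p}$, yields $\|P_{\lambda,\eta_0}\|_{L^2\to L^p}\gtrsim\lambda^{1/2-2/p}$. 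For $2<p\le6$ one uses the \emph{Knapp} (Gaussian beam) example: a quasimode of bounded frequency width and unit $L^2$ norm concentrated in an $O(\lambda^{-1/2})$‑tube about a geodesic arc has $L^p/L^2$ ratio $\sim\lambda^{\frac12(\frac12-\frac1p)}$ and is essentially preserved by $P_{\lambda,\eta_0}$ once $\eta_0$ exceeds its quasimode order. Together these give $\lambda^{\gamma(p)}$, and combined with the previous paragraph the proposition follows. The only genuinely technical point is this last step on a general complete manifold — in particular the Knapp construction, which requires control of geodesics and of the wave propagator on a unit time scale; the orthogonality/pigeonhole mechanism that produces $\eta^{1/2}$ is soft.
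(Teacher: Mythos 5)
Your proof is correct and reaches the same conclusion by a closely related but noticeably different accounting. The paper works entirely at the $L^{p'}\to L^p$ level: it writes $P_{\lambda,\eta_0}=\sum_j P_{I_j}$, bounds the sum of $N\sim\eta^{-1}$ terms by the triangle inequality to obtain $\lambda^{2\gamma(p)}\lesssim N\sup_\mu\|P_{\mu,\eta}\|_{L^{p'}\to L^p}$, and then invokes the $TT^*$ identity $\|P\|_{L^2\to L^p}^2=\|P\|_{L^{p'}\to L^p}$ at the very end to produce the square root. You instead stay at the $L^2\to L^p$ level throughout, exploiting that the $Q_j$ are mutually orthogonal idempotents so that $\sum_j\|Q_jf\|_{L^2}^2\le\|f\|_{L^2}^2$, and then Cauchy--Schwarz yields the factor $N^{1/2}\sim\eta^{-1/2}$ directly. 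This is a genuinely different bookkeeping; it buys a slightly shorter and more self-contained chain of inequalities (no passage through the $L^{p'}\to L^p$ norm), and it makes the role of almost-orthogonality visible, at the cost of having to invoke the projector property $Q_j=Q_j^2$ explicitly. Your explicit treatment of the case $\eta\ge\eta_0$ is also slightly more complete than the paper's remark ``we may therefore assume $\eta\le\eta_0/4$.'' One small caveat: the final paragraph of your argument re-derives the Sogge lower bound on a bounded-geometry manifold via a point example and a Knapp quasimode; this is only a sketch, and the paper's Appendix~\ref{AppendixSogge} takes a somewhat cleaner route (finite propagation speed reduces $\mathscr P'_{\lambda,1}$ to a finite-range operator, after which one can literally transplant to a compact manifold and cite Sogge's textbook proof). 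If you intended this sketch to be load-bearing you would need to flesh it out; if you merely meant to cite the appendix, your proof matches the paper's in substance.
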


\begin{proof}
By the classical $TT^*$ argument, Sogge's estimate \eqref{soggestatement} amounts to
\begin{equation}\label{SoggeEstimate}
{\|P_{\lambda,\eta_0}\|_{L^{p^{\hspace{.5pt}\prime}}\to L^p}}\sim \lambda^{2 \gamma(p)}
\end{equation}
for $\lambda$ large, say $\lambda>1$.
We are concerned with $\eta$ small and we may therefore assume that $\eta\le\tfrac{\eta_0}4$.
Let $N=\lceil\tfrac{\eta_0}{2\eta}\rceil$ and $c=\tfrac{\eta_0}{2\eta N}$.
Notice that $N\ge2$ and $\tfrac12<c\le1$.
Let us split up $[\lambda-\eta_0,\lambda+\eta_0]$ into
$N$ disjoint intervals $I_j$ of length $2c\eta$ centered at $\mu_j$. Then
$$
P_{\lambda,\eta_0}=\sum\nolimits_{j=1}^NP_{I_j}
$$
is the sum of the spectral projector associated with the intervals $I_j$.
Hence
\begin{equation}\label{UpperEstimate}\begin{aligned}
\|P_{\lambda,\eta_0}\|_{L^{p^{\hspace{.5pt}\prime}}\!\to L^p}\!
\le\sum_{j=1}^N\|P_{I_j}\|_{L^{p^{\hspace{.5pt}\prime}}\!\to L^p}\!
\le\sum_{j=1}^N\|P_{\mu_j,\eta}\|_{L^{p^{\hspace{.5pt}\prime}}\!\to L^p}\!
\le N\hspace{-2mm}\sup_{\mu\in[\lambda-\eta_0,\lambda+\eta_0]}
\hspace{-1mm}\|P_{\mu,\eta}\|_{L^{p^{\hspace{.5pt}\prime}}\!\to L^p}.
\end{aligned}\end{equation}
By using \eqref{SoggeEstimate}, \eqref{UpperEstimate} and the fact that $N\sim\tfrac1\eta$,
we deduce that
$$
\sup_{\mu\in[\lambda-\eta_0,\lambda+\eta_0]}\|P_{{\mu},\eta}\|_{L^{p^{\hspace{.5pt}\prime}}\to L^p}
\gtrsim\lambda^{2 \gamma(p)}\,\eta\hspace{1pt}.
$$
Applying once again the $TT^*$ argument, {we conclude that}
$$
\sup_{\mu\in[\lambda-\eta_0,\lambda+\eta_0]}\|P_{{\mu},\eta}\|_{L^2\to L^p}
\gtrsim \lambda^{\gamma(p)}\,\eta^{\frac12}\hspace{1pt}.
$$
\end{proof}

In the remainder of this section we prove lower bounds on the $L^2-L^p$ norm of spectral projectors in the case of the real hyperbolic space. Note that the results differ from those of \cite{GermainLeger2023}. Indeed we prove the sharpness of our estimates on spectral projectors and not on the restriction operator. In particular they are not mere corollaries of \cite{GermainLeger2023}. 

\subsection{The spherical example for the hyperbolic plane}

\begin{proposition}\label{SphericalExample}
Assume that \,$\lambda\hspace{-1pt}\ge\hspace{-1pt}0$\hspace{1pt},
$0\hspace{-1pt}<\hspace{-1pt}\eta\hspace{-1pt}<\!1$
and \,$p\hspace{-1pt}>\hspace{-1pt}2$\hspace{1pt}.
Then
$$
\|P_{\lambda,\eta}\|_{L^2 \to L^p}\gtrsim\begin{cases}
\,\lambda^{\frac12-\frac2p}\,\eta^{\frac12}
&\text{if $\lambda\hspace{-1pt}>\!1$,}\\
\hspace{1pt}(\lambda\hspace{-1pt}+\hspace{-1pt}\eta)\,\sqrt{\eta}
&\text{if $0\hspace{-1pt}\le\hspace{-1pt}\lambda\hspace{-1pt}\le\!1$.}\\
\end{cases}$$
\end{proposition}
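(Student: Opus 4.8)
The plan is to exhibit an explicit test function and compute the $L^2$-to-$L^p$ ratio directly from the kernel representation of the spectral projector. The natural candidate is the radial ``spherical example'': take $f$ to be (a smooth cutoff of) the radial function whose spherical Fourier transform $\widetilde f(\mu)$ is concentrated in the window $[\lambda-\eta,\lambda+\eta]$, say $\widetilde f = \mathbf 1_{[\lambda-\eta,\lambda+\eta]}$ itself, or more robustly $g = P_{\lambda,\eta}\delta_o$ where $o$ is the origin, so that $g(r) = \frac{1}{2\pi^2}\int_{\lambda-\eta}^{\lambda+\eta}\varphi_\mu(r)\,|\mathbf c(\mu)|^{-2}\,d\mu$. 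Since $P_{\lambda,\eta}$ is a projection, $\|P_{\lambda,\eta}g\|_{L^p}=\|g\|_{L^p}$ and $\|g\|_{L^2}^2 = \langle P_{\lambda,\eta}\delta_o,\delta_o\rangle = g(0)$, so the ratio $\|P_{\lambda,\eta}\|_{L^2\to L^p}\ge \|g\|_{L^p}/\|g\|_{L^2}$ reduces to estimating $\|g\|_{L^p}$ from below and $g(0)$ from above. (More precisely one should regularize $\delta_o$ by a bump supported in a ball of radius $\sim\lambda^{-1}$ to make $g\in L^2$, but the asymptotics are unchanged.)

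First I would record the needed pointwise behaviour of $g$ near the origin. For $r\lesssim\lambda^{-1}$ we have $\lambda r\lesssim 1$, so by \eqref{equivalents} $\varphi_\mu(r)\sim\varphi_0(r)\sim(1+r)e^{-r/2}\sim 1$, and $|\mathbf c(\mu)|^{-2}\sim\mu\sim\lambda$ (using $\lambda>1$); hence on the ball $B(o,c\lambda^{-1})$ one gets $g(r)\sim \frac{1}{2\pi^2}\cdot 2\eta\cdot\lambda = \frac{\lambda\eta}{\pi^2}$, a constant comparable to $\lambda\eta$. In particular $g(0)\sim\lambda\eta$, so $\|g\|_{L^2}^2\sim\lambda\eta$. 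Integrating the lower bound $|g|\gtrsim\lambda\eta$ over $B(o,c\lambda^{-1})$, whose hyperbolic volume is $\sim\lambda^{-2}$, gives $\|g\|_{L^p}^p\gtrsim(\lambda\eta)^p\lambda^{-2}$, i.e. $\|g\|_{L^p}\gtrsim\lambda\eta\,\lambda^{-2/p}$. Therefore
$$
\|P_{\lambda,\eta}\|_{L^2\to L^p}\ge\frac{\|g\|_{L^p}}{\|g\|_{L^2}}\gtrsim\frac{\lambda\eta\,\lambda^{-2/p}}{(\lambda\eta)^{1/2}}=\lambda^{\frac12-\frac2p}\,\eta^{\frac12},
$$
which is exactly the claimed bound for $\lambda>1$.

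For the low-frequency regime $0\le\lambda\le 1$ I would argue the same way but track the $\mathbf c$-function asymptotics for small spectral parameter: for $\mu\in[\lambda-\eta,\lambda+\eta]\subset[0,2)$ we have $|\mathbf c(\mu)|^{-2}\sim\mu^2$, so $g(r)\sim\frac{1}{2\pi^2}\int_{\lambda-\eta}^{\lambda+\eta}\varphi_\mu(r)\,\mu^2\,d\mu$ for $r\lesssim 1$ (where $\varphi_\mu(r)\sim 1$ since $\mu r\lesssim 1$), which is $\sim\eta\cdot(\lambda+\eta)^2$ — here $\int_{\lambda-\eta}^{\lambda+\eta}\mu^2\,d\mu\sim\eta(\lambda+\eta)^2$ is the elementary fact that on a window of width $2\eta$ around $\lambda\le 1$ the average of $\mu^2$ is $\sim(\lambda+\eta)^2$. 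Then $g(0)\sim\eta(\lambda+\eta)^2$ and, integrating $|g|\gtrsim\eta(\lambda+\eta)^2$ over the unit ball $B(o,c)$ of volume $\sim 1$, $\|g\|_{L^p}\gtrsim\eta(\lambda+\eta)^2$, so $\|P_{\lambda,\eta}\|_{L^2\to L^p}\gtrsim\eta(\lambda+\eta)^2/\sqrt{\eta(\lambda+\eta)^2}=(\lambda+\eta)\sqrt{\eta}$, as claimed.

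The routine parts are the volume computation for small hyperbolic balls and the $L^2$ normalization via $\|P_{\lambda,\eta}g\|_{L^2}^2=\langle P_{\lambda,\eta}\delta_o,\delta_o\rangle$. The one point demanding a little care — the main (minor) obstacle — is making the argument rigorous: $\delta_o\notin L^2$, so one replaces it by $\psi_o$, the $L^2$-normalized indicator (or a smooth bump) of $B(o,c\lambda^{-1})$, checks that $P_{\lambda,\eta}\psi_o$ still has size $\sim\lambda\eta$ on $B(o,c'\lambda^{-1})$ for suitable $c'<c$ (this uses that $\varphi_\mu$ varies slowly on this scale, again from $\lambda r\lesssim 1$), and verifies $\|\psi_o\|_{L^2}\sim 1$ with $\|P_{\lambda,\eta}\psi_o\|_{L^2}\le\|\psi_o\|_{L^2}\sim 1$ so no gain is lost in the denominator. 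Equivalently, one can phrase the whole computation in terms of $\mathbf E^{\H}_\mu,\mathbf R^{\H}_\mu$ and the equivalences of Proposition~\ref{EquivalentOperatorNormEstimates}, reducing to a lower bound on $\|\mathbf E^{\H}_\lambda\|_{L^2\to L^p}$ tested against a single generalized eigenfunction localized near $o$; either route gives the stated estimate.
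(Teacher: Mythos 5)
Your proposal is correct and takes essentially the same route as the paper: you test $P_{\lambda,\eta}$ on the radial function whose spherical Fourier transform is the indicator of the spectral window (your $g=P_{\lambda,\eta}\delta_o$ is precisely that function), compute the $L^2$ norm by Plancherel, and get the $L^p$ lower bound from the pointwise asymptotics $\varphi_\mu(r)\sim 1$ and $|\mathbf c(\mu)|^{-2}\sim\mu$ (resp.\ $\mu^2$) on a ball of radius $\sim\lambda^{-1}$ (resp.\ $\sim 1$). The detour through $\delta_o$ and the identity $\|g\|_{L^2}^2=g(0)$ is harmless but unnecessary; the paper simply defines $f$ by $\widetilde f=\mathbf 1_{[\lambda-\eta/2,\lambda+\eta/2]}$ (made even) and applies Plancherel directly, which avoids any regularization discussion.
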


\begin{remark}
Proposition \ref{SphericalExample} will serve to discuss the sharpness of the upper bounds
in Proposition \ref{LambdaSmallH} for small $\lambda$
and in Theorem \ref{L2LpBoundsProjectorsH} when $\lambda$ is large
and $2\hspace{-1pt}<\hspace{-1pt}p\hspace{-1pt}\le\hspace{-1pt}6$\hspace{.5pt}.
\end{remark}

\begin{proof}
Consider the radial function $f$ on $\H$ with spherical Fourier transform
$$
\widetilde{f}=\mathbf{1}_{[\lambda-\frac\eta2,\lambda+\frac\eta2]}
+\mathbf{1}_{[-\lambda-\frac\eta2,-\lambda+\frac\eta2]}\,.
$$
On the one hand, by Plancherel's theorem for the spherical Fourier transform and~\eqref{equivalents},
$$
\|f\|_{L^2(\H)}=\Bigl(\tfrac1{2\hspace{.5pt}\pi^2}\!
\int_0^{\hspace{.5pt}\infty}\!|\widetilde{f}(\mu)|^2\,
|\mathbf{c}(\mu)|^{-2}\,d\mu\Bigr)^{\hspace{-1pt}\frac12}
\sim\,\begin{cases}
\sqrt{\lambda\hspace{1pt}\eta\hspace{1pt}}
&\text{if $\lambda\hspace{-1pt}>\!1$,}\\
\hspace{.5pt}(\lambda\hspace{-1pt}+\hspace{-1pt}\eta)\sqrt{\eta\hspace{1pt}}
&\text{if \hspace{1pt}$0\hspace{-1pt}\le\hspace{-1pt}\lambda\hspace{-1pt}\le\!1$.}\\
\end{cases}
$$
On the other hand,
when \hspace{1pt}$r\hspace{-1pt}<\hspace{-1pt}\tfrac1{\lambda+\frac\eta2}$\hspace{.5pt},
we deduce from \eqref{equivalents} that
$$
f(r)\sim\varphi_0(r)\int_0^{\hspace{.5pt}\infty}\!\widetilde{f}(\mu)\,|\mathbf{c}(\mu)|^{-2}\,d\mu
\sim\begin{cases}
\,\lambda\,\eta
&\text{if $\lambda\hspace{-1pt}>\!1$,}\\
\hspace{1pt}(\lambda\hspace{-1pt}+\hspace{-1pt}\eta)^2\,\eta\,(1\!+\hspace{-1pt}r)\,e^{-\frac r2}
&\text{if $0\hspace{-1pt}\le\hspace{-1pt}\lambda\hspace{-1pt}\le\!1$.}\\
\end{cases}$$
Hence
$$
\|f\|_{L^p(\H)}\ge\|f\|_{L^p(B(0,\frac1{\lambda+\frac\eta2}))}
\gtrsim\begin{cases}
\,\lambda^{1-\frac2p}\,\eta
&\text{if $\lambda\hspace{-1pt}>\!1$,}\\
\hspace{1pt}(\lambda\hspace{-1pt}+\hspace{-1pt}\eta)^2\,\eta
&\text{if $0\hspace{-1pt}\le\hspace{-1pt}\lambda\hspace{-1pt}\le\!1$.}\\
\end{cases}$$
Finally, since $P_{\lambda,\eta}f=f$,
$$
\|P_{\lambda,\eta}\|_{L^2\to L^p}\ge\frac{\|f\|_{L^p}}{\|f\|_{L^2}}
\gtrsim\begin{cases}
\,\lambda^{\frac12-\frac2p}\,\eta^{\frac12}
&\text{if $\lambda\hspace{-1pt}>\!1$,}\\
\hspace{1pt}(\lambda\hspace{-1pt}+\hspace{-1pt}\eta)\,\sqrt{\eta}
&\text{if $0\hspace{-1pt}\le\hspace{-1pt}\lambda\hspace{-1pt}\le\!1$.}\\
\end{cases}$$
\end{proof}

\subsection{The Knapp example}

\begin{proposition}\label{KnappExample}
Assume that \,$\lambda\hspace{-1pt}>\!1$,
$\eta\hspace{-.5pt}\lesssim\hspace{-1.5pt}\frac1{\log\lambda}$
and \,$p\hspace{-1pt}-\hspace{-1pt}2\gtrsim\hspace{-1pt}\frac1{\log\lambda}$.
Then
$$
\|P_{\lambda,\eta}\|_{L^2\to L^p}\gtrsim\tfrac1{\sqrt{p-2}}\,
\lambda^{\frac12(\frac12-\frac1p)}\hspace{1pt}\eta^{\frac12}\hspace{.5pt}.
$$
\end{proposition}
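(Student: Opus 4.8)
We build a Knapp-type wave packet concentrated along (a segment of) the geodesic $\{\,iy:y>0\,\}$ through the origin $i$. Working in the upper half-plane model, with the Fourier transform on $\H$ recalled in Section~\ref{tools}, the plan is to take $f$ defined through its Helgason transform
$$
\widetilde f(\mu,\xi)=\mathbf{1}_{[\lambda-\eta,\lambda+\eta]}(\mu)\,\mathbf{1}_{[0,1]}(\xi)\,.
$$
Since $\H$ has no discrete spectrum (so $P_e=\mathrm{Id}$) and $\widetilde f$ is supported at $D$-frequencies in $[\lambda-\eta,\lambda+\eta]$, one has $P_{\lambda,\eta}f=f$, so it suffices to bound $\|f\|_{L^p(\H)}/\|f\|_{L^2(\H)}$ from below. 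Plancherel's formula gives at once
$$
\|f\|_{L^2(\H)}^2=\tfrac2\pi\!\int_{\lambda-\eta}^{\lambda+\eta}\!\mu^2\,d\mu\,\sim\,\lambda^2\hspace{1pt}\eta\,,
\qquad\text{whence}\qquad\|f\|_{L^2(\H)}\sim\lambda\hspace{1pt}\eta^{1/2}\,.
$$

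The core of the argument is a pointwise lower bound for $f$ on the ``tube''
$$
R=\bigl\{\,x+iy:\ \tfrac14\le x\le\tfrac34,\ \ 1\le y\le c\sqrt\lambda\,\bigr\}
$$
for a suitably small absolute constant $c$. By Fourier inversion and the formula for $E(\mu,\xi,z)$,
$$
f(x+iy)=\tfrac2\pi\!\int_{\lambda-\eta}^{\lambda+\eta}\!\mu^2\,\mathbf{C}(\mu)\,
\Bigl(\int_0^{1}\bigl(\tfrac{y}{(x-\xi)^2+y^2}\bigr)^{\frac12+i\mu}d\xi\Bigr)d\mu\,.
$$
First I would evaluate the inner integral by stationary phase in $\xi$: the phase $\mu\log\!\bigl(\tfrac{y}{(x-\xi)^2+y^2}\bigr)$ has a single nondegenerate critical point $\xi=x$, lying well inside $(0,1)$, with second derivative $\sim\mu/y^2\gtrsim\lambda/(c^2\lambda)$, which is large for $c$ small; the standard expansion then shows that the $\xi$-integral equals $(\pi/\mu)^{1/2}\,y^{1/2}\,e^{-i\mu\log y}$ times a unimodular factor, with endpoint and remainder contributions that are only an $O(c)+O(1/\lambda)$ fraction of this and whose $\mu$-dependence oscillates at frequency $O(\log\lambda)$. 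Then I would carry out the $\mu$-integral: using $|\mathbf{C}(\mu)|\sim\mu^{-1/2}$ together with the fact that $\arg\mathbf{C}(\mu)-\mu\log y$ has $\mu$-derivative $O(\log\lambda)$ on $R$, the hypothesis $\eta\lesssim1/\log\lambda$ makes the total $\mu$-phase vary by at most a small constant over the interval $[\lambda-\eta,\lambda+\eta]$ of length $\sim\eta$, so that
$$
|f(x+iy)|\,\gtrsim\,\lambda\hspace{1pt}\eta\hspace{1pt}y^{1/2}\qquad\text{for every }x+iy\in R\,.
$$

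Granting this, since the hyperbolic area element is $y^{-2}\,dx\,dy$ and $p>2$,
$$
\|f\|_{L^p(\H)}^p\ge\int_R|f|^p\,\tfrac{dx\,dy}{y^2}
\,\gtrsim\,(\lambda\hspace{1pt}\eta)^p\!\int_1^{c\sqrt\lambda}\!y^{\frac p2-2}\,dy
\,\gtrsim\,\tfrac1{p-2}\,(\lambda\hspace{1pt}\eta)^p\,\lambda^{\frac p4-\frac12}\,,
$$
the $y$-integral being governed by its upper endpoint. Dividing by $\|f\|_{L^2(\H)}\sim\lambda\hspace{1pt}\eta^{1/2}$ yields
$$
\|P_{\lambda,\eta}\|_{L^2\to L^p}\ge\frac{\|f\|_{L^p(\H)}}{\|f\|_{L^2(\H)}}
\,\gtrsim\,(p-2)^{-1/p}\,\lambda^{\frac12(\frac12-\frac1p)}\,\eta^{1/2}\,,
$$
and one concludes with the elementary inequality $(p-2)^{-1/p}\gtrsim(p-2)^{-1/2}$, valid for all $p>2$ (which is where $p-2\gtrsim1/\log\lambda$ makes the statement meaningful).

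The delicate point is the stationary-phase step and its uniformity. One must justify the asymptotics of the $\xi$-integral with error terms that are uniformly a small fraction of the main term over all of $R$, in particular in the borderline range $y\sim\sqrt\lambda$, where the $\xi$-phase becomes slowly oscillating and the absolute constant $c$ (together with the one implicit in $\eta\lesssim1/\log\lambda$) must be chosen accordingly; and all implied constants must be kept independent of $p$, so that the stated bound holds throughout $p>2$ rather than on a bounded range. By contrast, nothing has to be said about the contributions to $f$ coming from $x\notin[0,1]$ or from $y\gtrsim\sqrt\lambda$, since only a lower bound for $\|f\|_{L^p(\H)}$ is required, which is what makes the argument comparatively clean.
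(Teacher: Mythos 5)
Your proposal is correct, and it reaches the same final exponent as the paper but by a genuinely different route. The paper avoids stationary phase entirely: it restricts $z=x+iy$ to a far-field region ($1\ll x\ll y$, $\mu x\ll y^2$, $\sqrt{\lambda}\le y\le\lambda$) in which both the $\xi$-derivative and the $\mu$-derivative of the phase in $E(\mu,\xi,z)$ are small, so the $\xi$- and $\mu$-integrals just contribute their measures and $|f|\sim\lambda^{3/2}\eta\,y^{-1/2}$; the $L^p$ norm is then taken over a curved region $1<x<y^2/\lambda$. You instead concentrate near the vertical geodesic, taking $x\in[\tfrac14,\tfrac34]$, $1\le y\le c\sqrt\lambda$, so that the $\xi$-phase has a nondegenerate critical point $\xi=x$ strictly inside $[0,1]$ with $|\Phi''|\sim\mu/y^2\gtrsim c^{-2}$, and you extract the contribution by stationary phase, getting $|f|\gtrsim\lambda\eta\,y^{1/2}$; the $L^p$ norm is then taken over a straight tube. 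The two pointwise bounds agree at the overlap $y\sim\sqrt\lambda$ (both give $\sim\lambda^{5/4}\eta$), which is a nice consistency check, and your computation of $\int_1^{c\sqrt\lambda}y^{p/2-2}\,dy$ reproduces the factor $(p-2)^{-1/p}\gtrsim(p-2)^{-1/2}$ exactly as in the paper (with the constant $c^{1/2-1/p}\ge c^{1/2}$ harmless uniformly in $p>2$). What the paper's route buys is that it is elementary: no stationary phase and no $p$-uniform error control are needed. What your route buys is that it is conceptually closer to the classical Knapp example and gives the lower bound on a fixed tube through the base point; the price is that you must carry out the stationary phase with main term $(\pi/\mu)^{1/2}y^{1/2}e^{-i\mu\log y}$ and justify that the boundary and higher-order errors are uniformly $O(c)$ across $1\le y\le c\sqrt\lambda$ and across $p>2$ --- you correctly flag this as the delicate step, and it does go through for a sufficiently small absolute $c$ and a sufficiently small implied constant in $\eta\lesssim1/\log\lambda$, but it is not carried out in detail here.
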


\begin{remark}
The condition on \hspace{1pt}$p$ \hspace{1pt}is harmless,
as Proposition~\ref{KnappExample} will serve to discuss the sharpness of
the upper bound in Theorem \ref{L2LpBoundsProjectorsH}
when \hspace{1pt}$p\hspace{-1pt}\ge\hspace{-1pt}6$\hspace{.5pt}.
\end{remark}

\begin{proof}
Define $f=f_{\lambda,\eta}$ on $\H$ by its Fourier transform
$$
\widetilde{f}(\mu,\xi)=\mathbf{1}_{[\lambda-\eta,\lambda+\eta]}(\mu)\,\mathbf{1}_{[-1,1]}(\xi)\hspace{1pt}.
$$
Then, by the Plancherel formula,
\begin{equation}
\label{L2norm}
\|f\|_{L^2(\H)}
=\tfrac2\pi\,\|\hspace{.5pt}\mu\hspace{.5pt}\widetilde{f}\hspace{1pt}\|_{L^2_{\mu,\xi}}
\sim\lambda\,\eta^{\frac12}\hspace{.5pt}.
\end{equation}
By the inverse Fourier transform, we get on the physical space side
\begin{equation}
\label{equationf}
f(z)=\tfrac2\pi\int_{\lambda-\eta}^{\lambda+\eta}\!
\int_{-1}^{\hspace{.5pt}1}\!E\bigl(\mu,\xi,z)\,d\xi\,\mu^2\,d\mu\,,
\end{equation}
where the generalized eigenfunction can be rewritten
\begin{equation}\label{OscillatingExponential}
E (\mu,\xi,z)=\mathbf{C}(\mu)\,
\Bigl(\tfrac y{(x\hspace{1pt}-\hspace{1pt}\xi)^2+\hspace{1pt}y^2}\Bigr)^{\hspace{-1pt}\frac12}
e^{\hspace{.5pt}i\hspace{1pt}\mu\log\left(\frac{y}{(x-\xi)^2+y^2} \right)}\hspace{.5pt}.
\end{equation}
Let us restrict to the region
\begin{equation}\label{condition1}
1\ll x\ll y
\end{equation}
in $\H$\hspace{.5pt},
where
\begin{equation*}
\tfrac y{(x\hspace{1pt}-\hspace{1pt}\xi)^2+\hspace{1pt}y^2}\sim\tfrac1y\,,
\end{equation*}
hence
\begin{equation*}
\bigl|E\bigl(\mu,\xi,z)\bigr|\sim\mu^{-\frac12}\hspace{1pt}y^{-\frac12}\hspace{.5pt}.
\end{equation*}
We want to determine what is the range of $z$ for which the phase in \eqref{OscillatingExponential} is not oscillating over the domain of integration in~\eqref{equationf} - or to be more explicit, when the phase does not vary by more than $\ll 1$ on the domain of integration. This is the case if
\vspace{5pt}

\noindent$\bullet$
\,$\eta\left|\hspace{1pt}\partial_\mu\!\left[\hspace{.5pt}\mu
\log\left(\tfrac y{(x\hspace{1pt}-\hspace{1pt}\xi)^2+\hspace{1pt}y^2}\right)
\right]\right|\ll1$\hspace{.5pt},
which follows from 
\begin{equation}
\label{condition2}
\eta\hspace{1pt}\log y\ll1\hspace{1pt},
\end{equation}
\noindent$\bullet$
\,$\left|\hspace{1pt}\partial_\xi\!\left[\hspace{.5pt}\mu
\log\left(\tfrac y{(x\hspace{1pt}-\hspace{1pt}\xi)^2+\hspace{1pt}y^2}\right)
\right]\right|\ll1$\hspace{.5pt},
which follows from 
\begin{equation}
\label{condition3}
\mu\,x\ll y^2.
\end{equation}
Under the conditions~\eqref{condition1},~\eqref{condition2},~\eqref{condition3}, we find that
\begin{equation*}
|f(z)|\sim\int_{\lambda-\eta}^{\lambda+\eta}\!\int_{-1}^{\hspace{.5pt}1}
\bigl|E\bigl(\mu,\xi,z)\bigr|\,d\xi\,\mu^2\,d\mu
\sim\lambda^{\frac32}\hspace{1pt}y^{-\frac12}\!
\int_{\lambda-\eta}^{\lambda+\eta}\!\int_{-1}^{\hspace{.5pt}1}d\xi\,d\mu
\sim\lambda^{\frac32}\hspace{1pt}\eta\,y^{-\frac12}\hspace{1pt}.
\end{equation*}
Notice that the conditions~\eqref{condition1},~\eqref{condition2},~\eqref{condition3} imply that $y\hspace{-1pt}>\!\sqrt{\lambda\hspace{1pt}}$. Furthermore, since we are assuming $\eta\hspace{-.5pt}\lesssim\hspace{-1.5pt}\tfrac1{\log\lambda}$, we can integrate in $y$ over the interval $\bigl[\sqrt{\lambda\hspace{.5pt}},\hspace{-.5pt}\lambda\hspace{.5pt}\bigr]$ and obtain this way
\begin{align*}
\|f\|_{L^p(\H)}
&\gtrsim\lambda^{\frac32}\,\eta\,\biggl(\hspace{1pt}
\iint_{\substack{\sqrt{\lambda\hspace{1pt}}\le\hspace{1pt}y\hspace{1pt}\le\hspace{1pt}\lambda\\
1\hspace{.5pt}<\hspace{1pt}x\hspace{1pt}<\hspace{1pt}y^2\hspace{-1pt}/\hspace{-.5pt}\lambda}}
y^{-\frac p2}\,\tfrac{dx\hspace{1pt}dy}{y^2}\biggr)^{\!\frac1p}\!
\sim\lambda^{\frac32}\,\eta\,\biggl(\tfrac1\lambda
\int_{\sqrt\lambda}^{\hspace{.5pt}\lambda}\,y^{-\frac p2}\,dy\biggr)^{\!\frac1p}\\
&\sim\lambda^{\frac32}\,\eta\,\biggl(
\frac{\lambda^{-\frac12-\frac p4}\hspace{-1.5pt}-\hspace{-1pt}\lambda^{-\frac p2}}
{p\hspace{-1pt}-\hspace{-1pt}2}\biggr)^{\hspace{-1pt}\frac1p}\!
=(p\hspace{-1pt}-\hspace{-1pt}2)^{-\frac1p}\,\lambda^{\frac54-\frac1{2p}}\,\eta\,
\bigl(1\!-\hspace{-1pt}\lambda^{-\frac{p-2}4}\bigr)^{\frac1p}\hspace{.5pt},
\end{align*}
with $(p\hspace{-1pt}-\hspace{-1pt}2)^{-\frac1p}\!
\gtrsim\hspace{-1pt}(p\hspace{-1pt}-\hspace{-1pt}2)^{-\frac12}$,
so that
$$
\|f\|_{L^p}\gtrsim\tfrac1{\sqrt{p-2}}\,\lambda^{\frac54-\frac1{2p}}\,\eta
\qquad\text{if }\hspace{1pt}
(p\hspace{-1pt}-\hspace{-1pt}2)\log\lambda\hspace{-1pt}\gtrsim\!1\hspace{.5pt}.
$$
Recalling~\eqref{L2norm}, we conclude that
$$
\frac{\|f\|_{L^p}}{\|f\|_{L^2}}\gtrsim
\tfrac1{\sqrt{p-2}}\,\lambda^{\frac14-\frac1{2p}}\,\eta^{\frac12}\hspace{.5pt}.
$$
\end{proof}

\section{Upper bounds on $\mathbb{H}$} \label{upperbplane}

In the sequel we use to say that a variable,
such as the time $t$ or the distance $r$ to the origin,
is small or large depending whether it is smaller or larger to $1$.
Note that this lemma is already contained in \cite{GermainLeger2023}. However we have decided to keep this statement here since the proof will be refined so that it generalizes to convex cocompact hyperbolic surfaces with $0 \le \delta < \frac12.$ In particular the pointwise estimates obtained on kernels that appear in the proof are more precise, as we will remark below.
\begin{theorem}[high frequency]\label{L2LpBoundsProjectorsH}
The spectral projectors on \,$\H$ enjoy the following bounds for
\,$0\hspace{-1pt}<\hspace{-1pt}\eta\hspace{-1pt}<\!1\!<\hspace{-1pt}\lambda$
and \,$p\hspace{-1pt}>\hspace{-1pt}2:$
$$
\|P_{\lambda,\eta}\|_{L^2\to L^p}
\lesssim\bigl(1\!+\!\tfrac1{\sqrt{p-2}}\bigr)\hspace{1pt}
\lambda^{\gamma(p)}\,\eta^{\frac12}\hspace{1pt}.
$$
\end{theorem}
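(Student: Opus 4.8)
The plan is to follow the Stein--Tomas interpolation scheme adapted to the hyperbolic setting, which reduces the spectral projector bound to a dispersive estimate for the Schr\"odinger group. By the $TT^*$ argument of Lemma~\ref{comparison} and Proposition~\ref{EquivalentOperatorNormEstimates}, it suffices to establish the dual bound $\|P_{\lambda,\eta}\|_{L^{p'}\to L^p}\lesssim(1+\tfrac1{p-2})\,\lambda^{2\gamma(p)}\,\eta$. To get at this, I would write $P_{\lambda,\eta}=m_{\lambda,\eta}(D)$ and approximate the sharp cutoff $\mathbf 1_{[\lambda-\eta,\lambda+\eta]}$ by a smooth bump (using Lemma~\ref{comparison} to dominate it by a Schwartz function concentrated at scale $\eta$ around $\pm\lambda$), then represent $m(D)$ via the Schr\"odinger propagator: $m(D)=\int_{\R}\widehat{g}(t)\,e^{itD^2}\,dt$ for a suitable $g$ with $\widehat g$ supported (essentially) in $|t|\lesssim \eta^{-1}$. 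The operator norm is then controlled by splitting the time integral into the small-time regime $|t|\lesssim 1$ and the large-time regime $1\lesssim|t|\lesssim\eta^{-1}$.

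The two regimes use different inputs. For small times $|t|\le 1$, the hyperbolic Schr\"odinger group obeys the same Euclidean-type dispersive estimate $\|e^{itD^2}\|_{L^1\to L^\infty}\lesssim |t|^{-1}$ (dimension $2$), and interpolating with the trivial $L^2$ bound gives $\|e^{itD^2}\|_{L^{p'}\to L^p}\lesssim |t|^{-(1-\frac2p)}$; integrating the kernel representation of $m(D)$ against this, together with the frequency localization forcing the symbol to live near $\lambda$ and the measure $|\mathbf c(\mu)|^{-2}\sim\mu$, produces exactly the term $\lambda^{1-\frac2p}\eta$ (i.e.\ $\lambda^{2\gamma(p)}\eta$ for $p\ge6$) plus the Knapp-type contribution. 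For large times $1\lesssim|t|\lesssim\eta^{-1}$, one exploits the \emph{exponential} decay of the hyperbolic kernel: $\|e^{itD^2}\|_{L^1\to L^\infty}\lesssim |t|^{-3/2}$ for $|t|\ge1$ (the gain of $t^{-1/2}$ over the Euclidean rate is the dispersive improvement special to $\H$, due to the curvature and the factor $\varphi_\lambda$). This faster decay is what makes the large-time integral $\int_1^{\eta^{-1}} t^{-\frac32(1-\frac2p)}\,dt$ converge for $p>6$ (hence no $\log$ loss in that range) and, for $2<p<6$, the borderline summation is precisely where the $(1+\tfrac1{\sqrt{p-2}})$ factor appears — the exponent $\frac32(1-\frac2p)$ crosses $1$ at $p=6$, and near there the integral behaves like $(p-2)^{-1}$ before taking square roots in the final $TT^*$ step.

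Concretely the steps, in order, are: (1) reduce via $TT^*$ to the $L^{p'}\to L^p$ statement and replace the sharp projector by a smooth symbol $\chi((D-\lambda)/\eta)+\chi((D+\lambda)/\eta)$ with $\chi$ Schwartz, using Lemma~\ref{comparison}; (2) write this symbol as a superposition $\int \widehat{g_{\lambda,\eta}}(t)\,e^{itD^2}\,dt$ and estimate $\widehat{g_{\lambda,\eta}}$, noting it has rapid decay beyond $|t|\sim \eta^{-1}$ and size $\sim\eta$ near $t=0$; (3) recall/cite the dispersive bounds $\|e^{itD^2}\|_{L^1\to L^\infty}\lesssim\min(|t|^{-1},|t|^{-3/2})$ on $\H$ (from Anker--Pierfelice, etc.), interpolate with $L^2$; (4) insert into the time integral, handle $|t|\le1$ to get the radial term $\lambda^{1-\frac2p}\eta$ and $|t|\ge1$ to get the Knapp term $\lambda^{\frac12-\frac1p}\eta$, being careful that the frequency support near $\lambda$ contributes the correct power through the Plancherel density; (5) track the constant in the $|t|\ge1$ integral near $p=6$ to extract $(1+\tfrac1{p-2})$, then undo $TT^*$ to land the square root and conclude $\|P_{\lambda,\eta}\|_{L^2\to L^p}\lesssim(1+\tfrac1{\sqrt{p-2}})\lambda^{\gamma(p)}\eta^{1/2}$.

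The main obstacle I anticipate is step (4): correctly bookkeeping the frequency localization so that the $L^{p'}\to L^p$ operator norm picks up $\mu^{2\gamma(p)}$ rather than a worse power, and in particular verifying that the large-time contribution genuinely yields the Knapp exponent $\lambda^{\frac12-\frac1p}$ and not something larger. This requires a careful stationary-phase or kernel analysis of $e^{itD^2}$ acting after the narrow frequency cutoff — the naive $L^1\to L^\infty$ bound must be combined with the fact that the symbol is supported in a window of width $\eta$ around $\lambda$, and one has to see how the Harish-Chandra $\mathbf c$-function asymptotics $|\mathbf c(\mu)|^{-2}\sim\mu$ interact with the propagator kernel $K_{\mathbb H}$ written via the formula from Section~\ref{tools}. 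Getting the endpoint constant uniform as $p\downarrow 2$ (the $\frac1{\sqrt{p-2}}$) is a secondary but genuine technical point, handled by explicit evaluation of the divergent-at-$p=6$ integral rather than by abstract interpolation.
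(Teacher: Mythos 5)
Your plan — subordinate the frequency-localized projector to the Schr\"odinger propagator $e^{itD^2}$, insert the dispersive bound $\|e^{itD^2}\|_{L^1\to L^\infty}\lesssim\min(|t|^{-1},|t|^{-3/2})$, and integrate in time — does not produce the sharp exponent, and the arithmetic in your step~(4) contains an error that masks this. Writing $\chi\bigl(\tfrac{D-\lambda}{\eta}\bigr)=\int_\R\widehat{g_{\lambda,\eta}}(t)\,e^{itD^2}\,dt$, a direct computation gives $|\widehat{g_{\lambda,\eta}}(t)|\sim\lambda\eta$ on the scale $|t|\lesssim(\lambda\eta)^{-1}$. Feeding in even the \emph{improved}, frequency-localized small-time dispersive bound $\|e^{itD^2}\mathscr P_{\lambda,1}\|_{L^{p'}\to L^p}\lesssim\lambda^{1-\frac2p}(1+t\lambda)^{-N}$ (which this paper establishes in Proposition~\ref{DispersiveH}), the dominant contribution is
$$\lambda\eta\cdot\lambda^{1-\frac2p}\int_0^{1}(1+\lambda t)^{-N}\,dt\ \sim\ \lambda^{1-\frac2p}\,\eta,$$
and you then assert that $\lambda^{1-\frac2p}=\lambda^{2\gamma(p)}$ for $p\ge6$. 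But $2\gamma(p)=1-\tfrac4p$ for $p\ge6$, so your bound is off by a full factor of $\lambda^{2/p}$; the situation is no better on the undifferentiated Euclidean version of the argument, where naive subordination plus $|t|^{-n/2}$ dispersion also falls short of Stein--Tomas. The underlying issue is that a pointwise dispersive estimate integrated in time cannot ``see'' the analytic-family cancellation that Stein--Tomas needs. Relatedly, your attribution of the $(1+\tfrac1{\sqrt{p-2}})$ constant to the time integral crossing exponent $1$ near $p=6$ is also wrong: in the paper this constant degenerates only as $p\downarrow2$ (it comes from $\int_1^\infty e^{-(\frac12-\frac1p)\frac\epsilon4 x}\,dx\sim\tfrac1{p-2}$, i.e.\ from the tails of the $\mathscr Q$ kernels, not from a borderline power of $t$), while the log at $p=6$ is removed by Stein's interpolation theorem applied to an explicit analytic family.

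The paper's proof of this theorem does not use the Schr\"odinger group at all. It works directly with the radial convolution kernel via the inverse spherical/Abel transform formula $K_\H(r)=-\tfrac1{2\pi^{3/2}}\int_r^\infty\partial_s\widehat m(s)\,(\cosh s-\cosh r)^{-1/2}\,ds$, proving pointwise bounds on the kernels $p_{\lambda,\eta}$, $q_{\lambda,\eta}$, $\sigma_{\lambda,y}$ in Lemma~\ref{busard}. These give $L^1\to L^\infty$ bounds, which interpolate with $L^2\to L^2$ to give $L^{p'}\to L^p$ bounds for $\mathscr P_{\lambda,\eta}$ and $\mathscr Q_{\lambda,\eta}$. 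The Stein--Tomas mechanism is then implemented by decomposing the \emph{spectral density} $\mathbf P_\lambda=\delta_\lambda(D)$ dyadically as $\mathbf P_\lambda=2^{k_0}\mathscr P_{\lambda,2^{-k_0}}+\sum_{k>k_0}2^k\mathscr Q_{\lambda,2^{-k}}$, summing the dyadic contributions (with the exponential kernel decay controlling the $k>0$ tail, hence $(p-2)^{-1}$, and Stein interpolation at $k\le0$ to remove the $p=6$ log) to obtain the sharp $\|\mathbf P_\lambda\|_{L^{p'}\to L^p}\lesssim\lambda^{2\gamma(p)}$. The $\eta^{1/2}$ then comes for free: $P_{\lambda,\eta}=\int_{\lambda-\eta}^{\lambda+\eta}\mathbf P_\mu\,d\mu$, so $\|P_{\lambda,\eta}\|_{L^{p'}\to L^p}\lesssim\eta\,\lambda^{2\gamma(p)}$, and undoing $TT^*$ gives $\eta^{1/2}$. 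To repair your argument you would have to replicate this dyadic decomposition in frequency (or, equivalently, in the wave-time variable $s$ of the Abel formula), rather than integrating a single dispersive bound over time.
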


\begin{remark}
Notice that, in the range
\hspace{1pt}$2\hspace{-1pt}<\hspace{-1pt}p\hspace{-1pt}<\hspace{-1pt}6$\hspace{1pt},
the power \hspace{1pt}$\eta^{\frac12}$ for the hyperbolic plane $\H$ is better than the power
\hspace{1pt}$\eta^{\frac32(\frac12-\frac1p)}$ for the Euclidean plane $\mathbb{R}^2$.
\end{remark}

\begin{proof}
We estimate successively
\begin{enumerate}
\item
the spectral projectors
$$
\mathscr{P}_{\lambda,\eta}=\chi\bigl(\tfrac{D-\lambda}\eta\bigr)+\chi\bigl(\tfrac{D+\lambda}\eta\bigr)
$$
associated with an even Schwartz function $\chi$ on $\R$\hspace{1pt}, whose Fourier transform satisfies
$$
\widehat{\chi}\hspace{-.5pt}=\hspace{-1pt}1\text{ \,on \,}[-1,1\hspace{.5pt}]
\quad\text{and}
\quad\supp\widehat{\chi}\hspace{-1pt}\subset\hspace{-1pt}[-2,2\hspace{1pt}]\hspace{1pt},
$$
\item
the spectral projectors $\mathbf{P}_{\hspace{-1pt}\lambda}f=\delta_\lambda(D)f
=|\mathbf{c}(\lambda)|^{-2}\hspace{1pt}f*\varphi_\lambda$\hspace{1pt},
\item
the spectral projectors $P_{\lambda,\eta}$\hspace{1pt}.
\end{enumerate}
Notice that
$$
\|P_{\lambda,\eta}\|_{L^2\to L^p}=\|P_{\lambda,\eta}^2\|_{L^{p^{\hspace{.5pt}\prime}}\to L^p}^{1/2}
$$
according to the $TT^*$ argument.
\smallskip

\noindent
(1) In this item, we allow $0<\eta\le\lambda$\hspace{1pt}.
Consider $\psi=\chi-\tfrac12\hspace{1pt}\chi(\frac12\,\cdot\,)$,
whose Fourier transform
$\widehat{\psi}=\widehat{\chi}-\widehat{\chi}(2\,\cdot\,)$
is supported in $[-2,-\frac12]\cup[\frac12,2]$,
and the associated spectral projectors
$\mathscr{Q}_{\lambda,\eta}=\psi\bigl(\tfrac{D-\lambda}\eta\bigr)+\psi\bigl(\tfrac{D+\lambda}\eta\bigr)$.
Denote by $p_{\lambda,\eta}$ and $q_{\lambda,\eta}$
the radial convolution kernels of $\mathscr{P}_{\lambda,\eta}$ and $\mathscr{Q}_{\lambda,\eta}$.
It follows from Lemma \ref{busard} below that
\begin{equation*}\begin{cases}
\,\|\mathscr{P}_{\lambda,\eta}\|_{L^1\to L^{\hspace{.5pt}\infty}}
=\|p_{\lambda,\eta}\|_{L^{\hspace{.5pt}\infty}}
\lesssim\lambda\,\eta,\\
\,\|\mathscr{Q}_{\lambda,\eta}\|_{L^1\to L^{\hspace{.5pt}\infty}}
=\|q_{\lambda,\eta}\|_{L^{\hspace{.5pt}\infty}}
\lesssim\lambda^{\frac12}\,\eta\,(1\!+\hspace{-1pt}\eta)^{\frac12}\,e^{-\frac\epsilon{8\eta}},
\end{cases}\end{equation*}
for any fixed $0<\epsilon<1$.

By interpolation with the trivial estimates
\begin{equation*}\begin{cases}
\,\|\mathscr{P}_{\lambda,\eta}\|_{L^2\to L^2}\lesssim1,\\
\,\|\mathscr{Q}_{\lambda,\eta}\|_{L^2\to L^2}\lesssim1,
\end{cases}\end{equation*}
we deduce that
\begin{equation}\label{FirstEstimateLpprimeLp}\begin{cases}
\,\|\mathscr{P}_{\lambda,\eta}\|_{L^{p^{\hspace{.5pt}\prime}}\to L^p}
\lesssim\lambda^{1-\frac2p}\hspace{1pt}\eta^{1-\frac2p},\\
\,\|\mathscr{Q}_{\lambda,\eta}\|_{L^{p^{\hspace{.5pt}\prime}}\to L^p}
\lesssim\lambda^{\frac12-\frac1p}\hspace{1pt}
\eta^{1-\frac2p}\hspace{1pt}(1\!+\hspace{-1pt}\eta)^{\frac12-\frac1p}\hspace{1pt}
e^{-(\frac12-\frac1p)\frac\epsilon{4\eta}}.
\end{cases}\end{equation}
(2) The key idea consists in writing
\begin{equation*}
\delta_{\pm\lambda}(\xi\hspace{-1pt}\mp\hspace{-1pt}\lambda)
=2^{k_0}\hspace{1pt}\chi\bigl(2^{k_0}(\xi\hspace{-1pt}\mp\hspace{-1pt}\lambda)\bigr)
+\sum\nolimits_{k>k_0}2^k\hspace{1pt}\psi\bigl(2^k(\xi\hspace{-1pt}\mp\hspace{-1pt}\lambda)\bigr)\,,
\end{equation*}
hence
\begin{equation}\label{KeyFormula}
\mathbf{P}_{\hspace{-1pt}\lambda}=2^{k_0}\mathscr{P}_{\lambda,2^{-k_0}}
+\sum\nolimits_{k>k_0}2^k\,\mathscr{Q}_{\lambda,2^{-k}}\,,
\end{equation}
where $k_0=-\lceil\tfrac{\log\lambda}{\log 2}\rceil$, i.e.,
$-k_0$ is the smallest positive integer such that $2^{-k_0}\ge\lambda$ (\footnote{\,Hence \,$\lambda\sim2^{-k_0}$.}).
Then
\begin{align*}
\|\hspace{1pt}\mathbf{P}_{\hspace{-1pt}\lambda}\|_{L^{p^{\hspace{.5pt}\prime}}\to L^p}
&\le2^{k_0}\|\mathscr{P}_{\lambda,2^{-k_0}}\|_{L^{p^{\hspace{.5pt}\prime}}\to L^p}\\
&+\|\underbrace{\sum\nolimits_{k_0<k\le0}2^k\,\mathscr{Q}_{\lambda,2^{-k}}}_{\Sigma_0}\|_{L^{p^{\hspace{.5pt}\prime}}\to L^p}
+\|\underbrace{\sum\nolimits_{k>0}2^k\,\mathscr{Q}_{\lambda,2^{-k}}}_{\Sigma_\infty}\|_{L^{p^{\hspace{.5pt}\prime}}\to L^p}\,,
\end{align*}
where, according to \eqref{FirstEstimateLpprimeLp},
\begin{gather}
2^{k_0}\hspace{1pt}\|\mathscr{P}_{\lambda,2^{-k_0}}\|_{L^{p^{\hspace{.5pt}\prime}}\to L^p}
\lesssim\lambda^{1-\frac2p}\hspace{1pt}2^{\frac2pk_0}\sim\lambda^{1-\frac4p},
\label{FirstTerm}\\\begin{aligned}
\|\Sigma_0\|_{L^{p^{\hspace{.5pt}\prime}}\to L^p}
&\le\sum\nolimits_{k_0<k\le0}\overbrace{
2^k\,\|\mathscr{Q}_{\lambda,2^{-k}}\|_{L^{p^{\hspace{.5pt}\prime}}\to L^p}
}^{\lambda^{\frac 1 2\hspace{.2mm}- \frac 1 p}\,2^{\hspace{.2mm}3\hspace{.2mm}(\frac 1 p\hspace{.2mm}- \frac 1 6)k}}\\
&\lesssim\begin{cases}
\,\lambda^{\frac12-\frac1p}
&\text{if \,}2<p<6,\\
\,\lambda^{\frac13}\hspace{1pt}(-k_0)\sim
(1+\log\lambda)\,\lambda^{\frac13}
&\text{if \,}p=6,\\
\,\lambda^{\frac12-\frac1p}\,2^{\hspace{.2mm}3\hspace{.2mm}(\frac1p-\frac16)k_0}
\sim\lambda^{1-\frac4p}
&\text{if \,}p>6,\\
\end{cases}\end{aligned}\label{SigmaZero}
\end{gather}
and, finally,
\begin{equation}
\begin{split}
\|\Sigma_\infty\|_{L^{p^{\hspace{.5pt}\prime}}\to L^p}
& \le\sum\nolimits_{k>0}\underbrace{2^k\,\|\mathscr{Q}_{\lambda,2^{-k}}\|_{L^{p^{\hspace{.5pt}\prime}}\to L^p}
}_{\lambda^{\frac 1 2\hspace{.5pt}- \frac 1 p}\,2^{\hspace{.5pt}2k/p}\hspace{1pt}e^{-(\frac 1 2 \hspace{.5pt}- \frac 1 p)\hspace{.5pt}\epsilon\hspace{.5pt}2^{k-2}}} \\
&\lesssim\lambda^{\frac12-\frac1p}\sum\nolimits_{k>0}2^k\hspace{1pt}e^{-(\frac12-\frac1p)\frac\epsilon42^k}\\
&\lesssim\lambda^{\frac12-\frac1p}\!\int_1^{\hspace{.5pt}\infty}\!e^{-(\frac12-\frac1p)\frac\epsilon4x}\,dx
\lesssim(p\hspace{-.5pt}-\hspace{-1pt}2)^{-1}\lambda^{\frac12-\frac1p}\,.
\end{split}
\label{SigmaInfinity}
\end{equation}
Hence
\begin{equation*}
\|\hspace{1pt}\mathbf{P}_{\hspace{-1pt}\lambda}\|_{L^{p^{\hspace{.5pt}\prime}}\to L^p}\lesssim\begin{cases}
\frac1{p-2}\,\lambda^{\frac12-\frac1p}
&\text{if \,}2<p<6,\\
(1+\log\lambda)\,\lambda^{\frac13}
&\text{if \,}p=6,\\
\lambda^{1-\frac4p}
&\text{if \,}p>6.
\end{cases}\end{equation*}

Finally we may remove the logarithmic factor in the limit case $p=6$
by considering the analytic family of operators
\begin{equation}\label{AnalyticFamily}
R_{\lambda,z}
=\bigl(2^z-\tfrac12\bigr)
\sum\nolimits_{k_0<k\le0}2^{(z+1)k}\,\mathscr{Q}_{\lambda,2^{-k}}
\end{equation}
in the vertical strip $-1\le\Re z\le\frac12$ and by applying Stein's interpolation theorem
(see for instance \cite[Ch.~5, Theorem~4.1]{SteinWeiss1971}).
Let us elaborate\,:

\noindent$\bullet$
\,$R_{\lambda,z}$ is \,$i\tfrac{2\pi}{\log 2}\Z$\,--\,periodic in $z$.

\noindent$\bullet$ If $z =0$, $R_{\lambda,0} = \frac12 \Sigma_0$.

\noindent$\bullet$
\,If $\Re z=-1$, the $L^2\to L^2$ norm of $R_{\lambda,z}$ is controlled by the $L^{\hspace{.5pt}\infty}$ norm of
\begin{equation*}
\theta_{k_0,\hspace{1pt}\Im\hspace{-1pt}z}(\mu)
=\bigl(2^{\hspace{1pt}i\Im\hspace{-1pt}z}-1\bigr)\sum\nolimits_{k_0<k\le0}
2^{\hspace{1pt}i\hspace{1pt}(\Im\hspace{-1pt}z)\hspace{1pt}k}\,\psi(2^k\mu),
\end{equation*}
which is a smooth even function $\R$, with
\begin{align*}
\theta_{k_0,\hspace{1pt}\Im\hspace{-1pt}z}(\mu)
&=2^{\hspace{1pt}i\Im z}\,\psi(\mu)
-2^{\hspace{1pt}i\hspace{1pt}(\Im\hspace{-1pt}z)(k_0+1)}\,\psi(2^{k_0+1}\mu)\\
&-\sum\nolimits_{k_0+1<k\le0}2^{\hspace{1pt}i\hspace{1pt}(\Im\hspace{-1pt}z)\hspace{1pt}k}\,
\underbrace{\bigl[\psi(2^k\mu)-\psi(2^{k-1}\mu)\bigr]}_{\int_{2^{k-1}\mu}^{2^k\mu}\psi'(\nu)\,d\nu}
\end{align*}
if $\mu>0$ and
\begin{equation*}
\theta_{k_0,\hspace{1pt}\Im\hspace{-1pt}z}(0)
=\bigl[2^{\hspace{1pt}i\Im\hspace{-1pt}z}
-2^{\hspace{1pt}i\hspace{1pt}(\Im\hspace{-1pt}z)\hspace{1pt}k_0}\bigr]
\,\psi(0),
\end{equation*}
hence
\begin{equation*}
\|\theta_{k_0,\hspace{1pt}\Im\hspace{-1pt}z}\|_{L^{\hspace{.5pt}\infty}}\le2\,\|\psi\|_{L^{\hspace{.5pt}\infty}}+\|\psi'\|_{L^1}.
\end{equation*}

\noindent$\bullet$
If $\Re z=\frac12$, the $L^1\to L^{\hspace{.5pt}\infty}$ norm of $R_{\lambda,z}$ is controlled by the $L^{\hspace{.5pt}\infty}$ norm of
\begin{equation*}
\sigma_{\lambda,\hspace{1pt}\Im\hspace{-1pt}z}
=\sum\nolimits_{k_0<k\le0}
2^{\hspace{1pt}(\frac32+i\Im\hspace{-1pt}z)\hspace{1pt}k}\,q_{\lambda,2^{-k}}
\end{equation*}
which is $\text{O}\bigl(\sqrt{\lambda}\,\bigr)$,
according to Lemma \ref{busard}.

\medskip

In conclusion,
\begin{equation*}
\|\Sigma_0\|_{L^{p^{\hspace{.5pt}\prime}}\to L^p}=2\,\|R_{\lambda,0}\|_{L^{p^{\hspace{.5pt}\prime}}\to L^p}\lesssim\lambda^{\frac13}.
\end{equation*}

\medskip

\noindent (3) follows from (2).
By using Minkowsky's integral inequality, we deduce indeed from
\begin{equation*}
P_{\lambda,\eta}=\int_{\lambda-\eta}^{\lambda+\eta}\mathbf{P}_{\hspace{-1pt}\mu}\,d\mu
\end{equation*}
that
\begin{equation*}
\bigl\|P_{\lambda,\eta}\bigr\|_{L^{p^{\hspace{.5pt}\prime}}\to L^p}
\le\int_{\lambda-\eta}^{\lambda+\eta}\bigl\|\hspace{1pt}\mathbf{P}_{\hspace{-1pt}\mu}
\bigr\|_{L^{p^{\hspace{.5pt}\prime}}\to L^p}\hspace{1pt}d\mu
\lesssim\eta\times\begin{cases}
\,\lambda^{\frac12-\frac2p}
&\text{if \,}p\ge6\\
\hspace{1pt}\frac1{p-2}\,\lambda^{\frac12(\frac12-\frac1p)}
&\text{if \,}2<p\le6
\end{cases}\end{equation*}
for $\lambda$ large, say \hspace{1pt}$\lambda\hspace{-1pt}>\hspace{-2pt}1$\hspace{.5pt},
and \hspace{1pt}$0\hspace{-1pt}<\hspace{-1pt}\eta\hspace{-1pt}\le\hspace{-2pt}\tfrac\lambda4$\hspace{-.5pt}.
\end{proof}
\begin{lemma}[Pointwise kernel bounds]\label{busard}
The following estimates hold for $\lambda>1$, $0<\eta\le\lambda$, $r\ge0$, $0<\epsilon<1$ and $y\in\R:$
\begin{align}
\,\bigl|p_{\lambda,\eta}(r)\bigr|
&\lesssim\begin{cases}
\,\lambda\,\eta&\text{if \,$r$ is small, say \,$r\le1$,}\\
\,\lambda^{\frac12}\,\eta\,e^{-\frac r2}&\text{if \,$r$ is large, say \,$r>1$,}\\
\end{cases}\label{KernelEstimatePtildelambdaeta}\\
\,\bigl|q_{\lambda,\eta}(r)\bigr|
&\lesssim_\epsilon\,\lambda^{\frac12}\,\eta\,(1\!+\hspace{-1pt}\eta)^{\frac12}\,
e^{-\frac\epsilon{8\eta}}\hspace{1pt}e^{-\frac{1-\epsilon}2\hspace{.5pt}r},
\label{KernelEstimateQlambdaeta}\\
\,\bigl|\sigma_{\lambda,y}(r)\bigr|
&\lesssim\begin{cases}
\,\lambda^{\frac12}
&\text{if \,}r\le2,\\
\,0&\text{if \,}r>2.
\end{cases}\label{KernelEstimateSigmalambday}
\end{align}
\end{lemma}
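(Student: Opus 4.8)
The kernels $p_{\lambda,\eta}$, $q_{\lambda,\eta}$ and $\sigma_{\lambda,y}$ are the radial convolution kernels on $\H$ of the even Fourier multipliers $\mathscr{P}_{\lambda,\eta}$, $\mathscr{Q}_{\lambda,\eta}$ and $\sum_{k_0<k\le0}2^{(3/2+iy)k}\mathscr{Q}_{\lambda,2^{-k}}$, so they are all instances of the general formula
$$
K(r)=-\tfrac1{2\pi^{3/2}}\int_r^\infty\partial_s\widehat{m}(s)\,(\cosh s-\cosh r)^{-1/2}\,ds .
$$
The plan is to first compute $\widehat m$ explicitly. Since $\chi$ and $\psi$ are even, a scaling‑and‑modulation computation gives $\widehat m(s)=2\eta\cos(\lambda s)\,\widehat\chi(\eta s)$ for $\mathscr P_{\lambda,\eta}$ and $\widehat m(s)=2\eta\cos(\lambda s)\,\widehat\psi(\eta s)$ for $\mathscr Q_{\lambda,\eta}$ (up to a fixed normalization constant), and summing the latter over scales, $\widehat m(s)=2\cos(\lambda s)\,\Theta(s)$ with $\Theta(s)=\sum_{k_0<k\le0}2^{(1/2+iy)k}\widehat\psi(2^{-k}s)$. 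I would then record the support information: $\supp\widehat\chi\subset[-2,2]$; $\supp\widehat\psi\subset[-2,-\tfrac12]\cup[\tfrac12,2]$, so $\supp\widehat\psi(\eta\,\cdot)$ is the annulus $\{\tfrac1{2\eta}\le|s|\le\tfrac2\eta\}$; and, since the dyadic pieces of $\Theta$ overlap boundedly, $\supp\Theta\subset\{\lambda^{-1}\lesssim|s|\le2\}$ with $|\Theta^{(j)}(s)|\lesssim|s|^{1/2-j}$. Differentiating, the dominant term of $\partial_s\widehat m$ has size $\lesssim\lambda$ times the relevant amplitude ($\eta$ for $p,q$; $|\Theta|\lesssim|s|^{1/2}$ for $\sigma$), while the remainder has size $\lesssim\eta$ times the amplitude and is carried by $\widehat\chi'$, $\widehat\psi'$ (resp.\ $\Theta'$), hence is supported away from $s=0$ (for $\widehat\chi'$, in $|s|\ge\eta^{-1}$); moreover $\partial_s\widehat m$ is odd, so it vanishes to first order at $s=0$.

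The second ingredient is a short list of elementary bounds for $b_r(s):=(\cosh s-\cosh r)^{-1/2}$ on $\{s\ge r\ge0\}$, all coming from $\cosh s-\cosh r=2\sinh\tfrac{s+r}2\sinh\tfrac{s-r}2$: (i) $b_r(s)\lesssim(\sinh s)^{-1/2}(s-r)^{-1/2}$; (ii) if $s\ge r\ge1$ and $s-r\gtrsim1$ then $\cosh s-\cosh r\gtrsim e^{s}$, hence $b_r(s)\lesssim e^{-s/2}\le e^{-(1-\epsilon)r/2}e^{-\epsilon s/2}$; (iii) $b_r'(s)=-\tfrac12(\cosh s-\cosh r)^{-3/2}\sinh s$ obeys the analogues of (i)--(ii) with one extra power of $(s-r)^{-1}$ near the diagonal; (iv) for $r\le s\le2$ one has $\cosh s-\cosh r\sim s^2-r^2$, hence $b_r(s)\sim(s-r)^{-1/2}(s+r)^{-1/2}$. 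These make $b_r$ and $b_r'$ integrable near $s=r$ for $r>0$; the residual logarithmic divergence at $r=0$ is harmless because there $\partial_s\widehat m$ either vanishes (case $\mathscr P$) or is supported away from the origin (cases $\mathscr Q$, $\sigma$).

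The estimate for each kernel then comes from splitting $\int_r^\infty=\int_r^{r+\delta}+\int_{r+\delta}^\infty$ with $\delta=\lambda^{-1}$ (truncated to the support of $\partial_s\widehat m$). On the near‑diagonal piece one bounds directly: size $\lesssim\lambda$ times amplitude, against $\int_r^{r+\delta}b_r\lesssim\delta^{1/2}\times(\text{exp.\ factor from (i)--(ii)})$, so this piece is $\lesssim\lambda^{1/2}\cdot(\text{ampl.})\cdot(\text{exp.\ factor})$. On the far piece one integrates by parts once, writing $\sin(\lambda s)=-\lambda^{-1}\partial_s\cos(\lambda s)$ in the main term, which cancels the $\lambda$; the boundary term at $s=r+\delta$ is $\lesssim(\text{ampl.})\cdot b_r(r+\delta)\lesssim(\text{ampl.})\cdot\lambda^{1/2}\cdot(\text{exp.\ factor})$, the boundary term at the upper end of the support vanishes because $\widehat\chi(\pm2)=\widehat\psi(\pm2)=0$, and the new integral (where $b_r'$ decays like $(s-r)^{-3/2}$ near the diagonal and $e^{-s/2}$ at infinity, and the $\widehat\chi'$‑part is supported in $|s|\ge\eta^{-1}$ where $b_r$ is already small) is again $\lesssim\lambda^{1/2}(\cdots)$. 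Reading off the bookkeeping: amplitude $\eta$ yields $\lambda^{1/2}\eta$ for both $p$ and $q$; for $\mathscr Q_{\lambda,\eta}$ the annular constraint $|s|\ge\tfrac1{2\eta}$ forces $s\ge\max(r,\tfrac1{2\eta})\ge\tfrac12 r+\tfrac1{4\eta}$, which with (ii) produces $e^{-(1-\epsilon)r/2}e^{-\epsilon/(8\eta)}$, while the length $\sim\eta^{-1}$ of the $s$‑support in the regime $\eta\gtrsim1$ — where $s\sim\eta^{-1}$ and $(\sinh s)^{-1/2},(s+r)^{-1/2}\sim\eta^{1/2}$ by (iv) — produces the factor $(1+\eta)^{1/2}$. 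For $p_{\lambda,\eta}$ with $r\le1$ I would additionally use the first‑order vanishing of $\partial_s\widehat m$ at $0$ on $[0,\lambda^{-1}]$ to absorb the $b_0(s)\sim s^{-1}$ singularity and the support $|s|\ge\eta^{-1}$ of $\widehat\chi'$ (where $b_r$ is small by (ii)) to dispose of the remainder term; oscillation is needed here only to avoid a spurious $\log\lambda$ in the region where $b_r(s)\sim s^{-1}$, and the outcome is $\lesssim\lambda\eta$. For $\sigma_{\lambda,y}$: $\widehat m$ vanishes for $|s|>2$, so $\sigma_{\lambda,y}(r)=0$ when $r>2$; for $r\le2$ the same near/far split with $|\Theta|\lesssim|s|^{1/2}$, $|\Theta'|\lesssim|s|^{-1/2}$, and the support lower bound $|s|\gtrsim\lambda^{-1}$ turn the $|s|^{-1/2}$‑type integrals into an $O(\lambda^{1/2})$ contribution, which is the claimed bound.

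The main obstacle is reconciling the apparently non‑integrable diagonal singularity of $(\cosh s-\cosh r)^{-1/2}$ with the oscillation: one cannot integrate by parts across $s=r$, so the cut $\delta=\lambda^{-1}$ must be tuned so that the near‑diagonal error ($\propto\lambda\delta^{1/2}$) balances the integration‑by‑parts boundary term ($\propto\delta^{-1/2}$). Getting the sharp power $\lambda^{1/2}$ (and not $\lambda$) uniformly down to $r=0$ and for all $0<\eta\le\lambda$ rests on this balance together with the careful exploitation of the first‑order vanishing of $\partial_s\widehat m$ at the origin. A second delicate point, specific to $q_{\lambda,\eta}$, is the anisotropic splitting of the single factor $e^{-s/2}$ of $b_r$ into the $r$‑decay $e^{-(1-\epsilon)r/2}$ and the $\eta$‑gain $e^{-\epsilon/(8\eta)}$, dictated by $s\ge\max(r,\tfrac1{2\eta})$, together with the separate treatment of $\eta\le1$ and $\eta>1$ (in the latter the $s$‑support shrinks to a short interval near $\eta^{-1}$, which is where the $(1+\eta)^{1/2}$ originates).
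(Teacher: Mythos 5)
Your proposal is correct and follows essentially the same strategy as the paper: explicit formula for $\widehat m$, the geometric bounds on $(\cosh s-\cosh r)^{-1/2}$ coming from $\cosh s-\cosh r=2\sinh\frac{s+r}{2}\sinh\frac{s-r}{2}$, a near/far split at scale $\lambda^{-1}$ with a direct bound near the diagonal and one integration by parts on the far piece, the first-order vanishing of $\partial_s\widehat m$ at the origin, and the support bookkeeping for $\widehat\psi$ and $\vartheta_{k_0}$. The one technical point worth flagging in your sketch is the near-diagonal cut for $r\le1$: a fixed cut at $r+\lambda^{-1}$ forces you to switch between $|\partial_s\widehat m|\lesssim\lambda^2\eta s$ and $|\partial_s\widehat m|\lesssim\lambda\eta$ according as $s\lessgtr\lambda^{-1}$ (using only the first-order-vanishing bound on all of $[r,r+\lambda^{-1}]$ would give the spurious $\lambda^{3/2}\eta$ for $r$ near $1$), whereas the paper sidesteps the case distinction by cutting at $\sqrt{r^2+\lambda^{-2}}$, so that $\lambda^2\eta\int_r^{\sqrt{r^2+\lambda^{-2}}}s\,(s^2-r^2)^{-1/2}\,ds=\lambda\eta$ is a clean one-liner for all $r\le1$.
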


\begin{remark}
As we mentioned above, the estimates obtained in this lemma are more precise than in \cite{GermainLeger2023} (compare with Lemma 4.1 of that paper). Indeed here we carefully track the dependence on $r$ in the estimates. This will allow us to extend our results to the case of convex cocompact hyperbolic surfaces with $0 \le \delta < \frac12.$
\end{remark}


\begin{proof}
According to \cite[Section 2]{Koornwinder1984},
the kernels are given by
\begin{align}
p_{\lambda,\eta}(r)&=-\,\pi^{-\frac32}\hspace{1pt}\eta\int_r^{\hspace{.5pt}\infty}
\tfrac\partial{\partial s}
\bigl[\cos(\lambda\hspace{.5pt}s)\hspace{1pt}\widehat{\chi}(\eta\hspace{.5pt}s)\bigr]
(\cosh s-\cosh r)^{-\frac12}\,ds,\label{Kernelptildelambdaeta}\\
q_{\lambda,\eta}(r)&=-\,\pi^{-\frac32}\hspace{1pt}\eta\int_r^{\hspace{.5pt}\infty}
\tfrac\partial{\partial s}
\bigl[\cos(\lambda\hspace{.5pt}s)\hspace{1pt}\widehat{\psi}(\eta\hspace{.5pt}s)\bigr]
(\cosh s-\cosh r)^{-\frac12}\,ds,\label{Kernelqlambdaeta}\\
\sigma_{\lambda,y}(r)&=-\,\pi^{-\frac32}\int_r^{\hspace{.5pt}\infty}
\tfrac\partial{\partial s}
\bigl[\cos(\lambda\hspace{.5pt}s)\,\vartheta_{k_0}(s)\bigr]
(\cosh s-\cosh r)^{-\frac12}\,ds,\label{Kernelsigmalambday}
\end{align}
where
$$
\vartheta_{k_0}(s)=\sum\nolimits_{k_0<k\le0}2^{(\frac12+iy)k}\,\widehat{\psi}(2^{-k}s)
$$
is a smooth twist of $s^{\frac12+iy}\hspace{1pt}\mathbf{1}_{[\frac1\lambda,1]}(s)$.
We shall use repeatedly the following behavior, for $0\le r<s$\,:
\begin{equation}\label{AuxiliaryEstimate}\begin{aligned}
\cosh s-\cosh r&=2\sinh\tfrac{s+r}2\sinh\tfrac{s-r}2\\
&\sim\begin{cases}
\,e^{\hspace{1pt}s}&\text{if $s-r$ is large,}\\
\,(s-r)\,e^{\hspace{1pt}s}&\text{if $r$ is large and $s-r$ is small,}\\
\,s^2-r^2&\text{if $r$ and $s$ are both small.}\\
\end{cases}\end{aligned}\end{equation}

\medskip

\noindent\underline{Estimate of \eqref{Kernelptildelambdaeta}},
which vanishes if \hspace{1pt}$r\hspace{-2pt}\ge\hspace{-2pt}\tfrac2\eta$\hspace{1pt}.
If $r$ is large, say \hspace{1pt}$r\hspace{-2pt}>\hspace{-2pt}1$\hspace{1pt},
\eqref{Kernelptildelambdaeta} is estimated by splitting up
$$
\int_r^{\hspace{.5pt}\infty}=\int_r^{\hspace{1pt}r+\lambda^{-1}}+\int_{r+\lambda^{-1}}^{\hspace{1pt}r+1}+\int_{r+1}^{\hspace{.5pt}\infty}.
$$
More precisely, by using
\begin{equation}\label{EstimateDerivative}
\tfrac\partial{\partial s}
\bigl[\cos(\lambda\hspace{.5pt}s)\hspace{1pt}\widehat{\chi}(\eta\hspace{.5pt}s)\bigr]
=\text{O}(\lambda),
\end{equation}
the contribution of the first integral is bounded by
$$
\lambda\hspace{1pt}\eta
\int_r^{\hspace{1pt}r+\lambda^{-1}}(s-r)^{-\frac12}\hspace{1pt}e^{-\frac s2}\,ds
\lesssim\lambda^{\frac12}\hspace{1pt}\eta\,e^{-\frac r2}.
$$
On the other hand, after performing an integration by parts, 
the contributions of the second and third integrals are bounded by
$$
\eta\,(s-r)^{-\frac12}\hspace{1pt}e^{-\frac s2}\,\Big|_{s=r+\lambda^{-1}}
+\eta\,\int_{r+\lambda^{-1}}^{\hspace{1pt}r+1}(s-r)^{-\frac32}\hspace{1pt}e^{-\frac s2}\hspace{1pt}ds
+\eta\,\int_{r+1}^{\hspace{.5pt}\infty} e^{-\frac s2}\hspace{1pt}ds
\lesssim\lambda^{\frac12}\hspace{1pt}\eta\,e^{-\frac r2}\hspace{1pt}.
$$
If $r$ is small, say $r\le1$, \eqref{Kernelptildelambdaeta} is estimated by splitting up
$$
\int_r^{\hspace{.5pt}\infty}=\int_r^{\sqrt{r^2+\lambda^{-2}}}+\int_{\sqrt{r^2+\lambda^{-2}}}^{\hspace{1pt}r+1}+\int_{r+1}^{\hspace{.5pt}\infty}.
$$
More precisely, by using this time
$$
\tfrac\partial{\partial s}\bigl[\cos(\lambda s)\hspace{1pt}\widehat{\chi}(\eta\hspace{1pt}s)\bigr]
=\text{O}(\lambda^2s),
$$
the contribution of the first integral is bounded by
$$
\lambda^2\eta\int_r^{\sqrt{r^2+\lambda^{-2}}}(s^2\!-r^2)^{-\frac12}s\,ds
=\lambda^2\eta\sqrt{s^2-r^2}\,\Big|_{s=r}^{s=\sqrt{r^2+\lambda^{-2}}}
=\lambda\hspace{1pt}\eta.
$$
On the other hand, after an integration by parts,
the contributions of the second and third integrals are bounded by
$$
-\,\eta\,(s^2\!-r^2)^{-\frac12}\hspace{1pt}\Big|_{s=\sqrt{r^2+\lambda^{-2}}}
+\eta\,\int_{\sqrt{r^2+\lambda^{-2}}}^{\hspace{1pt}r+1}(s^2\!-r^2)^{-\frac32}s\,ds
+\eta\,\int_{r+1}^{\hspace{.5pt}\infty} e^{-\frac s2}ds
\lesssim\lambda\hspace{1pt}\eta.
$$
This concludes the proof of \eqref{KernelEstimatePtildelambdaeta}.

\medskip

\noindent\underline{Estimate of \eqref{Kernelqlambdaeta},} which vanishes again if \hspace{1pt}$r\hspace{-2pt}\ge\hspace{-2pt}\tfrac2\eta$\hspace{1pt}.
Assume first that \hspace{1pt}$1\hspace{-2pt}\le\hspace{-1pt}\eta\hspace{-1pt}\le\hspace{-1pt}\lambda$\hspace{2pt}.
If \hspace{1pt}$r\hspace{-1pt}\le\hspace{-1pt}\tfrac1{4\eta}$\hspace{1pt},
\eqref{Kernelqlambdaeta} becomes, after an integration by parts,
\begin{equation}\label{Kernelqlambdaetabis}
q_{\lambda,\eta}(r)=\pi^{-\frac32}\eta\int_{\frac1{2\eta}}^{\frac2\eta}
\cos(\lambda\hspace{.5pt}s)\,\widehat{\psi}(\eta\hspace{.5pt}s)\,
\tfrac\partial{\partial s}(\cosh s-\cosh r)^{-\frac12}\,ds,
\end{equation}
with
\begin{equation*}
-\tfrac\partial{\partial s}(\cosh s-\cosh r)^{-\frac12}
=\tfrac12(\cosh s-\cosh r)^{-\frac32}\sinh s
\sim s\,(s^2\!-r^2)^{-\frac32}
\end{equation*}
under the present assumptions.
Hence
\begin{equation*}
|q_{\lambda,\eta}(r)|
\lesssim\eta\int_{\frac1{2\eta}}^{\frac2\eta}(s^2\!-r^2)^{-\frac32}\,s\,ds
\lesssim\eta^2
\le\lambda^{\frac12}\,\eta^{\frac32}.
\end{equation*}
If \hspace{1pt}$\tfrac1{4\eta}\hspace{-2pt}<\hspace{-1pt}r\hspace{-1pt}<\hspace{-2pt}\tfrac2\eta$\hspace{1pt},
we split up
\begin{equation*}
\int_r^{\frac3\eta}
=\,\int_r^{\hspace{1pt}r+\lambda^{-1}}\hspace{-2pt}
+\,\int_{r+\lambda^{-1}}^{\frac3\eta}
\end{equation*}
in \eqref{Kernelqlambdaeta} and use \eqref{EstimateDerivative}, together with the behavior
\begin{equation*}
\cosh s-\cosh r\sim s^2\!-r^2\sim\eta^{-1}(s-r)
\end{equation*}
under the present assumptions.
On the one hand, the contribution of the first integral is bounded by
\begin{equation*}
\lambda\,\eta^{\frac32}\int_r^{\hspace{1pt}r+\lambda^{-1}}\hspace{-1pt}(s-r)^{-\frac12}\hspace{1pt}ds
\lesssim\lambda^{\frac12}\,\eta^{\frac32}.
\end{equation*}
On the other hand, after an integration by parts, the second integral yields the sum
\begin{gather*}
-\,\pi^{\frac32}\eta\,\cos(\lambda s)\hspace{1pt}\widehat{\psi}(\eta\hspace{1pt}s)\hspace{1pt}
(\cosh s-\cosh r)^{-\frac12}\hspace{1pt}\Big|_{s=r+\lambda^{-1}}\\
-\,\tfrac12\hspace{1pt}\pi^{\frac32}\eta\int_{r+\lambda^{-1}}^{\frac3\eta}
\cos(\lambda s)\hspace{1pt}\widehat{\psi}(\eta\hspace{1pt}s)\hspace{1pt}(\cosh s-\sinh r)^{-\frac 32}\sinh s\,ds,
\end{gather*}
which is $\text{O}\bigl(\lambda^{\frac12}\hspace{1pt}\eta^{\frac32}\bigr)$ too.
This concludes the proof of \eqref{KernelEstimateQlambdaeta}
when \hspace{1pt}$1\hspace{-2pt}\le\hspace{-1pt}\eta\hspace{-1pt}\le\hspace{-1pt}\lambda$\hspace{1pt}.
Assume next that \hspace{1pt}$\eta\hspace{-1pt}<\hspace{-2pt}1$\hspace{1pt}.
If \hspace{1pt}$r\hspace{-1pt}\le\hspace{-1pt}\tfrac1{4\eta}$\hspace{1pt},
we use again the expression \eqref{Kernelqlambdaetabis} with, this time,
\begin{equation*}
-\hspace{1pt}\tfrac\partial{\partial s}\hspace{1pt}(\cosh s\hspace{-1pt}-\hspace{-1pt}\cosh r)^{-\frac12}
=\tfrac12\hspace{1pt}(\cosh s\hspace{-1pt}-\hspace{-1pt}\cosh r)^{-\frac32}\sinh s
\sim e^{-\frac s2}\,.
\end{equation*}
Hence
\begin{equation*}
|q_{\lambda,\eta}(r)|
\lesssim\eta\int_{\frac1{2\eta}}^{\frac2\eta}e^{-\frac s2}\,ds
\lesssim\eta\,e^{-\frac1{4\eta}}
\lesssim\eta\,e^{-\frac1{8\eta}}\hspace{1pt}e^{-\frac r2}\,.
\end{equation*}
If $\tfrac1{4\eta}\hspace{-1pt}<\hspace{-1pt}r\hspace{-1pt}<\hspace{-1pt}\tfrac2{\eta}$\hspace{1pt},
we split up
\begin{equation*}
\int_r^{\frac3\eta}=\,\int_r^{\hspace{1pt}r+\lambda^{-1}}\hspace{-2pt}
+\,\int_{r+\lambda^{-1}}^{\hspace{1pt}r+1}+\,\int_{r+1}^{\frac3\eta}.
\end{equation*}
in \eqref{Kernelqlambdaeta}.
On the one hand, by using \eqref{EstimateDerivative} and
\begin{equation}\label{AuxiliaryEstimateBis}
\cosh r\hspace{-1pt}-\hspace{-1pt}\cosh s\sim(s\hspace{-1pt}-\hspace{-1pt}r)\,e^{\hspace{1pt}r},
\end{equation}
the contribution of the first integral is bounded by
\begin{equation*}
\lambda\,\eta\,e^{-\frac r2}\int_r^{\hspace{1pt}r+\lambda^{-1}}\!(s-r)^{-\frac12}\,ds
\lesssim\sqrt{\lambda}\,\eta\,e^{-\frac r2}
\lesssim\sqrt{\lambda}\,\eta\,e^{-\frac\epsilon{8\eta}}\,e^{-\frac{1-\epsilon}2\hspace{.5pt}r}\,.
\end{equation*}
On the other hand, after an integration by parts, the second and third integrals yield the sum
\begin{equation}\label{Sum}\begin{aligned}
&-\,\pi^{\frac32}\eta\,\cos(\lambda s)\hspace{1pt}\widehat{\psi}(\eta\hspace{1pt}s)\hspace{1pt}
(\cosh s-\cosh r)^{-\frac12}\hspace{1pt}\Big|_{s=r+\lambda^{-1}}\\
&-\,\tfrac12\hspace{1pt}\pi^{\frac32}\eta\int_{r+\lambda^{-1}}^{\hspace{1pt}r+1}
\cos(\lambda s)\hspace{1pt}\widehat{\psi}(\eta\hspace{1pt}s)\hspace{1pt}(\cosh s-\sinh r)^{-\frac 32}\sinh s\,ds\\
&-\,\tfrac12\hspace{1pt}\pi^{\frac32}\eta\int_{r+1}^{\frac3\eta}
\cos(\lambda s)\hspace{1pt}\widehat{\psi}(\eta\hspace{1pt}s)\hspace{1pt}(\cosh s-\sinh r)^{-\frac 32}\sinh s\,ds\,.
\end{aligned}\end{equation}
By using \eqref{AuxiliaryEstimateBis} when
\hspace{1pt}$r\hspace{-1pt}\le\hspace{-1pt}s\hspace{-1pt}\le\hspace{-1pt}r\hspace{-1pt}+\!1$
\hspace{.5pt}and
$$
\cosh r\hspace{-1pt}-\hspace{-1pt}\cosh s\sim e^{\hspace{1pt}s}
$$
when \hspace{1pt}$r\hspace{-1pt}+\!1\!\le\hspace{-1pt}s\hspace{-1pt}
\le\hspace{-1pt}\tfrac3\eta$\hspace{.5pt},
we estimate the first line of \eqref{Sum} by $\lambda^{\frac12}\hspace{1pt}\eta\,e^{-\frac r2}$,
the second line by
$$
\eta\,e^{-\frac r2}\int_r^{\hspace{1pt}r+\lambda^{-1}}\!(s-r)^{-\frac32}\hspace{1pt}ds
\lesssim\lambda^{\frac12}\hspace{1pt}\eta\,e^{-\frac r2}
$$
and the third line by \hspace{1pt}$\eta\,e^{-\frac r2}$.
Overall, we obtain again the bound
\begin{equation*}
\lambda^{\frac12}\hspace{1pt}\eta\,e^{-\frac r2}\lesssim
\lambda^{\frac12}\hspace{1pt}\eta\,e^{-\frac\epsilon{8\eta}}\,e^{-\frac{1-\epsilon}2\hspace{.5pt}r}\,.
\end{equation*}
This concludes the proof of \eqref{KernelEstimateQlambdaeta}.

\medskip \noindent \underline{Proof of \eqref{KernelEstimateSigmalambday}.} Notice first of all that
\begin{equation*}
\supp\vartheta_{k_0}\subset[-2,-\tfrac1{2\lambda}]\cup[\tfrac1{2\lambda},2]
\quad\text{with}\quad|\vartheta_{k_0}(s)|\lesssim|s|^{\frac12},
\quad|(\vartheta_{k_0})'(s)|\lesssim|s|^{-\frac12},
\end{equation*}
as $\supp\widehat{\psi}(2^{-k}\,\cdot\,)\subset[2^{k-1},2^{k+1}]$.
If $r\le\tfrac1{4\lambda}$, we obtain
\begin{equation*}
\sigma_{\lambda,y}(r)=-\,\pi^{-\frac32}\int_{\frac1{2\lambda}}^2\cos(\lambda s)\,\vartheta_{k_0}(s)
(\cosh s-\cosh r)^{-\frac32}\hspace{1pt}\sinh s\,ds
\end{equation*}
after an integration by parts and estimate straightforwardly
\begin{equation*}
|\sigma_{\lambda,y}(r)|
\lesssim\int_{\frac1{2\lambda}}^2(s^2\!-r^2)^{-\frac32}\hspace{1pt}s^{\frac32}\hspace{1pt}ds
=r^{-\frac12}\int_{\frac1{2\lambda r}}^{\frac 2r}(s^2\!-1)^{-\frac32}\hspace{1pt}s^{\frac32}\hspace{1pt}ds
\lesssim r^{-\frac12}\int_{\frac1{2\lambda r}}^{\hspace{.5pt}\infty} s^{-\frac32}\hspace{1pt}ds
\lesssim\lambda^{\frac12}\hspace{1pt}.
\end{equation*}
If $\tfrac1{2\lambda}\le r\le2$, we split up
\begin{equation*}
\int_r^{\hspace{1pt}4}=\,\int_r^{\hspace{1pt}r+\lambda^{-1}}\hspace{-2pt}+\,\int_{r+\lambda^{-1}}^{\hspace{1pt}4}
\end{equation*}
in \eqref{Kernelsigmalambday}.
On the one hand, the contribution of the first integral is bounded by the sum of
\begin{equation*}
I=\int_r^{\hspace{1pt}r+\lambda^{-1}}\!\lambda\,s^{\frac12}\hspace{1pt}(s^2\!-r^2)^{-\frac12}\hspace{1pt}ds
=\lambda\,r^{\frac12}\!\int_1^{\hspace{1pt}1+\frac1{\lambda r}}(s^2\!-1)^{-\frac12}\hspace{1pt}s^{\frac12}\hspace{1pt}ds
\end{equation*}
and
\begin{equation*}
I\!I=\int_r^{\hspace{1pt}r+\lambda^{-1}}\!s^{-\frac12}\hspace{1pt}(s^2\!-r^2)^{-\frac12}\hspace{1pt}ds
=r^{-\frac12}\!\int_1^{\hspace{1pt}1+\frac1{\lambda r}}(s^2\!-1)^{-\frac12}\hspace{1pt}s^{-\frac12}\hspace{1pt}ds.
\end{equation*}
As
\begin{equation*}
\int_1^{\hspace{1pt}1+\frac1{\lambda r}}(s^2\!-1)^{-\frac12}\hspace{1pt}s^{\pm\frac12}\hspace{1pt}ds
\sim\int_1^{\hspace{1pt}1+\frac1{\lambda r}}(s-1)^{-\frac12}\hspace{1pt}ds
\sim\tfrac1{\sqrt{\lambda\,r}},
\end{equation*}
we conclude that
\begin{equation*}
I\lesssim\lambda^{\frac12}
\quad\text{and}\quad
I\!I\lesssim\tfrac1{\sqrt{\lambda}\,r}\lesssim\lambda^{\frac12}
\end{equation*}
under the present assumptions.
On the other hand, after an integration by parts,
the second integral yields the sum of
\begin{equation*}
-\,\pi^{\frac32}\hspace{1pt}\cos(\lambda s)\,\vartheta_{k_0}(s)\hspace{1pt}
(\cosh s-\cosh r)^{-\frac12}\hspace{1pt}\Big|_{s=r+\lambda^{-1}},
\end{equation*}
which is $\text{O}\bigl(\sqrt{\lambda}\,\bigr)$,
and of
\begin{equation*}
-\tfrac12\,\pi^{\frac32}\int_{r+\lambda^{-1}}^{\hspace{1pt}4}
\cos(\lambda s)\,\vartheta_{k_0}(s)\hspace{1pt}
(\cosh s-\cosh r)^{-\frac32}\hspace{1pt}\sinh s\,ds,
\end{equation*}
which is bounded by 
\begin{align*}
\int_{r+\lambda^{-1}}^{\hspace{1pt}4}(s^2\!-r^2)^{-\frac32}\,s^{\frac32}\,ds\,
=\,\underbrace{r^{-\frac12}\!\int_{1+\frac1{\lambda r}}^{\hspace{1pt}2}(s^2\!-1)^{-\frac32}\,s^{\frac32}\,ds}_{I\!I\!I}
\,+\,\underbrace{r^{-\frac12}\!\int_2^{\frac4r}(s^2\!-1)^{-\frac32}\,s^{\frac32}\,ds}_{I\hspace{-1pt}V}.
\end{align*}
We conclude by observing that
\begin{equation*}
|I\!I\!I|\lesssim r^{-\frac12}\!\int_{1+\frac1{\lambda r}}^{\hspace{1pt}2}(s-1)^{-\frac32}\,ds\sim\lambda^{\frac12}
\end{equation*}
and
\begin{equation*}
|I\hspace{-1pt}V|
\lesssim r^{-\frac12}\!\int_2^{\hspace{.5pt}\infty}s^{-\frac32}\,ds
\lesssim\lambda^{\frac12}
\end{equation*}
under the present assumptions.
\end{proof}

Let us conclude this subsection with a low frequency bound for the operators
$$
P_{\lambda,\eta}=\mathbf{1}_{[\lambda-\eta,\lambda+\eta]}(D)
\quad{and}\quad
\mathscr{P}_{\lambda,\eta}=\chi(\tfrac{D-\lambda}\eta)+\chi(\tfrac{D+\lambda}\eta)\hspace{1pt},
$$
where $\chi$ is an even Schwartz function.

\begin{proposition}[low frequency]\label{LambdaSmallH}
Assume that \,$\lambda$ and \,$\eta$ are both small,
say \,$0\hspace{-1pt}\le\hspace{-1pt}\lambda\hspace{-1pt}\le\!1$
and \,$0\hspace{-1pt}<\hspace{-1pt}\eta\hspace{-1pt}<\!1$.
Then
\begin{equation*}
\bigl\|P_{\lambda,\eta}\bigr\|_{L^2\to L^p}
\lesssim(\lambda\hspace{-1pt}+\hspace{-1pt}\eta)\,\eta^{\frac12}
\quad\text{and}\quad
\bigl\|\mathscr{P}_{\lambda,\eta}\bigr\|_{L^2\!\to L^p}
\lesssim(\lambda\hspace{-1pt}+\hspace{-1pt}\eta)\,\eta^{\frac12}
\end{equation*}
for every $2<p\le\infty$.
\end{proposition}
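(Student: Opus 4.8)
The plan is to derive both estimates from the Kunze--Stein phenomenon on $\H$, by duality. Observe first why the elementary approach fails: $P_{\lambda,\eta}$ satisfies $\|P_{\lambda,\eta}\|_{L^2\to L^2}\le1$ and $\|P_{\lambda,\eta}\|_{L^1\to L^\infty}\lesssim(\lambda+\eta)^2\eta$ (its convolution kernel is $\text{O}\big((\lambda+\eta)^2\eta\big)$ pointwise), but interpolating these two bounds only gives $\|P_{\lambda,\eta}\|_{L^{p'}\to L^p}\lesssim\big((\lambda+\eta)^2\eta\big)^{1-2/p}$, which is \emph{weaker} than the desired $\|P_{\lambda,\eta}\|_{L^{p'}\to L^p}\lesssim(\lambda+\eta)^2\eta$ since $(\lambda+\eta)^2\eta<1$; the sharp power must come from the exponential volume growth of $\H$.

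Both $P_{\lambda,\eta}$ and $\mathscr P_{\lambda,\eta}$ have the form $m(D)$ for a bounded even symbol $m$. By Lemma~\ref{comparison} it is enough to bound $\||m|(D)\|_{L^2\to L^p}$, and since the symbol $|m|$ is real, $|m|(D)$ is self-adjoint, so $\||m|(D)\|_{L^2\to L^p}=\||m|(D)\|_{L^{p'}\to L^2}$. Writing $|m|(D)g=g*k$ with $k$ the radial convolution kernel, the problem reduces to proving
\[
\|g*k\|_{L^2(\H)}\ \lesssim_p\ (\lambda+\eta)\,\eta^{1/2}\,\|g\|_{L^{p'}(\H)}\qquad(1\le p'<2).
\]
Transferring $g$ and $k$ to $G=\mathrm{PSL}(2,\R)$ (respectively as a right-invariant and as a bi-invariant function under the maximal compact subgroup), I would invoke the Kunze--Stein phenomenon: for every $1\le q<2$ there is $C_q$ with $\|\varphi*\psi\|_{L^2(G)}\le C_q\,\|\varphi\|_{L^q(G)}\,\|\psi\|_{L^2(G)}$. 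Applied with $q=p'$ and transferred back to $\H$, this gives $\|g*k\|_{L^2(\H)}\le C_{p'}\,\|g\|_{L^{p'}(\H)}\,\|k\|_{L^2(\H)}$. The constant $C_{p'}$ blows up as $p'\to2^-$, i.e.\ as $p\to2^+$, which explains why the implicit constant in the proposition must depend on $p$ (and is consistent with $\|P_{\lambda,\eta}\|_{L^2\to L^2}$ failing to be small).

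It remains to estimate $\|k\|_{L^2(\H)}$. By Plancherel's formula for the spherical transform (and $\widetilde k=|m|$),
\[
\|k\|_{L^2(\H)}^2=\tfrac1{2\pi^2}\int_0^\infty|m(\mu)|^2\,|\mathbf c(\mu)|^{-2}\,d\mu .
\]
For $m=\mathbf 1_{[\lambda-\eta,\lambda+\eta]}$ this is $\tfrac1{2\pi^2}\int_{\max(\lambda-\eta,0)}^{\lambda+\eta}|\mathbf c(\mu)|^{-2}\,d\mu$; since $\lambda+\eta<2$, the asymptotics $|\mathbf c(\mu)|^{-2}\sim\mu^2$ of \eqref{equivalents} give $\|k\|_{L^2(\H)}^2\sim\int_{\max(\lambda-\eta,0)}^{\lambda+\eta}\mu^2\,d\mu\sim(\lambda+\eta)^2\eta$, hence $\|P_{\lambda,\eta}\|_{L^2\to L^p}\lesssim_p(\lambda+\eta)\eta^{1/2}$. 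For $\mathscr P_{\lambda,\eta}$, $\chi$ being Schwartz gives $|m(\mu)|\lesssim_N\big(1+\tfrac{|\mu-\lambda|}{\eta}\big)^{-N}+\big(1+\tfrac{|\mu+\lambda|}{\eta}\big)^{-N}$; inserting this and using $|\mathbf c(\mu)|^{-2}\lesssim\mu^2$ for $\mu\le1$ and $|\mathbf c(\mu)|^{-2}\sim\mu$ for $\mu\ge1$, the range $|\mu-\lambda|\lesssim\eta$ contributes $\sim(\lambda+\eta)^2\eta$ and the remaining polynomially decaying tails contribute no more, so again $\|k\|_{L^2(\H)}\lesssim(\lambda+\eta)\eta^{1/2}$. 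This yields both estimates; they are sharp by Proposition~\ref{SphericalExample}.

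The only nontrivial ingredient is the Kunze--Stein estimate, used in the dual form $L^{p'}*L^2\subseteq L^2$ on $G$; the rest is routine bookkeeping with the Plancherel formula and the behaviour of the Harish--Chandra $\mathbf c$-function near $0$. So the real obstacle is conceptual: one has to recognise that interpolating Schur-type endpoints is lossy on $\H$ (contrary to the Euclidean case) and that the correct $\eta^{1/2}$ is produced entirely by Kunze--Stein.
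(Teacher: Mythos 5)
Your proof is correct, and it takes a genuinely different route from the paper's. The paper first establishes a pointwise estimate $|p_{\lambda,\eta}(r)|\lesssim(\lambda^2+\eta^2)\,\eta\,(1+r)\,e^{-r/2}$ for the radial convolution kernel of $\mathscr{P}_{\lambda,\eta}$ (via the Abel transform representation and a case-by-case integration by parts), then feeds this into the weighted Kunze--Stein inequality of Lemma~\ref{KunzeSteinH}, which requires the kernel in a weighted $L^{p/2}$ space, to obtain the $L^{p'}\!\to L^p$ bound $\lesssim(\lambda^2+\eta^2)\,\eta$, and finally applies the $TT^*$ argument. You instead dualize to an $L^{p'}\!\to L^2$ estimate using self-adjointness, invoke the Kunze--Stein phenomenon on $G=\mathrm{PSL}(2,\R)$ in the raw convolution form $L^{p'}(G)*L^2(G)\subset L^2(G)$, and then only need $\|k\|_{L^2(\H)}$, which you compute directly from the spherical Plancherel formula together with the small-frequency asymptotics $|\mathbf{c}(\mu)|^{-2}\sim\mu^2$. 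Your route is leaner: it bypasses the pointwise kernel estimates entirely, and the sharp power $(\lambda+\eta)\,\eta^{1/2}$ drops out in one line from Plancherel. The paper's choice to go through pointwise kernel bounds is not gratuitous, though --- the same estimates are reused when passing to convex cocompact quotients (Proposition~\ref{LambdaSmallX}), where one needs to sum the kernel over the group $\Gamma$, and the $L^2$-of-kernel computation you rely on does not periodize as directly. In particular, if you intend your argument to carry over to $\Gamma\backslash\H$ as the paper does, you would need a separate ingredient to replace the absolute convergence of the Poincar\'e series that the paper gets for free from the $e^{-r/2}$ kernel decay.
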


\begin{remark}
These bounds are optimal, according to Proposition \ref{SphericalExample}.
\end{remark}

\begin{proof}
Let us first estimate the kernel
\begin{equation*}
p_{\lambda,\eta}(r)=-\,\pi^{-\frac32}\hspace{1pt}\eta\int_r^{\hspace{.5pt}\infty}\!
\tfrac\partial{\partial s}\bigl[\cos(\lambda s)\hspace{1pt}\widehat{\chi}(\eta\hspace{.5pt}s)\bigr]
\hspace{1pt}(\cosh s-\cosh r)^{-\frac12}\,ds
\end{equation*}
of $\mathscr{P}_{\lambda,\eta}$ by splitting up
\begin{equation*}
\int_r^{\hspace{.5pt}\infty}=\int_r^{\hspace{1pt}r+1}+\int_{r+1}^{\hspace{.5pt}\infty}
\end{equation*}
and by using \eqref{AuxiliaryEstimate} together with
$$
\tfrac\partial{\partial s}\bigl[\cos(\lambda s)\hspace{1pt}\widehat{\chi}(\eta\hspace{.5pt}s)\bigr]
=-\,\lambda\hspace{1pt}\underbrace{\sin(\lambda\hspace{.5pt}s)}_{\text{O}(\lambda s)}
\widehat{\chi}(\eta\hspace{.5pt}s)
+\cos(\lambda s)\,\eta\,
\underbrace{\widehat{\chi}^{\hspace{1pt}\prime}(\eta\hspace{.5pt}s)}_{\text{O}(\eta\hspace{.5pt}s)}
=\text{O}\bigl((\lambda^2\hspace{-2pt}+\hspace{-1pt}\eta^2)\hspace{.5pt}s\bigr).
$$
This way we obtain
\begin{equation*}
\bigl|p_{\lambda,\eta}(r)\bigr|
\lesssim\eta\,(\lambda^2\hspace{-2pt}+\hspace{-1pt}\eta^2)\hspace{-1pt}\underbrace{
\int_r^{\hspace{1pt}r+1}\!(s^2\hspace{-2pt}-\hspace{-1pt}r^2)^{-\frac12}\hspace{1pt}s\,ds
}_{\lesssim\,1}
+\,\eta\,(\lambda^2\hspace{-2pt}+\hspace{-1pt}\eta^2)\hspace{-1pt}\underbrace{
\int_{r+1}^{\hspace{.5pt}\infty}\!e^{-\frac s2}\hspace{1pt}s\,ds
}_{\lesssim\,1}
\lesssim\eta\,(\lambda^2\hspace{-2pt}+\hspace{-1pt}\eta^2)
\end{equation*}
when $r$ is small, and
\begin{equation*}
\bigl|p_{\lambda,\eta}(r)\bigr|
\lesssim\eta\,(\lambda^2\hspace{-2pt}+\hspace{-1pt}\eta^2)\hspace{-1pt}\underbrace{
\int_r^{\hspace{1pt}r+1}\!e^{-\frac s2}\hspace{1pt}
(s\hspace{-1pt}-\hspace{-1pt}r)^{-\frac12}\hspace{1pt}s\,ds
}_{\lesssim\;r\,e^{-\frac r2}}
+\,\eta\,(\lambda^2\hspace{-2pt}+\hspace{-1pt}\eta^2)\hspace{-1pt}\underbrace{
\int_{r+1}^{\hspace{.5pt}\infty}\!e^{-\frac s2}\hspace{1pt}s\,ds
}_{\lesssim\;r\,e^{-\frac r2}}
\lesssim\eta\,(\lambda^2\hspace{-2pt}+\hspace{-1pt}\eta^2)\,r\,e^{-\frac r2}
\end{equation*}
when $r$ is large. Altogether
\begin{equation*}
|p_{\lambda,\eta}(r)|\lesssim(\lambda^2\!+\hspace{-1pt}\eta^2)\,\eta\,(1\!+\hspace{-1pt}r)\,e^{-\frac r2}
\qquad\forall\,r\hspace{-1pt}\ge\hspace{-1pt}0\,.
\end{equation*}

From this kernel estimate, we deduce on the one hand
\begin{equation*}
\bigl\|\mathscr{P}_{\lambda,\eta}\bigr\|_{L^1\to L^\infty}\hspace{-1pt}
\lesssim(\lambda^2\!+\hspace{-1pt}\eta^2)\,\eta
\end{equation*}
and, on the other hand,
\begin{equation*}
\bigl\|\mathscr{P}_{\lambda,\eta}\bigr\|_{L^{p^{\hspace{.5pt}\prime}}\!\to L^p}\hspace{-1pt}
\lesssim(\lambda^2\!+\hspace{-1pt}\eta^2)\,\eta
\qquad\forall\;2\hspace{-1pt}<\hspace{-1pt}p\hspace{-1pt}<\hspace{-1pt}\infty\hspace{1pt},
\end{equation*}
by using the Kunze-Stein phenomenon on $\H$, as stated in Lemma \ref{KunzeSteinH} below.
By the $TT^*$ argument, we conclude that
\begin{equation*}
\bigl\|\mathscr{P}_{\lambda,\eta}\bigr\|_{L^2\to L^p}\hspace{-1pt}
\lesssim(\lambda^2\!+\hspace{-1pt}\eta^2)\,\eta
\qquad\forall\;2\hspace{-1pt}<\hspace{-1pt}p\hspace{-1pt}\le\hspace{-1pt}\infty\hspace{1pt}.
\end{equation*}

Finally, the bounds for \hspace{1pt}$P_{\lambda,\eta}$ can be
either deduced from the results for \hspace{1pt}$\mathscr{P}_{\lambda,\eta}$\hspace{1pt},
as explained in the first step of Appendix \ref{AppendixSogge},
or proved directly as above.
\end{proof}

\begin{lemma}[see Lemma 5.1 in \cite{AnkerPierfeliceVallarino2012}]\label{KunzeSteinH}
We have
\begin{equation*}
\|\hspace{.5pt}f\hspace{-1pt}*\hspace{-.5pt}K\hspace{1pt}\|_{L^p}
\lesssim\|f\|_{L^{p^{\hspace{.5pt}\prime}}}\hspace{1pt}
\Bigl[\hspace{1pt}\int_0^{\hspace{.5pt}\infty}\!|K(r)|^{\frac p2}\hspace{1pt}
(1\!+\hspace{-1pt}r)\,e^{-\frac r2}\hspace{1pt}\sinh r\,dr\hspace{1pt}\Bigr]^{\frac2p}
\end{equation*}
for every \,$2\hspace{-1pt}\le\hspace{-1pt}p\hspace{-1pt}<\hspace{-1pt}\infty$\hspace{1pt},
for every \,$f\hspace{-1pt}\in\hspace{-1pt}L^{p^{\hspace{.5pt}\prime}}\hspace{-1pt}(\H)$
and for every radial measurable function \,$K$ on~\,$\H$.
In the limit case \,$p\hspace{-.5pt}=\hspace{-1pt}\infty$\hspace{.5pt},
this inequality boils down to
\,$\|\hspace{.5pt}f\hspace{-1pt}*\hspace{-.5pt}K\hspace{1pt}\|_{L^\infty}\!
\le\hspace{-.5pt}\|f\|_{L^1}\hspace{1pt}\|K\|_{L^\infty}$.
\end{lemma}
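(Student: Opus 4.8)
The plan is to derive this inequality---a Kunze--Stein (Herz-type) estimate---by Stein's theorem of interpolation of analytic families of operators. Write $K=u\,|K|$ with $u=K/|K|$ on $\{K\neq0\}$ and $u=0$ elsewhere, and for $z$ in the closed strip $0\le\Re z\le1$ set
$$
T_zf=f*\kappa_z,\qquad\kappa_z(r)=u(r)\,|K(r)|^{\frac p2\,z}\hspace{1pt},
$$
a radial convolution operator. Since $\kappa_{2/p}=K$, the operator $T_{2/p}$ is exactly $f\mapsto f*K$, the one we wish to bound from $L^{p'}$ to $L^p$. For each fixed $r$ the function $z\mapsto\kappa_z(r)$ is entire (identically $0$ where $K(r)=0$), with $|\kappa_z(r)|=|K(r)|^{\frac p2\Re z}$.

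Next I would prove the two boundary estimates. On the line $\Re z=0$ we simply have $|\kappa_z|\le\mathbf1_{\{K\neq0\}}\le1$, hence $\|T_z\|_{L^1\to L^\infty}=\|\kappa_z\|_{L^\infty}\le1$. On the line $\Re z=1$, $\kappa_z$ is radial with $|\kappa_z(r)|=|K(r)|^{p/2}$; convolution with a radial kernel acts, on the hyperbolic Fourier side, by multiplication by its spherical transform, so by the spherical Plancherel formula $\|T_z\|_{L^2\to L^2}\le\sup_{\lambda\ge0}|\widetilde{\kappa_z}(\lambda)|$. Using the pointwise domination $|\varphi_\lambda(r)|\le\varphi_0(r)$ for real $\lambda$ (immediate from the integral formula for $\varphi_\lambda$ recalled above) together with $\varphi_0(r)\sim(1+r)e^{-r/2}$ from \eqref{equivalents}, one obtains
$$
\|T_z\|_{L^2\to L^2}\le2\pi\!\int_0^\infty\!|K(r)|^{p/2}\,\varphi_0(r)\,\sinh r\,dr\lesssim A,\qquad A:=\!\int_0^\infty\!|K(r)|^{p/2}(1+r)e^{-r/2}\sinh r\,dr\hspace{1pt}.
$$
Both bounds are uniform in $\Im z$. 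Stein's interpolation theorem applied between the line $\Re z=0$ (where $T_z:L^1\to L^\infty$) and the line $\Re z=1$ (where $T_z:L^2\to L^2$), at the value $\theta=\tfrac2p\in(0,1]$, lands at $L^{p'}\to L^p$ with norm $\lesssim 1^{\,1-\theta}A^{\,\theta}=A^{2/p}$; since $T_{2/p}f=f*K$, this is precisely the asserted estimate. The degenerate case $p=\infty$ is the trivial inequality $\|f*K\|_{L^\infty}\le\|f\|_{L^1}\|K\|_{L^\infty}$.

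The only delicate point is to check that $\{T_z\}$ is an admissible family for Stein's theorem: that $z\mapsto\langle T_zf,h\rangle$ is holomorphic on the open strip, continuous up to the boundary, and of at most subexponential growth on vertical lines, for $f,h$ in a dense class such as $\mathcal C_c(\H)$. This is routine---holomorphy follows from Morera's theorem together with Fubini, using $|\kappa_z(r)|\le\max\{1,|K(r)|^{p/2}\}$ and local integrability of $|K|^{p/2}$, and the growth on vertical lines is in fact uniform in $\Im z$---but it is convenient to establish the inequality first for bounded, compactly supported $K$ and then remove that restriction by Fatou's lemma, extending afterwards from $f\in\mathcal C_c(\H)$ to $f\in L^{p'}$ by density (legitimate since $p'<\infty$). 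I do not anticipate any substantive obstacle beyond this bookkeeping: the essential input is the domination $|\varphi_\lambda|\le\varphi_0$ of the spherical functions, which is exactly what allows the $L^2$ endpoint to absorb the sharp weight $\varphi_0\sim(1+r)e^{-r/2}$.
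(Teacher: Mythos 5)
Your proof is correct and takes the standard route for this Herz--Kunze--Stein criterion: Stein interpolation of the analytic family $T_z f = f * \kappa_z$ with $\kappa_z = u\,|K|^{\frac p2 z}$ between the trivial $L^1\to L^\infty$ bound at $\Re z=0$ and the $L^2\to L^2$ bound at $\Re z=1$, the latter controlled by $\sup_\lambda|\widetilde{\kappa_z}(\lambda)|\lesssim\int_0^\infty|K(r)|^{p/2}\varphi_0(r)\sinh r\,dr$ using $|\varphi_\lambda(r)|\le\varphi_0(r)\sim(1+r)e^{-r/2}$. The paper itself does not prove the lemma but cites Anker--Pierfelice--Vallarino (Lemma~5.1), and that proof proceeds by essentially the same complex interpolation argument, so you have reconstructed the intended proof rather than found an alternative.

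Two small remarks on the bookkeeping you flag at the end. First, the endpoint $p=2$ corresponds to $\theta=1$, which is not strictly interior to the strip; but there the claimed bound is exactly the $L^2\to L^2$ hypothesis you established, so it is better to state $p=2$ separately rather than invoke Stein's theorem at a boundary point. Second, your reduction to bounded compactly supported $K$ followed by Fatou is indeed the cleanest way to avoid integrability headaches in verifying admissibility of the family; for such $K$ one also has $|\kappa_z(r)|\le\max\{1,\|K\|_\infty^{p/2}\}$ uniformly on the closed strip, which makes the subexponential-growth and continuity-up-to-the-boundary hypotheses of Stein's theorem immediate.
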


\begin{remark}
From Proposition \ref{LambdaSmallH} we can recover
the bound (1.19) at low frequency in \cite[Theorem~1.6]{ChenHassell2018}.
Indeed,
\begin{equation*}
\|\hspace{1pt}\mathbf{P}_{\hspace{-1pt}\lambda}\|_{L^{p^{\hspace{.5pt}\prime}}\!\to L^p}\leq
\liminf_{\eta\to0}\frac{\Vert P_{\lambda,\eta}\Vert_{L^{p^{\hspace{.5pt}\prime}}\!\to L^p}}{2 \eta}
\lesssim\lambda^2.
\end{equation*}
\end{remark}

\begin{remark}
The results in this section extend straightforwardly to
real hyperbolic spaces of dimension $d\ge2$ considered in \cite{GermainLeger2023}
and more generally to all hyperbolic spaces (as well as Damek--Ricci spaces).
In this case,
the high frequency bound in Theorem \ref{L2LpBoundsProjectorsH} becomes
\begin{equation}\label{Generalization}
\|P_{\lambda,\eta}\|_{L^2\to L^p}
\lesssim\bigl(1\!+\!\tfrac1{\sqrt{p-2}}\bigr)\hspace{1pt}\lambda^{\gamma(p)}\,\eta^{\frac12}
\quad\text{with}\quad
\gamma(p)=\max\hspace{1pt}\bigl\{\tfrac{d-1}2\hspace{-1pt}-\hspace{-1pt}\tfrac dp\hspace{1pt},
\tfrac{d-1}2\bigl(\tfrac12\hspace{-1pt}-\hspace{-1pt}\tfrac1p\bigr)\hspace{-1pt}\bigr\}
\end{equation}
while the low frequency bound in Proposition \ref{LambdaSmallH} remains the same.
\end{remark}

\section{Refined dispersive and Strichartz estimates on $\mathbb{H}$}\label{DispersiveStrichartzH}


In this section, we prove successively kernel, dispersive and Strichartz estimates for the operators
$e^{\hspace{.5pt}i\hspace{.75pt}t\hspace{.5pt}\Delta}\hspace{1pt}\mathscr{P}_{\lambda\hspace{.5pt},1}$
on the hyperbolic plane $\H$, where $t\hspace{-1pt}\in\hspace{-1pt}\R^*$, $\lambda\hspace{-1pt}>\!1$
and $\chi\hspace{-1pt}\in\hspace{-1pt}\mathcal{S}(\R)$ is an even bump function.
\smallskip
Again in the estimates we keep track of the dependence on $r$ so that our results generalize easily to the case of convex cocompact hyperbolic surfaces with $0 \le \delta < \frac12.$
\smallskip

By symmetry we may assume that $t\hspace{-1pt}>\hspace{-1pt}0$\hspace{1pt}.
Recall that the kernel of
$e^{\hspace{.5pt}i\hspace{.75pt}t\hspace{.5pt}\Delta}\hspace{1pt}\mathscr{P}_{\lambda\hspace{.5pt},1}$
is given by
\begin{equation*}
K(r)=\const\int_r^{\hspace{.5pt}\infty}\tfrac\partial{\partial s}\,\widehat{m}(s)\,(\cosh s-\cosh r)^{-\frac12}\hspace{1pt}ds,
\end{equation*}
where
\begin{equation*}
\widehat{m}(s)
= \tfrac1{\sqrt{2\pi}} \int_{-\infty}^{\hspace{.5pt}\infty}
e^{is\xi}\,e^{-it\xi^2}\hspace{1pt}
\bigl[\chi(\xi-\lambda)+\chi(\xi+\lambda)\bigr]\hspace{1pt}d\xi
\end{equation*}
is the Fourier transform of the symbol
\begin{equation*}
m(\xi)=e^{-it\xi^2}\hspace{1pt}\bigl[\chi(\xi-\lambda)+\chi(\xi+\lambda)\bigr].
\end{equation*}
Let us split up dyadically $\chi=\sum_{j=0}^{\hspace{.5pt}\infty}\chi_j$,
and $m=\sum_{j=0}^{\hspace{.5pt}\infty} m_j$, $K=\sum_{j=0}^{\hspace{.5pt}\infty}K_j$ accordingly.
More precisely, given a smooth even bump function $\theta:\R\to[0,1]$
such that $\theta=1$ on $[-1,1]$ and $\supp\theta\subset[-2,2]$,
we set $\chi_0(\xi)=\chi(\xi)\,\theta(\xi)$ and
$\chi_j(\xi)=\chi(\xi)\bigl[\theta(2^{-j}\xi)-\theta(2^{1-j}\xi)\bigr]$
$\forall\,j\!\in\mathbb{N}^*$.

\subsection{Kernel estimates}
As we in the previous section \ref{upperbplane}, the key are pointwise estimates on the kernels of the operators just introduced. They are stated and proved in the remainder of this subsection.
\begin{lemma}\label{LemmaH2}
Let $M,N\!\in\mathbb{N}$.
Then the following kernel estimates hold,
for $r\ge0$, $t>0$, $\lambda>1$ and $j\in\mathbb{N}:$
\begin{equation}\label{KernelEstimateH2}
|K_j(r)|\lesssim2^{-Mj}\hspace{1pt}(1\!+\hspace{-1pt}t)^{-\frac32}\,e^{-\frac r2}
\times\begin{cases}
\,r
&\text{if \,$r$ and \,$t$ are large,}\\
\,\lambda
&\text{otherwise.}
\end{cases}
\end{equation}
Moreover, the following estimate holds,
under the assumptions that \,$2^{-j}\lambda>16$, that \,$t\lambda$ is large,
and that \,$r\notin[t\lambda,3t\lambda]$\,:
\begin{equation}\label{ImprovedKernelEstimateH2}
|K_j(r)|\lesssim2^{-Mj}\hspace{1pt}(t\lambda)^{-N}\hspace{1pt}\lambda\,e^{-\frac r2}\hspace{1pt}.
\end{equation}
\end{lemma}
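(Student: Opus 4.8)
The plan is to reduce everything to one‑dimensional oscillatory integral estimates and then feed these into the integral representation of the radial kernel, following the scheme of the proof of Lemma~\ref{busard}. Write $K_j(r)=c\int_r^\infty\partial_s\widehat{m_j}(s)\,(\cosh s-\cosh r)^{-1/2}\,ds$; after the changes of variables $\xi=\pm\lambda+u$,
\[ \partial_s^\alpha\widehat{m_j}(s)=\tfrac{i^{\alpha}}{\sqrt{2\pi}}\,e^{-it\lambda^2}\Bigl[e^{is\lambda}g_j^{\alpha,+}(s-2t\lambda,t)+e^{-is\lambda}g_j^{\alpha,-}(s+2t\lambda,t)\Bigr],\qquad g_j^{\alpha,\pm}(\sigma,t)=\int(\pm\lambda+u)^{\alpha}\,e^{i(\sigma u-tu^2)}\,\chi_j(u)\,du. \]
Since $\chi$ is Schwartz and even, each amplitude $(\pm\lambda+u)^{\alpha}\chi_j(u)$ is smooth, supported in $\{2^{j-1}\le|u|\le 2^{j+1}\}$ (in $\{|u|\le2\}$ if $j=0$), and has $C^k$ norms $\lesssim_{M,\alpha,k}(\lambda+2^j)^{\alpha}2^{-Mj}$ for every $M$, so the factor $2^{-Mj}$ rides through all estimates and the $j$‑summability is automatic. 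Parity in $\xi$ moreover forces $\partial_s\widehat{m_j}(0)=0$, which will prevent a spurious logarithm near $r=0$.

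\emph{One‑dimensional estimates.} Estimate $g_j^{\alpha,\pm}$ by stationary and non‑stationary phase in $u$, using that $\sigma u-tu^2$ has constant second derivative $-2t$ and critical point $u^{\ast}=\sigma/(2t)$. For $t\le1$ there is no critical point in $\supp\chi_j$ unless $|\sigma|\lesssim 2^j$, so integration by parts gives $|g_j^{\alpha,\pm}(\sigma,t)|\lesssim_{M,N,\alpha}(\lambda+2^j)^{\alpha}2^{-Mj}\langle\sigma\rangle^{-N}$. For $t\ge1$, van der Corput's lemma gives the gain $t^{-1/2}$, integration by parts where $u^{\ast}\notin\supp\chi_j$ gives rapid decay, and keeping the leading stationary‑phase term yields, for $\alpha\in\{0,1,2\}$,
\[ \partial_s^\alpha\widehat{m_j}(s)=c_{\alpha}\,t^{-1/2}e^{is^2/(4t)}\bigl(\tfrac{is}{2t}\bigr)^{\!\alpha}\bigl[\chi_j(\tfrac{s}{2t}-\lambda)+\chi_j(\tfrac{s}{2t}+\lambda)\bigr]+O_{M,N,\alpha}\!\bigl((\lambda+2^j)^{\alpha}2^{-Mj}\,t^{-3/2}\bigl\langle t^{1/2}\,\dist(\tfrac{s}{2t},\,\supp\chi_j(\cdot-\lambda)\cup\supp\chi_j(\cdot+\lambda))\bigr\rangle^{-N}\bigr). \]
Two features of this identity drive everything. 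First, $\partial_s^\alpha\widehat{m_j}$ is concentrated, up to rapid decay, on the ``light cone'' $s\in 2t\lambda+2t\supp\chi_j$ (the $+\lambda$ branch mattering only when $2^j\gtrsim\lambda$). Second, on that cone the amplitude $\tfrac{is}{2t}$ in the case $\alpha=1$ contributes a factor $s/(2t)$, which combined with the $t^{-1/2}$ of van der Corput produces the $t^{-3/2}$ of \eqref{KernelEstimateH2} and, through $s\sim r$ in the near‑diagonal range below, the factor $r$ in place of $\lambda$. Using in addition $\partial_s\widehat{m_j}(0)=0$ one gets, for small $s$ and every $t$, the auxiliary bound $|\partial_s\widehat{m_j}(s)|\lesssim_M(\lambda+2^j)^2\,2^{-Mj}\,s\,(1+t)^{-3/2}$.

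\emph{Insertion into the $s$‑integral.} Now bound $\int_r^\infty|\partial_s\widehat{m_j}(s)|(\cosh s-\cosh r)^{-1/2}\,ds$ using the three regimes of \eqref{AuxiliaryEstimate} for $\cosh s-\cosh r$, and split the $s$‑integral exactly as in Lemma~\ref{busard}: for $r$ small, $\int_r^{\sqrt{r^2+\lambda^{-2}}}+\int_{\sqrt{r^2+\lambda^{-2}}}^{r+1}+\int_{r+1}^\infty$, where on the first piece the bound $|\partial_s\widehat{m_j}(s)|\lesssim(\lambda+2^j)^2 2^{-Mj}s(1+t)^{-3/2}$ makes $\int s(s^2-r^2)^{-1/2}\,ds$ converge and produce $\lambda^{-1}$, while on the other two one integrates by parts, moving $\partial_s$ onto $(\cosh s-\cosh r)^{-1/2}$ and producing an integrable boundary term of size $\lesssim(1+t)^{-3/2}\lambda 2^{-Mj}$; for $r$ large, $\int_r^{r+\lambda^{-1}}+\int_{r+\lambda^{-1}}^{r+1}+\int_{r+1}^\infty$, the first piece contributing the integrable singularity $(s-r)^{-1/2}e^{-r/2}$ over a window of the oscillation length $\lambda^{-1}$, the others again by integration by parts. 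The factor $e^{-r/2}$ always comes from $(\cosh s-\cosh r)^{-1/2}\lesssim e^{-s/2}\le e^{-r/2}$ on $\{s\ge r+1\}$ together with the near‑diagonal pieces, and the dichotomy $\max(r,\lambda)$ is the alternative between $s\sim r$ lying on the light cone, so that the $s/(2t)$ factor reads $r/(2t)$ and forces $r\sim t\lambda$, hence both $r$ and $t$ large, and every other situation, in which $|\xi|\sim\lambda$ on $\supp m_j$ gives the factor $\lambda$ and, when $r$ is small, the part of $\widehat{m_j}$ that matters sits at $s\sim 2t\lambda$, where $(\cosh s-\cosh r)^{-1/2}$ is exponentially small, so the bound holds with room to spare.

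\emph{The improved estimate, and the main obstacle.} For \eqref{ImprovedKernelEstimateH2}, the hypothesis $2^{-j}\lambda>16$ forces $\supp\chi_j\subset[-\lambda/8,\lambda/8]$, so the $+\lambda$ branch is non‑stationary with $\dist\gtrsim\lambda$, hence $O((t\lambda)^{-N})$, and the essential support $2t\lambda+2t\supp\chi_j$ of the remaining branch lies in $[t\lambda,3t\lambda]$. Thus if $r>3t\lambda$ the whole integrand is rapidly decaying, $|\partial_s\widehat{m_j}(s)|\lesssim(\lambda+2^j)2^{-Mj}(t\lambda)^{-N}$, and $\int_r^\infty(\cosh s-\cosh r)^{-1/2}\,ds\lesssim e^{-r/2}$ closes the estimate; if $r<t\lambda$ the ranges $s\in[r,7t\lambda/4]$ are again rapidly decaying, while on $s\in[7t\lambda/4,9t\lambda/4]$ one has $s-r\ge 3t\lambda/4$, so $(\cosh s-\cosh r)^{-1/2}\lesssim e^{-s/2}\le e^{-r/2}e^{-3t\lambda/8}$, and since $t\lambda$ is large the exponential beats every power. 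I expect the real work to be the bookkeeping of the last two steps: one runs simultaneously over $t$ small or large, $r$ small or large, $s$ near‑diagonal or in the bulk, and $u^{\ast}$ inside or outside $\supp\chi_j$, and in each of the resulting cases must verify that the elementary integrals of powers of $(s-r)$ and of $e^{-s/2}$ reassemble into exactly the stated right‑hand sides --- in particular that the $s/(2t)$ amplitude factor produces $r$ rather than $\lambda$ precisely when both $r$ and $t$ are large, and that the stationary‑phase window never contributes more than $(1+t)^{-3/2}\max(r,\lambda)e^{-r/2}$.
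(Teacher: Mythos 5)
Your overall plan is the right one, and is close to the paper's in spirit: reduce to oscillatory‑integral estimates for $\widehat{m_j}$ using stationary/non‑stationary phase along the ``light cone'' $s\approx 2t\lambda$, then feed these into \eqref{kj} with the same splittings of the $s$‑integral as in Lemma~\ref{busard}. But there is a genuine gap in the quantitative implementation, concentrated in the case $r$ small, $t$ large.

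The problematic step is your claim that, for $r\le 1$, after integrating by parts on the pieces $\int_{\sqrt{r^2+\lambda^{-2}}}^{r+1}$ and $\int_{r+1}^{\infty}$, the boundary term is $\lesssim\lambda\,2^{-Mj}(1+t)^{-3/2}$. The boundary term equals $|\widehat{m_j}(\sqrt{r^2+\lambda^{-2}})|\cdot(\cosh\sqrt{r^2+\lambda^{-2}}-\cosh r)^{-1/2}$, and the second factor is $\sim\lambda$; so you would need $|\widehat{m_j}(s)|\lesssim 2^{-Mj}(1+t)^{-3/2}$ at $s\lesssim 1$. This is false for $j$ with $2^j\sim\lambda$: for such $j$ the support of $a_j(\xi)=\chi_j(\xi-\lambda)+\chi_j(\xi+\lambda)$ contains a neighbourhood of $\xi=0$, the stationary point $s/(2t)$ lies inside $\supp a_j$ when $s$ is small and $t$ is large, and van der Corput/stationary phase gives only $|\widehat{m_j}(s)|\sim 2^{-Mj}t^{-1/2}$ with no extra gain. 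So the boundary term is $\sim\lambda\,2^{-Mj}\,t^{-1/2}$, off by a full power of $t$ from the target $\lambda\,2^{-Mj}\,t^{-3/2}$. (The same $t^{-1/2}$ also reappears in the post‑IBP integral $\int\widehat{m_j}(s)\,\partial_s(\cosh s-\cosh r)^{-1/2}ds$, so the IBP does not help either.) Relatedly, your ``auxiliary bound'' $|\partial_s\widehat{m_j}(s)|\lesssim(\lambda+2^j)^2\,2^{-Mj}\,s\,(1+t)^{-3/2}$ is asserted from oddness plus the stationary‑phase expansion, but oddness ($\partial_s\widehat{m_j}(s)=\int_0^s\partial_u^2\widehat{m_j}\,du$ + van der Corput) yields only $(1+t)^{-1/2}$, and the stationary‑phase \emph{error} term as you have written it does not carry the extra factor of $s$; so the joint gain ``$s$ \emph{and} $t^{-1}$'' does not follow from what you wrote down.

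The paper closes precisely this case by avoiding a second integration by parts in $s$: it uses the \emph{exact} identity \eqref{mjhatprimebis}, $\partial_s\widehat{m_j}(s)=\tfrac{is}{2t}\widehat{m_j}(s)+\tfrac1{2t}\widehat{\undertilde m_j}(s)$ (this is your amplitude‑factor observation made into an algebraic identity by integrating by parts in $\xi$). Combining this with $|\widehat{m_j}|\lesssim 2^{-Mj}(1+t)^{-1/2}$, the oddness of $\widehat{\undertilde m_j}$, and $|\partial_s\widehat{\undertilde m_j}|\lesssim\lambda\,2^{-Mj}(1+t)^{-1/2}$ gives the genuine pointwise bound $|\partial_s\widehat{m_j}(s)|\lesssim 2^{-Mj}\,\lambda\,s\,/(t\sqrt{1+t})$ valid for all $s\ge 0$, $t>0$, $j$. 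Inserted directly into $\int_r^{r+1}(s^2-r^2)^{-1/2}\cdot|\partial_s\widehat{m_j}(s)|\,ds$ (no boundary term), this produces the desired $\lambda\,2^{-Mj}\,t^{-3/2}$. If you replace your auxiliary bound by this one, derived from the exact identity rather than the asymptotic, and drop the second integration by parts when $t\ge 1$, the rest of your case analysis (including the improved estimate \eqref{ImprovedKernelEstimateH2}, for which your non‑stationary‑phase reasoning is correct) closes as intended.
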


\begin{proof}
(a) To begin with, let us estimate for $s\ge0$ the oscillatory integral
\begin{equation}\label{mjhat}
\widehat{m_j}(s)= \tfrac1{\sqrt{2\pi}}\int_{-\infty}^{\hspace{.5pt}\infty}
e^{is\xi}\,e^{-it\xi^2}\hspace{1pt}
\bigl[\chi_j(\xi-\lambda)+\chi_j(\xi+\lambda)\bigr]\hspace{1pt}d\xi
\end{equation}
and its derivative
\begin{equation}\label{mjhatprime}
\tfrac\partial{\partial s}\hspace{1pt}\widehat{m_j}(s)
= \tfrac i{\sqrt{2\pi}}\int_{-\infty}^{\hspace{.5pt}\infty}e^{is\xi}\,\xi\,
e^{-it\xi^2}\hspace{1pt}\bigl[\chi_j(\xi-\lambda)+\chi_j(\xi+\lambda)\bigr]
\hspace{1pt}d\xi\hspace{1pt},
\end{equation}
which becomes
\vspace{-5mm}
\begin{equation}\label{mjhatprimebis}
\tfrac\partial{\partial s}\hspace{1pt}\widehat{m_j}(s)
=\tfrac i2\hspace{.5pt}\tfrac st\,\widehat{m_j}(s)
+\tfrac1{2t}\hspace{1pt}\overbrace{\tfrac1{\sqrt{2\pi}}
\int_{-\infty}^{\hspace{.5pt}\infty}e^{is\xi}\,e^{-it\xi^2}\hspace{1pt}
\bigl[\chi_j^{\hspace{.5pt}\prime}(\xi-\lambda)+\chi_j^{\hspace{.5pt}\prime}(\xi+\lambda)\bigr]
\,d\xi}^{\widehat{\undertilde{m}_j}(s)}\hspace{1pt},
\end{equation}
after an integration by parts based on
\begin{equation*}
\xi\,e^{-it\xi^2}=\tfrac i{2t}\,\tfrac\partial{\partial\xi}\,e^{-it\xi^2}.
\end{equation*}
\noindent
Observe on the one hand that
the phase $\Phi(\xi)=s\xi-t\xi^2$ in \eqref{mjhat}, \eqref{mjhatprime} and \eqref{mjhatprimebis}
has a single stationary point $\xi=\frac s{2t}$ and that $\Phi''=-2t$.
Observe on the other hand that the amplitude $a_j(\xi)=\chi_j(\xi-\lambda)+\chi_j(\xi+\lambda)$
satisfies\footnote{\,To this end, we write $\xi=(\xi\mp\lambda)\pm\lambda$
and use the fact that $\chi$ is a Schwartz function.}
\begin{equation*}
\bigl|\bigl(\tfrac\partial{\partial\xi}\bigr)^k\bigl[\xi^\ell a_j(\xi)\bigr]\bigr|\lesssim2^{-Mj}\lambda^\ell
\quad\forall\,k,\ell,M\!\in\hspace{-1pt}\mathbb{N}\hspace{1pt}.
\end{equation*}
By using the van der Corput lemma\footnote{
$|\int_Ie^{i\Phi(\xi)}a(\xi)\,d\xi\,|\le C\,T^{-\frac12}(\|a\|_{L^{\hspace{.5pt}\infty}(I)}+\|a'\|_{L^1(I)})$,
where $|\Phi''|\ge T$ on $I$.}
(see for instance \cite[Ch.~VIII, \S~1.2]{Stein1993}) for $t$ large
and a trivial estimate for $t$ small, we obtain
\begin{equation}\label{Estimate1mhat}
\bigl|\bigl(\tfrac\partial{\partial s}\bigr)^\ell\hspace{1pt}\widehat{m_j}(s)\bigr|
\lesssim2^{-Mj}\tfrac{\lambda^\ell}{\sqrt{1+t}}\hspace{1pt}.
\end{equation}
Let us improve this result for $\ell=1$.
Firstly,
as \eqref{Estimate1mhat} holds also for $\widehat{\undertilde{m}_j}$,
we deduce from \eqref{mjhatprimebis}
the following improved bound for $t$ large\,:
\begin{equation}\label{Estimate1bismhat}
\bigl|\tfrac\partial{\partial s}\,\widehat{m_j}(s)\bigr|
\lesssim2^{-Mj}\tfrac{1+s}{t\sqrt{1+t}}\hspace{1pt}.
\end{equation}
\noindent
Secondly,
as $\tfrac\partial{\partial s}\,\widehat{m_j}(s)$ is an odd function,
we deduce from
\begin{equation*}
\tfrac\partial{\partial s}\,\widehat{m_j}(s)
=\int_0^s\bigl(\tfrac\partial{\partial u}\bigr)^2\hspace{1pt}\widehat{m_j}(u)\,du
\end{equation*}
and \eqref{Estimate1mhat} the following improved bound for $s$ small\,:
\begin{equation}\label{Estimate1termhat}
\bigl|\tfrac\partial{\partial s}\,\widehat{m_j}(s)\bigr|
\lesssim2^{-Mj}\tfrac{\lambda^2s}{\sqrt{1+t}}\hspace{1pt}.
\end{equation}

Thirdly we improve upon \eqref{Estimate1mhat} for $t$ large and $s$ small.
First note that using \eqref{Estimate1mhat}, we have
\begin{align}\label{Estimatesottildemj}
\bigg \vert  \frac{s}{t} \widehat{m_j}(s)  \bigg \vert \lesssim 2^{-Mj}  \frac{ s }{t \sqrt{1+t}}.
\end{align}
Next notice that since $\widehat{\undertilde{m}_j}$ is odd we have
$\widehat{\undertilde{m}_j}(s)=\int_0^s\frac\partial{\partial u}\widehat{\undertilde{m}_j}(u)\,du.$ 

Using \eqref{Estimate1mhat} we deduce that
\begin{align} \label{Estimatemjsubtildehat}
|\widehat{\undertilde{m}_j}(s)|\lesssim2^{-Mj}\frac{\lambda s}{\sqrt{1+t}}.
\end{align}
Putting \eqref{Estimatesottildemj} and \eqref{Estimatemjsubtildehat} together, we can deduce the following improved bound in the regime $s \le 1 \le t$ from \eqref{mjhatprimebis}\,:
\begin{equation}\label{Estimate1quatermhat}
\bigl|\tfrac\partial{\partial s}\,\widehat{m_j}(s)\bigr|
\lesssim2^{-Mj}\tfrac{\lambda\,s}{t\sqrt{1+t}}\hspace{1pt}.
\end{equation}

%


So far \eqref{Estimate1bismhat}, \eqref{Estimate1termhat}, \eqref{Estimate1quatermhat}
hold true for all $s\ge0$, $t>0$, $\lambda>1$.
Let us next improve \eqref{Estimate1mhat}
under the additional assumption $|\tfrac s{2t}-\lambda|\ge2^{j+2}$.
Then
\begin{equation*}
\bigl|\tfrac s{2t}-\xi\bigr|
\ge\bigl|\tfrac s{2t}-|\xi|\bigr|
\ge\bigl|\tfrac s{2t}-\lambda\hspace{1pt}\bigr|-\underbrace{\bigl|\lambda-|\xi|\bigr|}_{\le\,2^{j+1}}
\ge\tfrac12\,\bigl|\tfrac s{2t}-\lambda\hspace{1pt}\bigr|
\quad\forall\,\xi\!\in\supp a_j\hspace{1pt}.
\end{equation*}
Hence
\begin{equation}\label{Estimate2mhat}
\bigl|\bigl(\tfrac\partial{\partial s}\bigr)^\ell\hspace{1pt}\widehat{m_j}(s)\bigr|
\lesssim2^{-Mj}\tfrac{\lambda^\ell}{\sqrt{1+t}}\,|s\hspace{-1pt}-\hspace{-1pt}2\hspace{1pt}t\lambda|^{-N}
\end{equation}
after performing $N$ integrations by parts based on
\begin{equation*}
e^{i\Phi(\xi)}=\tfrac i{2t}\,\tfrac1{\xi-\frac s{2t}}\,\tfrac\partial{\partial\xi}\,e^{i\Phi(\xi)}.
\end{equation*}
Moreover, the following improved bound can be obtained as before for $\ell=1$\,:
\begin{equation}\label{Estimate2bismhat}
\bigl|\tfrac\partial{\partial s}\,\widehat{m_j}(s)\bigr|
\lesssim2^{-Mj}\,|s\hspace{-1pt}-\hspace{-1pt}2\hspace{1pt}t\lambda|^{-N}\min\hspace{1pt}
\bigl\{{\tfrac{1+s}{t\sqrt{1+t}},\tfrac{\lambda^2s}{\sqrt{1+t}},\hspace{1pt}}
\tfrac{\lambda\,s}{t\sqrt{1+t}}\bigr\}
\end{equation}


\noindent
(b) Let us next estimate
\begin{equation}\label{kj}
K_j(r)=\const\int_r^{\hspace{.5pt}\infty}\tfrac\partial{\partial s}\,\widehat{m_j}(s)\,(\cosh s-\cosh r)^{-\frac12}\hspace{1pt}ds
\end{equation}
by using (a) and the estimate~\eqref{AuxiliaryEstimate}, which we will use repeatedly.

\medskip

\noindent \underline{Case 1.}
\,Assume that $r$ is small, say $0\le r\le1$.
\smallskip

\noindent $\bullet$
\,If $t$ is small, say $0<t\le1$, we split up
\begin{equation*}
\int_r^{\hspace{.5pt}\infty}
=\int_r^{\sqrt{r^2+\lambda^{-2}}}+\int_{\sqrt{r^2+\lambda^{-2}}}^{\hspace{.5pt}\infty}
\end{equation*}
in \eqref{kj}.
By using \eqref{AuxiliaryEstimate} together with \eqref{Estimate1termhat},
the first integral is bounded by
\begin{equation*}
2^{-Mj}\hspace{1pt}\lambda^2
\int_r^{\sqrt{r^2+\lambda^{-2}}}(s^2\!-\hspace{-1pt}r^2)^{-\frac12}\hspace{1pt}s\,ds
=2^{-Mj}\hspace{1pt}\lambda\hspace{1pt}.
\end{equation*}
After an integration by parts, the second integral becomes
\begin{align*}
&\widehat{m_j}(s)\,(\cosh s-\cosh r)^{-\frac12}\Big|_{s=\sqrt{r^2+\lambda^{-2}}}\\
&+\tfrac12\int_{\sqrt{r^2+\lambda^{-2}}}^{\hspace{1pt}r+1}\widehat{m_j}(s)\,(\cosh s-\cosh r)^{-\frac32}\sinh s\,ds\\
&+\tfrac12\int_{r+1}^{\hspace{.5pt}\infty}\widehat{m_j}(s)\,(\cosh s-\cosh r)^{-\frac32}\sinh s\,ds,
\end{align*}
which is also $\text{O}\bigl(2^{-Mj}\lambda\bigr)$,
according to \eqref{AuxiliaryEstimate} and \eqref{Estimate1mhat}.
Hence
\begin{equation}\label{EstimateCase1tSmall}
|K_j(r)|\lesssim2^{-Mj}\lambda\,.
\end{equation}
$\bullet$
\,If $t$ is large, say $t\ge1$,
we obtain
\begin{equation}\label{EstimateCase1tLarge}
|K_j(r)|\lesssim2^{-Mj}\lambda\,t^{-\frac32}
\end{equation}
by splitting up
\begin{equation*}
\int_r^{\hspace{.5pt}\infty}
=\int_r^{r+1}+\int_{r+1}^{\hspace{.5pt}\infty}
\end{equation*}
in \eqref{kj} and by using \eqref{Estimate1bismhat}, \eqref{Estimate1quatermhat}
instead of \eqref{Estimate1mhat}, \eqref{Estimate1termhat}.
{More precisely,
\begin{align*}
|K_j(r)|\lesssim2^{-Mj}\hspace{1pt}\lambda\,t^{-\frac32}
\underbrace{\int_r^{\hspace{1pt}r+1}(s^2-r^2)^{-\frac12}\hspace{1pt}s\,ds}_{\lesssim\,1}
+\,2^{-Mj}\hspace{1pt}t^{-\frac32}
\underbrace{\int_{r+1}^{\hspace{.5pt}\infty}e^{-\frac s2}\hspace{1pt}s\,ds}_{\lesssim\,1}
\lesssim2^{-Mj}\hspace{1pt}\lambda\,t^{-\frac32}\hspace{1pt}.
\end{align*}
}\smallskip

\noindent \underline{Case 2.}
\,Let us improve \eqref{EstimateCase1tSmall} and \eqref{EstimateCase1tLarge}
when $r$ is small while $t\lambda$ and $2^{-j}\lambda$ are large.
\smallskip

\noindent$\bullet$
\,Assume that $r$ is small while $t$ and $2^{-j}\lambda$ are large,
say $0\le r\le1\le t$ and $\lambda\ge2^{j+4}$.
Then
\begin{equation}\label{EstimateSTLambda}
|s-2\hspace{1pt}t\lambda|\ge\tfrac{t\,\lambda}2
\quad\text{and}\quad
\bigl|\tfrac s{2t}-\lambda\hspace{1pt}\bigr|\ge\tfrac\lambda4\ge2^{j+2}
\qquad\forall\,s\in\bigl[r,\tfrac32\hspace{1pt}t\lambda\bigr]
\end{equation}
and we obtain
\begin{equation}\label{EstimateCase2bis}
|K_j(r)|\lesssim2^{-Mj}\,(t\lambda)^{-N}
\qquad\forall\,M,N\!\ge\hspace{-1pt}0
\end{equation}
by splitting up
\begin{equation*}
\int_r^{\hspace{.5pt}\infty}=\int_r^{\hspace{1pt}r+1}
+\int_{r+1}^{\frac32t\lambda}+\int_{\frac32t\lambda}^{\hspace{.5pt}\infty}
\end{equation*}
in \eqref{kj} and by using \eqref{AuxiliaryEstimate} together with
\eqref{Estimate1mhat}, \eqref{Estimate2mhat}, \eqref{Estimate2bismhat}.
{More precisely,
\begin{align*}
|K_j(r)|
&\lesssim2^{-Mj}\hspace{1pt}(t\lambda)^{-N-1}\hspace{1pt}\lambda\,\overbrace{
\int_r^{\hspace{1pt}r+1\vphantom{\frac00}}\!(s^2\!-\hspace{-1pt}r^2)^{-\frac12}\hspace{1pt}s\,ds
}^{\lesssim\,1}+\,2^{-Mj}\hspace{1pt}(t\lambda)^{-N-1}\hspace{1pt}\lambda\,
\overbrace{\int_{r+1}^{\frac32t\lambda}\!e^{-\frac s2}\,ds}^{\lesssim\,1}\\
&+2^{-Mj}\hspace{1pt}\lambda\hspace{-4mm}\underbrace{
\int_{\frac32t\lambda}^{\hspace{.5pt}\infty}\!e^{-\frac s2}\,ds
}_{\lesssim\,e^{-\frac34t\lambda}\,\lesssim\,(t\lambda)^{-N-1}}\hspace{-4mm}
\lesssim2^{-Mj}\hspace{1pt}(t\lambda)^{-N}.
\end{align*}
}

\noindent$\bullet$
\,Assume that $r$ and $t$ are small while $t\lambda$ and $2^{-j}\lambda$ are large,
say $0\hspace{-1pt}\le\hspace{-1pt}r\le\!1$, $\frac1\lambda\!\le\hspace{-1pt}t\hspace{-1pt}\le\!1$
and $\lambda\ge2^{j+4}$.
Then \eqref{EstimateSTLambda} holds and we obtain
\begin{equation}\label{EstimateCase2bis}
|K_j(r)|\lesssim2^{-Mj}\hspace{1pt}(t\lambda)^{-N}\hspace{1pt}\lambda
\qquad\forall\,M,N\!\ge\hspace{-1pt}0
\end{equation}
by following the proof of Case 1: namely, we split up
\begin{equation}\label{SplittedIntegral}
\int_r^{\hspace{.5pt}\infty}=\int_r^{\sqrt{r^2+\lambda^{-2}}}
+\int_{\sqrt{r^2+\lambda^{-2}}}^{\hspace{1pt}r+\frac12}+\int_{r+\frac12}^{\frac32t\lambda}
+\int_{\frac32t\lambda}^{\hspace{.5pt}\infty}
\end{equation}
in \eqref{kj} and use \eqref{AuxiliaryEstimate} together with
\eqref{Estimate1mhat}, \eqref{Estimate2mhat}, \eqref{Estimate2bismhat}.
{More precisely,
the contribution of the first integral in \eqref{SplittedIntegral} is bounded by
\begin{equation*}
2^{-Mj}\hspace{1pt}(t\lambda)^{-N}\hspace{1pt}\tfrac\lambda t\underbrace{
\int_r^{\sqrt{r^2+\lambda^{-2}}}(s^2\!-r^2)^{-\frac12}\hspace{1pt}s\,ds
}_{=\,\frac1\lambda}
\lesssim2^{-Mj}\hspace{1pt}(t\lambda)^{-N}\hspace{1pt}\lambda
\end{equation*}
while, after an integration by parts,
the contribution of the last three integrals in \eqref{SplittedIntegral} is bounded by
\begin{align*}
&|\widehat{m_j}(s)|\,(\cosh s-\cosh r)^{-\frac12}\Big|_{s=\sqrt{r^2+\lambda^{-2}}}
+\tfrac12\int_{\sqrt{r^2+\lambda^{-2}}}^{\hspace{.5pt}\infty}\hspace{1pt}
|\widehat{m_j}(s)|\,(\cosh s-\cosh r)^{-\frac32}\sinh s\,ds\\
&\lesssim2^{-Mj}\,(t\lambda)^{-N}\hspace{1pt}\lambda
+2^{-Mj}\,(t\lambda)^{-N}\hspace{-1pt}\overbrace{
\int_{\sqrt{r^2+\lambda^{-2}}}^{\,r+\frac12}\hspace{1pt}(s^2\!-r^2)^{-\frac32}s\,ds
}^{\lesssim\,\lambda}
+\,2^{-Mj}\hspace{1pt}(t\lambda)^{-N}\hspace{-1pt}
\overbrace{\int_{\,r+\frac12}^{\frac32t\lambda}e^{-\frac s2}\hspace{1pt}ds}^{\lesssim\,1}\\
&+2^{-Mj}\hspace{-7.5pt}\underbrace{
\int_{\frac32t\lambda}^{\hspace{.5pt}\infty}e^{-\frac s2}\hspace{1pt}ds
}_{\lesssim\,e^{-\frac34t\lambda}\,\lesssim\,(t\lambda)^{-N}}\hspace{-7pt}
\lesssim2^{-Mj}\hspace{1pt}(t\lambda)^{-N}\hspace{1pt}\lambda\hspace{1pt}.
\end{align*}}

\noindent \underline{Case 3.}
\,Assume that $r$ is large, say $r\hspace{-1pt}\ge\!1$.
Then we obtain
\begin{equation}\label{EstimateCase3}
|K_j(r)|\lesssim2^{-Mj}\,e^{-\frac r2}
\times\begin{cases}
\,\lambda
&\text{if \,$0\hspace{-1pt}<\hspace{-1pt}t\hspace{-1pt}\le\!1$}\\
\,t^{-\frac32}\,r
&\text{if \,$t\hspace{-1pt}\ge\!1$}\\
\end{cases}\end{equation}
by splitting up
\begin{equation*}
\int_r^{\hspace{.5pt}\infty}
=\int_r^{\hspace{1pt}r+1}
+\int_{r+1}^{\hspace{.5pt}\infty}
\end{equation*}
in \eqref{kj} and by using \eqref{AuxiliaryEstimate}
together with \eqref{Estimate1mhat}, \eqref{Estimate1bismhat}.
{More precisely,
\begin{equation*}
|K_j(r)|\lesssim2^{-Mj}\hspace{1pt}\lambda\,e^{-\frac r2}\underbrace{
\int_r^{\hspace{1pt}r+1}(s\hspace{-1pt}-\hspace{-1pt}r)^{-\frac12}\,ds
}_{\lesssim\,1\vphantom{e^{\frac00}}}
+\,2^{-Mj}\hspace{1pt}\lambda\underbrace{\int_{r+1}^{\hspace{.5pt}\infty}
e^{-\frac s2}\,ds}_{\lesssim\,e^{-\frac r2}}
\lesssim\,2^{-Mj}\hspace{1pt}\lambda\,e^{-\frac r2}
\end{equation*}
if $0\hspace{-1pt}<\hspace{-1pt}t\hspace{-1pt}\le\!1$, while
\begin{equation*}
|K_j(r)|\lesssim2^{-Mj}\,t^{-\frac32}\,r\,e^{-\frac r2}\underbrace{
\int_r^{\hspace{1pt}r+1}(s\hspace{-1pt}-\hspace{-1pt}r)^{-\frac12}\,ds
}_{\lesssim\,1\vphantom{e^{\frac00}}}
+\,2^{-Mj}\,t^{-\frac32}\underbrace{\int_{r+1}^{\hspace{.5pt}\infty}
e^{-\frac s2}\,s\,ds}_{\lesssim\;r\,e^{-\frac r2}}
\lesssim\,2^{-Mj}\,t^{-\frac32}\,r\,e^{-\frac r2}
\end{equation*}
if $t\hspace{-1pt}\ge\!1$.
}\smallskip

\noindent\underline{Case 4.}
\,Let us improve \eqref{EstimateCase3}
when $t\lambda$\hspace{.5pt}, $2^{-j}\lambda$ are both large
and when $\tfrac r{t\hspace{.5pt}\lambda}$ stays away
\linebreak
from~$2$.

\smallskip

\noindent$\bullet$
\,Assume that $\lambda\hspace{-1pt}\ge\hspace{-1pt}2^{j+4}$
and $1\!\le\hspace{-1pt}r\hspace{-1pt}\le \hspace{-1pt}t\lambda$\hspace{.5pt}.
Then \eqref{EstimateSTLambda} holds again and we obtain
\begin{equation}\label{EstimateCase4bis}
|K_j(r)|\lesssim2^{-Mj}\,
(t\lambda)^{-N}\,\lambda \;e^{-\frac r2}
\end{equation}
by splitting up
\begin{equation*}
\int_r^{\hspace{.5pt}\infty}=\int_r^{\hspace{1pt}r+\frac12}
+\int_{r+\frac12}^{\frac32t\lambda}+\int_{\frac32t\lambda}^{\hspace{.5pt}\infty}
\end{equation*}
in \eqref{kj} and by using \eqref{AuxiliaryEstimate}
together with \eqref{Estimate1mhat}, \eqref{Estimate2mhat}.
{More precisely,
\begin{align*}
|K_j(r)|
&\lesssim2^{-Mj}\hspace{1pt}
(t\lambda)^{-N}\hspace{1pt}\lambda\,
e^{-\frac r2}\overbrace{\int_r^{\hspace{1pt}r+\frac12}(s-r)^{-\frac12}\,ds}^{\lesssim\,1}
+\,2^{-Mj}\hspace{1pt}(t\lambda)^{-N}\hspace{1pt}\lambda
\overbrace{\int_{\hspace{1pt}r+\frac12}^{\frac32t\lambda}e^{-\frac s2}\,ds
}^{\lesssim\,e^{-\frac r2}}\\
&+2^{-Mj}\hspace{1pt} \lambda 
\hspace{-5.5mm}\underbrace{\int_{\frac32t\lambda}^{\hspace{.5pt}\infty}e^{-\frac s2}\,ds
}_{\lesssim\,e^{-\frac34t\lambda}\,\lesssim\,(t\lambda)^{-N}\,e^{-\frac r2}}\hspace{-5.5mm}
\lesssim2^{-Mj}\hspace{1pt}
(t\lambda)^{-N}\hspace{1pt}\lambda\,
e^{-\frac r2}\hspace{1pt}.
\end{align*}
}$\bullet$
\,Assume that $\lambda\ge 2^{j+3}$ and that
$r\ge3\,t\lambda\hspace{-1pt}\ge\hspace{-1pt}3$\hspace{.51pt}.
Again
\begin{equation*}
|s-2\hspace{1pt}t\lambda|\ge t\lambda
\quad\text{and}\quad
\bigl|\tfrac s{2t}-\lambda\hspace{1pt}\bigr|\ge\tfrac\lambda2\ge2^{j+2}
\qquad\forall\,s\hspace{-1pt}\ge\hspace{-1pt}r
\end{equation*}
and we obtain \eqref{EstimateCase4bis} by splitting up
\begin{equation*}
\int_r^{\hspace{.5pt}\infty}=\int_r^{\hspace{1pt}r+1}+\int_{r+1}^{\hspace{.5pt}\infty}
\end{equation*}
in \eqref{kj} and by using \eqref{AuxiliaryEstimate}
together with \eqref{Estimate2mhat}.
{More precisely,
\begin{align*}
|K_j(r)|
&\lesssim2^{-Mj}\hspace{1pt}(t\lambda)^{-N}\hspace{1pt}\lambda\,e^{-\frac r2}
\overbrace{\int_r^{\hspace{1pt}r+1}(s-r)^{-\frac12}\,ds}^{\lesssim\,1}
+\,2^{-Mj}\hspace{1pt}(t\lambda)^{-N}\hspace{1pt}\lambda\,
\overbrace{\int_{\hspace{1pt}r+1}^{\hspace{.5pt}\infty}e^{-\frac s2}\,ds}^{\lesssim\,e^{-\frac r2}}\\
&\lesssim2^{-Mj}\hspace{1pt}(t\lambda)^{-N}\hspace{1pt}\lambda\,e^{-\frac r2}\hspace{1pt}.
\end{align*}
}\end{proof}

\begin{corollary}\label{CorollaryH}
Let \hspace{1pt}$N\!\ge\hspace{-1pt}0$\hspace{1pt}.
Then the following kernel estimates hold,
for $r\ge0$, $t>0$ and $\lambda>1:$
\begin{equation*}
|K(r)|\lesssim(1\!+\hspace{-1pt}r)^{N+1}\hspace{1pt}e^{-\frac r2}
\times\begin{cases}
\,\lambda\,(1\!+\hspace{-1pt}t\lambda)^{-N}
&\text{if \,$0\hspace{-1pt}<\hspace{-1pt}t\hspace{-1pt}\le\!1$\hspace{.5pt},}\\
\,t^{-\frac32}\,\lambda^{-N}
&\text{if \,$t\hspace{-1pt}\ge\!1$\hspace{.5pt}.}\\
\end{cases}\end{equation*}
\end{corollary}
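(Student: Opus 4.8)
The plan is to sum the dyadic pieces $K=\sum_{j\ge0}K_j$ and feed in the two bounds of Lemma~\ref{LemmaH2}, using that the gain $2^{-Mj}$ is summable in $j$: one has $\sum_{j\ge0}2^{-Mj}\lesssim1$, and over the tail $\sum_{j:\,2^j\gtrsim\lambda}2^{-Mj}\lesssim\lambda^{-M}$. Since Lemma~\ref{LemmaH2} holds for arbitrary $M,N\in\mathbb{N}$, throughout I will keep $M\ge N+1$ and use the improved estimate \eqref{ImprovedKernelEstimateH2} with an exponent $N'\ge\max\{N+1,2\}$ in place of $N$; the point of these choices is that every residual power of $t$ or of $\lambda$ produced at the end carries a non-positive exponent, hence is $\lesssim1$ because $\lambda>1$ and, in the relevant regime, $t\ge1$ or $t\lambda>1$.

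Summing \eqref{KernelEstimateH2} by itself over $j$ already yields
\[
|K(r)|\lesssim(1+t)^{-3/2}\,e^{-r/2}\times\bigl(r\ \text{if $r,t$ are large};\ \lambda\ \text{otherwise}\bigr),
\]
which is the claimed bound whenever $t\lambda$ stays bounded --- in particular on the whole range $0<t\le1$ with $t\lambda\lesssim1$, where $(1+t\lambda)^{-N}\sim1$. It remains to treat the regime where $t\lambda$ is large, covering all $t\ge1$ and the rest of $0<t\le1$. There I split according to whether $r$ lies in the shell $[t\lambda,3t\lambda]$ excluded from \eqref{ImprovedKernelEstimateH2}. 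Off the shell, I split the $j$-sum at $2^{-j}\lambda\sim16$: for $2^{-j}\lambda>16$ the improved estimate \eqref{ImprovedKernelEstimateH2} applies and, after summation, contributes $\lesssim(t\lambda)^{-N'}\lambda\,e^{-r/2}$; for $2^{-j}\lambda\le16$, i.e. $2^j\gtrsim\lambda$, the basic estimate \eqref{KernelEstimateH2} combined with the tail bound $\sum_{2^j\gtrsim\lambda}2^{-Mj}\lesssim\lambda^{-M}$ contributes $\lesssim\lambda^{1-M}(1+t)^{-3/2}(1+r)\,e^{-r/2}$. Comparing both contributions with the right-hand side of the corollary and discarding the spare powers of $t,\lambda$ via $\lambda>1$ and $t\ge1$ (respectively $t\lambda>1$), the choices $M\ge N+1$, $N'\ge\max\{N+1,2\}$ close the estimate.

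On the shell $r\in[t\lambda,3t\lambda]$ the stationary-phase gain of \eqref{ImprovedKernelEstimateH2} is not available, and this is the one genuinely delicate point. Here I fall back on summing \eqref{KernelEstimateH2} over $j$, getting $|K(r)|\lesssim\lambda\,e^{-r/2}$ when $t\le1$ (the ``otherwise'' branch) and $|K(r)|\lesssim t^{-3/2}r\,e^{-r/2}$ when $t\ge1$ (the ``$r,t$ large'' branch, since then $r\ge t\lambda\ge\lambda>1$). The reason this suffices is that on the shell one has $1+t\lambda\le1+r$ always, and $\lambda\le r$ when $t\ge1$; hence the missing factor $(1+t\lambda)^{-N}$ in the case $t\le1$, and the missing $\lambda^{-N}$ together with the extra factor $r$ in the case $t\ge1$, are absorbed into $(1+r)^{N+1}$, with the Gaussian $e^{-r/2}$ left untouched --- exactly the form asserted. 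Everything else is bookkeeping: identifying which branch of \eqref{KernelEstimateH2} is active in each $(t,r)$ regime and summing a geometric series in $j$.
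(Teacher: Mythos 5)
Your proof is correct and follows essentially the same route as the paper: sum the basic estimate of Lemma~\ref{LemmaH2} to dispose of the regime where $t\lambda$ stays bounded, then for $t\lambda$ large split the dyadic sum at $2^{-j}\lambda\sim16$ off the shell $[t\lambda,3t\lambda]$ (improved estimate for small $j$, tail sum $\lesssim\lambda^{-M}$ for large $j$), and on the shell absorb the loss into the factor $(1+r)^{N+1}$ using $t\lambda\le r$. The paper phrases the shell step as multiplication by $(r/(t\lambda))^N\sim1$ rather than your $1+t\lambda\le1+r$ and $\lambda\le r$, but the bookkeeping is identical.
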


\begin{proof}
We use the notations of Lemma \ref{LemmaH2}.
By summing up \eqref{KernelEstimateH2} over $j\in\mathbb{N}$, we obtain
\begin{equation*}
|K(r)|\le\sum\nolimits_{j=0}^{\hspace{.5pt}\infty}|K_j(r)|
\lesssim(1\!+\hspace{-1pt}t)^{-\frac32}\,e^{-\frac r2}\times\begin{cases}
\,r
&\text{if $r\hspace{-1pt}\ge\!1$ and $t\hspace{-1pt}\ge\!1$\hspace{.5pt},}\\
\,\lambda
&\text{otherwise,}\\
\end{cases}\end{equation*}
which implies Corollary \ref{CorollaryH} when
$0\hspace{-1pt}<\hspace{-1pt}t\hspace{-1pt}\le\hspace{-1pt}\tfrac1\lambda$\hspace{1pt}.
When $1\!\le\hspace{-1pt}t\lambda\hspace{-1pt}\le\hspace{-1pt}r\hspace{-1pt}
\le\hspace{-1pt}3\hspace{1pt}t\lambda$\hspace{.5pt},
the above estimate yields
\begin{equation*}
|K(r)|\lesssim\bigl(\tfrac r{t\lambda}\bigr)^{\hspace{-.5pt}N}\hspace{1pt}e^{-\frac r2}
\times\begin{cases}
\,\lambda
&\text{if $\frac1\lambda\!\le\hspace{-1pt}t\hspace{-1pt}\le\!1$\hspace{.5pt},}\\
\,r
&\text{if $t\hspace{-1pt}\ge\!1$\hspace{.5pt}.}\\
\end{cases}\end{equation*}
In the remaining cases,
we split up
\begin{equation}\label{SplittedSum}
|K(r)|\le\underbrace{\sum\nolimits_{\hspace{1pt}2^j\le\frac\lambda{16}}|K_j(r)|}_{\Sigma_1}\,
+\,\underbrace{\sum\nolimits_{\hspace{1pt}2^j>\frac\lambda{16}}|K_j(r)|}_{\Sigma_2}
\end{equation}
and use again \eqref{KernelEstimateH2} to estimate the second sum in \eqref{SplittedSum}.
More precisely,
\begin{equation}\label{PartialEstimate1}
\Sigma_2\lesssim t^{-\frac32}\,r\,e^{-\frac r2}\hspace{1pt}\underbrace{
\sum\nolimits_{\hspace{1pt}2^j>\frac\lambda{16}}\!2^{-Mj}
}_{\lesssim\,\lambda^{-M}}
\lesssim t^{-\frac32}\,\lambda^{-M}\,r\,e^{-\frac r2}
\end{equation}
when \hspace{1pt}$r\hspace{.5pt},\hspace{1pt}t$ are both large and
\begin{equation}\label{PartialEstimate2}
\Sigma_2\lesssim(1\!+\hspace{-1pt}t\hspace{.5pt})^{-\frac32}\,e^{-\frac r2}\hspace{1pt}
\lambda\hspace{1pt}\underbrace{
\sum\nolimits_{\hspace{1pt}2^j>\frac\lambda{16}}\!2^{-(M+1)j}}_{\lesssim\,\lambda^{-M-1}}
\lesssim(1\!+\hspace{-1pt}t)^{-\frac32}\hspace{1pt}\lambda^{-M}\,e^{-\frac r2}
\end{equation}
otherwise.
On the other hand,
we use \eqref{ImprovedKernelEstimateH2}
to estimate the first sum in \eqref{SplittedSum}.
More precisely,
\begin{equation}\label{PartialEstimate3}
\Sigma_1\lesssim(t\lambda)^{-N}\hspace{1pt}\lambda\,
\underbrace{\sum\nolimits_{\hspace{1pt}2^j\le\frac\lambda{16}}\!2^{-Mj}}_{\lesssim\,1} \qquad \mbox{when $0\hspace{-1pt}\le\hspace{-1pt}r\hspace{-1pt}\le\!1$ and
$\tfrac1\lambda\!\le\hspace{-1pt}t\hspace{-1pt}\le\hspace{-1pt}1$}
\end{equation}
\begin{equation}\label{PartialEstimate4}
\Sigma_1\lesssim(t\lambda)^{-N-1}\,\lambda\,
\underbrace{\sum\nolimits_{\hspace{1pt}2^j\le\frac\lambda{16}}\!2^{-Mj}}_{\lesssim\,1}
\lesssim\,(t\lambda)^{-N} \qquad \mbox{when $0\hspace{-1pt}\le\hspace{-1pt}r\hspace{-1pt}\le\!1\!\le\hspace{-1pt}t$\hspace{.5pt}}
\end{equation}
\begin{equation}\label{PartialEstimate5}
\Sigma_1\lesssim(t\lambda)^{-N}\hspace{1pt}\lambda\,e^{-\frac r2}\,
\underbrace{\sum\nolimits_{\hspace{1pt}2^j\le\frac\lambda{16}}\!2^{-Mj}}_{\lesssim\,1} \qquad \mbox{when $r\hspace{-1pt}\ge\!1$, $t\hspace{-1pt}\ge\!\tfrac1\lambda$\hspace{.5pt},
$r\hspace{-1pt}\notin\hspace{-1pt}[\hspace{1pt}t\lambda,3\hspace{1pt}t\lambda\hspace{1pt}]$}.
\end{equation}
By combining \eqref{SplittedSum} with \eqref{PartialEstimate1}, \eqref{PartialEstimate2}, \eqref{PartialEstimate3}, \eqref{PartialEstimate4}, \eqref{PartialEstimate5},
we obtain the desired bounds when \hspace{1pt}$t\lambda\hspace{-1pt}\ge\!1$
\hspace{.5pt}and \hspace{1pt}$r\hspace{-1pt}\notin\hspace{-1pt}
[\hspace{1pt}t\lambda,3\hspace{1pt}t\lambda\hspace{1pt}]$\hspace{1pt}.
\end{proof}

\subsection{Dispersive and Strichartz estimates}

Corollary \ref{CorollaryH} implies the following dispersive estimates.

\begin{proposition}\label{DispersiveH}
Assume that \,$t\hspace{-1pt}>\hspace{-1pt}0$\hspace{1pt}, $\lambda\hspace{-1pt}>\!1$ and
\,$2\hspace{-1pt}<\hspace{-1pt}q\hspace{-.5pt}\le\hspace{-1pt}\infty$\hspace{1pt}.
Then, for every \,$N\hspace{-2pt}\ge\hspace{-1pt}0$\hspace{1pt},
\begin{equation*}
\|\hspace{1pt}e^{\hspace{.5pt}i\hspace{.75pt}t\hspace{.5pt}\Delta}\hspace{1pt}
\mathscr{P}_{\lambda\hspace{.5pt},1}\|_{L^{q'}\!\to L^q}\,
\lesssim\begin{cases}
\,\lambda^{1-\frac2q}\hspace{1pt}(1\!+\hspace{-1pt}t\lambda)^{-N}
&\textit{if \,$t$ is small\hspace{1pt},}\\
\,t^{-\frac32}\hspace{1pt}\lambda^{-N}
&\textit{if \,$t$ is large\hspace{1pt}.}
\end{cases}\end{equation*}
\end{proposition}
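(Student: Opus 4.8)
The plan is to regard $T:=e^{\hspace{.5pt}i\hspace{.75pt}t\hspace{.5pt}\Delta}\hspace{1pt}\mathscr{P}_{\lambda\hspace{.5pt},1}$ as a radial convolution operator on $\H$ with kernel $K$ — exactly the object estimated in Corollary~\ref{CorollaryH} — and to combine a trivial $L^2\!\to\!L^2$ bound with the $L^1\!\to\!L^\infty$ information it supplies, interpolating the two when $t$ is small but invoking the Kunze--Stein phenomenon (Lemma~\ref{KunzeSteinH}) when $t$ is large. As endpoints: on the one hand $T$ is the spherical Fourier multiplier with symbol $m$ described right before Corollary~\ref{CorollaryH}, so $\|T\|_{L^2\to L^2}\le\|m\|_{L^\infty}\le2\,\|\chi\|_{L^\infty}\lesssim1$; on the other hand, for a radial convolution kernel $\|T\|_{L^1\to L^\infty}=\|K\|_{L^\infty(\H)}$, and since the weight $(1+r)^{N+1}e^{-r/2}$ in Corollary~\ref{CorollaryH} is bounded uniformly in $r$, we get, for every $N\ge0$,
$$
\|K\|_{L^\infty(\H)}\lesssim_N\begin{cases}\lambda\,(1+t\lambda)^{-N}&\text{if }0<t\le1,\\ t^{-3/2}\,\lambda^{-N}&\text{if }t\ge1.\end{cases}
$$
This already proves the case $q=\infty$.

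For $2<q<\infty$ with $t$ small, I would interpolate (Riesz--Thorin) between $\|T\|_{L^1\to L^\infty}\lesssim\lambda\,(1+t\lambda)^{-N}$ and $\|T\|_{L^2\to L^2}\lesssim1$ with parameter $\theta=\tfrac2q$, obtaining $\|T\|_{L^{q'}\to L^q}\lesssim\lambda^{1-\frac2q}\,(1+t\lambda)^{-N(1-\frac2q)}$; since $q$ is fixed and $N$ is arbitrary, running the argument with $N$ replaced by $\bigl\lceil N/(1-\tfrac2q)\bigr\rceil$ yields the stated bound $\lambda^{1-\frac2q}(1+t\lambda)^{-N}$.

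For $2<q<\infty$ with $t$ large, naive interpolation would only produce $t^{-\frac32(1-\frac2q)}$, which loses the characteristic hyperbolic gain, so instead I would feed the pointwise bound $|K(r)|\lesssim(1+r)^{N+1}e^{-r/2}\,t^{-3/2}\lambda^{-N}$ from Corollary~\ref{CorollaryH} into Lemma~\ref{KunzeSteinH}. Using $\sinh r\le e^{r}$,
$$
\int_0^{\hspace{.5pt}\infty}\!|K(r)|^{\frac q2}\,(1+r)\,e^{-\frac r2}\,\sinh r\,dr\lesssim\bigl(t^{-3/2}\lambda^{-N}\bigr)^{\frac q2}\int_0^{\hspace{.5pt}\infty}\!(1+r)^{(N+1)\frac q2+1}\,e^{-(\tfrac q4-\tfrac12)r}\,dr,
$$
and the last integral converges precisely because $q>2$, to a finite constant depending only on $q$ and $N$. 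Raising to the power $\tfrac2q$ gives $\|T\|_{L^{q'}\to L^q}\lesssim t^{-3/2}\lambda^{-N}$, completing the proof.

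The only non-routine point is the large-time regime: one must resist interpolating with $L^2\!\to\!L^2$ — which would wash out the extra decay $t^{-1/2}$ stemming from the exponential volume growth of $\H$ — and instead use the Kunze--Stein inequality, whose range of validity is exactly the hypothesis $q>2$. Conversely, for small $t$ the interpolation with $L^2$ is what upgrades the (lossy) Kunze--Stein output $\lambda$ to the sharp power $\lambda^{1-\frac2q}$.
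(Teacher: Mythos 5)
Your proposal is correct and follows essentially the same route as the paper: interpolate the $L^1\!\to\!L^\infty$ kernel bound from Corollary~\ref{CorollaryH} against the trivial $L^2\!\to\!L^2$ bound when $t$ is small, and feed the pointwise kernel estimate into the Kunze--Stein inequality (Lemma~\ref{KunzeSteinH}) when $t$ is large. Your extra remarks — replacing $N$ by $\lceil N/(1-\tfrac2q)\rceil$ to recover the stated exponent after interpolation, and the observation that Kunze--Stein alone would only give $\lambda$ rather than $\lambda^{1-2/q}$ for small $t$ — fill in details the paper leaves implicit, but the structure is identical.
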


\begin{remark}
Compared to other dispersive estimates available in the literature, note that our frequency localization here is of a different type. Indeed the window has size $1$ around $\lambda$.
\end{remark}

\begin{proof}
(a) Assume that $t$ is small,
say $0\hspace{-1pt}<\hspace{-1pt}t\hspace{-1pt}\le\!1$\hspace{1pt}.
Then the $L^{q'}\hspace{-3pt}\to\hspace{-2pt}L^q$ estimate is obtained by interpolation
between the trivial $L^2\hspace{-2.5pt}\to\hspace{-2pt}L^2$ estimate
\begin{equation*}
\|\hspace{1pt}e^{\hspace{.5pt}i\hspace{.75pt}t\hspace{.5pt}\Delta}\hspace{1pt}
\mathscr{P}_{\lambda\hspace{.5pt},1}\|_{L^2\to L^2}\le1
\end{equation*}
and the $L^1\hspace{-2.5pt}\to\hspace{-2pt}L^\infty$ estimate
\begin{equation*}
\|\hspace{1pt}e^{\hspace{.5pt}i\hspace{.75pt}t\hspace{.5pt}\Delta}\hspace{1pt}
\mathscr{P}_{\lambda\hspace{.5pt},1}\|_{L^1\to L^\infty}
\lesssim\lambda\hspace{1pt}(1\!+\hspace{-1pt}t\lambda)^{-N},
\end{equation*}
which follows from Corollary \ref{CorollaryH}.

\noindent
(b) Assume that $t$ is large, say $t\hspace{-1pt}\ge\!1$\hspace{1pt}.
On the one hand, the $L^1\hspace{-2.5pt}\to\hspace{-2pt}L^\infty$ estimate is
an immediate consequence of Corollary \ref{CorollaryH}.
On the other hand, the $L^{q'}\hspace{-3pt}\to\hspace{-2pt}L^q$
estimate for $2\hspace{-1pt}<\hspace{-1pt}q\hspace{-1pt}<\hspace{-1pt}\infty$
follows from the Kunze--Stein phenomenon on $\H$, as stated in Lemma \ref{KunzeSteinH},
combined with Corollary \ref{CorollaryH}.
\end{proof}

Proposition \ref{DispersiveH} imply in turn the following Strichartz estimate.

\begin{proposition}\label{StrichartzH}
Let \,$2\hspace{-1pt}\le\hspace{-1pt}p\hspace{-1pt}\le\hspace{-1pt}\infty$
and \,$2\hspace{-1pt}<\hspace{-1pt}q\hspace{-1pt}\le\hspace{-1pt}\infty$\hspace{1pt}.
Then, for every \,$\lambda\hspace{-1pt}>\!1$ and $f\!\in\!L^2(\H)$,
\begin{equation*}
\|\hspace{1pt}e^{\hspace{.5pt}i\hspace{.75pt}t\hspace{.5pt}\Delta}\hspace{1pt}
\mathscr{P}_{\lambda\hspace{.5pt},1}f\hspace{1pt}\|_{L_t^pL_x^q}
\lesssim\lambda^{\frac12-\frac1p-\frac1q}\,\|f\|_{L^2}\hspace{1pt}.
\end{equation*}
\end{proposition}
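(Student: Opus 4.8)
The plan is to derive the proposition from Proposition~\ref{DispersiveH} by the classical $TT^*$ argument. Write $Tf=e^{it\Delta}\mathscr{P}_{\lambda,1}f$, viewed as an operator $L^2(\H)\to L^p_tL^q_x$; by duality, since $\|T\|^2=\|TT^*\|_{L^{p'}_tL^{q'}_x\to L^p_tL^q_x}$, it suffices to prove
\[
\Bigl\|\,\int_{\R}e^{i(t-s)\Delta}\,\mathscr{P}_{\lambda,1}^2\,G(s)\,ds\,\Bigr\|_{L^p_tL^q_x}\lesssim\lambda^{1-\frac2p-\frac2q}\,\|G\|_{L^{p'}_tL^{q'}_x},
\]
where we used that $\mathscr{P}_{\lambda,1}$ is self-adjoint and commutes with the Schr\"odinger group. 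First I would observe that $\mathscr{P}_{\lambda,1}^2$ is, up to a Fourier multiplier in $D$ whose even Schwartz symbol is $\text{O}_M(\lambda^{-M})$ in every seminorm (the ``cross term'' $2\chi(\,\cdot\,-\lambda)\chi(\,\cdot\,+\lambda)$), again of the type treated in Section~\ref{DispersiveStrichartzH} with $\chi$ replaced by the even bump $\chi^2$; hence Corollary~\ref{CorollaryH} and Proposition~\ref{DispersiveH} apply to it, giving, for a fixed large $N$,
\[
\bigl\|\,e^{i\tau\Delta}\,\mathscr{P}_{\lambda,1}^2\,\bigr\|_{L^{q'}\to L^q}\le D(\tau):=\begin{cases}\lambda^{1-\frac2q}\,(1+|\tau|\lambda)^{-N}&\text{if }|\tau|\le1,\\[2pt]|\tau|^{-3/2}\,\lambda^{-N}&\text{if }|\tau|\ge1.\end{cases}
\]

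Next, by Minkowski's integral inequality the inner $L^q_x$ norm of the integral above is bounded by $(D*g)(t)$ with $g(s):=\|G(s)\|_{L^{q'}_x}$, and then Young's convolution inequality on $\R$ (legitimate since $p\ge2$, as $\tfrac1{p/2}+\tfrac1{p'}=1+\tfrac1p$) yields
\[
\Bigl\|\,\int_{\R}e^{i(t-s)\Delta}\,\mathscr{P}_{\lambda,1}^2\,G(s)\,ds\,\Bigr\|_{L^p_tL^q_x}\le\|D\|_{L^{p/2}(\R)}\,\|G\|_{L^{p'}_tL^{q'}_x}.
\]
So everything reduces to the elementary estimate $\|D\|_{L^{p/2}(\R)}\lesssim\lambda^{1-\frac2p-\frac2q}$. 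For $p<\infty$ the part $|\tau|\le1$ contributes $\lambda^{(1-\frac2q)\frac p2}\int_{|\tau|\le1}(1+|\tau|\lambda)^{-Np/2}\,d\tau\sim\lambda^{(1-\frac2q)\frac p2-1}$, the factor $\lambda^{-1}$ coming from the substitution $u=|\tau|\lambda$ (using $Np/2>1$), while the part $|\tau|\ge1$ contributes $\lambda^{-Np/2}\int_1^\infty\tau^{-3p/4}\,d\tau\lesssim\lambda^{-Np/2}$, which is negligible once $N$ is large; raising to the power $2/p$ gives exactly $\lambda^{(1-\frac2q)-\frac2p}$. For $p=\infty$ one simply has $\|D\|_{L^\infty}=\lambda^{1-\frac2q}$, again the desired value. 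This finishes the argument.

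I do not expect a serious obstacle here: the proof is $TT^*$ plus bookkeeping, and Young's inequality suffices because we only want the (non-retarded) full-line estimate, so no Christ--Kiselev or endpoint machinery enters; in particular $p\ge2$ is exactly the condition $p/2\ge1$, and $q>2$ is needed so that Proposition~\ref{DispersiveH} provides genuine $L^{q'}\to L^q$ decay. The one conceptual point worth stressing is that, since $\H$ carries no exact parabolic scaling, one works directly with the two-regime weight $D(\tau)$ rather than a single power of $|\tau|$: the $L^{p/2}$-mass of $D$ is concentrated in $|\tau|\lesssim\lambda^{-1}$, where the amplitude $\lambda^{1-2/q}$ and the rapid decay $(1+|\tau|\lambda)^{-N}$ combine to produce the gain $\lambda^{-2/p}$. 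This near-$t=0$ regime plays the role of the $|\tau|^{-d/2}$ singularity in Euclidean Strichartz estimates, while the genuinely dispersive tail $|\tau|\ge1$ is irrelevant at this frequency localization.
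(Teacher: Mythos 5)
Your proof is correct and follows essentially the same route as the paper: a $TT^*$ reduction, the dispersive bound from Proposition~\ref{DispersiveH}, Minkowski/H\"older, and Young's inequality with a short computation of $\|D\|_{L^{p/2}}$. The one point you handle more explicitly than the paper is the observation that $\mathscr{P}_{\lambda,1}^2$ satisfies the same dispersive bound as $\mathscr{P}_{\lambda,1}$ — by expanding $[\chi(\cdot-\lambda)+\chi(\cdot+\lambda)]^2$ into terms of the same form with $\chi$ replaced by $\chi^2$ plus a negligible cross term — which the paper applies tacitly.
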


\begin{proof}
By the standard $TT^*$ argument, it suffices to prove the dual estimate
\begin{equation}\label{DualStrichartzH2}
\Bigl\|\hspace{1pt}\int_{-\infty}^{\hspace{.5pt}\infty}
e^{\hspace{.5pt}i\hspace{.75pt}t\hspace{.5pt}\Delta}\hspace{1pt}
\mathscr{P}_{\lambda\hspace{.5pt},1}F(t,\hspace{1pt}\cdot\,)\,dt\,\Bigr\|_{L^2}
\lesssim\lambda^{\frac12-\frac1p-\frac1q}\,\|F\|_{L_t^{P^{\hspace{1pt}\prime}}\!L_x^{q'}}\hspace{1pt}.
\end{equation}
The square of the left hand side of \eqref{DualStrichartzH2} is equal to
\begin{equation}\label{TripleIntegral}
\int_{-\infty}^{\hspace{.5pt}\infty}\int_{-\infty}^{\hspace{.5pt}\infty}
\int_\H\,\bigl[e^{\hspace{.5pt}i\hspace{.5pt}(s-t)\hspace{.25pt}\Delta}
\mathscr{P}^2_{\lambda\hspace{.5pt},1}F(s,\hspace{1pt}\cdot\,)\bigr]\hspace{1pt}
\overline{F(t,x)}\,dx\,ds\,dt\hspace{1pt}.
\end{equation}
Denote by
\begin{equation*}
B(t)=\begin{cases}
\,\lambda^{1-\frac2q}\,(1\!+\hspace{-1pt}\lambda|t|)^{-N}
&\textit{if \,$|t|\hspace{-1pt}\le\!1$}\\
\,\lambda^{-N}\hspace{1pt}|t|^{-\frac32}
&\textit{if \,$|t|\hspace{-1pt}>\!1$}
\end{cases}\end{equation*}
the bound obtained in Proposition \ref{DispersiveH} with $N\!>\!1$
and notice that
\begin{equation*}
\|B\|_{L^r}\lesssim\lambda^{1-\frac2q-\frac1r}
\qquad\forall\,r\hspace{-1pt}\ge\hspace{-1pt}1\hspace{1pt}.
\end{equation*}
By applying successively H\"older's inequality,
Proposition \ref{DispersiveH} and Young's inequality,
we estimate \eqref{TripleIntegral} by
\begin{align*}
&\int_{-\infty}^{\hspace{.5pt}\infty}\int_{-\infty}^{\hspace{.5pt}\infty}
\bigl\|\hspace{.5pt}e^{\hspace{.5pt}i\hspace{.5pt}(s\hspace{.25pt}-t)\hspace{.25pt}\Delta}\hspace{1pt}
\mathscr{P}_{\lambda\hspace{.5pt},1}^2F(s,\hspace{1pt}\cdot\,)\hspace{.5pt}\bigr\|_{L^q}
\hspace{1pt}\|F(t,\hspace{1pt}\cdot\,)\|_{L^{q'}}\hspace{1pt}ds\,dt\\
&\lesssim\int_{-\infty}^{\hspace{.5pt}\infty}\int_{-\infty}^{\hspace{.5pt}\infty}
\!B(s\hspace{-1pt}-\hspace{-1pt}t)\,\bigl\|F(s,\hspace{1pt}\cdot\,)\hspace{.5pt}\bigr\|_{L^{q'}}
\hspace{1pt}\|F(t,\hspace{1pt}\cdot\,)\|_{L^{q'}}\hspace{1pt}ds\,dt\\
&\lesssim\|B\|_{L^{p/2}}\,\|F\|_{L^{p^{\hspace{.5pt}\prime}}\!L^{q'}}^2
\lesssim\lambda^{1-\frac2p-\frac2q}\,\|F\|_{L^{p^{\hspace{.5pt}\prime}}\!L^{q'}}^2.
\end{align*}
\end{proof}

\begin{remark}
The results in this section extend again straightforwardly to all hyperbolic spaces $X$
(and even more generally to Damek--Ricci spaces).
In this case (\footnote{\,As usual,\qquad
$\rho\hspace{1pt}=\begin{cases}
\hspace{1pt}\frac{n-1}2\\
\,n\\
\,2\hspace{1pt}n\hspace{-1pt}+\!1\\
\,11\\
\end{cases}$\quad
and\qquad
$d\hspace{1pt}=\begin{cases}
\,n\\
\,2\hspace{1pt}n\\
\,4\hspace{1pt}n\\
\,16\\
\end{cases}$\quad
if\qquad
$\begin{cases}
\hspace{1pt}X\hspace{-1pt}=\H^n\hspace{-1pt}=\text{H}^n(\R)\hspace{.5pt},\\
\hspace{1pt}X\hspace{-1pt}=\text{H}^n(\C)\hspace{.5pt},\\
\hspace{1pt}X\hspace{-1pt}=\text{H}^n(\H)\hspace{.5pt},\\
\hspace{1pt}X\hspace{-1pt}=\text{H}^{\hspace{.5pt}2}(\mathbb{O})\hspace{.5pt}.\\
\end{cases}$}\hspace{.5pt})\hspace{.5pt},
Corollary \ref{CorollaryH} reads
\begin{equation*}
|K(r)|\lesssim(1\!+\hspace{-1pt}r)^{N+1}\hspace{1pt}e^{-\rho\hspace{1pt}r}
\times\begin{cases}
\,\lambda^{\frac{d-1}2}\,(1\!+\hspace{-1pt}t\lambda)^{-N}
&\text{if \,$0\hspace{-1pt}<\hspace{-1pt}t\hspace{-1pt}\le\!1$\hspace{.5pt},}\\
\,t^{-\frac32}\,\lambda^{-N}
&\text{if \,$t\hspace{-1pt}\ge\!1$\hspace{.5pt},}\\
\end{cases}\end{equation*}
Proposition \ref{DispersiveH}
\begin{equation*}
\|\hspace{1pt}e^{\hspace{.5pt}i\hspace{.75pt}t\hspace{.5pt}\Delta}\hspace{1pt}
\mathscr{P}_{\lambda\hspace{.5pt},1}\|_{L^{q'}\!\to L^q}\,
\lesssim\begin{cases}
\,\lambda^{(d\hspace{.5pt}-1)\hspace{.5pt}(1-\frac2q)}
\hspace{1pt}(1\!+\hspace{-1pt}t\lambda)^{-N}
&\text{if \,$t$ is small}\\
\,t^{-\frac32}\hspace{1pt}\lambda^{-N}
&\text{if \,$t$ is large}
\end{cases}\end{equation*}
and Proposition \ref{StrichartzH}
\begin{equation*}
\|\hspace{1pt}e^{\hspace{.5pt}i\hspace{.75pt}t\hspace{.5pt}\Delta}\hspace{1pt}
\mathscr{P}_{\lambda\hspace{.5pt},1}f\hspace{1pt}\|_{L_t^pL_x^q}
\lesssim\lambda^{(d\hspace{.5pt}-1)\hspace{.5pt}(\frac12-\frac1q)\hspace{.5pt}-\frac1p}\,\|f\|_{L^2}\,.
\end{equation*}
\end{remark}

\section{Convex cocompact hyperbolic surfaces with $0\le\delta<\frac12$}\label{SurfacesSmallDelta}

In this section, we consider hyperbolic cylinders and, more generally,
quotients $X\!=\hspace{-1pt}\Gamma\backslash\mathbb{H}$
where $\Gamma$ is a Fuchsian group such that
\begin{itemize}
\item
$\Gamma$ is torsion free,
\item
$\Gamma$ is convex cocompact,
\item
$\Gamma$ has exponent of convergence $\delta<\frac12$
(\footnote{\,Recall that $\delta\hspace{-1pt}=\hspace{-1pt}0$ for hyperbolic cylinders
and that the assumption $\delta\hspace{-1pt}<\hspace{-1pt}\frac12$ excludes cusps.}).
\end{itemize}
In this case, recall \cite{Borthwick2016} that
the spectrum of $\Delta$ on $L^2(X)$ is equal to $[\tfrac14,+\infty)$.
In order to distinguish them from their previous counterparts on $\H$,
we add a superscript $X$ to denote
the spectral projectors and their kernels on $X$.

In this section we show how carefully tracking the dependence on $r$ in the pointwise estimates of the kernels above now allows us to easily generalize our results to the quotients $X.$

\subsection{Boundedness of spectral projectors}\label{SubsectionSpectralProjectorsPressureCondition}
Let us prove that the spectral projectors on the hyperbolic surface $X$
enjoy the same bounds as on the hyperbolic plane $\H$.

\begin{theorem}[high frequency]\label{L2LpBoundsProjectorsX}
For every \,$0<\eta<1<\lambda$ and \,$p>2$,
\begin{equation*}
\|P_{\lambda,\eta}^{\hspace{1pt}X}\|_{L^2\to L^p}
\lesssim\bigl(1\!+\hspace{-1.5pt}\tfrac1{\sqrt{p-2}}\bigr)\,\lambda^{\gamma(p)}\,\eta^\frac12\hspace{1pt}.
\end{equation*}
\end{theorem}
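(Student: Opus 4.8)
The plan is to transcribe the proof of Theorem~\ref{L2LpBoundsProjectorsH} word for word to $X$, the only genuinely new ingredient being a uniform bound on a Poincar\'e-type series. Recall from Subsection~\ref{FTX} that if $m$ is an even bounded function, then the kernel of $m(D_X)$ on $X$ is the $\Gamma$-periodization $K_X(z,z')=\sum_{\gamma\in\Gamma}K_{\mathbb H}(d(z,\gamma z'))$ of the kernel $K_{\mathbb H}$ of $m(D)$ on $\mathbb H$; in particular $\|m(D_X)\|_{L^1\to L^\infty}=\sup_{z,z'}|K_X(z,z')|\le\sup_{z,z'}\sum_{\gamma}|K_{\mathbb H}(d(z,\gamma z'))|$. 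Since $\delta<\frac12$, we may fix $\epsilon_0=\epsilon_0(\Gamma)\in(0,1-2\delta)$, so that $\frac{1-\epsilon_0}2>\delta$; the first step would be to record that, because $\Gamma$ is convex cocompact,
$$
\Lambda:=\sup_{z,z'\in\mathbb H}\ \sum_{\gamma\in\Gamma}(1+d(z,\gamma z'))\,e^{-\frac{1-\epsilon_0}{2}\,d(z,\gamma z')}\ <\ \infty\,,
$$
together with the uniform orbit count $\#\{\gamma:d(z,\gamma z')\le2\}\lesssim_\Gamma1$. Both follow from the convergence of the Poincar\'e series at any exponent $s>\delta$, uniformly over $\mathbb H\times\mathbb H$, which is standard for convex cocompact groups (see~\cite{Borthwick2016}); this is the only place where the hypotheses $\delta<\frac12$ and convex cocompactness are used. (Also, for $\delta<\frac12$ the spectrum of $\Delta_X$ on $L^2(X)$ is $[\frac14,\infty)$ with no eigenvalues, so $P^{\hspace{1pt}X}_{\lambda,\eta}=\mathbf{1}_{[\lambda-\eta,\lambda+\eta]}(D_X)$ is a genuine self-adjoint idempotent and the $TT^*$ argument applies as on $\mathbb H$.)

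Next I would reproduce step (1) of the proof of Theorem~\ref{L2LpBoundsProjectorsH}, replacing $D$ by $D_X$. With $\chi,\psi$ as there, set $\mathscr P^{\hspace{1pt}X}_{\lambda,\eta}=\chi(\tfrac{D_X-\lambda}\eta)+\chi(\tfrac{D_X+\lambda}\eta)$ and $\mathscr Q^{\hspace{1pt}X}_{\lambda,\eta}=\psi(\tfrac{D_X-\lambda}\eta)+\psi(\tfrac{D_X+\lambda}\eta)$; their kernels are the $\Gamma$-periodizations of $p_{\lambda,\eta}$ and $q_{\lambda,\eta}$. Lemma~\ref{busard} gives $|p_{\lambda,\eta}(r)|\lesssim\lambda\,\eta\,e^{-\frac{1-\epsilon_0}{2}r}$ and $|q_{\lambda,\eta}(r)|\lesssim_{\epsilon_0}\lambda^{\frac12}\,\eta\,(1+\eta)^{\frac12}\,e^{-\epsilon_0/(8\eta)}\,e^{-\frac{1-\epsilon_0}{2}r}$ for all $r\ge0$ — using precisely the exponent $\epsilon=\epsilon_0$ — so periodizing and invoking $\Lambda<\infty$ yields
$$
\|\mathscr P^{\hspace{1pt}X}_{\lambda,\eta}\|_{L^1\to L^\infty}\lesssim_\Gamma\lambda\,\eta\,,
\qquad
\|\mathscr Q^{\hspace{1pt}X}_{\lambda,\eta}\|_{L^1\to L^\infty}\lesssim_\Gamma\lambda^{\frac12}\,\eta\,(1+\eta)^{\frac12}\,e^{-\epsilon_0/(8\eta)}\,,
$$
i.e. the bounds used on $\mathbb H$ with $\epsilon=\epsilon_0$ fixed. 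Interpolating with the trivial $L^2\to L^2$ bounds (functional calculus, since $\chi,\psi$ are bounded) reproduces~\eqref{FirstEstimateLpprimeLp} verbatim on $X$.

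For step (2), the dyadic decomposition~\eqref{KeyFormula} is an identity between functions of $D_X$, hence holds verbatim, and the summations~\eqref{FirstTerm},~\eqref{SigmaZero},~\eqref{SigmaInfinity} go through unchanged, giving $\|\mathbf P^{\hspace{1pt}X}_\lambda\|_{L^{p^{\hspace{.5pt}\prime}}\to L^p}\lesssim_\Gamma\tfrac1{p-2}\lambda^{\frac12-\frac1p}$ for $2<p<6$, $\lesssim_\Gamma(1+\log\lambda)\lambda^{\frac13}$ for $p=6$, and $\lesssim_\Gamma\lambda^{1-\frac4p}$ for $p>6$. The logarithm at $p=6$ is removed exactly as on $\mathbb H$, via Stein interpolation applied to the analytic family $R^{\hspace{1pt}X}_{\lambda,z}=(2^z-\tfrac12)\sum_{k_0<k\le0}2^{(z+1)k}\mathscr Q^{\hspace{1pt}X}_{\lambda,2^{-k}}$ (the analogue of~\eqref{AnalyticFamily}): on $\Re z=-1$ the estimate only involves the scalar multiplier $\theta_{k_0,\Im z}$ and is unchanged, while on $\Re z=\frac12$ the $L^1\to L^\infty$ norm is controlled by $\|\sigma^{\hspace{1pt}X}_{\lambda,\Im z}\|_{L^\infty}$, whose kernel is the periodization of $\sigma_{\lambda,y}$; since $|\sigma_{\lambda,y}(r)|\lesssim\lambda^{\frac12}\mathbf{1}_{\{r\le2\}}$ by~\eqref{KernelEstimateSigmalambday} and $\#\{\gamma:d(z,\gamma z')\le2\}\lesssim_\Gamma1$, we get $\|\sigma^{\hspace{1pt}X}_{\lambda,\Im z}\|_{L^\infty}\lesssim_\Gamma\lambda^{\frac12}=\mathrm{O}(\sqrt\lambda)$, as required.

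Finally, from $\|\mathbf P^{\hspace{1pt}X}_\mu\|_{L^{p^{\hspace{.5pt}\prime}}\to L^p}\lesssim_\Gamma(1+\tfrac1{p-2})\mu^{2\gamma(p)}$ and $P^{\hspace{1pt}X}_{\lambda,\eta}=\int_{\lambda-\eta}^{\lambda+\eta}\mathbf P^{\hspace{1pt}X}_\mu\,d\mu$, Minkowski's integral inequality gives $\|P^{\hspace{1pt}X}_{\lambda,\eta}\|_{L^{p^{\hspace{.5pt}\prime}}\to L^p}\lesssim_\Gamma(1+\tfrac1{p-2})\lambda^{2\gamma(p)}\eta$ (the possible range $\mu<1$, which occurs only when $\lambda$ is of order one, being absorbed by the low-frequency bounds — the analogue of Proposition~\ref{LambdaSmallH} transfers to $X$ by the same periodization of $|p_{\lambda,\eta}(r)|\lesssim(\lambda^2+\eta^2)\eta(1+r)e^{-r/2}$). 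The $TT^*$ identity $\|P^{\hspace{1pt}X}_{\lambda,\eta}\|_{L^2\to L^p}^2=\|P^{\hspace{1pt}X}_{\lambda,\eta}\|_{L^{p^{\hspace{.5pt}\prime}}\to L^p}$ then yields the claimed bound with constant $1+\tfrac1{\sqrt{p-2}}$. The main (and essentially only) obstacle is establishing $\Lambda<\infty$ uniformly in $z,z'$; everything else is the $\mathbb H$ argument, made possible precisely because Lemma~\ref{busard} (and Corollary~\ref{CorollaryH}, for the dispersive part treated elsewhere) tracks the $e^{-r/2}$-decay of all the kernels involved.
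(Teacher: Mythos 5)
Your proposal is correct and follows essentially the same route as the paper: periodize the kernels of Lemma~\ref{busard} over $\Gamma$, use the uniform boundedness of the Poincar\'e series for convex cocompact groups at any exponent $s\in(\delta,\tfrac12)$ (the paper invokes Lemma~3.3 of Zhang~\cite{Zhang2019} for this) together with the attendant uniform orbit count for the compactly supported kernel $\sigma_{\lambda,y}$, and then rerun steps (1)--(3) of Theorem~\ref{L2LpBoundsProjectorsH} verbatim, including the Stein interpolation at $p=6$. The only cosmetic difference is that you record the polynomially weighted Poincar\'e sum $\Lambda$ up front; the paper instead states the resulting global kernel bounds on $p_{\lambda,\eta}^X$, $q_{\lambda,\eta}^X$, $\sigma_{\lambda,y}^X$ directly and then says ``proceed as before.''
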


\begin{remark}
We showed in Section \ref{lowerb} that this bound is sharp for the hyperbolic space.
\end{remark}

\begin{proof}
We resume the proof of Theorem \ref{L2LpBoundsProjectorsH} and its notation.
The kernel of $\mathscr{P}_{\lambda,\eta}^{\hspace{1pt}X}$ is given by
\begin{equation*}
p_{\lambda,\eta}^{\hspace{1pt}X}(\Gamma x,\Gamma x')
=\sum\nolimits_{\gamma\in\Gamma}p_{\lambda,\eta}(d(\gamma x,x'))
\end{equation*}
and similarly for $q_{\lambda,\eta}^{\hspace{1pt}X}$, $\sigma_{\lambda,y}^{\hspace{1pt}X}$.
Notice that these series converge absolutely,
owing to the exponential decay $e^{-r/2}$ in the kernel estimates in Lemma \ref{busard}
and to the assumption $\delta<\tfrac12$, which ensures the convergence of the Poincar\'e series
\begin{equation}\label{PoincareSeries}
\sum\nolimits_{\gamma\in\Gamma}e^{-s\hspace{.2mm}d(\gamma x,x')}
\quad\forall\,x,x'\!\in\H,
\end{equation}
for any \hspace{1pt}$\delta\hspace{-1pt}<\hspace{-1pt}s\hspace{-1pt}<\hspace{-1pt}\frac12$\hspace{1pt}.
Actually, under the additional assumption of convex cocompactness,
\eqref{PoincareSeries} is uniformly bounded in $x$ and $x'$
(see for instance \cite[Lemma 3.3]{Zhang2019}).
Thus Lemma \ref{busard} yields the following global kernel bounds,
for $\lambda\hspace{-1pt}>\!1$, $0\hspace{-1pt}<\hspace{-1pt}\eta\hspace{-1pt}\le\hspace{-1pt}\lambda$,
$x,x'\!\in\!X$, $\delta\hspace{-1pt}<\hspace{-1pt}s\hspace{-1pt}<\hspace{-1pt}\tfrac12$ and
$y\hspace{-1pt}\in\hspace{-1pt}\R:$
\vspace{-1mm}
\begin{equation*}\begin{cases}
\,\bigl|p_{\lambda,\eta}^{\hspace{1pt}X}(x,x')\bigr|
\lesssim\lambda\,\eta\,,\\
\,\bigl|q_{\lambda,\eta}^{\hspace{1pt}X}(x,x')\bigr|
\lesssim_{\hspace{1pt}s}\lambda^{\frac12}\,\eta\,(1\!+\hspace{-1pt}\eta)^{\frac12}\,e^{-\frac{1-2s}{8\eta}},\\
\,\bigl|\sigma_{\lambda,y}^{\hspace{1pt}X}(x,x')\bigr|
\lesssim\lambda^{\frac12}\,.
\end{cases}\end{equation*}
\vspace{-1mm}

\noindent
Then we proceed as in the proof of Theorem \ref{L2LpBoundsProjectorsH}.
\end{proof}

\begin{proposition}[low frequency]\label{LambdaSmallX}
When $\lambda$ and $\eta$ are both small,
say \,$0\hspace{-1pt}\le\hspace{-1pt}\lambda\hspace{-1pt}\le\!1$
and \,$0\hspace{-1pt}<\hspace{-1pt}\eta\hspace{-1pt}<\!1$,
we have
\begin{equation*}
\bigl\|P_{\lambda,\eta}^X\bigr\|_{L^2\to L^p}
\lesssim(\lambda\hspace{-1pt}+\hspace{-1pt}\eta)\,\eta^{\frac12}
\quad\text{and}\quad
\bigl\|\mathscr{P}_{\lambda,\eta}^X\bigr\|_{L^2\!\to L^p}
\lesssim(\lambda\hspace{-1pt}+\hspace{-1pt}\eta)\,\eta^{\frac12}
\end{equation*}
for \,$2\hspace{-1pt}<\hspace{-1pt}p\hspace{-1pt}\le\hspace{-1pt}\infty$\hspace{1pt}.
\end{proposition}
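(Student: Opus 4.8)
The plan is to transcribe the proof of Proposition~\ref{LambdaSmallH} to the quotient, the only genuinely new ingredient being a Kunze--Stein inequality on $X$. Write $p_{\lambda,\eta}^{X}(\Gamma x,\Gamma x')=\sum_{\gamma\in\Gamma}p_{\lambda,\eta}(d(\gamma x,x'))$ for the convolution kernel of $\mathscr{P}_{\lambda,\eta}^{X}$, and similarly for $P_{\lambda,\eta}^{X}$. The proof of Proposition~\ref{LambdaSmallH} supplies the pointwise bound $|p_{\lambda,\eta}(r)|\lesssim(\lambda^{2}+\eta^{2})\,\eta\,(1+r)\,e^{-r/2}$, valid for all $r\ge0$ when $\lambda,\eta\le1$. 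Fixing $s$ with $\delta<s<\tfrac12$, using $(1+r)e^{-r/2}\lesssim_{s}e^{-sr}$, and invoking the uniform boundedness of the Poincar\'e series $\sum_{\gamma\in\Gamma}e^{-s\,d(\gamma x,x')}$ on convex cocompact quotients --- exactly as in the proof of Theorem~\ref{L2LpBoundsProjectorsX}, via \cite[Lemma~3.3]{Zhang2019} --- I would record the uniform bound $|p_{\lambda,\eta}^{X}(x,x')|\lesssim(\lambda^{2}+\eta^{2})\,\eta$ on $X\times X$ (and, organising the $\Gamma$-orbit by shells around the displacement-minimising element, the sharper $|p_{\lambda,\eta}^{X}(x,x')|\lesssim_{s}(\lambda^{2}+\eta^{2})\,\eta\,e^{-(s-\delta)\,d_{X}(x,x')}$).

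From the uniform bound, $\|\mathscr{P}_{\lambda,\eta}^{X}\|_{L^{1}\to L^{\infty}}=\|p_{\lambda,\eta}^{X}\|_{L^{\infty}}\lesssim(\lambda^{2}+\eta^{2})\,\eta\sim(\lambda+\eta)^{2}\eta$, which settles $p=\infty$; by the $TT^{*}$ argument this gives $\|\mathscr{P}_{\lambda,\eta}^{X}\|_{L^{2}\to L^{\infty}}\lesssim(\lambda+\eta)\,\eta^{1/2}$. For $2<p<\infty$ I would instead use a Kunze--Stein inequality on $X$ of the same strength as Lemma~\ref{KunzeSteinH} --- whose validity for convex cocompact $\Gamma$ with $\delta<\tfrac12$ rests on the spectral gap $\operatorname{spec}_{L^{2}(X)}\Delta=[\tfrac14,\infty)$ --- controlling the operator with periodised kernel $p_{\lambda,\eta}^{X}$ by the $\H$-Plancherel-weighted quantity $\bigl(\int_{0}^{\infty}|p_{\lambda,\eta}(r)|^{p/2}\,(1+r)\,e^{-r/2}\sinh r\,dr\bigr)^{2/p}$. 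Inserting the $\H$-kernel bound, the integral $\int_{0}^{\infty}\bigl[(\lambda^{2}+\eta^{2})\,\eta\,(1+r)e^{-r/2}\bigr]^{p/2}(1+r)\,e^{-r/2}\sinh r\,dr$ converges precisely because $p>2$ and is $\lesssim_{p}\bigl[(\lambda^{2}+\eta^{2})\,\eta\bigr]^{p/2}$; hence $\|\mathscr{P}_{\lambda,\eta}^{X}\|_{L^{p^{\prime}}\to L^{p}}\lesssim(\lambda+\eta)^{2}\eta$ and, by $TT^{*}$, $\|\mathscr{P}_{\lambda,\eta}^{X}\|_{L^{2}\to L^{p}}\lesssim(\lambda+\eta)\,\eta^{1/2}$. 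Finally the bounds for $P_{\lambda,\eta}^{X}$ follow from those for $\mathscr{P}_{\lambda,\eta}^{X}$ as in the first step of Appendix~\ref{AppendixSogge}, or directly by rerunning the argument with $\widehat{\chi}$ compactly supported, using the analogous $\H$-kernel bound from Proposition~\ref{LambdaSmallH}.

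The one non-routine point is the Kunze--Stein inequality on the quotient. The pointwise kernel bound is imported from $\H$ for free and the case $p=\infty$ is immediate, but for $p$ close to $2$ the undegraded exponent $(\lambda+\eta)^{2}\eta$ cannot be obtained by interpolating $L^{1}\to L^{\infty}$ with the $O(1)$ bound $L^{2}\to L^{2}$: a tool of Kunze--Stein strength is really needed. Collapsing the $\Gamma$-sum first --- using only $|p_{\lambda,\eta}^{X}(x,x')|\lesssim(\lambda^{2}+\eta^{2})\,\eta\,e^{-(s-\delta)d_{X}(x,x')}$ together with the exponential volume growth of the funnel ends --- does not suffice, since it would force $p>\tfrac{2}{1-2\delta}$; the argument must keep the per-orbit-point decay $e^{-r/2}$ and transfer the $\H$ estimate. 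I expect this transfer to be available precisely because the bottom of $\operatorname{spec}_{L^{2}(X)}\Delta$ sits at the tempered value $\tfrac14$, which is the standard hypothesis under which $\H$-type weighted convolution estimates survive on quotients; pinning that down is where the work lies, everything else being a line-by-line copy of Proposition~\ref{LambdaSmallH}.
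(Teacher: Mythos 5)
Your proof takes essentially the paper's route: periodize the $\H$-kernel bound $|p_{\lambda,\eta}(r)|\lesssim(\lambda^2+\eta^2)\,\eta\,(1+r)\,e^{-r/2}$ from the proof of Proposition~\ref{LambdaSmallH} to get the uniform bound $|p_{\lambda,\eta}^X(x,x')|\lesssim(\lambda^2+\eta^2)\,\eta$ via the convergent Poincar\'e series, handle $p=\infty$ by $L^1\to L^\infty$ plus $TT^*$, and for $2<p<\infty$ feed the $\H$-kernel bound into a Kunze--Stein inequality on the quotient. The one point where you depart from the paper, and where you should be more careful, is the precise form of that inequality. You assert a quotient Kunze--Stein ``of the same strength as Lemma~\ref{KunzeSteinH}'', i.e.\ with the bare $\H$-weight $(1+r)e^{-r/2}\sinh r\,dr$, and suggest it holds because the bottom of the $L^2$ spectrum is the tempered value $\tfrac14$. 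This is not what the paper uses, and it is not clear the undegraded version holds for $\delta>0$: the paper invokes Lemma~\ref{KunzeSteinX} (Zhang's Lemma~3.4), whose weight carries the additional factor $e^{(\frac q2-1)(\delta+\epsilon)r}$ reflecting the geometry of the quotient. The good news is that this degradation is harmless here, because the kernel decays like $e^{-r/2}$: after inserting the kernel bound the exponential rate is $-\tfrac p4+\tfrac12+(\tfrac p2-1)(\delta+\epsilon)$, which is negative for all $p>2$ once $\epsilon$ is chosen small enough using $\delta<\tfrac12$, so the integral still converges and the bound $(\lambda+\eta)^2\eta$ comes out. So your argument is correct in outline and conclusion, but the ``same strength'' claim should be replaced by the weaker, correctly weighted Lemma~\ref{KunzeSteinX}, with the observation that the extra factor does no damage in the range $\delta<\tfrac12$.
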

\begin{proof}
Resuming the proof of Proposition \ref{LambdaSmallH},
we estimate this time uniformly
$$
\bigl|\hspace{.5pt}p_{\lambda,\eta}^{\hspace{1pt}X}(x,x')\bigr|
\lesssim(\lambda^2\!+\hspace{-1pt}\eta^2)\,\eta
$$
and conclude by using the following version of the Kunze--Stein phenomenon
instead of Lemma~\ref{KunzeSteinH}.
\end{proof}

\begin{lemma}[see Lemma 3.4 in \cite{Zhang2019}]\label{KunzeSteinX}
Let \,$0\hspace{-1pt}<\hspace{-1pt}\epsilon\hspace{-1pt}<\hspace{-1pt}\frac12\!-\hspace{-1pt}\delta$
and \,$2\hspace{-1pt}\le\hspace{-1pt}q\hspace{-1pt}<\hspace{-1pt}\infty$\hspace{1pt}. Then
\begin{equation*}
\|\hspace{.5pt}f\!*\hspace{-1pt}K\hspace{1pt}\|_{L^q}
\lesssim\|f\|_{L^{q'}}\hspace{1pt}\Bigl[\hspace{1pt}\int_0^{\hspace{.5pt}\infty}\!
|K(r)|^{\frac q2}\hspace{1pt}(1\!+\hspace{-1pt}r)\,e^{-\frac r2}\,
e^{\hspace{1pt}(\frac q2-1)(\delta\hspace{.5pt}+\hspace{.5pt}\epsilon)\hspace{1pt}r}
\sinh r\,dr\hspace{1pt}\Bigr]^{\frac2q},
\end{equation*}
for every \,$f\hspace{-1pt}\in\hspace{-1pt}L^{q'}\hspace{-1pt}(X)$
and for every radial measurable function \,$K$ on \,$\H$.
\end{lemma}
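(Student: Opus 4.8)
This is the Kunze--Stein phenomenon on the quotient $X$, the exact counterpart of Lemma \ref{KunzeSteinH} on $\H$; the only new feature is the exponential weight $e^{(q/2-1)(\delta+\epsilon)r}$, which quantifies the cost of folding $\H$ onto $X$. The plan is to follow the proof of the $\H$ version, inserting at the right place the uniform bound on the Poincar\'e series that convex cocompactness provides. First I would reduce to the case $K\ge0$ radial and $f\ge0$: writing the convolution on $X$ through the periodized kernel $K_\Gamma(x,x')=\sum_{\gamma\in\Gamma}K(d(x,\gamma x'))$, one has $|f*K|\le|f|*|K|$ pointwise. Fix $0<\epsilon<\tfrac12-\delta$; under this assumption the Poincar\'e series $\sum_{\gamma\in\Gamma}e^{-(\delta+\epsilon)d(x,\gamma x')}$ converges, and by convex cocompactness it is bounded by a constant $C_\Gamma$ uniformly in $x,x'$ (this is \cite[Lemma~3.3]{Zhang2019}, as used around \eqref{PoincareSeries}).

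Next come the two endpoint estimates. At $q=2$: by the functional calculus of Subsection \ref{FTX}, the operator $f\mapsto f*K$ is the Fourier multiplier $m(D_X)$ with symbol $m=\widetilde{\mathcal{F}}K$; since the spectrum of $\Delta$ on $L^2(X)$ equals $[\tfrac14,+\infty)$, the spectrum of $D_X$ is $[0,+\infty)$, so by Plancherel \eqref{PlancherelX} the $L^2\to L^2$ norm of this operator equals $\sup_{\lambda\ge0}|\widetilde{\mathcal{F}}K(\lambda)|\lesssim\int_0^\infty|K(r)|\,(1+r)\,e^{-r/2}\sinh r\,dr$, using $|\varphi_\lambda(r)|\lesssim\varphi_0(r)\sim(1+r)e^{-r/2}$ from \eqref{equivalents} --- this is exactly the right-hand side at $q=2$. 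At the other endpoint,
\[
\|f*K\|_{L^\infty}\le\|f\|_{L^1(X)}\,\sup_{x,x'}|K_\Gamma(x,x')|\le C_\Gamma\,\|f\|_{L^1(X)}\,\sup_{r\ge0}\bigl(|K(r)|\,e^{(\delta+\epsilon)r}\bigr),
\]
which is the $q=\infty$ limit of the right-hand side.

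Finally I would interpolate. Plain bilinear interpolation between these two endpoints already yields $\|f*K\|_{L^q}\lesssim\|f\|_{L^{q'}}\bigl(\sup_\lambda|\widetilde{\mathcal{F}}K|\bigr)^{2/q}\bigl(\sup_r|K(r)|e^{(\delta+\epsilon)r}\bigr)^{1-2/q}$, but this product form is strictly weaker than the stated single-integral bound and too lossy for the applications (for instance in Proposition \ref{LambdaSmallX}). To obtain the integral form one must run Stein's complex interpolation theorem on the analytic family of operators whose kernel is, up to a fixed phase, $|K(d(x,\gamma x'))|$ times a $z$-dependent power of the weight $(1+r)e^{-r/2}e^{(\delta+\epsilon)r}\sinh r$, periodized over $\Gamma$ --- exactly as in the proof of Lemma \ref{KunzeSteinH} (namely \cite[Lemma~5.1]{AnkerPierfeliceVallarino2012}); the factor $e^{(q/2-1)(\delta+\epsilon)r}$ appears because the Poincar\'e weight $e^{(\delta+\epsilon)r}$ must be absorbed with full strength at the $L^1\to L^\infty$ end and not at all at the $L^2\to L^2$ end. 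The main difficulty --- and the only point where the argument genuinely departs from the $\H$ case --- is to check that this analytic family is well defined once the $\Gamma$-periodization is inserted and that its boundary bounds hold uniformly on the strip; this is exactly where convex cocompactness is used, and not merely $\delta<\tfrac12$, since one needs the uniformity of the Poincar\'e series in $x,x'$ rather than only its finiteness.
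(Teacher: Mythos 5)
Your proposal is correct and proceeds along the only natural route: the paper itself gives no proof, merely citing Lemma 3.4 of Zhang (2019), and your outline (periodize, bound the Poincar\'e series uniformly via convex cocompactness, establish the two endpoint estimates $L^2\!\to\!L^2$ via Plancherel on $X$ together with $|\varphi_\lambda|\le\varphi_0\sim(1+r)e^{-r/2}$, and $L^1\!\to\!L^\infty$ via $\sup_{x,x'}|K_\Gamma|\le C_\Gamma\sup_r|K(r)|e^{(\delta+\epsilon)r}$, then interpolate the bilinear map $(f,K)\mapsto f*K$) reproduces that argument. Your diagnosis of the role of convex cocompactness (uniformity, not mere convergence, of the Poincar\'e series) and of the reason the product-form bound from ordinary Riesz--Thorin on a fixed $T_K$ is too weak are both on point.

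One small imprecision in the interpolation step: the analytic family should not be $|K|$ times a $z$-dependent power of the full weight $(1+r)e^{-r/2}e^{(\delta+\epsilon)r}\sinh r$. Matching the two boundary bounds forces the family
$K_z(r)=\operatorname{sgn}(K(r))\,|K(r)|^{\frac q2(1-z)}\,e^{(\delta+\epsilon)\left(\frac q2-1-\frac q2 z\right)r}$
(with the normalization $\int_0^\infty|K|^{q/2}(1+r)e^{-r/2}e^{(\frac q2-1)(\delta+\epsilon)r}\sinh r\,dr=1$); at $\Re z=0$ this makes $\int|K_z|(1+r)e^{-r/2}\sinh r\,dr\lesssim1$, at $\Re z=1$ it makes $\sup_r|K_z(r)|e^{(\delta+\epsilon)r}\lesssim1$, and at $z=\theta=1-\frac2q$ one recovers $K_\theta=K$. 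Thus only a $z$-power of $|K|$ and of the Poincar\'e weight $e^{(\delta+\epsilon)r}$ appear; the factor $(1+r)e^{-r/2}\sinh r$ enters only through the measure in the $L^1$ endpoint, not as part of the analytic family. Equivalently, one can invoke bilinear complex interpolation together with the Stein--Weiss theorem for weighted $L^p$ spaces and dispense with the explicit family. Either way the estimate follows, and your proof is sound.
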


\begin{remark}
The results in this subsection extend straightforwardly to
locally symmetric spaces \hspace{.5pt}$\Gamma\hspace{1pt}\backslash G/K$
such that
\begin{itemize}
\item
$\hspace{.5pt}\operatorname{rank}\hspace{1pt}(G/K)\hspace{-1pt}=\hspace{-1pt}1$\hspace{.5pt},
\item
$\Gamma$ is a discrete subgroup of $G$, which is torsion free and convex cocompact,
\item
$\Gamma$ has exponent of convergence \hspace{1pt}$\delta\hspace{-1pt}<\hspace{-1pt}\rho$\hspace{1pt}.
\end{itemize}
In this case,
the high frequency bound in Theorem \ref{L2LpBoundsProjectorsX} is given by \eqref{Generalization}
while the low frequency bound in Proposition \ref{LambdaSmallX} remains the same.
\end{remark}

\subsection{Smoothing estimates}
We first deduce from Subsection \ref{SubsectionSpectralProjectorsPressureCondition}
the following global $L^p$ smoothing estimates for the Schr\"odinger equation on $X$.

\begin{theorem} \label{SmoothX}
Let \,$p\hspace{-1pt}>\hspace{-1pt}2$\hspace{1pt}.
Then\footnote{\, Notice that
$0\hspace{-1pt}<\hspace{-1pt}\gamma(p)\hspace{-1pt}\le\hspace{-1pt}\frac12$
when $2\hspace{-1pt}<\hspace{-1pt}p\hspace{-1pt}\le\hspace{-1pt}\infty$\hspace{.5pt}.}
\begin{align}
\label{smo1}&\Bigl\|\,D_X^{\frac12-\gamma(p)}\,
e^{\hspace{.5pt}i\hspace{1pt}t\hspace{.5pt}\Delta_X}f(x)\,\Bigr\|_{L^p_xL^2_t}
\lesssim\bigl\|f(x)\bigr\|_{L^2_x}\,,\\
\label{smo2}&\Bigl\|\,D_X^{\frac12-\gamma(p)}\!\int_{-\infty}^{\hspace{1pt}\infty}\!
e^{\hspace{.5pt}i\hspace{1pt}t\hspace{.5pt}\Delta_X}F(t,x)\,dt\,\Bigr\|_{L^2_x}
\lesssim\bigl\|F(t,x)\bigr\|_{L^{p^{\hspace{.5pt}\prime}}_x\!L^2_t}\,,\\
\label{smo3}&\Bigl\|\,D_X^{1-2\hspace{.5pt}\gamma(p)}\!\int_{-\infty}^{\hspace{.5pt}\infty}\!
e^{\hspace{.5pt}i\hspace{.5pt}(t-s)\hspace{.5pt}\Delta_X}F(s,x)\,ds\,\Bigr\|_{L^p_xL^2_t}
\lesssim\bigl\|F(t,x)\bigr\|_{L^{p^{\hspace{.5pt}\prime}}_x\!L^2_t}\,.
\end{align}
\end{theorem}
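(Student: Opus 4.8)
The three estimates are equivalent by the standard $TT^*$ principle — \eqref{smo2} is the adjoint of \eqref{smo1}, and \eqref{smo3} describes the operator $TT^*$ with $Tf=D_X^{\frac12-\gamma(p)}e^{\hspace{.5pt}i\hspace{.75pt}t\hspace{.5pt}\Delta_X}f$ — so the plan is to prove \eqref{smo1} and then read off the other two. For \eqref{smo1}, recall that under our hypotheses the spectrum of $\Delta_X$ on $L^2(X)$ is purely absolutely continuous and equal to $[\tfrac14,\infty)$, so $\Delta_X=D_X^2+\tfrac14$, $P_e=\mathrm{Id}$, and $f=\int_0^\infty\mathbf{P}^X_\lambda f\,d\lambda$ with $\int_0^\infty\langle\mathbf{P}^X_\lambda f,f\rangle\,d\lambda=\|f\|_{L^2(X)}^{2}$ by \eqref{PlancherelX}. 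Writing $e^{\hspace{.5pt}i\hspace{.75pt}t\hspace{.5pt}\Delta_X}=e^{\hspace{.5pt}it/4}e^{\hspace{.5pt}it\hspace{.5pt}D_X^{2}}$ I would begin from
\[
D_X^{\frac12-\gamma(p)}e^{\hspace{.5pt}i\hspace{.75pt}t\hspace{.5pt}\Delta_X}f(x)
=e^{\hspace{.5pt}it/4}\int_0^\infty\lambda^{\frac12-\gamma(p)}\,e^{\hspace{.5pt}it\lambda^{2}}\,(\mathbf{P}^X_\lambda f)(x)\,d\lambda .
\]

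The substitution $\mu=\lambda^{2}$ turns the $\lambda$-integral into a constant multiple of the inverse Fourier transform in $t$ of $\mu\mapsto\mathbf{1}_{\{\mu>0\}}\,\mu^{-\frac14-\frac{\gamma(p)}2}(\mathbf{P}^X_{\sqrt\mu}f)(x)$; since $|e^{\hspace{.5pt}it/4}|=1$, Plancherel in $t$ gives, after returning to the variable $\lambda$,
\[
\bigl\|\,D_X^{\frac12-\gamma(p)}e^{\hspace{.5pt}i\hspace{.75pt}t\hspace{.5pt}\Delta_X}f(x)\,\bigr\|_{L^2_t}^{2}
\sim\int_0^\infty\lambda^{-2\gamma(p)}\,\bigl|(\mathbf{P}^X_\lambda f)(x)\bigr|^{2}\,d\lambda .
\]
I would then take the $L^{p/2}_x$ norm (note $p/2\ge1$), apply Minkowski's integral inequality to pull it inside the $\lambda$-integral, and be left with $\int_0^\infty\lambda^{-2\gamma(p)}\|\mathbf{P}^X_\lambda f\|_{L^p_x}^{2}\,d\lambda$. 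Factoring $\mathbf{P}^X_\lambda=\mathbf{E}^X_\lambda\mathbf{R}^X_\lambda$ and invoking $\|\mathbf{E}^X_\lambda\|_{L^2\to L^p}\lesssim_p\lambda^{\gamma(p)}$ — which for $\lambda>1$ is Theorem \ref{L2LpBoundsProjectorsX} via Proposition \ref{EquivalentOperatorNormEstimates}(iv), and for $0<\lambda\le1$ follows from Proposition \ref{LambdaSmallX} (which in fact gives $\lesssim\lambda\le\lambda^{\gamma(p)}$) — one gets $\|\mathbf{P}^X_\lambda f\|_{L^p_x}\lesssim\lambda^{\gamma(p)}\|\mathbf{R}^X_\lambda f\|_{L^2}$. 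The powers of $\lambda$ cancel exactly, leaving $\int_0^\infty\|\mathbf{R}^X_\lambda f\|_{L^2}^{2}\,d\lambda=\|f\|_{L^2}^{2}$, which is \eqref{smo1}.

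Estimate \eqref{smo2} is then immediate, being the adjoint of \eqref{smo1}: the operator $f\mapsto D_X^{\frac12-\gamma(p)}e^{\hspace{.5pt}i\hspace{.75pt}t\hspace{.5pt}\Delta_X}f$ from $L^2_x$ to $L^p_xL^2_t$ has adjoint $F\mapsto D_X^{\frac12-\gamma(p)}\int_{\mathbb{R}}e^{-\hspace{.5pt}i\hspace{.75pt}t\hspace{.5pt}\Delta_X}F(t,\cdot)\,dt$ from $L^{p'}_xL^2_t$ to $L^2_x$, up to the harmless $t\mapsto-t$ and complex conjugation. For \eqref{smo3}, write $\int_{\mathbb{R}}e^{\hspace{.5pt}i\hspace{.5pt}(t-s)\hspace{.5pt}\Delta_X}F(s,\cdot)\,ds=e^{\hspace{.5pt}i\hspace{.75pt}t\hspace{.5pt}\Delta_X}g$ with $g:=\int_{\mathbb{R}}e^{-\hspace{.5pt}i\hspace{.5pt}s\hspace{.5pt}\Delta_X}F(s,\cdot)\,ds$, split $D_X^{1-2\gamma(p)}=D_X^{\frac12-\gamma(p)}D_X^{\frac12-\gamma(p)}$, apply \eqref{smo1} to $D_X^{\frac12-\gamma(p)}g\in L^2_x$, and bound $\|D_X^{\frac12-\gamma(p)}g\|_{L^2_x}\lesssim\|F\|_{L^{p'}_xL^2_t}$ using \eqref{smo2}.

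The one point needing care is uniformity in the frequency variable: the bound $\|\mathbf{E}^X_\lambda\|_{L^2\to L^p}\lesssim\lambda^{\gamma(p)}$ has to hold all the way down to $\lambda\to0$, which is exactly why the low-frequency Proposition \ref{LambdaSmallX} must be used alongside the high-frequency Theorem \ref{L2LpBoundsProjectorsX}. The Fourier–Plancherel manipulation should first be carried out for $f$ in a dense subclass — say with $\mathbf{R}^X_\lambda f$ smooth and compactly supported in $\lambda$, so that every integral converges absolutely and the $L^p_xL^2_t$ norm is finite — and then extended by density; this is routine. Beyond that, the argument is the standard orthogonality/$TT^*$ passage from spectral-projector bounds to square-function smoothing estimates, so I do not anticipate a serious obstacle.
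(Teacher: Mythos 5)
Your proof is correct and follows essentially the same route as the paper: Plancherel in $t$ after the substitution $\mu=\lambda^2$, Minkowski to pass from $L^p_xL^2_\lambda$ to $L^2_\lambda L^p_x$, and then the factorization $\mathbf{P}^X_\lambda=\mathbf{E}^X_\lambda\mathbf{R}^X_\lambda$ together with the uniform bound $\|\mathbf{E}^X_\lambda\|_{L^2\to L^p}\lesssim\lambda^{\gamma(p)}$ (Proposition \ref{LambdaSmallX} for $\lambda\le1$, Theorem \ref{L2LpBoundsProjectorsX} for $\lambda>1$). The only presentational difference is that the paper displays the chain of equalities for \eqref{smo1} and leaves \eqref{smo2}, \eqref{smo3} implicit, whereas you spell out the duality and $TT^*$ steps deriving them; both are fine.
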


\begin{proof}
We proceed along the same lines as \cite[Subsection 7.2]{GermainLeger2023}. 
Using functional calculus, a change of variables, Plancherel's identity in $t$
and finally Minkowski's inequality, we obtain
\begin{align*}
\Bigl\|\,D_X^{\frac12-\gamma(p)}\,
e^{\hspace{.5pt}i\hspace{1pt}t\hspace{.5pt}\Delta_X}f(x)\,\Bigr\|_{L^p_xL^2_t}\hspace{-1pt}
&=\biggl\|\hspace{1pt}\int_0^{\infty}\!
e^{\hspace{.5pt}i\hspace{1pt}t\hspace{.5pt}\lambda^2}\lambda^{\frac12-\gamma(p)}\,
\mathbf{P}^{\hspace{.5pt}X}_{\hspace{-1pt}\lambda}\!f(x)\,d\lambda\hspace{1pt}\biggr\|_{L^p_xL^2_t}\\
&=\tfrac12\,\biggl\|\hspace{1pt}\int_0^{\infty}\!
e^{\hspace{.5pt}i\hspace{1pt}t\hspace{.5pt}\lambda}\hspace{1pt}\lambda^{-\frac14-\frac{\gamma(p)}2}\,
\mathbf{P}^{\hspace{1pt}X}_{\hspace{-2pt}\sqrt{\lambda}}\hspace{1pt}f(x)\,
d\lambda\hspace{1pt}\biggr\|_{L^p_xL^2_t}\!\\
&=\sqrt{\tfrac\pi2}\,\Bigl\|\hspace{.5pt}\mathbf{1}_{(0,\infty)}(\lambda)\hspace{1pt}
\lambda^{-\frac14-\frac{\gamma(p)}2}\,
\mathbf{P}^{\hspace{1pt}X}_{\hspace{-2pt}\sqrt{\lambda}}
\hspace{1pt}f(x)\hspace{1pt}\Bigr\|_{L^p_xL^2_\lambda}\\
&=\sqrt{2\pi}\,\Bigl\|\hspace{.5pt}\mathbf{1}_{(0,\infty)}(\lambda)\hspace{1pt}
\lambda^{-\hspace{.5pt}\gamma(p)}\,\mathbf{P}^{\hspace{.5pt}X}_{\hspace{-1pt}\lambda}\!
f(x)\hspace{1pt}\Bigr\|_{L^p_xL^2_\lambda}\!\\
&\le\sqrt{2\pi}\,\Bigl\|\hspace{.5pt}\mathbf{1}_{(0,\infty)}(\lambda)\hspace{1pt}
\lambda^{-\hspace{.5pt}\gamma(p)}\,\mathbf{P}^{\hspace{.5pt}X}_{\hspace{-1pt}\lambda}\!
f(x)\hspace{1pt}\Bigr\|_{L^2_\lambda L^p_x}\,.
\end{align*}
Writing \,$\mathbf{P}^{\hspace{.5pt}X}_{\hspace{-1pt}\lambda}\hspace{-2pt}
=\mathbf{E}^{\hspace{.5pt}X}_\lambda\hspace{.5pt}\mathbf{R}^{\hspace{.5pt}X}_\lambda$
as in Subsection \ref{FTX} and using
$$
\|\hspace{1pt}\mathbf{E}^{\hspace{.5pt}X}_\lambda\|_{L^2\to L^p}
\lesssim\begin{cases}
\,\lambda^{\frac12}&\text{if \,$0\hspace{-1pt}\le\hspace{-1pt}\lambda\hspace{-1pt}\le\!1$}\\
\,\lambda^{\gamma(p)}&\text{if \,$\lambda\hspace{-1pt}>\!1$}\\
\end{cases}$$
(see Proposition \ref{EquivalentOperatorNormEstimates}), we conclude that
\begin{align*}
\bigl\|\hspace{.5pt}D_X^{\frac12-\gamma(p)}\hspace{1pt}
e^{\hspace{.5pt}i\hspace{.5pt}t\hspace{.5pt}\Delta_X}\hspace{-.5pt}f(x)
\hspace{.5pt}\bigr\|_{L^p_xL^2_t}\hspace{-1pt}
&\lesssim\bigl\|\hspace{.5pt}\mathbf{1}_{(0,\infty)}(\lambda)\,
\mathbf{R}^{\hspace{.5pt}X}_\lambda\!
f(\theta)\hspace{.5pt}\bigr\|_{L^2_\lambda L^2_\theta}
\le\|f(x)\|_{L^2_x}\,.
\end{align*}
\end{proof}

\begin{remark}
Theorem \ref{SmoothX} holds actually with $D_X^{\frac12-\gamma(p)}$ replaced by $D_{\hspace{-.5pt}X}^{\hspace{.5pt}\alpha\vphantom{\frac00}}\hspace{1pt}\Delta_X^{\hspace{-.5pt}\frac12\left(\frac12-\gamma(p)-\alpha\right)}$, where $\alpha\hspace{-1pt}\ge\hspace{-1pt}-\hspace{1pt}\frac12\hspace{.5pt}.$
\end{remark}

We next deduce from Theorem \ref{SmoothX}
the following local $L^2$ smoothing estimate for the Schr\"odinger equation on $X$.

\begin{proposition}\label{KatosmoothingX}
Let \,$\zeta\!\in\hspace{-.5pt}\mathcal{C}_c^\infty(X)$
and \,$0\hspace{-1pt}<\hspace{-1pt}\epsilon\hspace{-1pt}<\hspace{-1pt}\tfrac12$\hspace{.5pt}.
Then the following estimate holds, for every \,$f\hspace{-1pt}\in\hspace{-1pt}L^2(X)\!:$
\begin{align*}
\bigl\|\hspace{1pt}\zeta(x)\,D_X^{\frac12-\hspace{1pt}\epsilon}
e^{\hspace{.5pt}i\hspace{1pt}t\hspace{.5pt}\Delta_X}f(x)\bigr\|_{L^2_{t,x}}
\lesssim\bigl\|f(x)\bigr\|_{L^2_x}\,.
\end{align*}
\end{proposition}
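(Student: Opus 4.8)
The plan is to derive this local estimate directly from the global $L^p$ smoothing estimate \eqref{smo1} of Theorem \ref{SmoothX}: the exponent $\frac12-\gamma(p)$ appearing there can be tuned so as to equal $\frac12-\epsilon$, after which the passage from $L^p_x$ to $L^2$ on a compactly supported cutoff costs only one application of H\"older's inequality.

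First I would observe that $p\mapsto\gamma(p)$ is a continuous, strictly increasing bijection from $(2,\infty)$ onto $(0,\tfrac12)$: for $2<p\le6$ one has $\gamma(p)=\tfrac14-\tfrac1{2p}$, which increases over $(0,\tfrac16]$, while for $p\ge6$ one has $\gamma(p)=\tfrac12-\tfrac2p$, which increases over $[\tfrac16,\tfrac12)$, the two pieces matching at $p=6$. Hence, given $\epsilon\in(0,\tfrac12)$, there is a (unique) exponent $p>2$ with $\gamma(p)=\epsilon$, and Theorem \ref{SmoothX} applied with this $p$ gives
\begin{equation*}
\bigl\|\,D_X^{\frac12-\epsilon}\,e^{\hspace{.5pt}i\hspace{1pt}t\hspace{.5pt}\Delta_X}f(x)\,\bigr\|_{L^p_xL^2_t}\lesssim\bigl\|f\bigr\|_{L^2_x}\,.
\end{equation*}

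It remains to localize. Since $\zeta\in\mathcal{C}_c^\infty(X)$, it belongs to $L^{2p/(p-2)}(X)$, so for an arbitrary function $g=g(t,x)$, setting $h(x)=\|g(\cdot,x)\|_{L^2_t}$ and using Fubini to isolate the $t$-integral followed by H\"older's inequality in $x$ (with $\tfrac12=\tfrac{p-2}{2p}+\tfrac1p$), we obtain
\begin{equation*}
\|\zeta\,g\|_{L^2_{t,x}}=\|\zeta\,h\|_{L^2_x}\le\|\zeta\|_{L^{2p/(p-2)}_x}\,\|h\|_{L^p_x}=\|\zeta\|_{L^{2p/(p-2)}_x}\,\|g\|_{L^p_xL^2_t}\,.
\end{equation*}
Applying this with $g=D_X^{\frac12-\epsilon}e^{\hspace{.5pt}i\hspace{1pt}t\hspace{.5pt}\Delta_X}f$ and inserting the previous display yields the claim, with an implicit constant depending on $\zeta$ and on $\epsilon$ (through $p$). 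I do not expect a genuine obstacle here; the argument is robust and the only points to monitor are purely bookkeeping: the hypothesis $\epsilon<\tfrac12$ is precisely what keeps the matching exponent $p$ finite, hence $2p/(p-2)<\infty$ so that the H\"older step is legitimate on the compactly supported $\zeta$, while $\epsilon>0$ keeps $p>2$ so that \eqref{smo1} applies. (Alternatively, one could rerun the spectral-side computation in the proof of Theorem \ref{SmoothX} with the cutoff $\zeta$ in place, choosing any $p>2$ with $\gamma(p)\le\epsilon$ and absorbing the factors $\lambda^{\gamma(p)-\epsilon}\le1$ for $\lambda>1$ and $\lambda^{\frac12-\epsilon}\le1$ for $0<\lambda\le1$ against $\|\mathbf{R}^X_\lambda f\|_{L^2_\theta}$ before invoking Plancherel \eqref{PlancherelX}; the matching-exponent route above is shorter.)
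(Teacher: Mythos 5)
Your proof is correct and follows essentially the same route as the paper's: pick the Lebesgue exponent so that $\gamma(p)=\epsilon$ (the paper writes this as $\gamma(2\widetilde p)=\epsilon$ and gives an explicit piecewise formula for $\widetilde p$, which is equivalent to your monotonicity observation), then permute the $t$- and $x$-integrals, apply H\"older in $x$ against $\zeta\in L^{2p/(p-2)}$, and invoke the global smoothing estimate \eqref{smo1}. Nothing to add.
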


\begin{proof}
Set
\begin{align*}
u(t,x)=D_X^{\frac12-\hspace{1pt}\epsilon}\hspace{1pt}
e^{\hspace{.5pt}i\hspace{1pt}t\hspace{.5pt}\Delta_X}f(x)
\end{align*}
and let $1\!<\hspace{-1pt}\widetilde{p}\hspace{-1pt}<\hspace{-1pt}\infty$
be such that $\gamma(2\hspace{1pt}\widetilde{p})\hspace{-1pt}=\hspace{-1pt}\varepsilon$\hspace{1pt},
namely
\begin{equation*}
\widetilde{p}=\begin{cases}
\frac{1}{1\hspace{.5pt}-\hspace{1pt}4\hspace{1pt}\varepsilon}
&\text{if \,}0\hspace{-1pt}<\hspace{-1pt}\varepsilon\hspace{-1pt}\le\hspace{-1pt}\frac16\hspace{1pt},\\
\frac{2}{1\hspace{.5pt}-\hspace{1pt}2\hspace{1pt}\varepsilon}
&\text{if \,}\frac16\hspace{-1pt}\le\hspace{-1pt}\varepsilon\hspace{-1pt}<\hspace{-1pt}\frac12\hspace{1pt}.\\
\end{cases}\end{equation*}
Then the desired estimate is obtained by permuting the time and space variables,
by applying H\"{o}lder's inequality and by using \eqref{smo1}. Specifically,

\begin{align*}
\bigl\|\hspace{1pt}\zeta(x)\,u(t,x)\bigr\|_{L^2_{t,x}}^{\hspace{1pt}2}\!
&=\int_X|\hspace{.5pt}\zeta(x)|^2\hspace{1pt}
\Bigl(\int_{-\infty}^{\hspace{1pt}\infty}\!
|\hspace{.5pt}u(t,x)|^2\hspace{1pt}dt{\Bigr)}\hspace{1pt}dx\\
&\le\underbrace{\bigl\|\hspace{.5pt}\zeta(x)
\bigr\|_{L^{2\hspace{.5pt}\widetilde{p}^{\hspace{.5pt}\prime}}_x}^{\hspace{1pt}2}}_{\const}\hspace{1pt}
\bigl\|\hspace{.5pt}u(t,x)\bigr\|_{L^{2\hspace{.5pt}\widetilde{p}}_xL^2_t}^{\hspace{1pt}2}
\lesssim\bigl\|f(x)\bigr\|_{L^2_x}^{\hspace{1pt}2}\,.
\end{align*}

\end{proof}


\begin{remark}
We might also adapt the proof of Theorem \ref{SmoothX},
replacing the $L^2\!\to \hspace{-1pt}L^p$ bounds on the restriction operators by corresponding weighted $L^2$ bounds.
\end{remark}

\subsection{Refined dispersive and Strichartz estimates}\label{DispersiveStrichartzX}
This subsection is devoted to the analogs of Propositions
\ref{DispersiveH} and \ref{StrichartzH} on $X$.
Consider the operators
$e^{\hspace{.5pt}i\hspace{.75pt}t\hspace{.5pt}\Delta_X}\mathscr{P}_{\lambda\hspace{.5pt},1}^X$
on $X$\hspace{-1pt}, where $t\hspace{-1pt}\in\hspace{-1pt}\R^*$\hspace{-1pt}, $\lambda\hspace{-1pt}>\!1$
and $\chi\hspace{-1pt}\in\hspace{-1pt}\mathcal{S}(\R)$ is an even bump function.
By arguing as in the proof of Theorem \ref{L2LpBoundsProjectorsX},
we deduce from Corollary \ref{CorollaryH} the following uniform kernel estimates on~$X$\,:
\begin{equation}\label{KernelEstimateX}
\bigl|k_{\hspace{.5pt}t,\lambda}^{\hspace{.5pt}X}(x,x')\bigr|
\lesssim\begin{cases}
\,\lambda\,(1\!+\hspace{-1pt}t\lambda)^{-N}
&\text{if \,$0\hspace{-1pt}<\hspace{-1pt}t\hspace{-1pt}\le\!1$}\\
\,t^{-\frac32}\,\lambda^{-N}
&\text{if \,$t\hspace{-1pt}\ge\!1$}\\
\end{cases}
\qquad\forall\,x,x'\!\in\!X.
\end{equation}
On the one hand, we obtain by interpolation the following dispersive estimate
for $2\hspace{-1pt}<\hspace{-1pt}q\hspace{-1pt}\le\hspace{-1pt}\infty$ and $t$ small,
say $0\hspace{-1pt}<\hspace{-1pt}t\hspace{-1pt}\le\!1:$
\begin{equation}\label{DispersiveXtSmall}
\|\hspace{1pt}e^{\hspace{.5pt}i\hspace{.75pt}t\hspace{.5pt}\Delta_X}
\mathscr{P}_{\lambda\hspace{.5pt},1}^{\hspace{.5pt}X}\|_{L^{q'}\!\to L^q}\,
\lesssim\lambda^{1-\frac2q}\hspace{1pt}(1\!+\hspace{-1pt}t\lambda)^{-N}.
\end{equation}
On the other hand, when $2\hspace{-1pt}<\hspace{-1pt}q\hspace{-1pt}<\hspace{-1pt}\infty$
and $t$ is large, we obtain
\begin{equation}\label{DispersiveXtLarge}
\|\hspace{1pt}e^{\hspace{.5pt}i\hspace{.75pt}t\hspace{.5pt}\Delta_X}
\mathscr{P}_{\lambda\hspace{.5pt},1}^{\hspace{.5pt}X}\|_{L^{q'}\!\to L^q}\,
\lesssim t^{-\frac32}\hspace{1pt}\lambda^{-N}
\end{equation}
by using again Corollary \ref{CorollaryH} and
by applying the following version of the Kunze--Stein phenomenon on~$X$.
In the limit case $q\hspace{-1pt}=\hspace{-1pt}\infty$\hspace{.5pt},
note that \eqref{DispersiveXtLarge} is a trivial consequence of \eqref{KernelEstimateX}.

\begin{lemma}[see Lemma 3.4 in \cite{Zhang2019}]\label{KunzeSteinX}
Let \,$0\hspace{-1pt}<\hspace{-1pt}\epsilon\hspace{-1pt}<\hspace{-1pt}\frac12\!-\hspace{-1pt}\delta$
and \,$2\hspace{-1pt}\le\hspace{-1pt}q\hspace{-1pt}<\hspace{-1pt}\infty$\hspace{1pt}. Then
\begin{equation*}
\|\hspace{.5pt}f\!*\hspace{-1pt}k\hspace{1pt}\|_{L^q}
\lesssim\|f\|_{L^{q'}}\hspace{1pt}\Bigl[\hspace{1pt}\int_0^{\hspace{.5pt}\infty}\!
|K(r)|^{\frac q2}\hspace{1pt}(1\!+\hspace{-1pt}r)\,e^{-\frac r2}\,
e^{\hspace{1pt}(\frac q2-1)(\delta\hspace{.5pt}+\hspace{.5pt}\epsilon)\hspace{1pt}r}
\sinh r\,dr\hspace{1pt}\Bigr]^{\frac2q},
\end{equation*}
for every \,$f\hspace{-1pt}\in\hspace{-1pt}L^{q'}\hspace{-1pt}(X)$
and for every radial measurable function \,$k$ on \,$\H$.
\end{lemma}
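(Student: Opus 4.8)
We adapt the proof of Lemma \ref{KunzeSteinH}, keeping the sum over $\Gamma$ throughout; see also \cite[Lemma~3.4]{Zhang2019}. By duality and the pointwise inequality $|f\!*\!K|\le|f|\!*\!|K|$, one reduces to $f\ge0$, $K\ge0$ radial, and it then suffices to bound the bilinear form
$$
\int_X\!\!\int_X\Bigl(\sum\nolimits_{\gamma\in\Gamma}K(d(x,\gamma x'))\Bigr)f(x')\,g(x)\,dx\,dx'
$$
by $\|f\|_{L^{q'}(X)}\,\|g\|_{L^{q'}(X)}$ times the weighted $L^{q/2}$ quantity on the right-hand side of the Lemma, for $g\ge0$ with $\|g\|_{L^{q'}(X)}\le1$. (For $q=2$ the statement is contained in the $L^2\!\to\!L^2$ bound invoked below, via $|\varphi_\lambda|\le\varphi_0$.)

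The proof of Lemma \ref{KunzeSteinH} goes by interpolating an $L^1\!\to\!L^\infty$ estimate — in which the radial kernel is paired against the volume element $\sinh r\,dr$ — with an $L^2\!\to\!L^2$ estimate governed by the Harish--Chandra function $\varphi_0(r)\sim(1+r)\,e^{-r/2}$. Carried out on $X$, each of these now carries the orbit sum $\sum_{\gamma\in\Gamma}$, and controlling it is the only new ingredient. Put $s=\delta+\epsilon$; the hypothesis $0<\epsilon<\tfrac12-\delta$ places $s$ in the range $\delta<s<\tfrac12$ in which, $\Gamma$ being convex cocompact, the Poincar\'e series $\sum_{\gamma\in\Gamma}e^{-s\,d(x,\gamma x')}$ is bounded \emph{uniformly} in $x,x'\in\H$ — equivalently, $\#\{\gamma\in\Gamma:d(x,\gamma x')\le R\}\lesssim_\epsilon e^{sR}$ uniformly (this is \eqref{PoincareSeries} and \cite[Lemma~3.3]{Zhang2019}). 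Since the $\H$--estimate enters at the level of $|K|^{q/2}$, the relevant manipulation is a H\"older inequality in the $\gamma$--summation with the conjugate exponents $\tfrac q2$ and $\tfrac q{q-2}$: writing $K(d_\gamma)=\bigl(K(d_\gamma)\,e^{c\,d_\gamma}\bigr)\,e^{-c\,d_\gamma}$ (with $d_\gamma=d(x,\gamma x')$ and $c=s\hspace{1pt}\tfrac{q-2}{q}$), the factors $e^{-c\,d_\gamma}$ are summed off against the bounded Poincar\'e series while $K(d_\gamma)^{q/2}$ is left multiplied by the weight $e^{(\frac q2-1)s\,d_\gamma}$. This is the origin of the factor $e^{(\frac q2-1)(\delta+\epsilon)r}$ in the statement, whereas $(1+r)\,e^{-r/2}$ and $\sinh r$ are, exactly as on $\H$, those of $\varphi_0$ and of the volume element. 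Performing the usual distance--annulus decomposition and summing the resulting series — which converges precisely because $s<\tfrac12$ — then reproduces the interpolation argument of Lemma \ref{KunzeSteinH} with the weighted kernel in place of $K$, and gives the claimed bound.

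The main obstacle is not this bookkeeping but the infinite volume of $X$: the periodized kernel $\sum_\gamma K(d(x,\gamma x'))$ is not dominated by any radial kernel on $\H$ of comparable weighted size — its envelope does not decay along the funnel ends — so one cannot simply invoke Lemma \ref{KunzeSteinH}, and a crude Schur or Hilbert--Schmidt estimate over a fundamental domain diverges. It is the convex cocompactness together with the strict inequality $\delta+\epsilon<\tfrac12$ that makes the annulus--by--annulus summation over $\Gamma$ converge, and securing that convergence is the crux of the argument.
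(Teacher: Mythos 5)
The paper does not actually prove this lemma: it cites \cite{Zhang2019} (Lemma~3.4) and moves on, so there is no internal argument against which to check you line by line. Judged on its own, your sketch correctly identifies the two essential new ingredients relative to Lemma~\ref{KunzeSteinH} — the \emph{uniform} boundedness of the Poincar\'e series $\sum_{\gamma}e^{-s\,d(x,\gamma x')}$ for $\delta<s<\tfrac12$ in the convex cocompact case, and H\"older in the $\gamma$--sum with exponents $\tfrac q2$ and $\tfrac q{q-2}$ — and your arithmetic for the resulting weight is right: with $c=s\tfrac{q-2}{q}$ one gets $c\cdot\tfrac q{q-2}=s$ for the Poincar\'e factor and $c\cdot\tfrac q2=s(\tfrac q2-1)$ for the kernel factor, reproducing $e^{(\frac q2-1)(\delta+\epsilon)r}$.

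However, the final assembly is not actually carried out, and as stated it has a gap. After your H\"older step you have the pointwise bound
\[
k_X(x,y)\;=\;\sum_{\gamma}K\bigl(d(x,\gamma y)\bigr)\;\lesssim\;\Bigl(\sum_{\gamma}\widetilde K\bigl(d(x,\gamma y)\bigr)\Bigr)^{2/q},\qquad \widetilde K(r)=K(r)^{q/2}e^{(\frac q2-1)s r},
\]
and then you assert that this ``reproduces the interpolation argument of Lemma~\ref{KunzeSteinH} with the weighted kernel in place of $K$.'' But the right--hand side is a periodized kernel on $X\times X$ raised to the power $2/q$, not a radial kernel on $\H$, so Lemma~\ref{KunzeSteinH} does not apply to it, and the crude Young/Schur bound you would otherwise reach for diverges on the funnels (a point you yourself raise). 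What is missing is the actual mechanism turning the normalization $\int_0^\infty\widetilde K(r)\,\varphi_0(r)\sinh r\,dr<\infty$ into the $L^{q'}(X)\to L^q(X)$ bound. The natural way to do this is to run the analytic--family (Stein) interpolation \emph{directly on $X$}: one interpolates the periodized multiplier $T_z$ between an $L^2\to L^2$ endpoint controlled by $\sup_\lambda|\widetilde{K_z}(\lambda)|\le\int|K_z|\varphi_0\sinh$ (unchanged from $\H$) and an $L^1\to L^\infty$ endpoint $\sup_{x,y}\sum_\gamma|K_z(d_\gamma)|$, and it is precisely at this $L^1\to L^\infty$ endpoint that the uniform Poincar\'e bound is invoked and the weight $e^{(\frac q2-1)(\delta+\epsilon)r}$ enters. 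Doing the H\"older--in--$\gamma$ once and for all, \emph{before} interpolating, does not give a kernel to which the $\H$ lemma applies; the H\"older step must be embedded inside the endpoint estimates of an interpolation carried out on $X$. So the ingredients are right but the proof is not yet a proof.
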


Finally the following Strichartz estimate is deduced
from the dispersive estimates \eqref{DispersiveXtSmall} and \eqref{DispersiveXtLarge},
as Proposition \ref{StrichartzH} from Proposition \ref{DispersiveH}.

\begin{proposition}\label{StrichartzX}
Let \,$2\hspace{-1pt}\le\hspace{-1pt}p\hspace{-1pt}\le\hspace{-1pt}\infty$
and \,$2\hspace{-1pt}<\hspace{-1pt}q\hspace{-1pt}\le\hspace{-1pt}\infty$\hspace{1pt}.
Then, for every \,$\lambda\hspace{-1pt}>\!1$ and $f\!\in\!L^2(X)$,
\begin{equation*}
\|\hspace{1pt}e^{\hspace{.5pt}i\hspace{.75pt}t\hspace{.5pt}\Delta_{X}}
\mathscr{P}_{\hspace{-.5pt}\lambda\hspace{.5pt},1}^{X}\hspace{1pt}f\hspace{1pt}\|_{L_t^p L_x^q}
\lesssim \lambda^{\frac12-\frac1p-\frac1q} \,\|f\|_{L^2}\hspace{1pt}.
\end{equation*}
\end{proposition}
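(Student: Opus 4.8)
The plan is to run the standard $TT^{*}$ duality argument, verbatim as in the proof of Proposition~\ref{StrichartzH}, simply replacing the dispersive estimates on $\H$ by their analogues \eqref{DispersiveXtSmall}--\eqref{DispersiveXtLarge} on $X$. First I would reduce, by $TT^{*}$, to the dual bilinear estimate
\[
\Bigl\|\int_{-\infty}^{\hspace{.5pt}\infty}e^{\hspace{.5pt}i\hspace{.75pt}t\hspace{.5pt}\Delta_X}
\mathscr{P}^{\hspace{.5pt}X}_{\lambda\hspace{.5pt},1}F(t,\hspace{1pt}\cdot\,)\,dt\Bigr\|_{L^2_x}
\lesssim\lambda^{\frac12-\frac1p-\frac1q}\,\|F\|_{L^{p^{\hspace{.5pt}\prime}}_tL^{q'}_x}\,,
\]
using that $\mathscr{P}^{\hspace{.5pt}X}_{\lambda\hspace{.5pt},1}$ is self-adjoint and commutes with the Schr\"odinger group, so that $e^{\hspace{.5pt}i\hspace{.5pt}s\hspace{.25pt}\Delta_X}\mathscr{P}^{\hspace{.5pt}X}_{\lambda\hspace{.5pt},1}\bigl(e^{\hspace{.5pt}i\hspace{.5pt}t\hspace{.25pt}\Delta_X}\mathscr{P}^{\hspace{.5pt}X}_{\lambda\hspace{.5pt},1}\bigr)^{*}=e^{\hspace{.5pt}i\hspace{.5pt}(s-t)\hspace{.25pt}\Delta_X}\bigl(\mathscr{P}^{\hspace{.5pt}X}_{\lambda\hspace{.5pt},1}\bigr)^{2}$. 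Squaring the left-hand side then produces the triple integral $\int\!\!\int\!\!\int_X\bigl[e^{\hspace{.5pt}i\hspace{.5pt}(s-t)\hspace{.25pt}\Delta_X}(\mathscr{P}^{\hspace{.5pt}X}_{\lambda\hspace{.5pt},1})^{2}F(s,\hspace{1pt}\cdot\,)\bigr](x)\,\overline{F(t,x)}\,dx\,ds\,dt$, exactly as in \eqref{TripleIntegral}.

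Next I would fix $N\!>\!1$ and set $B(t)=\lambda^{1-\frac2q}(1+\lambda|t|)^{-N}$ for $|t|\le1$ and $B(t)=\lambda^{-N}|t|^{-\frac32}$ for $|t|>1$; this is precisely the bound for $\|e^{\hspace{.5pt}i\hspace{.5pt}(s-t)\hspace{.25pt}\Delta_X}(\mathscr{P}^{\hspace{.5pt}X}_{\lambda\hspace{.5pt},1})^{2}\|_{L^{q'}\to L^q}$ furnished by \eqref{DispersiveXtSmall} and \eqref{DispersiveXtLarge}, once one observes that $(\mathscr{P}^{\hspace{.5pt}X}_{\lambda\hspace{.5pt},1})^{2}$ is again a multiplier of the same type (replace the even bump $\chi$ by $\chi^{2}$, plus a cross term that is Schwartz-localized near $\pm\lambda$), so the dispersive estimates of Subsection~\ref{DispersiveStrichartzX} apply to it with the same exponents. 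A direct computation gives $\|B\|_{L^r(\R)}\lesssim\lambda^{(1-\frac2q)-\frac1r}$ for every $r\ge1$, since $Nr>1$ makes the small-time part contribute $\lambda^{(1-\frac2q)-\frac1r}$ and the large-time part is $\mathrm{O}(\lambda^{-N})$, which is smaller for $\lambda>1$. I would then bound the triple integral by applying H\"older's inequality in $x$ (pairing $L^q_x$ with $L^{q'}_x$), the dispersive bound $\|e^{\hspace{.5pt}i\hspace{.5pt}(s-t)\hspace{.25pt}\Delta_X}(\mathscr{P}^{\hspace{.5pt}X}_{\lambda\hspace{.5pt},1})^{2}F(s,\hspace{1pt}\cdot\,)\|_{L^q_x}\lesssim B(s-t)\,\|F(s,\hspace{1pt}\cdot\,)\|_{L^{q'}_x}$, and finally Young's convolution inequality in time with $g(t)=\|F(t,\hspace{1pt}\cdot\,)\|_{L^{q'}_x}$: the constraint $1+\frac1p=\frac1r+\frac1{p'}$ forces $r=\frac p2\ge1$, whence the integral is $\lesssim\|B\|_{L^{p/2}}\,\|g\|_{L^{p'}_t}^{2}\lesssim\lambda^{1-\frac2p-\frac2q}\,\|F\|_{L^{p'}_tL^{q'}_x}^{2}$. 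Taking square roots yields the dual estimate, hence the proposition; the endpoints $p=2$ (where $\frac p2=1$) and $p=\infty$ (where $\frac p2=\infty$ and $\|B\|_{L^\infty}=B(0)=\lambda^{1-\frac2q}$) are included without modification.

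There is no genuine obstacle here: the argument is mechanical once \eqref{DispersiveXtSmall}--\eqref{DispersiveXtLarge} are available, which in turn rest on Corollary~\ref{CorollaryH} together with the absolute convergence of the periodized kernel guaranteed by $\delta<\frac12$ and convex cocompactness. The only points deserving a line of care are the remark that $(\mathscr{P}^{\hspace{.5pt}X}_{\lambda\hspace{.5pt},1})^{2}$ still obeys the same dispersive bounds, and the elementary bookkeeping of $\|B\|_{L^{p/2}}$; both are routine and mirror the corresponding steps on $\H$.
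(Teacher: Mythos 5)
Your proposal is correct and follows precisely the route the paper takes: the paper states that Proposition~\ref{StrichartzX} is deduced from the dispersive estimates \eqref{DispersiveXtSmall}--\eqref{DispersiveXtLarge} exactly as Proposition~\ref{StrichartzH} was deduced from Proposition~\ref{DispersiveH}, i.e.\ by the $TT^*$ reduction, the dispersive bound $B(t)$, and Young's inequality in time with $\|B\|_{L^{p/2}}$. Your explicit remark that $(\mathscr{P}^{\hspace{.5pt}X}_{\lambda,1})^{2}$ obeys the same dispersive bounds is a point the paper leaves implicit, and your treatment of it is sound.
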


\section{Convex cocompact hyperbolic surfaces with infinite area}
\label{SurfacesLargeDelta}

In this section, we extend our high frequency bounds for $\|P_{\lambda,\eta}^X\|_{L^2\to L^p}$,
though with a small loss, to convex cocompact hyperbolic surfaces $X$ with exponent of convergence
\hspace{.5pt}$\frac12\hspace{-1pt}\le\hspace{-1pt}\delta\hspace{-1pt}<\hspace{-1pt}1\hspace{.5pt}$.
This obtained by piecing together the resolvent estimate in \cite{BourgainDyatlov2018}
with the results of the previous section, following a strategy going back to \cite{StaffilaniTataru2002}
(see also \cite{Wang2019}).

According to \cite{Borthwick2016},
any such surface can be decomposed
into a compact component $X_0$ and finitely many funnels $F_j$,
which are halves of cylinders $C_j$ and whose boundaries are circles\,:
$$
X=X_0\cup\bigcup_{j=1}^nF_j
\quad\text{with}\quad
\partial X_0=\bigsqcup_{j=1}^n\partial F_j\,.
$$
Consider an associated smooth partition of unity
\begin{equation}\label{PartitionUnity}
1=\zeta_0+\sum_{j=1}^n\zeta_j\,,
\end{equation}
where $\zeta_0\hspace{-2pt}\in\hspace{-1pt}\mathcal{C}_c^{\hspace{.5pt}\infty}(X)$
is equal to $1$ on a  neighborhood of $X_0$
and $\zeta_j\!\in\hspace{-1pt}\mathcal{C}^{\hspace{.5pt}\infty}(X)$
is supported in $F_j$ away from $\partial F_j$, say
\hspace{.5pt}$d\hspace{.5pt}(\supp\zeta_j,\partial F_j)\hspace{-1pt}>\hspace{-1pt}3$\hspace{1pt}.

\begin{theorem}
The following estimate holds,
for every \,$p\hspace{-1pt}>\hspace{-1pt}2$\hspace{.5pt},
$\epsilon\hspace{-1pt}>\hspace{-1pt}0$\hspace{.5pt},
$M\!>\hspace{-1pt}0$
and for every \,$0\hspace{-1pt}<\hspace{-1pt}\eta\hspace{-1pt}<\!1\!<\hspace{-1pt}\lambda$
such that \,$\lambda^M\eta\hspace{-1pt}>\!1\!:$
$$
\|P_{\lambda,\eta}f\|_{L^2\to L^p}
\lesssim\lambda^{\gamma(p)+\hspace{.5pt}\epsilon}\,\eta^{\frac12}\hspace{1pt}.
$$
\end{theorem}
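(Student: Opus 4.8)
The plan is to localize to the compact core and to the funnels via the partition of unity \eqref{PartitionUnity}, bounding $\|P_{\lambda,\eta}f\|_{L^p}\le\|\zeta_0P_{\lambda,\eta}f\|_{L^p}+\sum_{j=1}^n\|\zeta_jP_{\lambda,\eta}f\|_{L^p}$, and to treat the core term and the funnel terms by different mechanisms. By the $TT^*$ argument we may assume $f=P_{\lambda,\eta}f$. Choosing an even cutoff $\rho\in C_c^\infty(\R)$ equal to the constant $\tfrac1{2\pi T}$ near the origin, with $T=c(\lambda\eta)^{-1}$ and $\mu=\lambda^2+\tfrac14$, one has the reproducing identity $f=\int_\R\widehat\rho(t/T)\,e^{-it\mu}\,e^{it\Delta_X}f\,dt$ (since $2\pi T\rho\bigl(T(\Delta_X-\mu)\bigr)$ acts as the identity on the spectral support of $f$ when $\eta<\lambda$), with $\widehat\rho$ rapidly decaying. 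Applying Minkowski in $x$ and Cauchy--Schwarz in $t$ turns each piece into $\|\zeta_i P_{\lambda,\eta}f\|_{L^p}\lesssim\|\widehat\rho(\cdot/T)\|_{L^2_t}\,\|\zeta_i e^{it\Delta_X}f\|_{L^p_xL^2_t}\sim(\lambda\eta)^{1/2}\,\|\zeta_i e^{it\Delta_X}f\|_{L^p_xL^2_t}$, so it suffices to prove $\|\zeta_i e^{it\Delta_X}f\|_{L^p_xL^2_t}\lesssim\lambda^{\gamma(p)-\frac12+\epsilon}\|f\|_{L^2}$ for the spectrally localized datum $f$ (compare with how \eqref{smo1} produces the sharp bound on $\H$).

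For the funnel terms, $\zeta_j$ is supported in $F_j$ away from $\partial F_j$, and $F_j$ sits inside the full hyperbolic cylinder $C_j$, which has exponent of convergence $\delta=0<\tfrac12$, so all of Section \ref{SurfacesSmallDelta} (and \eqref{smo1}--\eqref{smo3}) applies to $C_j$. Pick $\zeta'_j\in C_c^\infty(F_j)$ equal to $1$ on $\supp\zeta_j$ and compare $e^{it\Delta_X}f$ with $e^{it\Delta_{C_j}}(\zeta'_jf)$ by Duhamel: since $\Delta_X=\Delta_{C_j}$ on $F_j$, the difference $w=\zeta'_j e^{it\Delta_X}f-e^{it\Delta_{C_j}}(\zeta'_jf)$ solves a Schr\"odinger equation on $C_j$ with zero data and forcing $G=[\Delta_X,\zeta'_j]\,e^{it\Delta_X}f$ supported in the compact collar $\supp\nabla\zeta'_j$. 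Feeding the homogeneous part back into the reproducing formula yields $\zeta_j\,n(D_{C_j})(\zeta'_jf)$ with $n(\nu)=2\pi T\rho\bigl(T(\nu^2-\lambda^2)\bigr)$, which is $\mathrm{O}(1)$ and supported in $[\lambda-C\eta,\lambda+C\eta]$; by Lemma \ref{comparison} and Theorem \ref{L2LpBoundsProjectorsX} applied to $C_j$, this has $L^2\to L^p$ norm $\lesssim\lambda^{\gamma(p)}\eta^{1/2}$. The contribution of $w$ is controlled by the inhomogeneous $L^p$ smoothing estimate \eqref{smo3} on $C_j$, together with the Christ--Kiselev lemma to pass from the retarded to the full Duhamel integral, once the space--time norm of the collar forcing $G$ is controlled --- which is the output of the next step.

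The core term $\zeta_0P_{\lambda,\eta}f$ and the collar forcing are where \cite{BourgainDyatlov2018} enters. On any convex cocompact hyperbolic surface the cutoff resolvent obeys $\|\zeta\,R_X(\tfrac12-i\nu)\,\zeta\|_{L^2\to L^2}\lesssim_\epsilon\nu^{-1+\epsilon}$ for $\nu>1$, $\zeta\in C_c^\infty(X)$; by Stone's formula and a Kato/$TT^*$ argument this gives the local smoothing bound $\|\zeta D_X^{1/2-\epsilon}e^{it\Delta_X}g\|_{L^2_{t,x}}\lesssim\|g\|_{L^2}$, whence $\|\zeta e^{it\Delta_X}g\|_{L^2_{t,x}}\lesssim\lambda^{-1/2+\epsilon}\|g\|_{L^2}$ on frequency-$\lambda$ data. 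Following Staffilani--Tataru, one upgrades this to a local $L^p$ smoothing estimate: cut time into intervals of length $\sim\lambda^{-1}$; on $\supp\zeta_0$ the surface is locally isometric to $\H$, so on each such interval $\zeta_0 e^{it\Delta_X}g$ is, up to a collar-supported Duhamel error, the free $\H$ evolution of the data at the left endpoint, to which \eqref{smo1} applies; summing over intervals, the Riemann sum $\sum_k\|\zeta_0 e^{it_k\Delta_X}g\|_{L^2}^2\lesssim\lambda\!\int\|\zeta_0 e^{it\Delta_X}g\|_{L^2}^2\,dt+\int\!\bigl|\partial_t\|\zeta_0 e^{it\Delta_X}g\|_{L^2}^2\bigr|\,dt$ and the accumulated collar forcing $\|\psi e^{it\Delta_X}g\|_{L^2_{t,x}}^2$ are both tamed by the local $L^2$ smoothing bound. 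Inserting the resulting local $L^p$ smoothing estimate on $X$ into the reproducing formula delivers $\|\zeta_0P_{\lambda,\eta}f\|_{L^p}\lesssim\lambda^{\gamma(p)+\epsilon}\eta^{1/2}\|f\|_{L^2}$, and at the same time controls the funnel error term of the previous step. The hypothesis $\lambda^M\eta>1$ serves only to discard the parametrix remainders, which are of order $(\lambda\eta)^{-N}$ for every $N$ (rapid decay of $\widehat\rho$, repeated integration by parts) and must be dominated by $\eta^{1/2}\gtrsim\lambda^{-M/2}$; this forces $N$, hence the implicit constant, to depend on $M$.

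The delicate point, and the place where care beyond routine is needed, is the bookkeeping in the gluing: the commutator $[\Delta_X,\zeta'_j]$ (or $[\Delta_X,\zeta_0]$) is of size $\mathrm{O}(\lambda)$ on frequency-$\lambda$ data, and a crude application of the inhomogeneous smoothing estimates loses a factor $\lambda^{\gamma(p)}$ over what is needed; removing this loss requires either choosing the cutoffs to be pseudodifferential and microlocalized away from the trapped directions, or a bootstrap exploiting that the collar forcing, once re-propagated, again escapes the core and is therefore itself subject to the Bourgain--Dyatlov local $L^2$ smoothing bound. Secondary technical issues are the exponent restrictions imposed by the Christ--Kiselev step, and checking that the local-in-time free parametrix on the core is legitimate at the semiclassical scale $t\sim\lambda^{-1}$ (injectivity radius considerations).
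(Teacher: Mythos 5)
Your overall architecture matches the paper's: subordinate the projector to the Schr\"odinger group, split with the partition of unity \eqref{PartitionUnity}, treat the core via the Bourgain--Dyatlov resolvent bound and the funnels via Duhamel comparison between $\Delta_X$ and $\Delta_{C_j}$, and sweep remainders under the hypothesis $\lambda^M\eta>1$. But there is a genuine gap exactly where you flag one, and the speculative repairs you offer (microlocalized cutoffs, a bootstrap) are not what closes it.

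The problem is structural. Your reproducing identity $f=\int\widehat\rho(t/T)e^{-it\mu}e^{it\Delta_X}f\,dt$ followed by Cauchy--Schwarz in $t$ forces you to bound $\|\zeta_i e^{it\Delta_X}f\|_{L^p_xL^2_t}$, and the inhomogeneous smoothing estimate \eqref{smo3} on $C_j$ maps $L^{p'}_xL^2_t\to L^p_xL^2_t$ with a factor $\lambda^{2\gamma(p)-1}$ on frequency-$\lambda$ data. Since the commutator is $\mathrm{O}(\lambda)$ and the collar $L^2_{t,x}$ bound from Bourgain--Dyatlov gains only $\lambda^{-1/2+\epsilon}$, the total is $\lambda^{2\gamma(p)+\epsilon}\eta^{1/2}$ --- a $\lambda^{\gamma(p)}$ loss, as you compute. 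The paper's fix is to factor $\phi(\tfrac{D-\lambda}{\eta})^2=\mathscr P_{\lambda,1}^{\,3}\,\widetilde{\mathbb P}_{\lambda,\eta}$ where $\mathscr P_{\lambda,1}=\chi(D-\lambda)+\chi(D+\lambda)$ has \emph{finite range} by finite propagation speed for the wave group, so the three copies of $\mathscr P_{\lambda,1}$ pass through the partition of unity up to fattening supports and can be distributed: one copy on the outside supplies a Sogge-type $L^q\to L^p$ bound costing only $\lambda^{\gamma(p)+\epsilon}$ for $q$ close to $2$ (Lemma \ref{Lemma3}); the two inner copies enter Lemma \ref{DuhamelLemma}, which estimates the Duhamel integral in $L^1_tL^q_x$ --- \emph{not} in $L^p_xL^2_t$ --- by pairing $\|Z_{\lambda,\eta}\|_{L^{r'}_t}\sim(\lambda\eta)^{1/r}$ with the Strichartz bound of Proposition \ref{StrichartzX}, which gains $\lambda^{1/2-1/q-1/r}\to\lambda^{-1/2}$, then the dual smoothing bound \eqref{smo2}. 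The third ingredient you lack is the Commutator Lemma \ref{CommutatorLemma}, which writes $\mathscr P_{\lambda,1}[\Delta_{C_j},\zeta_j]=\lambda^{1+\epsilon}\zeta A\widetilde\zeta+R_\lambda$ with $A$ bounded on $L^2$ and $R_\lambda=\mathrm O(\lambda^{-N})$; this is what lets the commutator be absorbed into a compactly supported $L^2$ term to which the local smoothing of \eqref{KatoLocalSmoothing} applies. Combined: $\lambda^{1+\epsilon}\cdot\lambda^{\gamma(p)+\epsilon}\cdot\lambda^{-1/q+\epsilon}\eta^{1/2-\epsilon}\cdot\lambda^{-1/2+\epsilon}$, which is $\lambda^{\gamma(p)+\mathrm O(\epsilon)}\eta^{1/2-\epsilon}$ as $q\to2^+$. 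Without the outer $\mathscr P_{\lambda,1}$, there is no $L^q\to L^p$ step available and your framework cannot produce this cancellation.

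A secondary but related issue: because you do not carry the finite-range projector, your treatment of $\zeta_0$ requires a local $L^p$ smoothing estimate on $X$ which you only sketch (time slicing at scale $\lambda^{-1}$, local free parametrix, Riemann-sum bookkeeping). The paper needs nothing of the sort for the core: it bounds $\|\zeta_0\,\widetilde{\mathbb P}_{\lambda,\eta}f\|_{L^2}\lesssim\lambda^\epsilon\eta^{1/2}\|f\|_{L^2}$ by Cauchy--Schwarz and \eqref{KatoLocalSmoothing} alone, and then the outer $\mathscr P_{\lambda,1}^{\,3}$ upgrades to $L^p$ via Sogge (Remark \ref{RemarkSogge}). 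Your sketch could probably be made to work --- the Riemann-sum and commutator-error terms are controllable by local $L^2$ smoothing as you indicate --- but it is considerably heavier than necessary, and in any case the funnel commutator gap remains and is the real obstruction.
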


\begin{proof}
According to Lemma \ref{comparison}, we can replace
$P_{\lambda,\eta}\hspace{-1pt}=\hspace{-.5pt}\mathbf{1}_{[-1,1]}\bigl(\tfrac{D-\lambda}\eta\bigr)$
by $\phi\bigl(\tfrac{D-\lambda}\eta\bigr)^{\hspace{-.5pt}2}$,
where $\phi$ is a smooth even bump function such that \hspace{1pt}$\supp\phi\hspace{-1pt}\subset\hspace{-1pt}[-\frac12,\frac12\hspace{1pt}]$\hspace{1pt}.
Let us factorize
$$
\phi\bigl(\tfrac{D-\lambda}\eta\bigr)
=\mathscr{P}_{\hspace{-1pt}\lambda,1}^{\hspace{1pt}3}\hspace{1pt}
\mathbb{P}_{\hspace{-.5pt}\lambda,\eta}\hspace{1pt},
$$
where
$$
\mathscr{P}_{\lambda,1}=\chi\bigl(D\hspace{-1pt}-\hspace{-1pt}\lambda\bigr)
+\chi\bigl(D\hspace{-1pt}+\hspace{-1pt}\lambda\bigr)
$$
and
$$
\mathbb{P}_{\hspace{-.5pt}\lambda,\eta}
=\frac{\phi\bigl(\frac{D-\lambda}\eta\bigr)}
{\bigl[\hspace{1pt}\chi\bigl(D\hspace{-1pt}-\hspace{-1pt}\lambda\bigr)
+\chi\bigl(D\hspace{-1pt}+\hspace{-1pt}\lambda\bigr)\bigr]^3}\,.
$$
Here $\chi$ is an even Schwartz function on $\R$ such that $\chi\hspace{-1pt}>\hspace{-1pt}0$, $\chi(0)\hspace{-1pt}=\hspace{-1pt}1$ and $\supp\widehat{\chi}\hspace{-1pt}\subset\hspace{-1pt}[-1,1]$.
The operators $\mathscr{P}_{\lambda,1}$ and $\mathbb{P}_{\hspace{-.5pt}\lambda,\eta}$
can be expressed as follows, by means of the wave and the Schr\"odinger groups\,:
\begin{equation}\label{SubordinationWave}
\mathscr{P}_{\lambda,1}=\sqrt{\hspace{-.5pt}\tfrac8\pi}
\int_{\hspace{.5pt}0}^{\hspace{.5pt}1}\cos(tD)\cos(\lambda t)\,\widehat{\chi}(t)\,dt
\end{equation}
and
\begin{equation}\label{SubordinationSchrodinger}
\mathbb{P}_{\hspace{-.5pt}\lambda,\eta}
=\tfrac1{\sqrt{2\pi}}\int_{-\infty}^{\infty}\!
Z_{\lambda,\eta}(t)\,e^{\hspace{1pt}i\hspace{1pt}t\hspace{.5pt}D^2}dt\hspace{1pt},
\end{equation}
where \hspace{1pt}$Z_{\lambda,\eta}$ denotes the Fourier transform of
$$
\tau\longmapsto\begin{cases}
\frac{\phi\bigl(\frac{\sqrt{\tau}\hspace{1pt}-\hspace{1pt}\lambda}\eta\bigr)}
{\bigl[\hspace{1pt}\chi\bigl(\sqrt{\tau}-\lambda\bigr)
+\hspace{.5pt}\chi\bigl(\sqrt{\tau}+\lambda\bigr)\bigr]^3}
&\text{if \,$\tau\hspace{-1.5pt}\ge\hspace{-1pt}0$\hspace{1pt},}\\
\hspace{46pt}0
&\text{if \,$\tau\hspace{-1.5pt}<\hspace{-1pt}0$\hspace{1pt}.}
\end{cases}
$$
Notice on the one hand that $\mathscr{P}_{\lambda,1}$ has range $\le\hspace{-1pt}1$, 
by finite propagation speed for the wave equation.
Notice on the other hand that\,{\footnote{
\;Proof of \eqref{PointwiseEstimateZ}\,:
\begin{align*}
|t|^L\,|Z_{\lambda,\eta}(t)|
&\le\tfrac1{\sqrt{2\pi}}\int_{\hspace{1pt}0}^{\hspace{.5pt}\infty}\,
\biggl|\hspace{1pt}\bigl(\tfrac\partial{\partial\tau}\bigr)^{\hspace{-1pt}L}
\frac{\phi\bigl(\frac{\sqrt{\tau}\hspace{1pt}-\hspace{1pt}\lambda}\eta\bigr)}
{\bigl[\hspace{1pt}\chi\bigl(\sqrt{\tau}-\lambda\bigr)\hspace{-1pt}
+\hspace{-1pt}\chi\bigl(\sqrt{\tau}+\lambda\bigr)\bigr]^3}\,\biggr|\,d\tau\\
&=\tfrac1{\sqrt{2\pi}}\hspace{1pt}
(2\hspace{1pt}\eta\hspace{.5pt})^{1\hspace{-.5pt}-\hspace{-.5pt}L}\!
\int_{-\frac12}^{\hspace{.5pt}\frac12}\,
\biggl|\hspace{1pt}\bigl(\tfrac1{\lambda+\hspace{.5pt}\eta\hspace{.5pt}\tau}
\tfrac\partial{\partial\tau}\bigr)^{\hspace{-1pt}L}
\frac{\phi(\tau)\vphantom{\big|}}{\bigl[\hspace{1pt}\chi(\eta\hspace{1pt}\tau)\hspace{-1pt}+\hspace{-1pt}
\chi(2\hspace{.5pt}\lambda\hspace{-1pt}+\hspace{-1pt}\eta\hspace{1pt}\tau)\bigr]^3}\,\biggr|
\,(\lambda\hspace{-1pt}+\hspace{-1pt}\eta\hspace{1pt}\tau)\,d\tau\\
&\lesssim(\lambda\hspace{1pt}\eta\hspace{.5pt})^{1\hspace{-.5pt}-\hspace{-.5pt}L}.
\end{align*}}}
\begin{equation}\label{PointwiseEstimateZ}
|Z_{\lambda,\eta}(t)|\lesssim_{\hspace{.5pt}L}\hspace{-1pt}\lambda\hspace{1pt}\eta\,
\bigl(1\!+\!\lambda\hspace{1pt}\eta\hspace{1pt}|t|\hspace{.5pt})^{-L}
\qquad\forall\,L\!\in\hspace{-1pt}\mathbb{N}\hspace{1pt},
\end{equation}
hence
\begin{equation}\label{LrEstimateZ}
\|Z_{\lambda,\eta}\|_{L^r}\lesssim(\lambda\hspace{1pt}\eta)^{\frac1{r^{\hspace{.5pt}\prime}}}
\qquad\forall\;1\!\le\hspace{-1pt}r\hspace{-1pt}\le\hspace{-1pt}\infty\hspace{1pt}.
\end{equation}
Let us finally decompose
$$
\phi\bigl(\tfrac{D-\lambda}\eta\bigr)^{\hspace{-.5pt}2}\hspace{-1pt}
=\mathscr{P}_{\hspace{-1pt}\lambda,1}^{\hspace{1pt}3}\hspace{1pt}\zeta_0
\underbrace{\mathbb{P}_{\lambda,\eta}\,\phi\bigl(\tfrac{D-\lambda}\eta\bigr)
}_{\widetilde{\mathbb{P}}_{\lambda,\eta}}
+\sum_{j=1}^n\mathscr{P}_{\hspace{-1pt}\lambda,1}^{\hspace{1pt}3}\hspace{1pt}\zeta_j\,
\underbrace{\mathbb{P}_{\lambda,\eta}\,\phi\bigl(\tfrac{D-\lambda}\eta\bigr)
}_{\widetilde{\mathbb{P}}_{\lambda,\eta}}
$$
by using the partition of unity \eqref{PartitionUnity}.
\medskip

\noindent\underline{The compact part.}
We learn from Bourgain-Dyatlov~\cite[Theorem 2]{BourgainDyatlov2018} that, for any $\epsilon>0$,
there exists $C_\epsilon\hspace{-1pt}>\hspace{-1pt}0$ such that
$$
\|\hspace{1pt}\zeta_0\hspace{1pt}(D^{\hspace{.5pt}2}\!-\hspace{-1pt}\lambda^2\hspace{-1pt}\pm\hspace{-.5pt}i\hspace{1pt}0\hspace{1pt})^{-1}\hspace{1pt}\zeta_0\hspace{1pt}
\|_{L^2\to L^2}\lesssim_{\hspace{1pt}\epsilon}\lambda^{-1+\hspace{1pt}2\hspace{1pt}\epsilon}
\qquad\forall\,\lambda\hspace{-1pt}\ge\hspace{-1pt}{C_\epsilon}\hspace{1pt}.
$$
Assume first that
$\lambda\hspace{-1pt}>\hspace{-1pt}\max\hspace{1pt}\{1,C_\epsilon\}$\hspace{.5pt}. Then it
follows from Kato's local $L^2$ smoothing theorem (see~\cite[Theorem~7.2]{DyatlovZworski2019}) that
\begin{equation}
\label{KatoLocalSmoothing}
\left\|\hspace{1pt}\zeta_0\,\phi\bigl(\tfrac{D-\lambda}\eta\bigr)\hspace{1pt}
e^{\hspace{.5pt}i\hspace{.5pt}tD^2}\hspace{-1pt}f\hspace{1pt}\right\|_{L^2_tL^2_x}
\lesssim_{\hspace{1pt}\epsilon}\lambda^{-\frac12+\hspace{1pt}\epsilon}\,\| f\|_{L^2}
\qquad\forall\,f\hspace{-1pt}\in\hspace{-1pt}L^2(X)\hspace{.5pt}.
\end{equation}
Moreover, by applying the Cauchy-Schwarz inequality to \eqref{SubordinationSchrodinger}
and by using \eqref{LrEstimateZ} together with \eqref{KatoLocalSmoothing}, we obtain
\begin{align*}
\bigl\|\hspace{1pt}\zeta_0\,\widetilde{\mathbb{P}}_{\hspace{-.5pt}\lambda,\eta}\,
f\hspace{1pt}\bigr\|_{L^2}
&=\bigl\|\hspace{1pt}\zeta_0\,\mathbb{P}_{\hspace{-.5pt}\lambda,\eta}\,
\phi\bigl(\tfrac{D-\lambda}\eta\bigr)\hspace{.5pt}f\hspace{1pt}\bigr\|_{L^2}
=\tfrac1{\sqrt{2\pi}}\left\|\hspace{1pt}\int_{-\infty}^{+\infty}\!
Z_{\lambda,\eta}(t)\,\zeta_0\,\phi\bigl(\tfrac{D-\lambda}\eta\bigr)\,
e^{\hspace{1pt}i\hspace{1pt}t\hspace{.5pt}D^2}\hspace{-1pt}f\,dt\,\right\|_{L^2_x}\\
&\lesssim\bigl\|Z_{\lambda,\eta}(t)\bigr\|_{L^2_t}\,
\bigl\|\hspace{1pt}\zeta_0\,\phi\bigl(\tfrac{D-\lambda}\eta\bigr)\,
e^{\hspace{1pt}i\hspace{1pt}t\hspace{.5pt}D^2}\hspace{-1pt}f\hspace{1pt}\bigr\|_{L^2_tL^2_x}
\lesssim\lambda^\epsilon\,\eta^{\frac12}\,\|f\|_{L^2}\hspace{1pt}.
\end{align*}
Assume next that $C_\epsilon\hspace{-1.5pt}>\!1$
and $1\!<\hspace{-1pt}\lambda\hspace{-1pt}<\hspace{-1pt}C_{\varepsilon}$.
Then, by using \eqref{LrEstimateZ} with $r\hspace{-1pt}=\!1$\hspace{.5pt},
we estimate
\begin{align*}
\bigl\|\hspace{1pt}\zeta_0\,\widetilde{\mathbb{P}}_{\hspace{-.5pt}\lambda,\eta}\,
f\hspace{1pt}\bigr\|_{L^2}
&=\tfrac1{\sqrt{2\pi}}\left\|\hspace{1pt}\int_{-\infty}^{+\infty}\!
Z_{\lambda,\eta}(t)\,\zeta_0\,\phi\bigl(\tfrac{D-\lambda}\eta\bigr)\,
e^{\hspace{1pt}i\hspace{1pt}t\hspace{.5pt}D^2}\hspace{-1pt}f\,dt\,\right\|_{L^2_x}\\
&\lesssim\bigl\|Z_{\lambda,\eta}(t)\bigr\|_{L^1_t}\,
\bigl\|\hspace{1pt}\zeta_0\,\phi\bigl(\tfrac{D-\lambda}\eta\bigr)\,
e^{\hspace{1pt}i\hspace{1pt}t\hspace{.5pt}D^2}\hspace{-1pt}f\hspace{1pt}\bigr\|_{L^{\infty}_tL^2_x}
\lesssim\|f\|_{L^2}\hspace{1pt},
\end{align*}

\noindent
which is $\lesssim\hspace{-1pt}\lambda^\epsilon\,\eta^{\frac12}\,\|f\|_{L^2}$
under the assumption $\lambda^M\eta\hspace{-1pt}>\!1$\hspace{.5pt}.
In both cases, by applying $\mathscr{P}_{\lambda,1}^{\hspace{1pt}3}$
and by using Sogge's estimate as stated in Remark \ref{RemarkSogge},
we conclude that
$$
\bigl\|\hspace{1pt}\mathscr{P}_{\lambda,1}^{\hspace{1pt}3}\hspace{1pt}\zeta_0\,
\widetilde{\mathbb{P}}_{\hspace{-.5pt}\lambda,\eta}\hspace{1pt}f\hspace{1pt}\bigr\|_{L^p} 
\lesssim\lambda^{\gamma(p)}\hspace{1pt}\bigl\|\hspace{1pt}\zeta_0\,
\widetilde{\mathbb{P}}_{\hspace{-.5pt}\lambda,\eta}\hspace{1pt}f\hspace{1pt}\bigr\|_{L^2}
\lesssim\lambda^{\gamma(p)+\hspace{.5pt}\epsilon}\,\eta^{\frac12}\,\|f\|_{L^2} \hspace{1pt}.
$$\vspace{-2mm}

\noindent\underline{The non-compact part.} 
There remains to estimate the $L^2\!\to\!L^p$ operator norm of
\begin{equation*}\label{EstimateOnCj}
\mathscr{P}_{\lambda,1}^{\hspace{1pt}3}\hspace{1pt}\zeta_j\,
\widetilde{\mathbb{P}}_{\hspace{-.5pt}\lambda,\eta}
=\mathscr{P}_{\lambda,1}^{\hspace{1pt}3}\hspace{1pt}\zeta_j\,
\mathbb{P}_{\hspace{-.5pt}\lambda,\eta}\,\phi\bigl(\tfrac{D-\lambda}\eta\bigr)
\end{equation*}
for $1\!\le\hspace{-1pt}j\hspace{-1pt}\le\hspace{-1pt}n$\hspace{.5pt}.
Given $f\hspace{-1pt}\in\hspace{-1pt}L^2(X)$, let \hspace{1pt}$u(t,\cdot\,)\hspace{-1pt}
=\hspace{-1pt}e^{\hspace{1pt}i\hspace{1pt}t\hspace{.5pt}\Delta}\hspace{1pt}\phi\bigl(\tfrac{D-\lambda}\eta\bigr)\hspace{.5pt}f$
be the solution to the Cauchy problem
$$
\begin{cases}
\,i\hspace{1pt}\partial_{\hspace{.5pt}t}\hspace{.5pt}u+\Delta\hspace{1pt}u=0\\
\,u\hspace{1pt}|_{\hspace{.5pt}t\hspace{.5pt}=\hspace{.5pt}0}=\phi\bigl(\tfrac{D-\lambda}\eta\bigr)f
\end{cases}
$$
on $X$.
On the hyperbolic cylinder $C_j$, whose half is the funnel $F_j$, the function
$$
u_j(t,x)=\begin{cases}
\,\zeta_j(x)\hspace{1pt}u(t,x)
&\text{if \,$x\hspace{-1pt}\in\hspace{-1pt}F_j$}\\
\hspace{25pt}0
&\text{if \,$x\hspace{-1pt}\in\hspace{-1pt}C_j\!\smallsetminus\hspace{-1pt}F_j$}\\
\end{cases}
$$
solves the Cauchy problem
$$
\begin{cases}
\,i\hspace{1pt}\partial_{\hspace{.5pt}t}\hspace{.5pt}u_j\hspace{-.5pt}
+\Delta_{\hspace{.5pt}C_j}\hspace{0pt}u_j
=[\Delta_{\hspace{.5pt}C_j},\zeta_j\hspace{.5pt}]\hspace{1pt}u\hspace{1pt},\\
\,u_j\hspace{.5pt}|_{\hspace{.5pt}t\hspace{.5pt}=0}
=\zeta_j\hspace{1pt}\phi\bigl(\tfrac{D-\lambda}\eta\bigr)\hspace{.5pt}f.
\end{cases}$$
Here we add the subscript $C_j$
to emphasize the fact that we are now working on $C_j$ rather than $X$.
By Duhamel's formula,
$$
u_j(t,\cdot\,)=e^{\hspace{1pt}i\hspace{1pt}t\hspace{.75pt}\Delta_{C_j}}\hspace{-.5pt}
\zeta_j\hspace{1pt}\phi\bigl(\tfrac{D-\lambda}\eta\bigr)\hspace{.5pt}f
-i\int_{\hspace{1pt}0}^{\hspace{1pt}t}e^{\hspace{1pt}i\hspace{1pt}(t-s)\hspace{.5pt}\Delta_{C_j}} [\Delta_{\hspace{.5pt}C_j},\zeta_j]\hspace{1pt}u(s,\cdot\,)\,ds\hspace{1pt}.
$$
Thus
\begin{equation}\label{eqzeta}\begin{aligned}
\zeta_j\,\widetilde{\mathbb{P}}_{\hspace{-.5pt}\lambda,\eta}\hspace{1pt}f
&=\zeta_j\,\mathbb{P}_{\hspace{-.5pt}\lambda,\eta}\,
\phi\bigl(\tfrac{D-\lambda}\eta\bigr)\hspace{.5pt}f
=\tfrac1{\sqrt{2\pi}}\int_{-\infty}^{+\infty}\!Z_{\lambda,\eta}(t)\,
\zeta_j\,e^{\hspace{1pt}i\hspace{1pt}t\hspace{.5pt}D^2}\hspace{-1pt}
\phi\bigl(\tfrac{D-\lambda}\eta\bigr)\hspace{.5pt}f\,dt\\
&=\tfrac1{\sqrt{2\pi}}\int_{-\infty}^{+\infty}\!e^{\hspace{1pt}i\frac t4}\hspace{1pt}
Z_{\lambda,\eta}(t)\,u_j(t\hspace{.5pt},\cdot\,)\,dt\\
&=\underbrace{\tfrac1{\sqrt{2\pi}}\int_{-\infty}^{+\infty}\!
e^{\hspace{1pt}i\frac t4}\hspace{1pt}Z_{\lambda,\eta}(t)\,
e^{\hspace{1pt}i\hspace{1pt}t\hspace{.75pt}\Delta_{C_j}}\hspace{-.5pt}
\zeta_j\hspace{1pt}\phi\bigl(\tfrac{D-\lambda}\eta\bigr)\hspace{.5pt}f\,dt
}_{\mathbb{P}_{\hspace{-1pt}\lambda,\eta}^{\hspace{1pt}C_j}\hspace{1pt}
\zeta_j\hspace{1pt}\phi(\frac{D-\lambda}\eta)\hspace{.5pt}f}\\
&-\tfrac i{\sqrt{2\pi}}\int_{-\infty}^{+\infty}\!e^{\hspace{1pt}i\frac t4}\hspace{1pt}
Z_{\lambda,\eta}(t)\hspace{1pt}\Bigl(\int_{\hspace{.5pt}0}^{\hspace{1pt}t}
e^{\hspace{1pt}i\hspace{1pt}(t-s)\hspace{.5pt}\Delta_{C_j}}
[\Delta_{\hspace{.5pt}C_j},\zeta_j]\hspace{1pt}u(s,\cdot\,)\,ds\Bigr)dt\,.
\end{aligned}\end{equation}
Let us comment about \eqref{eqzeta}.
The functions $\phi(\frac{D-\lambda}\eta)\hspace{.5pt}f$ and $u$ are initially defined on~$X$
and spectrally localized around \hspace{.5pt}$\lambda$\hspace{1pt}.
Once multiplied by $\zeta_j$,
all expressions are supported inside the funnel $F_j$
and may be considered as functions on the cylinder $C_j$,
which vanish outside~$F_j$.
Thus it makes sense to apply the Schr\"odinger group
\hspace{1pt}$e^{\hspace{1pt}i\,\cdot\,\Delta_{C_j}}$ to them.
Finally, the resulting expressions matter only inside $F_j$.

We finally apply the spectral projector $\mathscr{P}_{\hspace{-1pt}\lambda,1}^{\hspace{1pt}3}$
to \eqref{eqzeta}. Notice that $\mathscr{P}_{\hspace{-.5pt}\lambda,1}\!
=\hspace{-1pt}\mathscr{P}_{\hspace{-1pt}\lambda,1}^{\hspace{1pt}X}$
coincides with $\smash{\mathscr{P}_{\hspace{-1pt}\lambda,1}^{\hspace{.5pt}C_j}}$
on $\supp\zeta_j$,
as waves propagate at speed $\le\!1\hspace{.5pt}$.
Hence
\begin{equation}\begin{aligned}\label{finaleq}
\mathscr{P}_{\hspace{-1pt}\lambda,1}^{\hspace{1pt}3}\hspace{1pt}
\zeta_j\hspace{1pt}\widetilde{\mathbb{P}}_{\hspace{-.5pt}\lambda,\eta}\hspace{1pt}f
&=\phi\Bigl(\tfrac{D_{C_j}-\lambda}\eta\Bigr)\hspace{1pt}
\zeta_j\,\phi(\tfrac{D-\lambda}\eta)\hspace{.5pt}f\\
&-\tfrac i{\sqrt{2\pi}}\int_{-\infty}^{+\infty}\!e^{\hspace{1pt}i\frac t4}\hspace{1pt}
Z_{\lambda,\eta}(t)\hspace{1pt}\Bigl(\int_{\hspace{.5pt}0}^{\hspace{1pt}t}\!
\mathscr{P}_{\hspace{-1pt}\lambda,1}^{\hspace{1pt}3}\hspace{1pt}
e^{\hspace{1pt}i\hspace{1pt}(t-s)\hspace{.5pt}\Delta_{C_j}}
[\Delta_{\hspace{.5pt}C_j},\zeta_j]\hspace{1pt}u(s,\cdot\,)\,ds\Bigr)dt\,.
\end{aligned}\end{equation}
We know from Section \ref{SurfacesSmallDelta} that
the first term on the right-hand side of \eqref{finaleq} enjoys optimal bounds on $C_j$\,:
$$
\Bigl\|\hspace{1pt}\phi\Bigl(\tfrac{D_{C_j}-\lambda}\eta\Bigr)\hspace{1pt}
\zeta_j\,\phi(\tfrac{D-\lambda}\eta)\hspace{.5pt}f\hspace{1pt}\Bigr\|_{L^p}
\lesssim\lambda^{\gamma(p)}\hspace{1pt}\eta^{\frac12}\hspace{1pt}\underbrace{
\bigl\|\hspace{1pt}\zeta_j\,\phi(\tfrac{D-\lambda}\eta)\hspace{.5pt}f\hspace{1pt}\bigr\|_{L^2}
}_{\lesssim\,\|f\|_{L^2}}\hspace{1pt}.
$$
As for the second term on the right-hand side of \eqref{finaleq},
its control will require the following three lemmas.

\begin{lemma}[Duhamel Lemma]\label{DuhamelLemma}
Let \,$\zeta\!\in\hspace{-1pt}\mathcal{C}_{\hspace{1pt}c}^{\hspace{.5pt}\infty}(C_j)$\hspace{.5pt}.
Then the following inequality holds on \,$\R\hspace{-1pt}\times\hspace{-1pt}C_j$,
for every \,$q\hspace{-1pt}>\hspace{-1pt}2$ and \,$\epsilon\hspace{-1pt}>\hspace{-1pt}0:$
\begin{equation}\label{EstimateLemma1}
\Bigl\|\hspace{1pt}Z_{\lambda,\eta}(t)\!\int_{\hspace{.5pt}0}^{\hspace{1pt}t}\!
\mathscr{P}_{\hspace{-1pt}\lambda,1}^{\hspace{1pt}2}\hspace{1pt}
e^{\hspace{1pt}i\hspace{1pt}(t-s)\hspace{.5pt}\Delta_{C_j}}
\zeta\hspace{1pt}F(s,\cdot\,)\,ds\hspace{1pt}\Bigr\|_{L^1_tL^q_x}
\lesssim_{\hspace{1pt}q,\epsilon}\lambda^{-\frac1q+\hspace{1pt}\epsilon}\,
\eta^{\frac12-\hspace{1pt}\epsilon}\,\|F(t,x)\|_{L^2_tL^2_x}\hspace{1pt}.
\end{equation}
\end{lemma}

\begin{proof}
Let $r\hspace{-1pt}>\hspace{-1pt}2$
and $2\hspace{-1pt}<\hspace{-1pt}\tilde{r}\hspace{-1pt}<\hspace{-1pt}6$\hspace{.5pt}.
Firstly, by using H\"older's inequality and \eqref{LrEstimateZ},
the left hand side of \eqref{EstimateLemma1} is bounded above by
\begin{equation*}
\underbrace{\|Z_{\lambda,\eta}(t)\|_{L_t^{r^{\hspace{.5pt}\prime}}}
}_{\lesssim\,(\lambda\hspace{.5pt}\eta)^{1/r}}\,
\Bigl\|\hspace{.5pt}\int_{\hspace{.5pt}0}^{\hspace{1pt}t}\!
\mathscr{P}_{\hspace{-1pt}\lambda,1}^{\hspace{1pt}2}\hspace{1pt}
e^{\hspace{1pt}i\hspace{1pt}(t-s)\hspace{.5pt}\Delta_{C_j}}
\zeta\hspace{1pt}F(s,\cdot\,)\,ds\hspace{1pt}\Bigr\|_{L^r_tL^q_x}\,.
\end{equation*}
Secondly, by applying Proposition \ref{StrichartzX},
\begin{align*}
\Bigl\|\hspace{.5pt}\int_{-\infty}^{+\infty}\!
\mathscr{P}_{\hspace{-1pt}\lambda,1}^{\hspace{1pt}2}\hspace{1pt}
e^{\hspace{1pt}i\hspace{1pt}(t-s)\hspace{.5pt}\Delta_{C_j}}
\zeta\hspace{1pt}F(s,\cdot\,)\,ds\hspace{1pt}\Bigr\|_{L^r_tL^q_x}
\lesssim\lambda^{\frac12-\frac1q-\frac1r}\,
\Bigl\|\hspace{.5pt}\mathscr{P}_{\hspace{-.5pt}\lambda,1}\!\int_{-\infty}^{\hspace{.5pt}+\infty}\!
e^{-\hspace{1pt}i\hspace{1pt}s\hspace{1pt}\Delta_{C_j}}
\zeta\hspace{1pt}F(s,\cdot\,)\,ds\hspace{1pt}\Bigr\|_{L^2_x}
\end{align*}
Thirdly, let us estimate separately
\begin{equation*}
I=\Bigl\|\hspace{1pt}\mathbf{1}_{[\hspace{.5pt}0,\frac\lambda2]}(D_{C_j}\hspace{-.5pt})\hspace{1pt}
\mathscr{P}_{\hspace{-1pt}\lambda,1}\!\int_{-\infty}^{+\infty}\!e^{-i\hspace{1pt}s\hspace{.5pt}\Delta_{C_j}}
\zeta\hspace{1pt}F(s,\cdot\,)\,ds\hspace{1pt}\Bigr\|_{L_x^2}
\end{equation*}
and
\begin{equation*}
I\!I=\Bigl\|\hspace{1pt}\mathbf{1}_{(\frac\lambda2,+\infty)}(D_{C_j}\hspace{-.5pt})\hspace{1pt}
\mathscr{P}_{\hspace{-1pt}\lambda,1}\!\int_{-\infty}^{+\infty}\!e^{-i\hspace{1pt}s)\hspace{.5pt}\Delta_{C_j}}
\zeta\hspace{1pt}F(s,\cdot\,)\,ds\hspace{1pt}\Bigr\|_{L^1_tL^q_x}\,.
\end{equation*}
On the one hand, by using the elementary estimate
\begin{equation*}
\chi(\,\cdot-\hspace{-1pt}\lambda)+\chi(\,\cdot+\hspace{-1pt}\lambda)
=\text{O}\bigl(\lambda^{-N}\bigr)
\quad\text{on \,}[\hspace{.5pt}0,\hspace{-.5pt}\tfrac\lambda2\hspace{.5pt}]
\end{equation*}
and Kato's local $L^2$ smoothing theorem on $C_j$ (see Proposition \ref{KatosmoothingX}),
we get
\begin{align*}
I&\lesssim\lambda^{-N}\,
\Bigl\|\hspace{1pt}\mathbf{1}_{[\hspace{.5pt}0,\frac\lambda2]}(D_{C_j}\hspace{-.5pt})\!
\int_{-\infty}^{+\infty}\!e^{-i\hspace{1pt}s\hspace{.5pt}\Delta_{C_j}}
\zeta\hspace{1pt}F(s,\cdot\,)\,ds\hspace{1pt}\Bigr\|_{L_x^2}\\
&\lesssim\lambda^{-N}\,\|F(t,x)\|_{L^2_tL^2_x}\,.
\end{align*}
On the other hand, by using the $L^{\tilde{r}}$ smoothing estimate~\eqref{smo2}, we obtain
\begin{align*}
I\!I&\lesssim\lambda^{\gamma(\tilde{r})-\frac12}\hspace{1pt}
\Bigl\|\hspace{1pt}\mathbf{1}_{[\frac\lambda2,+\infty)}(D_{C_j}\hspace{-.5pt})\hspace{1pt}
\mathscr{P}_{\hspace{-.5pt}\lambda,1}\hspace{1pt}D_{C_j}^{\frac12-\gamma(\tilde{r})}\!
\int_{-\infty}^{\hspace{.5pt}+\infty}\!e^{-\hspace{1pt}i\hspace{1pt}s\hspace{1pt}\Delta_{C_j}}
\zeta\hspace{1pt}F(s,\cdot\,)\,ds\hspace{1pt}\Bigr\|_{L^2_x}\\
&\lesssim\lambda^{-\frac14-\frac1{2\tilde{r}}}\,
\Bigl\|\hspace{1pt}D_{C_j}^{\frac12-\gamma(\tilde{r})}\!
\int_{-\infty}^{\hspace{.5pt}+\infty}\!e^{-\hspace{1pt}i\hspace{1pt}s\hspace{1pt}\Delta_{C_j}}
\zeta\hspace{1pt}F(s,\cdot\,)\,ds\hspace{1pt}\Bigr\|_{L^2_x}\\
&\lesssim\lambda^{-\frac14-\frac1{2\tilde{r}}}\,
\|\hspace{1pt}\zeta(x)\hspace{.5pt}F(t,x)\|_{L^{\tilde{r}^{\hspace{.5pt}\prime}}_x\!L^2_t}
\lesssim\lambda^{-\frac14-\frac1{2\tilde{r}}}\,\|F(t,x)\|_{L^2_tL^2_x}\,.
\end{align*}
Hence
\begin{equation*}
\Bigl\|\hspace{.5pt}\mathscr{P}_{\hspace{-.5pt}\lambda,1}\!\int_{-\infty}^{\hspace{.5pt}+\infty}\!
e^{-\hspace{1pt}i\hspace{1pt}s\hspace{1pt}\Delta_{C_j}}
\zeta\hspace{1pt}F(s,\cdot\,)\,ds\hspace{1pt}\Bigr\|_{L^2_x}
\le I\hspace{-1pt}+\hspace{-1pt}I\!I
\lesssim\lambda^{-\frac14-\frac1{2\tilde{r}}}\,\|F(t,x)\|_{L^2_tL^2_x}
\end{equation*}
and consequently
\begin{equation}\label{UntruncatedInequality}
\Bigl\|\hspace{.5pt}\int_{-\infty}^{+\infty}\!
\mathscr{P}_{\hspace{-1pt}\lambda,1}^{\hspace{1pt}2}\hspace{1pt}
e^{\hspace{1pt}i\hspace{1pt}(t-s)\hspace{.5pt}\Delta_{C_j}}
\zeta\hspace{1pt}F(s,\cdot\,)\,ds\hspace{1pt}\Bigr\|_{L^r_tL^q_x}
\lesssim\lambda^{\frac14-\frac1q-\frac1r-\frac1{2\tilde{r}}}\,.
\end{equation}
Fourthly, the Christ-Kiselev lemma allows us to replace $\int_{-\infty}^{+\infty}$
by the truncated integral $\int_{\hspace{1pt}0}^{\hspace{1pt}t}$ in \eqref{UntruncatedInequality}.
In conclusion,
$$
\Bigl\|\hspace{1pt}Z_{\lambda,\eta}(t)\!\int_{\hspace{.5pt}0}^{\hspace{1pt}t}\!
\mathscr{P}_{\hspace{-1pt}\lambda,1}^{\hspace{1pt}2}\hspace{1pt}
e^{\hspace{1pt}i\hspace{1pt}(t-s)\hspace{.5pt}\Delta_{C_j}}
\zeta\hspace{1pt}F(s,\cdot\,)\,ds\hspace{1pt}\Bigr\|_{L^1_tL^q_x}
\lesssim_{\hspace{1pt}q,r,\tilde{r}}\lambda^{\frac14-\frac1q-\frac1{2\tilde{r}}}\,
\eta^{\frac1r}\,\|F(t,x)\|_{L^2_tL^2_x}
$$
and \eqref{EstimateLemma1} is obtained by taking $r$ and $\tilde{r}$ sufficiently close to $2$.
\end{proof}


\begin{lemma}[Commutator Lemma]\label{CommutatorLemma}
For any \,$1\!\le\hspace{-1pt}j\hspace{-1pt}\le\hspace{-1pt}n$\hspace{.5pt},
$\epsilon\hspace{-1pt}>\hspace{-1pt}0$ and \hspace{1pt}$N\hspace{-1pt}>\hspace{-1pt}0$\hspace{.5pt},
there exist functions
\,$\zeta,\widetilde{\zeta}\!\in\hspace{-1pt}\mathcal{C}_{\hspace{.5pt}c}^{\hspace{.5pt}\infty}(C_j)$
and bounded operators \,$A\hspace{.5pt},R_{\hspace{.5pt}\lambda}$ on \hspace{1pt}$L^2(C_j)$
such that
$$
\mathscr{P}_{\lambda,1}\hspace{1pt}[\Delta_{\hspace{.5pt}C_j},\zeta_j]\hspace{1pt}u
=\lambda^{1+\hspace{.5pt}\epsilon}\hspace{1pt}{\zeta}\hspace{1pt}
A\hspace{.5pt}{(\widetilde{\zeta}u)}
+R_{\hspace{.5pt}\lambda}\hspace{.5pt}{(\widetilde{\zeta}u)}
$$
and \,$\|R_{\hspace{.5pt}\lambda}\|_{L^2\to L^2}\hspace{-1pt}=\hspace{-1pt}\text{\rm O}(\lambda^{-N})$.
\end{lemma}

\begin{proof}
First, given a slightly enlarged cut-off $\widetilde{\zeta_j}$ (meaning it equals $1$ on the support of $\zeta_j$) we can write that 
\begin{align*}
[\Delta_{\hspace{.5pt}C_j},\zeta_j] = \widetilde{\zeta_j} [\Delta_{\hspace{.5pt}C_j},\zeta_j],
\end{align*}
as can be seen by working in local coordinates.

Next, since the operator $\mathscr{P}_{\hspace{-.5pt}\lambda,1}$ has finite range by finite speed of propagation of the wave semi-group, there exists an enlarged cut-off $\widetilde{\widetilde{\zeta_j}}$ of $\widetilde{\zeta_j}$ such that 
\begin{align*}
\mathscr{P}_{\hspace{-.5pt}\lambda,1} [\Delta_{\hspace{.5pt}C_j},\zeta_j] = \mathscr{P}_{\hspace{-.5pt}\lambda,1} \widetilde{\zeta_j} [\Delta_{\hspace{.5pt}C_j},\zeta_j]  = \widetilde{\widetilde{\zeta_j}} \mathscr{P}_{\hspace{-.5pt}\lambda,1} \widetilde{\zeta_j} [\Delta_{\hspace{.5pt}C_j},\zeta_j]= \widetilde{\widetilde{\zeta_j}} \mathscr{P}_{\hspace{-.5pt}\lambda,1} [\Delta_{\hspace{.5pt}C_j},\zeta_j].
\end{align*}

Next by the spectral localization of $u$, we can write, for
$\varphi\hspace{-1pt}\in\hspace{-1pt}\mathcal{C}_{\hspace{.5pt}c}^{\hspace{.5pt}\infty}$
equal to $1$ in a neighborhood of the origin,
\begin{align*}
\mathscr{P}_{\hspace{-.5pt}\lambda,1}
(\Delta_{\hspace{.5pt}C_j}\zeta_j-\zeta_j\Delta_{\hspace{.5pt}C_j})
\hspace{1pt}u 
&=\mathscr{P}_{\hspace{-.5pt}\lambda,1}\bigl[\varphi\bigl(\lambda^{-2-\delta}\Delta_{\hspace{.5pt}C_j}\bigr)\Delta_{\hspace{.5pt}C_j}\zeta_j-\zeta_j\varphi\bigl( {\lambda^{-2-\delta}}\Delta_{\hspace{.5pt}C_j}\bigr)\Delta_{\hspace{.5pt}C_j}\bigr]u\\
&+\mathscr{P}_{\hspace{-.5pt}\lambda,1}\bigl[1-\varphi\bigl( {\lambda^{-2-\delta}}\Delta_{\hspace{.5pt}C_j}\bigr)\bigr]\Delta_{\hspace{.5pt}C_j}\zeta_ju.
\end{align*}
To the first summand on the right-hand side, we can apply the commutator lemma in \cite{Wang2019}, which gives the desired decomposition. Turning to the second summand on the right-hand side, we observe that
$$
 \mathscr{P}_{\lambda,1}\bigl[1-\varphi\bigl( {\lambda^{-2-\delta}}\Delta_{\hspace{.5pt}C_j}\bigr)\bigr]\Delta_{\hspace{.5pt}C_j}=\text{O}(\lambda^{-N})
$$
as an operator on $L^2$ for any $N$. This follows by considering the symbol of this operator, and using the rapid decay of $\chi$, from which $\mathscr{P}_{\lambda,1}$ is defined.
\end{proof}

\begin{lemma}\label{Lemma3}
Let \,$\lambda\hspace{-1pt}>\!1$, $p\hspace{-1pt}>\hspace{-1pt}2$
and \,$\epsilon\hspace{-1pt}>\hspace{-1pt}0$\hspace{1pt}.
Then
$$
\|\mathscr{P}_{\lambda,1}\|_{L^q\to L^p}\lesssim\lambda^{\gamma(p)+\hspace{.5pt}\epsilon}
$$
on \,$C_j$,
for \,$q\hspace{-1pt}>\hspace{-1pt}2$ sufficiently close to \,$2$\hspace{1pt}.
\end{lemma}

\begin{remark}
This result holds more generally in the setting of Section \ref{SurfacesSmallDelta}.
\end{remark}

\begin{proof}
Let \hspace{1pt}$\tilde{p}\hspace{-1pt}>\hspace{-1pt}p\hspace{-1pt}>\hspace{-1pt}
\tilde{q}\hspace{-1pt}>\hspace{-1pt}q\hspace{-1pt}>\hspace{-1pt}2$ \hspace{1pt}with
$\tilde{p}$ close to $p$ and $\tilde{q}\hspace{.5pt},\hspace{-.5pt}q$ close to $2$\hspace{.5pt}.
On the one hand,
\begin{equation}\label{L2Lptilde}
\|\mathscr{P}_{\lambda,1}\|_{L^2\to L^{\tilde{p}}}
\lesssim\lambda^{\gamma(\tilde{p})}
\end{equation}
according to Sogge's upper bound
(see Remark \ref{RemarkSogge} in Appendix \ref{AppendixSogge}).
On the other hand,
\begin{equation}\label{LqtildeLqtilde}
\|\mathscr{P}_{\lambda,1}\|_{L^{\tilde{q}}\to L^{\tilde{q}}}
\lesssim\lambda^{4\hspace{1pt}(\frac12-\frac1{\tilde{q}})}
\end{equation}
according to the multiplier theorem in \cite{Taylor1989} or \cite{FotiadisMarias2010}.
We conclude by interpolation between \eqref{L2Lptilde} and \eqref{LqtildeLqtilde}.
\end{proof}

We can now turn to estimating the second term on the right-hand side of~\eqref{finaleq}.
By the Lemma \ref{CommutatorLemma},
\begin{align*}
&\int_{-\infty}^{+\infty}\!e^{\hspace{1pt}i\frac t4}\hspace{1pt}
Z_{\lambda,\eta}(t)\hspace{1pt}\Bigl(\int_{\hspace{.5pt}0}^{\hspace{1pt}t}\!
\mathscr{P}_{\hspace{-1pt}\lambda,1}^{\hspace{1pt}3}\hspace{1pt}
e^{\hspace{1pt}i\hspace{1pt}(t-s)\hspace{.5pt}\Delta_{C_j}}
[\Delta_{\hspace{.5pt}C_j},\zeta_j]\hspace{1pt}u(s,\cdot\,)\,ds\Bigr)dt\\
&\qquad=\int_{-\infty}^{+\infty}\!e^{\hspace{1pt}i\frac t4}\hspace{1pt}
Z_{\lambda,\eta}(t)\hspace{1pt}\Bigl(\int_{\hspace{.5pt}0}^{\hspace{1pt}t}\!
\mathscr{P}_{\hspace{-1pt}\lambda,1}^{\hspace{1pt}3}\hspace{1pt}
e^{\hspace{1pt}i\hspace{1pt}(t-s)\hspace{.5pt}\Delta_{C_j}}\bigl[\hspace{.5pt}
\lambda^{1+\hspace{.5pt}\epsilon}\hspace{1pt}\zeta\hspace{1pt}
A\,\widetilde{\zeta}\hspace{1pt}u(s,\cdot\,)\hspace{-1pt}
+\hspace{-1pt}R_{\hspace{.5pt}\lambda}\hspace{.5pt}u(s,\cdot\,)
\hspace{.5pt}\bigr]\hspace{1pt}ds\Bigr)dt\\
&\qquad=I+I\hspace{-1pt}I.
\end{align*}
In order to estimate the localized contribution \hspace{1pt}$I$,
we apply successively Lemma \ref{Lemma3}, Lemma \ref{DuhamelLemma} with 
$\tfrac12\hspace{-1pt}-\hspace{-1pt}\epsilon\hspace{-1pt}
<\hspace{-1pt}\tfrac1q\hspace{-1pt}<\hspace{-1pt}\frac12$\hspace{1pt},
Lemma \ref{CommutatorLemma} and Proposition \ref{KatosmoothingX}
to obtaino
\begin{equation}\label{EstimateI}\begin{aligned}
\|I\|_{L^p}
&\lesssim\lambda^{1+\hspace{.5pt}\epsilon}\hspace{1pt}
\bigl\|\hspace{.5pt}\mathscr{P}_{\hspace{-.5pt}\lambda,1}\bigr\|_{L^q\to L^p}\hspace{1pt}
\Bigl\|\hspace{1pt}Z_{\lambda,\eta}(t)\!\int_{\hspace{.5pt}0}^{\hspace{1pt}t}\!
\mathscr{P}_{\hspace{-1pt}\lambda,1}^{\hspace{1pt}2}\hspace{1pt}
e^{\hspace{1pt}i\hspace{1pt}(t-s)\hspace{.5pt}\Delta_{C_j}}\hspace{.5pt}\zeta\hspace{1pt}
A\,\widetilde{\zeta}\hspace{.5pt}u(s,\cdot\,)\,ds\hspace{1pt}\Bigr\|_{L^1_tL^q_x}\\
&\lesssim\lambda^{\gamma(p)+\frac12+\hspace{.5pt}4\hspace{.5pt}\epsilon}\hspace{1pt}
\eta^{\frac12-\epsilon}\hspace{1pt}
\|A\,\widetilde{\zeta}\hspace{.5pt}u(t,\,\cdot)\|_{L^2_tL^2_x} \lesssim \lambda^{\gamma(p) + 5 \varepsilon} \eta^{\frac12 - \varepsilon} \hspace{1pt}  \Vert f \Vert_{L^2}.
\end{aligned}\end{equation}

\smallskip

In order to estimate the contribution of the remainder term $I\hspace{-1pt}I$,
we use Sogge's upper bound, \eqref{PointwiseEstimateZ} with $L\hspace{-1pt}\ge\hspace{-1pt}3$
and Lemma \ref{CommutatorLemma} to estimate
\begin{equation}\label{EstimateII}\begin{aligned}
\|I\hspace{-1pt}I\|_{L^p}
&\lesssim\bigl\|\hspace{.5pt}\mathscr{P}_{\hspace{-.5pt}\lambda,1}^{\hspace{1pt}3}\bigr\|_{L^2\to L^p}
\int_{-\infty}^{+\infty}\!|Z_{\lambda,\eta}(t)|\,\Bigl\|\hspace{.5pt}\int_{\hspace{.5pt}0}^{\hspace{1pt}t}\!
e^{\hspace{1pt}i\hspace{1pt}(t-s)\hspace{.5pt}\Delta_{C_j}}R_\lambda\hspace{.5pt}u(s,\cdot\,)
\,ds\,\Bigr\|_{L^2_x}dt\\
&\lesssim\lambda^{\gamma(p)}\!\int_{-\infty}^{+\infty}\!
\lambda\hspace{1pt}\eta\,(1\!+\!\lambda\hspace{1pt}\eta\hspace{1pt}|t|\hspace{.5pt})^{-L}\,
\Bigl(\int_{-|t|}^{\hspace{1pt}|t|}\|R_\lambda\hspace{.5pt}u(s,\cdot\,)\|_{L^2_x}\hspace{1pt}ds\Bigr)\,dt\\
&\lesssim\lambda^{\gamma(p)\hspace{.5pt}-N}\hspace{-1pt}\underbrace{
\Bigl(\lambda\hspace{.5pt}\eta\int_{-\infty}^{+\infty}\hspace{-1pt}
(1\!+\!\lambda\hspace{1pt}\eta\hspace{1pt}|t|\hspace{.5pt})^{-L}\hspace{1pt}|t|\,dt\Bigr)
}_{\sim\,(\lambda\hspace{.5pt}\eta\hspace{.5pt})^{-1}}\|f\|_{L^2_x}\\
&\lesssim \lambda^{\gamma(p)\hspace{.5pt}-N-1}\hspace{1pt}\eta^{-1}\hspace{1pt}\|f\|_{L^2_x}\hspace{1pt}.
\end{aligned}\end{equation}

This is $\lesssim \lambda^{\gamma(p)} \hspace{1pt} \eta^{\frac12} \hspace{1pt} \Vert f \Vert_{L^2_x}$ under the assumption $\lambda^M \eta >1$ since $N$ can be chosen arbitrarily large.

\end{proof}

\appendix

\section{Sogge's theorem on complete manifolds with bounded geometry}\label{AppendixSogge}

We show in this appendix how Sogge's bound \eqref{soggestatement} on the operator norm of spectral projectors can be extended from compact manifolds to complete manifolds with bounded geometry, which means here
\begin{itemize}
\item
uniform bound on derivatives of any order of the metric,
\item
injectivity radius bounded from below.
\end{itemize}

These conditions are satisfied for the quotients of the hyperbolic plane considered in Sections \ref{SurfacesSmallDelta} and \ref{SurfacesLargeDelta}

Starting from a complete manifold $X$ with bounded geometry, the strategy of the proof will be to reduce matters to the setting of a compact manifold, where the proof in the textbook by Sogge~\cite{Sogge2017} applies. We restrict to the case of dimension $2$, but all arguments extend straightforwardly to higher dimensions.

\bigskip

\noindent\underline{The upper bound\,: reduction to a finite range operator.}
We aim at proving the bound
$$
\|P^{\hspace{1pt}\prime}_{\lambda,\eta_0}\|_{L^2\to L^p}\lesssim\lambda^{\gamma(p)},
$$
for any fixed $\eta_0>0$ and $\lambda>1$.
Firstly, note that it suffices to prove this bound for the operator
\begin{equation}\label{Plambda}
\mathscr{P}^{\hspace{1pt}\prime}_{\lambda,1}
=\chi(\sqrt{\Delta}\hspace{-1pt}-\hspace{-1pt}\lambda)
+\chi(\sqrt{\Delta}\hspace{-1pt}+\hspace{-1pt}\lambda)\hspace{1pt},
\end{equation}
where $\chi$ is an even Schwartz function such that
$\supp\widehat{\chi}\subset[-\epsilon_0,-\frac{\epsilon_0}2]\cup[\frac{\epsilon_0}2,\epsilon_0]$
(where $5\hspace{1pt}\epsilon_0$ is smaller than the injectivity radius of $X$)
and $\chi(0) =1$.
Indeed, there exists $0<\eta_1<1$ such that $\mathbf{1}_{[-\eta_1,\eta_1]}\le2\hspace{1pt}|\chi|$,
hence
$$
\|P^{\hspace{1pt}\prime}_{\lambda,{\eta}}\|_{L^2\to L^p}
\le\|P^{\hspace{1pt}\prime}_{\lambda,{\eta_1}}\|_{L^2\to L^p}
\lesssim\|\mathscr{P}^{\hspace{1pt}\prime}_{\lambda,1}\|_{L^2 \to L^p}
\quad\forall\,0<\eta\le\eta_1\hspace{1pt}.
$$
If $\eta_0>\eta_1$, we split up the interval $[-\eta_0,\eta_0]$ into
$N\sim\tfrac1{\eta_1}$ disjoint subintervals $I_j$ of length $\le\eta_1$,
we estimate
$$
\|P^{\hspace{1pt}\prime}_{\lambda,\eta_0}\|_{L^2\to L^p}
=\sum\nolimits_{j=1}^N\|P^{\hspace{1pt}\prime}_{\lambda+I_j}\|_{L^2\to L^p}
\lesssim\|\mathscr{P}^{\hspace{1pt}\prime}_{\lambda,1}\|_{L^2 \to L^p}\,.
$$
Secondly, the operator $\mathscr{P}^{\hspace{1pt}\prime}_{\lambda,1}$ can be written under the form
$$
\mathscr{P}^{\hspace{1pt}\prime}_{\lambda,1}=\sqrt{\hspace{-.5pt}\tfrac2\pi}\int_{-\infty}^{+\infty}
\widehat{\chi}(t)\hspace{.5pt}\cos\hspace{.5pt}(t\hspace{.5pt}\sqrt{\hspace{-.5pt}\Delta}\hspace{1pt})\hspace{1pt}\cos(t\lambda)\,dt\,.
$$
Therefore, by finite propagation speed for the wave equation,
its kernel $K_{\lambda}(x,x')$ vanishes if ${d(x,x')}>\epsilon_0$\hspace{1pt}.

\bigskip

\noindent\underline{Proof of the upper bound.}
Consider smooth bump functions \hspace{.5pt}$\phi_i\hspace{-1pt}:\!X\!\to\hspace{-1pt}[0,1]$
and \hspace{.5pt}$\psi_i\hspace{-1pt}:\!X\!\to\hspace{-1pt}[0,1]$ such that
\begin{itemize}
\item
$\sum_{i\in I}\phi_i^2=1$ on $X$,
\item
each $\phi_i$ is supported in a ball $B(x_i,\epsilon_0)$,
\item
{$\psi_i=1$ on $B(x_i,2\epsilon_0)$ and $\supp\psi_i\subset B(x_i,3\epsilon_0)$,}
\item
{the balls $B(x_i,3\epsilon_0)$} have uniformly bounded overlap,
hence $\sum_i \psi_i\lesssim 1$ on $X$.
\end{itemize}
Notice that $\mathscr{P}^{\hspace{1pt}\prime}_{\lambda,1}\phi_i
=\psi_i\hspace{1pt}\mathscr{P}^{\hspace{1pt}\prime}_{\lambda,1}\phi_i$
by finite propagation speed.
Now that everything has been localized around $x_i$,
we can consider a compact manifold $K_i$ agreeing with $X$ on $B(x_i,4\epsilon_0)$
and deduce from Sogge's result that
$$
\|\hspace{1pt}\psi_i\hspace{1pt}\mathscr{P}^{\hspace{1pt}\prime}_{\lambda,1}\hspace{1pt}
\phi_i\hspace{1pt}\|_{L^2\to L^p}\lesssim\lambda^{\gamma(p)}
$$
uniformly in $i\!\in\!I$.
By using successively the locality of $\mathscr{P}^{\hspace{1pt}\prime}_{\lambda,1}$, the bounded overlap,
the boundedness of $\mathscr{P}^{\hspace{1pt}\prime}_{\lambda,1}$,
the inclusion $\ell^2(I)\subset\ell^p(I)$, and once again the bounded overlap, we obtain
\begin{align*}
\bigl\|\hspace{1pt}\mathscr{P}^{\hspace{1pt}\prime}_{\lambda,1}\hspace{1pt}f\hspace{1pt}\bigr\|_{L^p}
&=\Bigl\|\,\sum_{i\in I}\psi_i\hspace{1pt}\mathscr{P}^{\hspace{1pt}\prime}_{\lambda,1}
\bigl(\phi_i^2f\bigr)\Bigr\|_{L^p} 
\sim\Bigl(\hspace{1pt}\sum_{i\in I}\,\bigl\|\hspace{1pt}\psi_i\hspace{1pt}
\mathscr{P}^{\hspace{1pt}\prime}_{\lambda,1}\hspace{1pt}\phi_i\hspace{1pt}\bigl(\phi_i\hspace{1pt}f\bigr)
\bigr\|_{L^p}^p\Bigr)^{\hspace{-1pt}1/p} \\
&\lesssim\lambda^{\gamma(p)}\Bigl(\hspace{1pt}\sum_{i\in I}\,
\bigl\|\phi_if\bigr\|_{L^2}^p\Bigr)^{\hspace{-1pt}1/p}
\leq\lambda^{\gamma(p)}\Bigl(\hspace{1pt}\sum_{i\in I}\,
\bigl\|\phi_if\bigr\|_{L^2}^2\Bigr)^{\hspace{-1pt}1/2}
\lesssim \lambda^{\gamma(p)}\hspace{1pt}\|f\|_{L^2}\hspace{1pt},
\end{align*}
which is the desired upper bound.

\begin{remark}\label{RemarkSogge}
Notice that this upper bound holds true for
\hspace{1pt}$\mathscr{P}^{\hspace{1pt}\prime}_{\lambda,1}\!
=\chi(\sqrt{\Delta}\hspace{-1pt}-\hspace{-1pt}\lambda)
+\chi(\sqrt{\Delta}\hspace{-1pt}+\hspace{-1pt}\lambda)$\hspace{1pt},
where $\chi$ is any Schwartz function.
Indeed,
$$
\bigl\|\mathscr{P}^{\hspace{1pt}\prime}_{\lambda,1}\bigr\|_{L^2\to L^p}
=\Bigl\|\hspace{1pt}\mathscr{P}^{\hspace{1pt}\prime}_{\lambda,1}
\hspace{-6pt}\sum_{n\in2\mathbb{N}+1}\hspace{-6pt}
\mathbf{1}_{[n-1,n+1)}(\sqrt{\Delta}\hspace{1pt})\hspace{1pt}\Bigr\|_{L^2\to L^p}
\le\sum_n\,|w_{\lambda,n}|\,\bigl\|P^{\hspace{1pt}\prime}_{n,1}\bigr\|_{L^2\to L^p}\hspace{1pt},
$$
where
\begin{equation*}
w_{{\lambda},n}=\hspace{-2pt}\sup_{{\xi}\in[n-1,n+1]}\hspace{-.5pt}
|\hspace{.5pt}\chi(\xi\hspace{-1pt}-\hspace{-1pt}\lambda)\hspace{-1pt}
+\hspace{-1pt}\chi(\xi\hspace{-1pt}+\hspace{-1pt}\lambda)\hspace{.5pt}|
\end{equation*}
is $\text{O}\bigl((1\!+\hspace{-1pt}|\lambda\hspace{-1pt}-\hspace{-1pt}n|)^{-N}\bigr)$.
Hence
$$
\bigl\|\mathscr{P}^{\hspace{1pt}\prime}_{\lambda,1}\bigr\|_{L^2\to L^p}\lesssim\sum_n\hspace{1pt}
(1\!+\hspace{-1pt}|\lambda\hspace{-1pt}-\hspace{-1pt}n|)^{-\gamma(p)-2}\hspace{1pt}n^{\gamma(p)}
\lesssim\lambda^{\gamma(p)}\hspace{1pt}.
$$
\end{remark}
\noindent\underline{The lower bound\,: reduction to a finite range operator.}
We aim at proving that there exists a constant ${\eta}_0>0$ such that,
for {every $\lambda>1$},
$$
\|P^{\hspace{1pt}\prime}_{\lambda,{\eta_0}}\|_{L^2\to L^p}\gtrsim
\lambda^{\gamma(p)}.
$$
Defining $\mathscr{P}^{\hspace{1pt}\prime}_{\lambda,1}$ as above, it suffices to prove that
$$
\|\mathscr{P}^{\hspace{1pt}\prime}_{\lambda,1}\|_{L^2\to L^p}\gtrsim\lambda^{\gamma(p)}
\quad\text{or equivalently}\quad
\|\mathscr{P}^{\hspace{1pt}\prime}_{\lambda,1}\|_{L^{p^{\hspace{.5pt}\prime}}\!\to L^2}
\gtrsim\lambda^{\gamma(p)}
$$
for all $\lambda >1$. Indeed, if $f$ satisfies
$$
{\|f\|_{L^{p^{\hspace{.5pt}\prime}}}\hspace{-1pt}=1\quad\text{and}\quad}
\|\mathscr{P}^{\hspace{1pt}\prime}_{\lambda,1}f\|_{L^2}\gtrsim
\lambda^{\gamma(p)},
$$
then
\begin{align*}
\lambda^{\gamma(p)}
&\lesssim\|\mathscr{P}^{\hspace{1pt}\prime}_{\lambda,1}f\|_{L^2}
\lesssim\Bigl\|\,\mathscr{P}^{\hspace{1pt}\prime}_{\lambda,1}
\hspace{-6pt}\sum_{n\in2\mathbb{N}+1}\hspace{-6pt}
{\mathbf{1}_{[n-1,n+1)}(\sqrt{\Delta}\hspace{1pt})}\,f\,\Bigr\|_{L^2}\\
&\lesssim\Bigl(\,\sum_nw_{{\lambda},n}^2\,
\|P^{\hspace{1pt}\prime}_{n,1}f\|_{L^2}^2\hspace{1pt}\Bigr)^{\hspace{-1pt}1/2}
\le\Bigl(\,\sum_nw_{\lambda,n}^2\,
\|P^{\hspace{1pt}\prime}_{n,1}\|_{L^{p^{\hspace{.5pt}\prime}}\!\to L^2}^2
\hspace{1pt}\Bigr)^{\hspace{-1pt}1/2},
\end{align*}
where $w_{\lambda,n}$ is defined as above.
Due to the rapid decay of $\chi$
and to the {upper} bound on $\|P^{\hspace{1pt}\prime}_{n,1}\|_{L^{p^{\hspace{.5pt}\prime}}\!\to L^2}$,
there exists a constant $C_0\,{>0}$ such that
$$
\Bigl(\,\sum\nolimits_{|n-\lambda|\ge C_0}w_{\lambda,n}^2\,
\|P^{\hspace{1pt}\prime}_{n,1}\|_{L^{p^{\hspace{.5pt}\prime}}\!\to L^2}^2
\hspace{1pt}\Bigr)^{\hspace{-1pt}1/2}
\ll\lambda^{\gamma(p)},	
$$
{hence}
$$
\lambda^{\gamma(p)}
\lesssim\Bigl(\,\sum\nolimits_{|n-\lambda|<C_0}
\|P^{\hspace{1pt}\prime}_{n,1}\|_{L^{p^{\hspace{.5pt}\prime}}\!\to L^2}^2
\hspace{1pt}\Bigr)^{\hspace{-1pt}1/2}.
$$
We deduce the lower bound
$$
\|P^{\hspace{1pt}\prime}_{\lambda,\eta_0}\|_{L^{p^{\hspace{.5pt}\prime}}\!\to L^2}
\gtrsim\lambda^{\gamma(p)}
$$
for $\eta_0=C_0+1$ by using 
$\|P^{\hspace{1pt}\prime}_{n,1}\|_{L^{p^{\hspace{.5pt}\prime}}\!\to L^2}
\le\|P^{\hspace{1pt}\prime}_{\lambda,\eta_0}\|_{L^{p^{\hspace{.5pt}\prime}}\!\to L^2}$\hspace{1pt}.
\medskip

\noindent\underline{Proof of the lower bound.}
By the previous reduction, it suffices to prove a lower bound
for $\psi\hspace{1pt}\mathscr{P}^{\hspace{1pt}\prime}_{\lambda,1}\hspace{1pt}\phi$,
where $\phi$ is a smooth bump function supported in a ball $B(x_0,\epsilon_0)$,
while $\psi=1$ on $B(x_0,2\epsilon_0)$ and $\supp\psi\subset B(x_0,3\epsilon_0)$.
It is now possible to consider a compact manifold $K$ agreeing with $X$ on $B(x_0,{4}\epsilon_0)$.
By finite propagation speed, the operator
$\psi\hspace{1pt}\mathscr{P}^{\hspace{1pt}\prime}_{\lambda,1}\hspace{1pt}\phi$
is identical on $X$ and $K$.
Therefore, the ``proof of sharpness'' for compact manifolds in \cite[{pp.~144--147}]{Sogge2017} applies,
with the following few modifications\,:
\begin{itemize}
\item
$\mathscr{P}^{\hspace{1pt}\prime}_{\lambda,1}$ should be substituted
to $\chi_\lambda$ and $\widetilde\chi_\lambda$,
\item
the local version of Weyl's law (see for instance~\cite{Sogge2014})
should be applied instead of the estimate on the counting function.
\end{itemize}

\section{Unboundedness of spectral projectors for the parabolic cylinder} \label{unbounded-cusp}
This appendix is devoted to the unboundedness of the spectral projectors $P_{\lambda,\eta}$ on
the parabolic cylinder $C$ in the half-plane model as in~\cite[\S~5.3]{Borthwick2016}.
We know that $y^{\hspace{.5pt}1\hspace{-.5pt}/2\hspace{1pt}\pm\hspace{1pt}i\hspace{1pt}\xi}$
are generalized eigenfunction associated with the eigenvalue $\frac14+\xi^2$.
Consider the function
$$   
f(x+iy)=\tfrac1{\sqrt{2\pi}}\int_{-\infty}^{+\infty}\!
\phi\bigl(\tfrac{\xi-\lambda}\eta\bigr)\,y^{\frac12-i\xi}\,d\xi\,,
$$
which doesn't depend on $x$ and which is spectrally localized
between $\lambda\hspace{-1pt}-\hspace{-1pt}\eta$
and $\lambda\hspace{-1pt}+\hspace{-1pt}\eta$\hspace{.5pt},
if $\phi$ is supported in $[-1,1]$\hspace{1pt}.
A simple computation yields
$$
f(y)=\eta\,y^{\frac12-i\lambda}\,\widehat{\phi}(\eta\log y)\hspace{.5pt}.
$$
Then
$$
\|f\|_{L^2(C)}=\eta\,\biggl(\hspace{.5pt}\int_0^{+\infty}\!
y\,|\widehat{\phi}(\eta\log y)|^2\,\tfrac{dy}{y^2}\biggr)^{\!\frac12}\!
=\eta\,\biggl(\hspace{.5pt}\int_{-\infty}^{+\infty}\!
|\widehat{\phi}(\eta\hspace{1pt}u)|^2\,du\biggr)^{\!\frac12}\!
=\eta^{\frac12}\,\|\widehat{\phi}\|_{L^2(\R)}
$$
while
$$
\|f\|_{L^p(C)}=\eta\,\biggl(\int_0^{+\infty}\!
y^{\frac p2}\,|\widehat{\phi}(\eta\log y)|^p\,\tfrac{dy}{y^2}\biggr)^{\!\frac1p}\!
=\eta\,\biggl(\hspace{.5pt}\int_{-\infty}^{+\infty}\!e^{\hspace{1pt}(\frac p2-1)\hspace{1pt}u}\,
|\widehat{\phi}(\eta\hspace{1pt}u)|^p\,du\biggr)^{\!\frac1p}.
$$
We may choose $\phi$ such that
\begin{equation}\label{BehaviorFourierTransform}
|\widehat{\phi}(u)|\sim(1\!+\!|u|)^{-\alpha}
\qquad\forall\,u\hspace{-1pt}\in\hspace{-1pt}\R
\end{equation}
for some $\alpha\hspace{-1pt}>\!\frac12$\hspace{.5pt}.
Then $\|f\|_{L^2(C)}\!<\hspace{-1pt}\infty$ while $\|f\|_{L^p(C)}\!=\hspace{-1pt}\infty$\hspace{.5pt}.
As a conclusion,
$$
\|\hspace{1pt}P_{\lambda,\eta}\hspace{1pt}\|_{L^2\to L^p}=\infty\,.
$$

\section{Analytic continuation of Fourier multipliers}\label{HolomorphicMultipliers}

The analytic continuation of $L^p\!\to\!L^p$ Fourier multipliers on noncompact symmetric spaces
was first observed in \cite{ClercStein1974}.
This phenomenon extends to $L^p\!\to\!L^q$ Fourier multipliers with $1\le p,q<2$ or $2<p,q\le\infty$.
Let us explain it for the hyperbolic plane $\H$.
\smallskip

Spherical functions on $\H$ are given by Harish-Chandra's integral formula
\begin{equation}\label{SphericalFunctions}
\varphi_\lambda(r)=\tfrac1\pi\int_0^\pi
\bigl(\cosh r-\sinh r\cos\theta\bigr)^{i\lambda-\frac12}\,d\theta
\end{equation}
 (see for instance \cite[(20) p. 39]{Helgason1984},
 \cite[(5.28)]{Koornwinder1984} or \cite[p.~112]{vanDijk2009}).
It follows from this formula that
\begin{itemize}[leftmargin=*]
\item
\,$\lambda\longmapsto\varphi_\lambda(r)$ is a holomorphic function on $\C$,
\item
\,$\lambda\longmapsto\varphi_\lambda(r)$ is a convex function on $i\R$,
\item
\,$|\varphi_{\lambda}(r)|\le\varphi_{i(\Im\!\lambda)}(r)$,
\end{itemize}
\smallskip 

Next let $0<\epsilon\le\frac12$ be fixed. We estimate $\varphi_{-i\epsilon}(r)=\frac1\pi\int_0^\pi(\cosh r-\sinh r\cos\theta)^{-(\frac12-\epsilon)}d\theta$.
Since $e^{-r}\le\cosh r-\sinh r\cos\theta\le e^r$, we have $e^{-(\frac12-\epsilon)r}\le\varphi_{-i\epsilon}(r)\le e^{(\frac12-\epsilon)r}$.

To improve the upper bound, we use 
\begin{align*}
\cosh r-\sinh r\cos\theta=\frac{1-\cos\theta}2e^r+\frac{1+\cos\theta}2e^{-r}\ge(\sin\frac\theta2)^2e^r\ge\pi^{-2}\theta^2e^r,
\end{align*}
and obtain that 
\begin{align*}
\varphi_{-i\epsilon}(r)\lesssim e^{-(\frac12-\epsilon)r}\int_0^\pi\theta^{2\epsilon-1}d\theta\lesssim\frac1\epsilon\,e^{-(\frac12-\epsilon)r}.
\end{align*}

We conclude that:
\smallskip
\begin{itemize}[leftmargin=*]
\item
\,for $0<\epsilon\le\frac12$ fixed, $\varphi_{-i\epsilon}(r)\sim e^{-(\frac12-\epsilon)r}$.
\end{itemize}


\noindent
Finally \footnote{\,This doesn't follow from \eqref{SphericalFunctions}.} we know that

\noindent$\bullet$
\,$\varphi_\lambda(r)=\varphi_{-\lambda}(r)$.
\vspace{.5mm}

\noindent
We deduce that
\begin{equation*}
|\varphi_\lambda(r)|\le\varphi_{-i\epsilon}(r)\lesssim e^{-(\frac12-\epsilon)r}
\end{equation*}
for every $\lambda$ in the strip $S_\epsilon=\{\lambda\in\C\mid|\Im\!\lambda\,|\le\epsilon\}$.
Hence
\begin{equation}\label{Lp}
\int_0^\infty|\varphi_\lambda(r)|^p\,\sinh r\,dr
\lesssim\!\int_0^\infty\!e^{-(\frac12-\frac1p-\epsilon)pr}\,dr
<+\infty
\end{equation}
if $0<\frac1p<\frac12-\epsilon<\frac12$, i.e., $0<\epsilon<\frac12-\frac1p<\frac12$.
\smallskip

Consider next convolution operators $T\hspace{-1pt}f=f*k$ on $\H$
corresponding to bounded Fourier multipliers $m=\widetilde{k}$.

\begin{proposition}
Assume that \,$T$ is bounded from $L^p$ to $L^q$,
with $1<p,q<2$ or $2<p,q<\infty$.
Then \,$m$ extends to a holomorphic function in the interior of the strip \,$S_\epsilon$,
where $\epsilon=\min\bigl\{|\frac12-\frac1p|,|\frac12-\frac1q|\bigr\}$.
\end{proposition}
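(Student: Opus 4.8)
The plan is to recover $m(\lambda)$, for $\lambda$ complex in the open strip $S_\epsilon^{\circ}=\{|\Im\lambda|<\epsilon\}$, as a ratio of holomorphic functions obtained by testing $T$ against the spherical function $\varphi_\lambda$, which the computation preceding the statement places in $L^p(\H)$ on precisely such a strip. First I would reduce to the case $2<p,q<\infty$: when $1<p,q<2$, pass to the adjoint $T^{*}\colon L^{q'}\to L^{p'}$, which is convolution against $\bar k$ and has the bounded multiplier $\overline{m(\bar{\,\cdot\,})}$, and which lies in the range $2<q',p'<\infty$; since $|\tfrac12-\tfrac1{p'}|=|\tfrac12-\tfrac1p|$ and likewise for $q$, a holomorphic extension of $\overline{m(\bar\lambda)}$ to $S_\epsilon^{\circ}$ produces one for $m$.

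So assume $2<p,q<\infty$, with $\epsilon=\min\bigl\{\tfrac12-\tfrac1p,\tfrac12-\tfrac1q\bigr\}$. By the bound $|\varphi_\lambda(r)|\lesssim e^{-(\frac12-|\Im\lambda|)\,r}$ established above, together with \eqref{Lp} applied with $|\Im\lambda|<\tfrac12-\tfrac1p$, one has $\varphi_\lambda\in L^p(\H)$ for every $\lambda\in S_\epsilon^{\circ}$; moreover $\lambda\mapsto\varphi_\lambda$ is a holomorphic map $S_\epsilon^{\circ}\to L^p(\H)$. Indeed, holomorphy of $\lambda\mapsto\varphi_\lambda(r)$ in $\lambda$ for each $r$, the above pointwise bound (locally uniform in $\lambda$), Morera's theorem and Fubini give weak holomorphy, hence strong holomorphy since $L^p$ is a Banach space. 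Fixing a radial $g\in\mathcal{C}_c^{\infty}(\H)$ and composing with the continuous maps $T\colon L^p\to L^q$ and $\langle\,\cdot\,,g\rangle\colon L^q\to\C$, it follows that $\lambda\mapsto\langle T\varphi_\lambda,g\rangle$ and $\lambda\mapsto\langle\varphi_\lambda,g\rangle$ are holomorphic on $S_\epsilon^{\circ}$.

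Next I would identify these quantities on the real axis. Writing $\langle T\varphi_\lambda,g\rangle=\langle\varphi_\lambda,T^{*}g\rangle$ with $T^{*}g=g*\bar k\in L^{p'}(\H)$, and combining the elementary identity $\langle\varphi_\lambda,h\rangle=\overline{\widetilde{\mathcal{F}}h(\lambda)}$ (valid for $\lambda\in\R$, with $\widetilde{\mathcal{F}}$ the spherical transform of the radial part of $h$) with the convolution theorem $\widetilde{\mathcal{F}}(g*\bar k)=\widetilde{\mathcal{F}}g\cdot\overline{m(\bar{\,\cdot\,})}$, one gets $\langle T\varphi_\lambda,g\rangle=m(\lambda)\,\langle\varphi_\lambda,g\rangle$ for every real $\lambda$. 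For any $\lambda_0\in\R$ there is a radial $g\in\mathcal{C}_c^{\infty}(\H)$ with $\langle\varphi_{\lambda_0},g\rangle=\overline{\widetilde{\mathcal{F}}g(\lambda_0)}\neq0$ (otherwise $\varphi_{\lambda_0}\equiv0$), and $\widetilde{\mathcal{F}}g$ is entire, so the ratio $\lambda\mapsto\langle T\varphi_\lambda,g\rangle/\langle\varphi_\lambda,g\rangle$ is holomorphic near $\lambda_0$ and coincides there with $m$.

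Finally I would patch these local pieces. For two radial test functions $g_1,g_2$ the function $\lambda\mapsto\langle T\varphi_\lambda,g_1\rangle\langle\varphi_\lambda,g_2\rangle-\langle T\varphi_\lambda,g_2\rangle\langle\varphi_\lambda,g_1\rangle$ is holomorphic on the connected set $S_\epsilon^{\circ}$ and vanishes on $S_\epsilon^{\circ}\cap\R$ by the previous step, hence vanishes identically; thus $\langle T\varphi_\lambda,g\rangle/\langle\varphi_\lambda,g\rangle$ is independent of $g$ wherever its denominator is nonzero. Since for every $\lambda_0\in S_\epsilon^{\circ}$ some radial $g\in\mathcal{C}_c^{\infty}(\H)$ has $\langle\varphi_{\lambda_0},g\rangle\neq0$ ($\varphi_{\lambda_0}$ being not identically zero), these ratios glue to a single holomorphic function on all of $S_\epsilon^{\circ}$ extending $m$, which is the claim. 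The step I expect to require the most care is the interface between the operator-theoretic hypothesis (``$T$ bounded $L^p\to L^q$'') and the distributional description of $T$ as convolution by $k$ with $\widetilde{\mathcal{F}}k=m$ — namely justifying $T^{*}g=g*\bar k\in L^{p'}$ and the convolution identity for the spherical transform of $g*\bar k$ — but this is classical; the holomorphy statements are soft once the pointwise bound on $\varphi_\lambda$ from the preceding computation is available.
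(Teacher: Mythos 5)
Your proof is correct and is built on the same core identity as the paper's — namely $T\varphi_\lambda=m(\lambda)\varphi_\lambda$ on $\R$, plus the fact that $\varphi_\lambda\in L^p\cap L^q$ (holomorphically in $\lambda$) throughout $S_\epsilon^\circ$, so that $m(\lambda)$ can be read off as a ratio of two holomorphic functions of $\lambda$. The one place you diverge is in the choice of test function: you pair $T\varphi_\lambda$ against an arbitrary radial $g\in\mathcal{C}_c^\infty$, obtaining only local extensions near those $\lambda_0$ where $\langle\varphi_{\lambda_0},g\rangle\neq0$, and then you patch these local pieces together by the identity theorem. The paper shortcuts this step by choosing a single global test function — a heat kernel $f$ — whose spherical Fourier transform $e^{-t(\lambda^2+1/4)}$ is entire and nowhere vanishing, so the formula $m(\lambda)=\tfrac{2\pi}{\widetilde f(\lambda)}\int_\H Tf(r)\,\varphi_\lambda(r)\sinh r\,dr$ immediately provides a holomorphic extension to all of $S_\epsilon^\circ$ with no patching needed. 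Both routes work; the paper's is a bit shorter, while yours has the small virtue of not requiring one to know in advance a function with non-vanishing entire spherical transform.
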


\begin{proof}
By duality\footnote{\,If $T$ is bounded from $L^p$ to $L^q$,
then $T$ is bounded from $L^{q'}$ \!to $L^{p^{\hspace{.5pt}\prime}}$\!.},
we may assume that $2<p,q<\infty$.
Then $\varphi_\lambda\in L^p\cap L^q$, for every $\lambda$ in the interior of $S_\epsilon$,
and
\begin{equation}\label{Tphi}
T\varphi_\lambda=m(\lambda)\,\varphi_\lambda
\quad\forall\,\lambda\in\R.
\end{equation}
Let $f\in L^{q'}(\H)$ be a radial function whose spherical Fourier transform doesn't vanish in the interior of $S_\epsilon$. For instance the heat kernel at any time $t>0$, whose spherical Fourier transform is equal to $e^{-t(\lambda^2+1/4)}$.
By integrating \eqref{Tphi} against $f$, we obtain
\begin{equation}\label{AnalyticContinuation}
m(\lambda)=\tfrac{2\pi}{\widetilde{f}(\lambda)}
\int_\H T\hspace{-1pt}f(r)\,\varphi_\lambda(r)\,\sinh r\,dr,
\end{equation}
The right hand of \eqref{AnalyticContinuation} provides
a holomorphic extension of $m$ in the interior of $S_\epsilon$.
\end{proof}
\begin{remark}
Moreover \eqref{Tphi} implies the following additional properties\,:

\noindent\underline{Case} \hspace{1pt}$p\hspace{-.5pt}=\hspace{-.5pt}q$\,:
\,$m$ is bounded inside $S_\epsilon$,

\noindent\underline{Limit case} \hspace{1pt}$p\hspace{-.5pt}=\hspace{-.5pt}q\hspace{-.5pt}=\hspace{-1pt}1$
or \hspace{1pt}$p\hspace{-.5pt}=\hspace{-.5pt}q\hspace{-.5pt}=\hspace{-.5pt}\infty$\,:
\,$m$ extends continuously to the boundary of $S_{\frac12}$.
\end{remark}

\bibliographystyle{siam}
\bibliography{referencesMay10}

\end{document}